\DeclareMathAlphabet{\mathpzc}{OT1}{pzc}{m}{it} % Tipo de letra \mathpzc
\newtheorem{Th}{Theorem}[section]              % Enumera los teoremas de acuerdo con la seccion (Theorem 1.1, Theorem 1.2 , ...)
\newtheorem{Cor}[Th]{Corollary}
\newtheorem{Rem}[Th]{Remark}
\newtheorem{Prop}[Th]{Proposition}
\newtheorem{Lem}[Th]{Lemma}
\DeclareMathOperator{\supp}{supp}
\title[Anisotropic Hardy-Lorentz spaces with variable exponents]
      {Anisotropic Hardy-Lorentz spaces with variable exponents}
\author[V. Almeida]{V. Almeida}
\author[J.J. Betancor]{J.J. Betancor}
\author[L. Rodr\'{\i}guez-Mesa]{L. Rodr\'{\i}guez-Mesa}
\address{\newline
        V\'{\i}ctor Almeida, Jorge J. Betancor, Lourdes Rodr\'{\i}guez-Mesa \newline
        Departamento de An\'alisis Matem\'atico,
        Universidad de la Laguna, \newline
        Campus de Anchieta, Avda. Astrof\'{\i}sico Francisco S\'anchez, s/n, \newline
        38271, La Laguna (Sta. Cruz de Tenerife), Spain}
\email{valmeida@ull.es, jbetanco@ull.es, lrguez@ull.es}
\keywords{Variable exponent Hardy spaces, Hardy-Lorentz spaces, anisotropic Hardy spaces, maximal functions, atomic decomposition}
\subjclass[2010]{ 42B30 (42B25, 42B35)}
\thanks{The authors are partially supported by MTM2013-44357-P}
\begin{document}

  \footnotetext{Date: \today.}

  \maketitle                % Si no se activa esta opcion no se pone ni el titulo ni los autores en el encabezado de cada pagina

\begin{abstract}
    In this paper we introduce Hardy-Lorentz spaces with variable exponents associated to dilations in ${\Bbb R}^n$. We establish maximal characterizations and atomic decompositions for our variable exponent anisotropic Hardy-Lorentz spaces.
\end{abstract}

%%%%%%%%%%%%%%%%%%%%%%%%%%%%%%%%%%%%%%%%%%%%%%%%%%%%%%%%%%%%%%%%%%%%%%%%%%%%%%
\section{Introduction} \label{sec:intro}
%%%%%%%%%%%%%%%%%%%%%%%%%%%%%%%%%%%%%%%%%%%%%%%%%%%%%%%%%%%%%%%%%%%%%%%%%%%%%%
The celebrated Fefferman and Stein's paper \cite{FS} has been crucial in the development of the real variable theory of Hardy spaces. In \cite{FS} the tempered distributions in the Hardy spaces $H^p({\Bbb R}^n)$ were characterized as those ones such that certain maximal functions are in $L^p({\Bbb R}^n)$. Coifman \cite{Co} and Latter \cite{Lat} obtained atomic decompositions of the elements of the Hardy spaces $H^p({\Bbb R}^n)$. Here, $0<p<\infty$ and $H^p({\Bbb R}^n)=L^p({\Bbb R}^n)$ provided that $1<p<\infty$.

Many authors have investigated Hardy spaces in several settings. Some generalizations substitute the underlying domain ${\Bbb R}^n$ by other ones (see, for instance, \cite{BD}, \cite{CKS}, \cite{CoWe}, \cite{MaSe}, \cite{Stri} and \cite{Tol}). Also, Hardy spaces associated with operators have been defined (see \cite{DL}, \cite{DY1}, \cite{HLMMY}, \cite{HM1} and \cite{Ya}, amongst others). If $X$ is a function space, the Hardy space $H({\Bbb R}^n,X)$ on ${\Bbb R}^n$ modeled on $X$ consists of all those tempered distributions $f$ on ${\Bbb R}^n$ such that the maximal function ${\mathcal M}(f)$ of $f$ is in $X$. The definition of the maximal operator ${\mathcal M}$ will be shown below. The classical Hardy space $H^p({\Bbb R}^n)$ is the Hardy space on ${\Bbb R}^n$ modeled on $L^p({\Bbb R}^n)$. If $\nu$ is a weight on ${\Bbb R}^n$ and  $L^p({\Bbb R}^n,\nu)$ denotes the weighted Lebesgue space, the Hardy space $H({\Bbb R}^n,L^p({\Bbb R}^n,\nu))$ was investigated in \cite{GC}. The Hardy space $H({\Bbb R}^n,L^{p,q}({\Bbb R}^n))$ where $L^{p,q}({\Bbb R}^n)$ represents the Lorentz space has been studied in \cite{AT1}, \cite{FRS}, \cite{FSo}, \cite{GH} and \cite{He}. The Hardy space $H({\Bbb R}^n,{\Lambda}^p(\phi))$ on ${\Bbb R}^n$ modeled on generalized Lorentz space ${\Lambda}^p(\phi)$ was studied by Almeida and Caetano \cite{AC}. The variable exponent Hardy space $H^{p(\cdot)}({\Bbb R}^n)$, investigated by \cite{CrW}, \cite{NS}, \cite{Sa} and \cite{ZYL}, is the space $H({\Bbb R}^n, L^{p(\cdot)}({\Bbb R}^n))$ on ${\Bbb R}^n$ modeled on the variable exponent Lebesgue space $L^{p(\cdot)}({\Bbb R}^n)$.

By $S({\Bbb R}^n)$, as usual, we denote the Schwartz function class on ${\Bbb R}^n$ and by $S'({\Bbb R}^n)$ its dual space. If $\varphi\in S({\Bbb R}^n)$, the radial maximal function ${\mathcal M}=M_{\varphi}$ used to characterized Hardy spaces is defined by
$${\mathcal M}(f)=\sup_{t>0}|f*\varphi_t|,\;\;\;f\in S'({\Bbb R}^n),$$
where $\varphi_t(x)=t^{-n}\varphi(x/t)$, $x\in{\Bbb R}^n$ and $t>0$. Bownik \cite{Bow1} studied anisotropic Hardy spaces on ${\Bbb R}^n$ associated with dilations in ${\Bbb R}^n$. If $A$ is an expansive dilation matrix  in ${\Bbb R}^n$, that is, a $n\times n$ real matrix such that $\displaystyle\min_{\lambda\in\sigma(A)}|\lambda|>1$ where $\sigma(A)$ represents the set of eigenvalues of $A$, for every $k\in\Bbb Z$, we define
$${\varphi}_{A,k}(x)=|{\rm det}\;A|^{-k}\varphi(A^{-k}x),\;\;x\in{\Bbb R}^n,$$
and the maximal function ${\mathcal M}_A=M_{A,\varphi}$ associated with $A$ is given by
$${\mathcal M}_A(f)=\sup_{k\in\Bbb Z}|f*\varphi_{A,k}|,\;\;\;f\in S'({\Bbb R}^n).$$
Bownik \cite{Bow1} characterizes anisotropic Hardy spaces by maximal functions like ${\mathcal M}_A$. Recently,  Liu, Yang, and Yuan \cite{LYY} have extended Bownik's results by studying anisotropic Hardy spaces on $\mathbb{R}^n$ modeled on Lorentz spaces $L^{p,q}(\mathbb{R}^n)$.

Ephremidze, Kokilashvili and Samko \cite{EKS} introduced variable exponent Lorentz Spaces ${\mathcal L}^{p(\cdot),q(\cdot)}({\Bbb R}^n)$. In this paper  we define anisotropic Hardy spaces on ${\Bbb R}^n$ associated with a dilation $A$ modelled on ${\mathcal L}^{p(\cdot),q(\cdot)}({\Bbb R}^n)$. These Hardy spaces are represented by $H^{p(\cdot),q(\cdot)}({\Bbb R}^n,A)$ and they are called variable exponent anisotropic Hardy-Lorentz spaces on ${\Bbb R}^n$. We characterize the tempered distributions in $H^{p(\cdot),q(\cdot)}({\Bbb R}^n,A)$ by using anisotropic maximal function ${\mathcal M}_A$. Also, we obtain atomic decompositions for the elements of $H^{p(\cdot),q(\cdot)}({\Bbb R}^n,A)$. Our results extend those ones in \cite{LYY} to variable exponent setting.

Before establishing the results of this paper we recall the definitions and properties about anisotropy and  variable exponent Lebesgue and Lorentz spaces we will need.

An exhaustive and systematic study about variable exponent Lebesgue spaces $L^{p(\cdot)}(\Omega)$ where $\Omega\subset{\Bbb R}^n$ can be found in the monograph \cite{CF} and in \cite{DHHR}. Here $p:\Omega\rightarrow(0,\infty)$ is a measurable function. We assume that $0<p_-(\Omega)\leq p_+(\Omega)<\infty$, where $\displaystyle p_-(\Omega)={\rm essinf}_{x\in\Omega}p(x)$ and $\displaystyle p_+(\Omega)={\rm essup}_{x\in\Omega}p(x)$. The space $L^{p(\cdot)}(\Omega)$ is the collection of all measurable functions $f$ such that, for some $\lambda>0$, $\rho(f/\lambda)<\infty$, where
$$\rho(f)=\rho _{p(\cdot )}(f)=\int_\Omega{|f(x)|^{p(x)}dx}.$$
We define $\|\cdot\|_{p(\cdot)}$ as follows
$$\|f\|_{p(\cdot)}
    =\inf\left\{\lambda>0\ : \ \int_{\Omega}{\left({{|f(x)|}\over{\lambda}}\right)^{p(x)}dx}\leq 1\right\},\;\;\;f\in L^{p(\cdot)}(\Omega).$$

If $p_-(\Omega)\geq 1$, then $\|\cdot\|_{p(\cdot)}$ is a norm and $(L^{p(\cdot)}(\Omega), \|\cdot\|_{p(\cdot)})$ is a Banach space. However,  if $p_-(\Omega)< 1$, then $\|\cdot\|_{p(\cdot)}$ is a quasinorm and $(L^{p(\cdot)}(\Omega), \|\cdot\|_{p(\cdot)})$ is a quasi Banach space.

A crucial problem concerning to variable exponent Lebesgue spaces is to describe the exponents $p$ for which the Hardy-Littlewood maximal function is bounded in $L^{p(\cdot)}({\Bbb R}^n)$ (see, \cite{CDF}, \cite{Die1}, \cite{Ne}, and \cite{sam}, amongst others). As it is shown in \cite{CFMP}, \cite{CrW2} and \cite{GK}, the boundedness of the Hardy-Littlewood maximal function, together with extensions of Rubio de Francia's extrapolation theorem, lead to the boundedness of a wide class of operators and vector-valued inequalities on $L^{p(\cdot)}({\Bbb R}^n)$ and the weighted $L^{p(\cdot)}(\nu)$. These ideas also work in the variable exponent Lorentz spaces, introduced by Ephremidze, Kokilashvili and Samko \cite{EKS} and they will play a fundamental role in the proof of some of our main results.

The Lorentz spaces were introduced in \cite{Lo1} and \cite{Lo2} as a generalization of classical Lebesgue spaces. The theory of Lorentz spaces can be encountered in \cite{BS} and \cite{CRS}. Assume that $f$ is a measurable function. We define the distribution function $\mu_f:[0,\infty)\rightarrow[0,\infty]$ associated with $f$ by
$$\mu_f(s)=|\{x\in{\Bbb R}^n:\;|f(x)|>s\}|,\;\;\;s\in[0,\infty).$$
Here, $|E|$ denotes the Lebesgue measure of $E$, for every Lebesgue measurable set $E$. The non-increasing equimeasurable rearrangement $f^*:[0,\infty)\rightarrow[0,\infty]$ of $f$ is defined by
$$f^*(t)=\inf\{s\geq 0:\;\mu_f(s)\leq t\},\;\;\;t\in[0,\infty).$$
If $0<p,q<\infty$, the measurable function $f$ is in the Lorentz space $L^{p,q}({\Bbb R}^n)$ provided that
$$\|f\|_{L^{p,q}({\Bbb R}^n)}=\left(\int_0^{\infty}t^{\frac{q}{p}-1}(f^*(t))^qdt\right)^{1/q}<\infty.$$
$L^{p,q}({\Bbb R}^n)$ is complete and it is normable, that is, there exists a norm equivalent with the quasinorm $\|\cdot\|_{L^{p,q}({\Bbb R}^n)}$ (see \cite[p. 66]{CRS}) for $1<p<\infty$ and $1\leq q<\infty$.

Variable exponent Lorentz spaces have been defined in two different ways: one of them by Ephremidze, Kokilashvili and Samko \cite{EKS} and the other one  by Kempka and Vyb\'{\i}ral \cite{KV}.

In this paper we consider the space defined in \cite{EKS}. This election is motivated by the following fact. We need to use a vectorial inequality for the anisotropic Hardy-Littlewood maximal function (see Proposition \ref{Prop1.3}). In order to prove this property we use an extrapolation argument requering us to know the associated K\"othe dual space of the Lorentz space. The dual space of the variable exponent Lorentz space in \cite{EKS} is known (\cite[Lemma 2.7]{EKS}). However, characterizations of the dual space of the variable exponent Lorentz space in \cite{KV} have not been established (see Remark \ref{Rem1.1}).

For every $a\geq 0$ we denote by ${\mathfrak P}_a$ the set of measurable functions $p:(0,\infty)\rightarrow(0,\infty)$ such that $a<p_-((0,\infty))\leq p_+((0,\infty))<\infty$. By ${\Bbb P}$ we represent the class of bounded measurable functions $p:(0,\infty)\rightarrow(0,\infty)$ such that there exist the limits
$$p(0)=:\lim_{t\rightarrow 0^+}p(t)\;\;\mbox{and}\;\;p(\infty)=:\lim_{t\rightarrow +\infty}p(t)$$
and the following log-H\"older continuity conditions are satisfied:

$\begin{array}{ll}
 & |p(t)-p(0)|\leq\displaystyle{C\over{|\ln\,t|}},\;\;\mbox{for}\;\;0<t\leq 1/2 ,\\
 \noindent\mbox{and} & \\
 & |p(t)-p(\infty)|\leq\displaystyle{C\over{\ln(e+t)}},\;\;\mbox{for}\;\;t\in (0,\infty).
\end{array}$

\noindent We also denote ${\Bbb P}_a={\Bbb P}\cap{\mathfrak P}_a$, for every $a\geq 0$.

Let $p,q\in {\mathfrak P}_0$. We represent by $(p(\cdot),q(\cdot))$-Lorentz space ${\mathcal L}^{p(\cdot),q(\cdot)}({\Bbb R}^n)$ the space of all those measurable functions $f$ on ${\Bbb R}^n$ such that $t^{{1\over{p(t)}}-{1\over{q(t)}}}f^*(t)\in L^{q(\cdot)}(0,\infty)$. We define
$$\|f\|_{{\mathcal L}^{p(\cdot),q(\cdot)}({\Bbb R}^n)}=\|t^{{1\over{p(t)}}-{1\over{q(t)}}}f^*(t)\|_{L^{q(\cdot)}(0,\infty)},\;\;\;f\in {\mathcal L}^{p(\cdot),q(\cdot)}({\Bbb R}^n).$$
We also consider the average $f^{**}$ of $f^*$ given by
$$f^{**}(t)={1\over t}\int_0^t{f^*(s)ds},\;\;\;t\in(0,\infty),$$
and
$$\|f\|_{{\mathcal L}^{p(\cdot),q(\cdot)}({\Bbb R}^n)}^{(1)}=\|t^{{1\over{p(t)}}-{1\over{q(t)}}}f^{**}(t)\|_{L^{q(\cdot)}(0,\infty)},\;\;\;f\in {\mathcal L}^{p(\cdot),q(\cdot)}({\Bbb R}^n).$$
$\|\cdot\|_{{\mathcal L}^{p(\cdot),q(\cdot)}({\Bbb R}^n)}^{(1)}$ satisfies the triangular inequality provided that $q_-((0,\infty))\geq 1$. It is clear that $\|f\|_{{\mathcal L}^{p(\cdot),q(\cdot)}({\Bbb R}^n)}\leq \|f\|_{{\mathcal L}^{p(\cdot),q(\cdot)}({\Bbb R}^n)}^{(1)}$. According to \cite[Theorem 2.4]{EKS} if $p\in{\Bbb P}_0$, $q\in{\Bbb P}_1$, $p(0)>1$, and $p(\infty)>1$, there exists $C>0$ for which
$$\|f\|_{{\mathcal L}^{p(\cdot),q(\cdot)}({\Bbb R}^n)}^{(1)}\leq C\|f\|_{{\mathcal L}^{p(\cdot),q(\cdot)}({\Bbb R}^n)},\;\;\;f\in{\mathcal L}^{p(\cdot),q(\cdot)}({\Bbb R}^n).$$
If $p,q\in{\Bbb P}_1$, ${\mathcal L}^{p(\cdot),q(\cdot)}({\Bbb R}^n)$ is a Banach function space (in the sense of \cite{BS}) and the dual space $({\mathcal L}^{p(\cdot),q(\cdot)}({\Bbb R}^n))'$ coincides with ${\mathcal L}^{p'(\cdot),q'(\cdot)}({\Bbb R}^n)$ \cite[Lemma 2.7 and Theorem 2.8]{EKS}. Here as usual if $r:(0,\infty)\rightarrow(1,\infty)$, $r'={r\over{r-1}}$. The behaviour of the anisotropic Hardy-Littlewood maximal function on ${\mathcal L}^{p(\cdot),q(\cdot)}({\Bbb R}^n)$ will be very useful in the sequel. According to \cite[Theorem 3.12]{EKS}, the classical Hardy-Littlewood maximal operator is bounded from ${\mathcal L}^{p(\cdot),q(\cdot)}({\Bbb R}^n)$ into itself provided that $p,q\in{\Bbb P}_1$.

The main definitions and properties about the anisotropic setting we will use in this paper can be found in \cite{Bow1}.

Suppose that $A$ is an expansive dilation matrix  in ${\Bbb R}^n$. We say that a measurable function $\rho:{\Bbb R}^n\rightarrow[0,\infty)$ is a homogeneous quasinorm associated with $A$ when the following properties hold:
\begin{enumerate}
\item[(a)] $\rho(x)=0$ if, and only if, $x=0$;
\item[(b)] $\rho(Ax)=|{\rm det}\;A|\rho(x),\;\;x\in{\Bbb R}^n$;
\item[(c)] $\rho(x+y)\leq H(\rho(x)+\rho(y)),\;\;x,y\in{\Bbb R}^n$, for certain $H\geq 1$.
\end{enumerate}
If $P$ is a nondegenerate $n\times n$ matrix, the set $\Delta$ defined by
$$\Delta=\left\{x\in{\Bbb R}^n:\;|Px|<1\right\}$$
is called the ellipsoid generated by $P$. According to \cite[Lemma 2.2, p. 5]{Bow1} there exists an ellipsoid $\Delta$ with Lebesgue measure $1$ and such that, for certain $r_0>1$,
$$\Delta\subseteq r_0\Delta\subseteq A\Delta.$$
>From now on, the ellipsoid $\Delta$ satisfying the above properties is fixed. For every $k\in\Bbb Z$, we define $B_k=A^k\Delta$, as the equivalent of the Euclidean balls in our anisotropic context,  and denote by $\omega$ the smallest integer such that $2B_0\subset B_{\omega}$. We have that, for every $k\in\Bbb Z$, $|B_k|=b^k$ , where $b=|{\rm det}\;A|$, and $B_k\subset r_0B_k\subset B_{k+1}$.

The step quasinorm $\rho_A$ on ${\Bbb R}^n$ is defined by
$$\rho_A(x)=\left\{\begin{array}{ll}
b^k, & x\in B_{k+1}\backslash B_k,\;\;k\in\Bbb Z, \\
0, & x=0.
\end{array}\right.
$$
Thus, $\rho_A$ is a homogeneous quasinorm associated with $A$.

By \cite[Lemma 2.4, p. 6]{Bow1} if $\rho$ is any quasinorm associated with $A$, then $\rho_A$ and $\rho$ are equivalent, that is, for a certain $C>0$,
$$\rho(x)/C\leq\rho_A(x)\leq C\rho(x),\;\;\;x\in{\Bbb R}^n.$$
The triplet $({\Bbb R}^n,\rho_A,|\cdot|)$, where $|\cdot|$ denotes the Lebesgue measure in ${\Bbb R}^n$, is a space of homogeneous type in the sense of Coifman and Weiss \cite{CW}.

We now define maximal functions in our anisotropic setting. Suppose that $\varphi\in S({\Bbb R}^n)$ and $f\in S'({\Bbb R}^n)$. The radial maximal function $M_{\varphi}^0(f)$ of $f$ with respect to $\varphi$ is defined by
$$M_{\varphi}^0(f)(x)=\sup_{k\in\Bbb Z}|(f*{\varphi}_k)(x)|,$$
where ${\varphi}_k(x)=b^{-k}\varphi(A^{-k}x),\;\;k\in\Bbb Z$ and $x\in{\Bbb R}^n$. Since the matrix $A$ is fixed we do not refer it in the notation of maximal functions.

The nontangential maximal function $M_{\varphi}(f)$ with respect to $\varphi$ is given by
$$M_{\varphi}(f)(x)=\sup_{k\in\Bbb Z\;,\;y\in x+B_k}|(f*{\varphi}_k)(y)|,\;\;\;x\in{\Bbb R}^n.$$

If $\alpha=(\alpha_1,...,\alpha_n)\in{\Bbb N}^n$, we write $|\alpha|=\alpha_1+...+\alpha_n$. Let $N\in \Bbb N$. We consider the set
$$S_N=\{\varphi\in S({\Bbb R}^n):\;\sup_{x\in{\Bbb R}^n}(1+\rho_A(x))^N|D^{\alpha}\varphi(x)|\leq 1,\,\;\alpha\in{\Bbb N}^n\;\mbox{and}\;|\alpha|\leq N\}.$$
Here $D^{\alpha}=\displaystyle{{\partial^{|\alpha |}}\over{\partial x_1^{\alpha_1}...\partial x_n^{\alpha_n}}}$, when $\alpha=(\alpha_1,...,\alpha_n)\in{\Bbb N}^n$.

The radial grandmaximal function $M_N^0(f)$ of $f$ of order $N$ is defined by
$$M_N^0(f)=\sup_{\varphi\in S_N}M_{\varphi}^0(f).$$

The nontangential grandmaximal function $M_N(f)$ of $f$ of order $N$ is given by
$$M_N(f)=\sup_{\varphi\in S_N}M_{\varphi}(f).$$

We now define variable exponent anisotropic Hardy-Lorentz spaces. Let $N\in\Bbb N$ and $p,q\in {\mathfrak P}_0$. The $(p(\cdot),q(\cdot))$-anisotropic Hardy-Lorentz space $H^{p(\cdot),q(\cdot)}_{N}({\Bbb R}^n,A)$ associated with $A$ is the set of all those $f\in S'(\Bbb{R}^n)$ such that $M_N(f)\in{\mathcal L}^{p(\cdot),q(\cdot)}({\Bbb R}^n)$. On $H^{p(\cdot),q(\cdot)}_{N}({\Bbb R}^n,A)$ we consider the quasinorm $\|\cdot\|_{H^{p(\cdot),q(\cdot)}_{N}({\Bbb R}^n,A)}$ defined by
$$\|f\|_{H^{p(\cdot),q(\cdot)}_{N}({\Bbb R}^n,A)}=\|M_N(f)\|_{{\mathcal L}^{p(\cdot),q(\cdot)}({\Bbb R}^n)},\;\;\;f\in H^{p(\cdot),q(\cdot)}_{N}({\Bbb R}^n,A) .$$

Our first result establishes that the space $H^{p(\cdot),q(\cdot)}_N({\Bbb R}^n,A)$ actually does not depend on $N$ provided that $N$ is large enough. Furthermore, we prove that $H^{p(\cdot),q(\cdot)}_N({\Bbb R}^n,A)$ can be characterized also by using the maximal functions $M_{\varphi}^0$, $M_{\varphi}$ and $M_N^0$.

\begin{Th}\label{Th1.1}
Let $f\in S'({\Bbb R}^n)$ and $\varphi\in S({\Bbb R}^n)$ such that $\int{\varphi}\neq 0$. Assume that $p,q\in{\Bbb P}_0$. Then, the following assertions are equivalent.
\begin{enumerate}
\item[(i)] There exists $N_0\in \mathbb{N}$ such that, for every $N\geq N_0$, $f\in H^{p(\cdot),q(\cdot)}_N({\Bbb R}^n,A)$.
\item[(ii)] $M_{\varphi}(f)\in {\mathcal L}^{p(\cdot),q(\cdot)}({\Bbb R}^n)$.
\item[(iii)] $M_{\varphi}^0(f)\in {\mathcal L}^{p(\cdot),q(\cdot)}({\Bbb R}^n)$.
\end{enumerate}
Moreover, for every $g\in S'({\Bbb R}^n)$ the quantities $\|M_N(g)\|_{{\mathcal L}^{p(\cdot),q(\cdot)}({\Bbb R}^n)}$, $N\geq N_0$, $\|M_{\varphi}^0(g)\|_{{\mathcal L}^{p(\cdot),q(\cdot)}({\Bbb R}^n)}$ and $\|M_{\varphi}(g)\|_{{\mathcal L}^{p(\cdot),q(\cdot)}({\Bbb R}^n)}$ are equivalent.
\end{Th}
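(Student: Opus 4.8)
The plan is to prove the cyclic chain $(i)\Rightarrow(ii)\Rightarrow(iii)\Rightarrow(i)$ together with the stated norm equivalences, following the Fefferman--Stein scheme in the form adapted to the anisotropic setting by Bownik \cite{Bow1} and to Lorentz spaces by Liu, Yang and Yuan \cite{LYY}, with the variable exponent Lorentz space $\mathcal{L}^{p(\cdot),q(\cdot)}(\mathbb{R}^n)$ replacing $L^{p,q}(\mathbb{R}^n)$ throughout.

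The implications $(i)\Rightarrow(ii)$ and $(ii)\Rightarrow(iii)$ are elementary. Since $\varphi\in S(\mathbb{R}^n)$, for each $N$ there is $c=c_N>0$ with $c\varphi\in S_N$, so $M_\varphi(f)\le c_N^{-1}M_N(f)$ pointwise, while $M_\varphi^0(f)\le M_\varphi(f)$ pointwise trivially; since $\|\cdot\|_{\mathcal{L}^{p(\cdot),q(\cdot)}(\mathbb{R}^n)}$ is monotone in $|f|$ (via the rearrangement), these give $(i)\Rightarrow(ii)\Rightarrow(iii)$ and the corresponding one-sided norm bounds.

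The substance is $(iii)\Rightarrow(i)$. For $\lambda>0$ introduce the tangential maximal function with decay
$$M_\varphi^{\ast\ast,\lambda}(f)(x)=\sup_{k\in\mathbb{Z},\,y\in\mathbb{R}^n}\frac{|(f\ast\varphi_k)(x-y)|}{(1+\rho_A(A^{-k}y))^{\lambda}},\qquad x\in\mathbb{R}^n,$$
together with truncations $M_\varphi^{\ast\ast,\lambda,\ell}(f)$ ($\ell\in\mathbb{N}$), defined as in \cite{Bow1} by restricting the supremum to $|k|\le\ell$ and inserting an extra factor that makes $M_\varphi^{\ast\ast,\lambda,\ell}(f)$ finite and increases to $M_\varphi^{\ast\ast,\lambda}(f)$ as $\ell\to\infty$. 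The argument rests on three facts, each a transcription of the corresponding anisotropic statement from \cite{Bow1}. First, a reproducing formula (available because $\int\varphi\neq0$) expressing every $\psi\in S_N$ as a rapidly convergent series of dilates of $\varphi$ yields, for $N\ge N_0(\lambda,n,A)$, the pointwise bound $M_N(f)\le C\,M_\varphi^{\ast\ast,\lambda}(f)$. Second, fixing $0<r<\min\{1,p_-((0,\infty)),q_-((0,\infty))\}$ and then $\lambda$ large enough (so that $\lambda r$ exceeds a threshold depending only on $n$ and $A$), one gets the key pointwise estimate
$$M_\varphi^{\ast\ast,\lambda,\ell}(f)(x)\le C\bigl[\mathfrak{M}\bigl((M_\varphi^0 f)^r\bigr)(x)\bigr]^{1/r},$$
with $C$ independent of $\ell$, where $\mathfrak{M}$ is the anisotropic Hardy--Littlewood maximal operator on $(\mathbb{R}^n,\rho_A,|\cdot|)$; the truncation is exactly what guarantees the a priori finiteness needed to run this argument. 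Third, since $r<\min\{p_-((0,\infty)),q_-((0,\infty))\}$ and $p,q\in\mathbb{P}_0$, the rescaled exponents $p(\cdot)/r$, $q(\cdot)/r$ lie in $\mathbb{P}_1$ (log--Hölder continuity is stable under division by the constant $r$, and the lower bounds become $>1$), so $\mathfrak{M}$ is bounded on $\mathcal{L}^{p(\cdot)/r,q(\cdot)/r}(\mathbb{R}^n)$ (this is \cite[Theorem 3.12]{EKS} transferred to the space of homogeneous type $(\mathbb{R}^n,\rho_A,|\cdot|)$, cf.\ Proposition~\ref{Prop1.3}); combining this with the identity $\|g\|_{\mathcal{L}^{p(\cdot),q(\cdot)}(\mathbb{R}^n)}=\bigl\||g|^r\bigr\|_{\mathcal{L}^{p(\cdot)/r,q(\cdot)/r}(\mathbb{R}^n)}^{1/r}$ (immediate from $(|g|^r)^*=(g^*)^r$) and the previous estimate gives, uniformly in $\ell$,
$$\|M_\varphi^{\ast\ast,\lambda,\ell}(f)\|_{\mathcal{L}^{p(\cdot),q(\cdot)}(\mathbb{R}^n)}\le C\,\bigl\|\mathfrak{M}\bigl((M_\varphi^0 f)^r\bigr)\bigr\|_{\mathcal{L}^{p(\cdot)/r,q(\cdot)/r}(\mathbb{R}^n)}^{1/r}\le C\,\|M_\varphi^0(f)\|_{\mathcal{L}^{p(\cdot),q(\cdot)}(\mathbb{R}^n)}.$$
Letting $\ell\to\infty$ and using the Fatou property of $\mathcal{L}^{p(\cdot),q(\cdot)}(\mathbb{R}^n)$, then invoking the first fact, we obtain $\|M_N(f)\|_{\mathcal{L}^{p(\cdot),q(\cdot)}(\mathbb{R}^n)}\le C\|M_\varphi^0(f)\|_{\mathcal{L}^{p(\cdot),q(\cdot)}(\mathbb{R}^n)}$ for every $N\ge N_0$, i.e.\ $(i)$; reading the constants around the cycle yields the equivalence of the four quantities.

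The main obstacle is the second fact: the pointwise domination of the (truncated) tangential maximal function by a power of $\mathfrak{M}$ applied to a power of $M_\varphi^0(f)$, with constants independent of the truncation, and the bookkeeping of the truncation/limit procedure so that every quantity is a priori finite. This is where the anisotropic geometry enters (through the covering and doubling properties of the balls $B_k$) and where the size of $\lambda$, hence of $N_0$ --- depending only on $p_-((0,\infty))$, $q_-((0,\infty))$, $n$ and $A$ --- is forced. The remaining ingredients (the reproducing formula, the verification $p(\cdot)/r,q(\cdot)/r\in\mathbb{P}_1$, and the limiting argument) are routine adaptations of known facts.
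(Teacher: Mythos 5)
Your overall architecture---the easy implications, the tangential maximal function with truncations, the reproducing formula to dominate the grand maximal function, the power trick $\|g\|_{\mathcal{L}^{p(\cdot),q(\cdot)}(\mathbb{R}^n)}=\bigl\||g|^r\bigr\|_{\mathcal{L}^{p(\cdot)/r,q(\cdot)/r}(\mathbb{R}^n)}^{1/r}$ combined with the boundedness of the anisotropic Hardy--Littlewood maximal operator on the rescaled variable Lorentz space, and the monotone-convergence passage to the limit in the truncation---is the scheme the paper follows. The problem is your ``second fact''. The pointwise estimate that is actually available (Bownik's Lemma 7.4; Lemma \ref{lem2.1} of the paper) bounds the truncated tangential maximal function by $\bigl[M_{HL}\bigl((M_{\varphi}^{K,L}(f))^r\bigr)\bigr]^{1/r}$ with the \emph{nontangential} maximal function inside, not the radial one. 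This is structural: the averaging argument needs $|(f*\varphi_k)(y)|\le M_{\varphi}(f)(z)$ for every $z$ with $y\in z+B_k$, and only the nontangential maximal function controls the value of $f*\varphi_k$ at points other than the one where it is evaluated; $M_{\varphi}^0(f)(z)$ sees only $|(f*\varphi_k)(z)|$ itself. A global pointwise bound of the (truncated) tangential maximal function by $\bigl[M_{HL}((M_{\varphi}^0 f)^r)\bigr]^{1/r}$ is not a transcription of anything in Bownik and is false in general.

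Consequently your chain is missing its hardest link: the comparison $\|M_{\varphi}(f)\|_{\mathcal{L}^{p(\cdot),q(\cdot)}(\mathbb{R}^n)}\le C\|M_{\varphi}^0(f)\|_{\mathcal{L}^{p(\cdot),q(\cdot)}(\mathbb{R}^n)}$ is only a norm inequality, and the paper obtains it by a good-set/absorption argument. One first shows, via the decay estimate of Lemma \ref{lem2.5} and the integrability of $\max(1,\rho_A(\cdot))^{-\alpha}$ from Lemma \ref{lem2.3}, that $M_{\varphi}^{K,L_0}(f)\in\mathcal{L}^{p(\cdot),q(\cdot)}(\mathbb{R}^n)$ a priori for a suitable $L_0$ (depending on $f$); one then sets $\Omega_0=\{x:\;M_{M_0}^{0,K,L_0}(f)(x)\le C_2M_{\varphi}^{K,L_0}(f)(x)\}$, absorbs the contribution of $\Omega_0^c$ into the left-hand side using this finiteness together with the triangle inequality for the averaged functional $\|\cdot\|^{(1)}$, and only on $\Omega_0$ invokes the pointwise bound $M_{\varphi}^{K,L_0}(f)\le C\bigl[M_{HL}\bigl(M_{\varphi}^{0,K,L_0}(f)^{1/r}\bigr)\bigr]^{r}$ (Bownik's (7.16)); a final bootstrap with $L_0=0$ removes the dependence of the constant on $f$. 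None of this can be bypassed by the pointwise estimate you assert. The remaining ingredients of your proposal (the choice of $r$, the membership of $p(\cdot)/r$, $q(\cdot)/r$ in $\mathbb{P}_1$, and the limiting argument in the truncation parameter) are sound.
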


According to Theorem \ref{Th1.1} we write $H^{p(\cdot),q(\cdot)}({\Bbb R}^n,A)$ to denote $H^{p(\cdot),q(\cdot)}_N({\Bbb R}^n,A)$, for every $N\geq N_0$.

In order to prove this theorem we follow the ideas developed by Bownik \cite[\S 7]{Bow1} (see also \cite[\S 4]{LYY}) but we need to make some modifications due to that decreasing rearrangement and variable exponents appear.

Let $1<r\leq \infty$, $s\in \mathbb{N}$ and  $p,q\in {\mathfrak P}_0$. We say that a measurable function $a$ on ${\Bbb R}^n$ is a $(p(\cdot),q(\cdot),r,s)$-atom associated with $x_0\in{\Bbb R}^n$ and $k\in\Bbb Z$ when $a$ satisfies

\begin{enumerate}
\item[(a)] $\mbox{supp}\; a\subseteq x_0+B_k$.
\item[(b)] $\|a\|_r\leq b^{k/r}\|\chi_{x_0+B_k}\|^{-1}_{{\mathcal L}^{p(\cdot),q(\cdot)}({\Bbb R}^n)}$. (Note that $(\chi_{x_0+B_k})^*=\chi _{(0,b^k)}$).
\item[(c)] $\displaystyle\int_{{\Bbb R}^n}a(x)x^{\alpha}dx=0$, for every $\alpha\in{\Bbb N}^n$ such that $|\alpha|\leq s$.
\end{enumerate}
Here, if $\alpha=(\alpha_1,...,\alpha_n)\in\Bbb N$ and $x=(x_1,...,x_n)\in{\Bbb R}^n$, $x^{\alpha}=x_1^{\alpha_1}\cdot\cdot\cdot x_n^{\alpha_n}$.

\begin{Rem}
>From now on, any time we write $a $ is a $(p(\cdot),q(\cdot),r,s)$-atom associated with $x_0\in{\Bbb R}^n$ and $k\in\Bbb Z$, it is understood that (a), (b) and (c) hold.
\end{Rem}
In the next result we characterize the distributions in $H^{p(\cdot),q(\cdot)}({\Bbb R}^n,A)$ by atomic decompositions.

\begin{Th}\label{Th1.2}
Let $p,q\in{\Bbb P}_0$.
\begin{enumerate}
\item[(i)] There exist $s_0\in \mathbb{N}$ and $C>0$ such that if, for every $j\in\Bbb N$, $\lambda_j\geq 0$ and $a_j$ is a $(p(\cdot),q(\cdot),\infty,s_0)$-atom associated with $x_j\in{\Bbb R}^n$ and $\ell_j\in\Bbb Z$, satisfying that \newline $\displaystyle\sum_{j\in\Bbb N}\lambda_j\|\chi_{x_j+B_{\ell_j}}\|^{-1}_{{\mathcal L}^{p(\cdot),q(\cdot)}({\Bbb R}^n)}\chi_{x_j+B_{\ell_j}}\in {\mathcal L}^{p(\cdot),q(\cdot)}({\Bbb R}^n)$, then $f=\displaystyle\sum_{j\in\Bbb N}\lambda_ja_j\in H^{p(\cdot),q(\cdot)}({\Bbb R}^n,A)$ and
    $$\|f\|_{H^{p(\cdot),q(\cdot)}({\Bbb R}^n,A)}\leq C\left\|\sum_{j\in\Bbb N}\lambda_j\|\chi_{x_j+B_{\ell_j}}\|^{-1}_{{\mathcal L}^{p(\cdot),q(\cdot)}({\Bbb R}^n)}\chi_{x_j+B_{\ell_j}}\right\|_{{\mathcal L}^{p(\cdot),q(\cdot)}({\Bbb R}^n)}.$$
    If also $p(0)<q(0)$, then there exists $r_0>1$ such that for every $r_0<r<\infty$ the above assertion is true when $(p(\cdot),q(\cdot),\infty,s_0)$-atoms are replaced by $(p(\cdot),q(\cdot),r,s_0)$-atoms.
\item[(ii)] There exists $s_0\in \mathbb{N}$ such that for every $s\in \mathbb{N}$, $s\ge s_0$, and $1<r\le \infty$, we can find $C>0$ such that, for every $f\in H^{p(\cdot),q(\cdot)}({\Bbb R}^n,A)$, there exist, for each $j\in\Bbb N$, $\lambda_j>0$ and a $(p(\cdot),q(\cdot),r,s)$-atom $a_j$ associated with $x_j\in{\Bbb R}^n$ and $\ell_j\in\Bbb Z$, satisfying that $\displaystyle\sum_{j\in\Bbb N}\lambda_j\|\chi_{x_j+B_{\ell_j}}\|^{-1}_{{\mathcal L}^{p(\cdot),q(\cdot)}({\Bbb R}^n)}\chi_{x_j+B_{\ell_j}}\in {\mathcal L}^{p(\cdot),q(\cdot)}({\Bbb R}^n)$, $f=\displaystyle\sum_{j\in\Bbb N}\lambda_ja_j$ in $S'({\Bbb R}^n)$ \noindent and
    $$\left\|\sum_{j\in\Bbb N}\lambda_j\|\chi_{x_j+B_{\ell_j}}\|^{-1}_{{\mathcal L}^{p(\cdot),q(\cdot)}({\Bbb R}^n)}\chi_{x_j+B_{\ell_j}}\right\|_{{\mathcal L}^{p(\cdot),q(\cdot)}({\Bbb R}^n)}\leq C\|f\|_{H^{p(\cdot),q(\cdot)}({\Bbb R}^n,A)}.$$

\end{enumerate}
\end{Th}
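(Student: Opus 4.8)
\section*{Proof proposal for Theorem \ref{Th1.2}}

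The plan is to follow the classical Calder\'on--Zygmund machinery for anisotropic Hardy spaces as in Bownik \cite[\S 6]{Bow1} and its Lorentz counterpart in Liu--Yang--Yuan \cite{LYY}, but working throughout with the quasinorm of ${\mathcal L}^{p(\cdot),q(\cdot)}({\Bbb R}^n)$ instead of $L^{p,q}$. For part (i), the hypothesis on the $\lambda_j$ gives control of $\|\sum_j \lambda_j \|\chi_{x_j+B_{\ell_j}}\|^{-1}_{{\mathcal L}^{p(\cdot),q(\cdot)}}\chi_{x_j+B_{\ell_j}}\|_{{\mathcal L}^{p(\cdot),q(\cdot)}}$; first I would estimate $M_N(a_j)$ for a single atom $a_j$, splitting ${\Bbb R}^n$ into $x_j+B_{\ell_j+\omega}$ (the ``near'' region, where one uses the size bound (b), the boundedness of the anisotropic Hardy--Littlewood maximal operator on ${\mathcal L}^{p(\cdot),q(\cdot)}$ from the $L^r\to L^r$ bound for $r$ large, cf.\ \cite[Theorem 3.12]{EKS} and the preceding remarks) and the complement (the ``far'' region, where the moment conditions (c) together with a Taylor expansion of $\varphi_k$ yield rapid decay in $\rho_A$). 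This produces the pointwise bound $M_N(a_j)\lesssim \|\chi_{x_j+B_{\ell_j}}\|^{-1}_{{\mathcal L}^{p(\cdot),q(\cdot)}}\,\big(M_{\mathrm{HL}}\chi_{x_j+B_{\ell_j}}\big)^{\theta}$ for a suitable $\theta\in(0,1)$ chosen with $p_+,q_+$ in mind. Then I would sum: write $f=\sum_j\lambda_j a_j$, apply the quasi-triangle inequality in ${\mathcal L}^{p(\cdot),q(\cdot)}$ after raising to a small power $\underline{p}$ to make it subadditive, invoke the vector-valued maximal inequality of Proposition \ref{Prop1.3} to pass from $\sum_j(M_{\mathrm{HL}}\chi_{x_j+B_{\ell_j}})^{\theta}$-type sums back to $\sum_j\chi_{x_j+B_{\ell_j}}$-type sums, and conclude. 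Convergence of $\sum_j\lambda_j a_j$ in $S'$ (hence the meaning of $f$) follows from the same estimates once one checks $\sum_j\lambda_j\|a_j\|_{L^1}$ or a duality pairing is summable on bounded sets of $S$.

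For part (ii) I would run the Calder\'on--Zygmund decomposition of a distribution $f\in H^{p(\cdot),q(\cdot)}({\Bbb R}^n,A)$ at heights $2^i$ (or rather at the levels dictated by the distribution function of $M_N(f)$), exactly as in \cite[\S 6--7]{Bow1}: set $\Omega_i=\{x:M_N(f)(x)>2^i\}$, take a Whitney-type covering $\{x^i_j+B_{\ell^i_j}\}_j$ of $\Omega_i$ adapted to $\rho_A$ with bounded overlap, choose a smooth partition of unity subordinate to it, and define the bad parts $b^i_j$ by subtracting off the projection of $f\theta^i_j$ onto polynomials of degree $\le s$. Telescoping $f=\sum_i\sum_j (b^i_j-b^{i+1}_j\text{-corrections})$ yields $f=\sum_i\sum_j \lambda^i_j a^i_j$ with $a^i_j$ a $(p(\cdot),q(\cdot),\infty,s)$-atom (and, after an extra Lebesgue-differentiation / truncation step, a $(p(\cdot),q(\cdot),r,s)$-atom for any $1<r\le\infty$) supported in a dilate of $x^i_j+B_{\ell^i_j}$, and with $\lambda^i_j\approx 2^i\|\chi_{x^i_j+B_{\ell^i_j}}\|_{{\mathcal L}^{p(\cdot),q(\cdot)}}$. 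The quasinorm estimate then reduces, via the bounded overlap of the Whitney cubes at each level $i$ and the nesting $\Omega_{i+1}\subset\Omega_i$, to showing $\big\|\sum_i 2^i \chi_{\Omega_i}\big\|_{{\mathcal L}^{p(\cdot),q(\cdot)}}\lesssim \|M_N(f)\|_{{\mathcal L}^{p(\cdot),q(\cdot)}}$, which is a Fefferman--Stein-type ``dyadic discretization'' inequality: one compares $\sum_i 2^i\chi_{\Omega_i}$ with $M_N(f)$ pointwise up to a fixed multiplicative constant, using only that $\{\Omega_i\}$ is decreasing and $|\{M_N(f)>2^i\}|=|\Omega_i|$.

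I expect the main obstacle to be two intertwined points specific to the variable-exponent Lorentz setting. The first is that ${\mathcal L}^{p(\cdot),q(\cdot)}$ is defined through the decreasing rearrangement, so it is not a ``local'' or lattice-friendly norm: estimates that in $L^{p,q}$ or even in $L^{p(\cdot)}$ would be handled by summing $\chi_{Q_j}$ pointwise must here be routed through the equivalence $\|\cdot\|_{{\mathcal L}^{p(\cdot),q(\cdot)}}\approx\|\cdot\|^{(1)}_{{\mathcal L}^{p(\cdot),q(\cdot)}}$ (valid under $p(0),p(\infty)>1$, \cite[Theorem 2.4]{EKS}) and the boundedness of the anisotropic Hardy--Littlewood maximal operator, which forces a rescaling trick: replace $p,q$ by $p/\underline{r},q/\underline{r}$ for a small $\underline{r}<p_-((0,\infty))$ so that the rescaled exponents land in ${\Bbb P}_1$, prove everything there, and scale back. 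Making this rescaling compatible with the atom normalization (b) and with the log-H\"older hypotheses is the delicate bookkeeping. The second obstacle is the $L^r$ (rather than $L^\infty$) refinement claimed in (i) when $p(0)<q(0)$ and in all of (ii): one must verify that the density argument (smooth atoms, then $L^r$ atoms via a Fatou/limiting argument in $S'$) survives the rearrangement-based quasinorm, and that the threshold $r_0$ and the truncation level in the Calder\'on--Zygmund step can be chosen uniformly; I would handle this by first proving the $r=\infty$ case completely and then deducing the $L^r$ case by interpolation-type comparison of atoms, noting $\|a\|_r\le b^{k/r-k}\|a\|_\infty$ on $x_0+B_k$.
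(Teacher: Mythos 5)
Your outline for the case $r=\infty$ of part (i) and for all of part (ii) matches the paper's argument: the near/far splitting of $M_\varphi(a_j)$ using the size and moment conditions, the pointwise domination of the tail by a power of $M_{HL}(\chi_{x_j+B_{\ell_j}})$, the vector-valued inequality of Proposition \ref{Prop1.3} after rescaling the exponents into ${\Bbb P}_1$, and, for (ii), the Calder\'on--Zygmund decomposition at heights $2^i$ with the telescoping $f=\sum_j(g_{j+1}-g_j)$ and the pointwise bound $\sum_i 2^i\chi_{\Omega_i}\le 2M_N(f)$. Two small corrections: the exponent $\theta$ you put on $M_{HL}\chi$ must be taken \emph{large} (so that $\theta p,\theta q\in{\Bbb P}_1$, i.e. $\theta>1/\min\{p_-,q_-\}$), not $\theta\in(0,1)$, or the tail is not summable in the quasinorm; and passing from $f\in L^1_{loc}\cap H^{p(\cdot),q(\cdot)}$ to general $f$ in (ii) requires not only the density statement (Corollary \ref{cor1}) but also the convergence $g_j\to 0$ in $S'({\Bbb R}^n)$ as $j\to-\infty$, which the paper has to prove separately (Proposition \ref{Propnew}).

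The genuine gap is in part (i) for $r<\infty$. Your plan is to deduce it from the $r=\infty$ case by the comparison $\|a\|_r\le b^{k/r}\|a\|_\infty$ on $x_0+B_k$. That inequality shows every $(p(\cdot),q(\cdot),\infty,s)$-atom is a $(p(\cdot),q(\cdot),r,s)$-atom --- which is exactly what reduces part (ii) for finite $r$ to the case $r=\infty$ --- but it points the wrong way for part (i): there you are \emph{given} $(r,s)$-atoms, which are strictly weaker objects than $\infty$-atoms, and must still prove the uniform reconstruction bound; no comparison or interpolation of the kind you describe produces this. In the paper this is the hardest piece of the theorem. One first shows (Proposition \ref{prop4.3}) that a single $(r,s)$-atom lies in $H^{p(\cdot),q(\cdot)}({\Bbb R}^n,A)$ --- this is where $p(0)<q(0)$ enters, to bound $t^{1/p(t)-1/q(t)}$ near $0$ and reduce the near-region estimate to an $L^r$ bound with $r>q_+$ --- but the constant there depends on the atom. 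The uniform estimate for sums is then obtained by a Rubio de Francia extrapolation through weighted anisotropic Hardy spaces $H^{p_0}({\Bbb R}^n,\nu,A)$ with $\nu\in\mathcal{A}_1({\Bbb R}^n,A)\cap RH_{(r\alpha)'}({\Bbb R}^n,A)$, using the K\"othe dual $({\mathcal L}^{\alpha p(\cdot),\alpha q(\cdot)}({\Bbb R}^n))'={\mathcal L}^{(\alpha p(\cdot))',(\alpha q(\cdot))'}({\Bbb R}^n)$ and the iteration algorithm $R(h)$ (Lemmas \ref{lem4.6}--\ref{lem4.9} and Proposition \ref{prop4.6}); the threshold $r_0$ arises precisely from the reverse H\"older class of $R(h)$. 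Your proposal contains no substitute for this machinery, so as written it establishes part (ii) and part (i) only for $r=\infty$.
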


Let $1<r\leq \infty$, $s\in \mathbb{N}$ and  $p,q\in {\mathfrak P}_0$. We define the anisotropic variable exponent atomic Hardy-Lorentz space $H^{p(\cdot),q(\cdot),r,s}({\Bbb R}^n,A)$ as follows. A distribution $f\in S'({\Bbb R}^n)$ is in $H^{p(\cdot),q(\cdot),r,s}({\Bbb R}^n,A)$ when,  for every $j\in\Bbb N$ there exist $\lambda_j>0$ and a $(p(\cdot),q(\cdot),r,s)$-atom  $a_j$ associated with $x_j\in{\Bbb R}^n$ and $\ell_j\in\Bbb Z$ such that $f=\displaystyle\sum_{j\in\Bbb N}\lambda_ja_j$, where the series converges in $S'({\Bbb R}^n)$, and
$$\sum_{j\in\Bbb N}\lambda_j\|\chi_{x_j+B_{\ell_j}}\|^{-1}_{{\mathcal L}^{p(\cdot),q(\cdot)}({\Bbb R}^n)}\chi_{x_j+B_{\ell_j}}\in {\mathcal L}^{p(\cdot),q(\cdot)}({\Bbb R}^n).$$

For every $f\in H^{p(\cdot),q(\cdot),r,s}({\Bbb R}^n,A)$ we define
$$\|f\|_{H^{p(\cdot),q(\cdot),r,s}({\Bbb R}^n,A)}=\inf\left\|\sum_{j\in\Bbb N}\lambda_j\|\chi_{x_j+B_{\ell_j}}\|^{-1}_{{\mathcal L}^{p(\cdot),q(\cdot)}({\Bbb R}^n)}\chi_{x_j+B_{\ell_j}}\right\|_{{\mathcal L}^{p(\cdot),q(\cdot)}({\Bbb R}^n)},$$
where the infimum is taken over all the sequences $(\lambda_j)_{j\in \mathbb{N}}\subset(0,\infty)$ and $(a_j)_{j\in \mathbb{N}}$ of $(p(\cdot),q(\cdot),r,s)$-atoms satisfying that $f=\displaystyle\sum_{j\in\Bbb N}\lambda_ja_j$ in  $ S'({\Bbb R}^n)$ and
$$\displaystyle\sum_{j\in\Bbb N}\lambda_j\|\chi_{x_j+B_{\ell_j}}\|^{-1}_{{\mathcal L}^{p(\cdot),q(\cdot)}({\Bbb R}^n)}\chi_{x_j+B_{\ell_j}}\in {\mathcal L}^{p(\cdot),q(\cdot)}({\Bbb R}^n),$$
being $a_j$ associated with $x_j\in{\Bbb R}^n$ and $\ell_j\in\Bbb Z$, for every $j\in\Bbb N$.

In Theorem \ref{Th1.2} we state some conditions in order that the inclusions $H^{p(\cdot),q(\cdot)}({\Bbb R}^n,A)\subset H^{p(\cdot),q(\cdot),r,s}({\Bbb R}^n,A)$ and  $H^{p(\cdot),q(\cdot),r,s}({\Bbb R}^n,A)\subset H^{p(\cdot),q(\cdot)}({\Bbb R}^n,A)$ hold continuously.

In our proof of Theorem \ref{Th1.2}  a vector valued inequality, involving the Hardy-Littlewood maximal function in our anisotropic setting, plays an important role. The mentioned maximal function is defined by
$$M_{HL}(f)(x)=\sup_{k\in\Bbb Z,y\in x+B_k}{1\over{b^k}}\int_{y+B_k}|f(z)|dz,\;\;\;x\in{\Bbb R}^n.$$

After proving a version of \cite[Theorem 3.12]{EKS} for $M_{HL}$, by using an extension of Rubio de Francia extrapolation Theorem (see \cite{CFMP}, \cite{CrW2}, and \cite{GK}), we can establish the following result.

\begin{Prop}\label{Prop1.3}
Assume that $p,q\in{\Bbb P}_1$. For every $r\in(1,\infty)$ there exists $C>0$ such that
$$\left\|\left(\sum_{j\in\Bbb N}M_{HL}(f_j)^r\right)^{1/r}\right\|_{{\mathcal L}^{p(\cdot),q(\cdot)}({\Bbb R}^n)}\leq C\left\|\left(\sum_{j\in\Bbb N}|f_j|^r\right)^{1/r}\right\|_{{\mathcal L}^{p(\cdot),q(\cdot)}({\Bbb R}^n)},$$
for each sequence $(f_j)_{j\in\Bbb N}$ of functions in $L^1_{loc}({\Bbb R}^n)$.
\end{Prop}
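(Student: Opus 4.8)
The plan is to reduce the vector-valued estimate for $M_{HL}$ on ${\mathcal L}^{p(\cdot),q(\cdot)}({\Bbb R}^n)$ to a scalar weighted estimate via the Rubio de Francia extrapolation machinery, exactly as in \cite{CFMP}, \cite{CrW2}, and \cite{GK}, but with the anisotropic Hardy--Littlewood maximal operator replacing the Euclidean one. The first step is to record the scalar boundedness: a version of \cite[Theorem 3.12]{EKS} valid for $M_{HL}$, i.e.\ $M_{HL}$ is bounded on ${\mathcal L}^{p(\cdot),q(\cdot)}({\Bbb R}^n)$ whenever $p,q\in{\Bbb P}_1$. Since $({\Bbb R}^n,\rho_A,|\cdot|)$ is a space of homogeneous type and $M_{HL}$ is the corresponding uncentered Hardy--Littlewood maximal operator, the proof of \cite[Theorem 3.12]{EKS} (which uses only the rearrangement inequality $ (M_{HL}f)^{*}(t) \lesssim f^{**}(t)$, valid in spaces of homogeneous type, together with Hardy's inequality and the mapping properties of the dilation operator on $L^{q(\cdot)}(0,\infty)$) carries over with only cosmetic changes; I would state this as a lemma and indicate these points.

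Next I would invoke the duality structure. By the discussion preceding the statement, when $p,q\in{\Bbb P}_1$ the space ${\mathcal L}^{p(\cdot),q(\cdot)}({\Bbb R}^n)$ is a Banach function space whose K\"othe dual (associate space) is ${\mathcal L}^{p'(\cdot),q'(\cdot)}({\Bbb R}^n)$ by \cite[Lemma 2.7 and Theorem 2.8]{EKS}. This is precisely the input needed for the extrapolation theorem in the Banach-function-space formulation: if $T$ is a (sublinear) operator that is bounded on a Banach function space $X$ and whose associate space $X'$ also enjoys the boundedness of the relevant maximal operator, then the $\ell^r$-valued extension of $T$ is bounded on $X$ for every $1<r<\infty$. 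Concretely, I would apply the off-diagonal/vector-valued extrapolation principle to $T=M_{HL}$, using that $M_{HL}$ is bounded both on $X={\mathcal L}^{p(\cdot),q(\cdot)}({\Bbb R}^n)$ and on $X'={\mathcal L}^{p'(\cdot),q'(\cdot)}({\Bbb R}^n)$; the latter holds because $p',q'\in{\Bbb P}_1$ as well (indeed $p',q'$ are again log-H\"older continuous with limits $p'(0),p'(\infty)>1$), so the scalar lemma applies to the dual exponents too.

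The mechanism of the extrapolation step is the Rubio de Francia iteration/algorithm: from the boundedness of $M_{HL}$ on $X$ and on $X'$ one builds, for a given nonnegative $h\in X'$, an $A_1$-type majorant $Rh\geq h$ with $\|Rh\|_{X'}\leq 2\|h\|_{X'}$ and $M_{HL}(Rh)\leq C\,Rh$ (an anisotropic $A_1$ weight, with the $A_1$ constant controlled by $\|M_{HL}\|_{X'\to X'}$); the weighted Fefferman--Stein inequality $\int (M_{HL}f)^r\, w\leq C\int |f|^r\, (M_{HL}w)$ — valid in spaces of homogeneous type — then gives, after pairing with $h$ and using $Rh\geq h$ together with $M_{HL}(Rh)\lesssim Rh$, the pointwise-in-$\ell^r$ estimate $\big\|(\sum_j M_{HL}(f_j)^r)^{1/r}\big\|_X\leq C\big\|(\sum_j|f_j|^r)^{1/r}\big\|_X$ by duality. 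I would present this as an application of the abstract statement in \cite{CrW2} or \cite{GK}, noting that those results are proved in the generality of Banach function spaces over measure spaces and that the only space-specific hypotheses — $X$ a Banach function space, $X'$ its associate, $M_{HL}$ bounded on both — have been verified above.

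The main obstacle is the scalar lemma: establishing boundedness of the genuinely anisotropic, uncentered operator $M_{HL}$ on ${\mathcal L}^{p(\cdot),q(\cdot)}({\Bbb R}^n)$ and on its dual. One must check that the key ingredient of \cite[Theorem 3.12]{EKS}, namely control of the decreasing rearrangement $(M_{HL}f)^*$ by $f^{**}$ (equivalently, a weak-$(1,1)$ and strong-$(\infty,\infty)$ bound combined with Herz's inequality), holds in the space of homogeneous type $({\Bbb R}^n,\rho_A,|\cdot|)$; this requires the Vitali-type covering lemma for the balls $x+B_k$, which is available since $(\mathbb R^n,\rho_A,|\cdot|)$ is of homogeneous type. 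Once the rearrangement inequality is in hand, the passage to ${\mathcal L}^{p(\cdot),q(\cdot)}$ is identical to \cite{EKS}, and the extrapolation step is then purely formal.
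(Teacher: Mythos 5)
Your proposal follows essentially the same route as the paper: first the scalar boundedness of $M_{HL}$ on ${\mathcal L}^{p(\cdot),q(\cdot)}({\Bbb R}^n)$ via the rearrangement estimate $(M_{HL}f)^*\lesssim f^{**}$ (the paper's Proposition \ref{propM_{HL}}), then the identification of the K\"othe dual from \cite[Lemma 2.7]{EKS}, then Rubio de Francia extrapolation over the Muckenhoupt basis of anisotropic balls, citing \cite{CMP}/\cite{CrW2}.

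One point deserves care, because the mechanism you sketch for the extrapolation step is not quite the one that works for all $r\in(1,\infty)$. Pairing $\sum_j (M_{HL}f_j)^r$ against $h$ and using $\int (M_{HL}f)^r w\le C\int |f|^r M_{HL}w$ amounts to dualizing in the space $({\mathcal L}^{p(\cdot),q(\cdot)})^{1/r}={\mathcal L}^{p(\cdot)/r,q(\cdot)/r}$, which is a Banach function space (with the required norming property and with $M_{HL}$ bounded on its associate) only when $r<\min\{p_-,q_-\}$; and the alternative of dualizing $\bigl(\sum_j (M_{HL}f_j)^r\bigr)^{1/r}$ directly against $X'={\mathcal L}^{p'(\cdot),q'(\cdot)}$ would require the strong $L^1(w)$ vector-valued maximal inequality for $w\in\mathcal{A}_1$, which is false already for $w\equiv 1$. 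The paper sidesteps this by fixing $\beta\in(0,1)$ with $\beta p,\beta q\in{\Bbb P}_1$ and running the extrapolation at the exponent $p_0=1/\beta\in(1,\min\{p_-,q_-\})$, i.e.\ it dualizes $|F|^{p_0}$ against $({\mathcal L}^{\beta p(\cdot),\beta q(\cdot)})'={\mathcal L}^{(\beta p(\cdot))',(\beta q(\cdot))'}$, where the Andersen--John weighted $L^{p_0}(w)$ inequality for $w\in\mathcal{A}_1\subset\mathcal{A}_{p_0}$ is available; this is exactly the hypothesis of \cite[Corollary 4.8 and Remark 4.9]{CMP}. Your argument is repaired by replacing ``$M_{HL}$ bounded on $X$ and $X'$'' with ``$M_{HL}$ bounded on $(X^{\beta})'$ for some $\beta<1$ close to $1$'', which your scalar lemma already supplies; with that adjustment the two proofs coincide.
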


\begin{Rem}\label{Rem1.1}
We do not know if the last vectorial inequality holds when the Lorentz space ${\mathcal L}^{p(\cdot),q(\cdot)}({\Bbb R}^n)$ is replaced by the variable exponent Lorentz space $L_{p(\cdot),q(\cdot)}({\Bbb R}^n)$ introduced by Kempka and Vyb\'{\i}ral \cite{KV}. In order to apply extrapolation technique it is necessary to know the associated K\"othe dual space (see \cite[p. 25]{LT}) $(L_{p(\cdot),q(\cdot)}({\Bbb R}^n))^*$ of  $L_{p(\cdot),q(\cdot)}({\Bbb R}^n)$, but its characterization is, as far we know, an open question.
\end{Rem}

Also in order to prove Theorem \ref{Th1.2} we need to establish that $H^{p(\cdot),q(\cdot)}({\Bbb R}^n,A)\cap L^1_{loc}({\Bbb R}^n)$ is a dense subspace of $H^{p(\cdot),q(\cdot)}({\Bbb R}^n,A)$. At this point a careful study of Calder\'on-Zygmund decomposition of the distributions in $H^{p(\cdot),q(\cdot)}({\Bbb R}^n,A)$ must be made.

To establish boundedness of operators on Hardy spaces, atomic characterizations (as in Theorem \ref{Th1.2}) play an important role. Meyer \cite{Me} (see also \cite[p. 513]{MTW}) gave a function $f\in H^1({\Bbb R}^n)$ whose norm is not achieved by finite atomic decomposition. More recently, Bownik \cite{Bow2}  adapted that example to get, for every $0<p\leq 1$, an atom in $H^p({\Bbb R}^n)$ with the same property. Also in \cite[Theorem 2]{Bow2} it was proved that there exists a linear functional ${\frak l}$ defined on the space $H^{1,\infty}_{fin}({\Bbb R}^n)$, consisting in finite linear combinations of $(1,\infty)$-atoms, such that, for a certain $C>0$, $|{\frak l}(a)|\leq C$, for every $(1,\infty)$-atom $a$, and ${\frak l}$ can not be extended to a bounded functional on the whole $H^1({\Bbb R}^n)$.

Bownik's results have motivated some investigations about operators on Hardy spaces via atomic decompositions. Meda, Sj{\"o}gren and Vallarino \cite{MSV} proved that if $1<q<\infty$ and $T$ is a linear operator defined on $H^{1,q}_{fin}({\Bbb R}^n)$, the space of finite linear combinations of $(1,q)$-atoms, into a quasi Banach space $Y$ such that $\sup\{\|Ta\|_Y:\;a\;\mbox{is a}\;(1,q)\mbox{-atom}\}<\infty$, then $T$ can be extended to $ H^1({\Bbb R}^n)$ as a bounded operator from $ H^1({\Bbb R}^n)$ into $Y$. Also, it is proved that the same is true when $(1,q)$-atoms are replaced by continuous $(1,\infty)$-atoms, in contrast with the Bownik's result. Yang and Zhou \cite{YZ} established the result when $0<p\leq 1$ and $(p,2)$-atoms are considered. Ricci and Verdera \cite{RV} proved that, for $0<p<1$, when $H^{p,\infty}_{fin}({\Bbb R}^n)$ is endowed with the natural topology, the dual spaces of $H^{p,\infty}_{fin}({\Bbb R}^n)$ and $H^p({\Bbb R}^n)$ coincide.

Also, this type of results have been recently established for Hardy spaces in more general settings (see, for instance, \cite{BLYZ}, \cite{CrW}, \cite{LYY} and \cite{ZSY}).

In order to study boundedness of some singular integrals on our anisotropic Hardy-Lorentz spaces with variable exponents we consider finite atomic Hardy-Lorentz spaces in our settings.

Let $1<r< \infty$, $s\in \mathbb{N}$ and  $p,q\in {\mathfrak P}_0$. The space  $H^{p(\cdot),q(\cdot),r,s}_{fin}({\Bbb R}^n,A)$ consists of all those $f\in H^{p(\cdot),q(\cdot)}({\Bbb R}^n,A)$ such that there exist $k\in \Bbb N$ and, for every $j\in \Bbb N$, $1\leq j\leq k$, $\lambda_j>0$ and a $(p(\cdot),q(\cdot),r,s)$-atom $a_j$ such that $f=\displaystyle\sum_{j=1}^k\lambda_ja_j$. For every $f\in H^{p(\cdot),q(\cdot),r,s}_{fin}({\Bbb R}^n,A)$, we define
$$\|f\|_{H^{p(\cdot),q(\cdot),r,s}_{fin}({\Bbb R}^n,A)}=\inf\left\|\sum_{j=1}^k\lambda_j\|\chi_{x_j+B_{\ell_j}}\|^{-1}_{{\mathcal L}^{p(\cdot),q(\cdot)}({\Bbb R}^n)}\chi_{x_j+B_{\ell_j}}\right\|_{{\mathcal L}^{p(\cdot),q(\cdot)}({\Bbb R}^n)},$$
where the infimum is taken over all the finite sequences $(\lambda_j)_{j=1}^{k}\subset(0,\infty)$ and $(a_j)_{j=1}^{k}$ of $(p(\cdot),q(\cdot),r,s)$-atoms such that $f=\displaystyle\sum_{j=1}^k\lambda_ja_j$
and  being, for every $j\in\Bbb N$, $j\leq k$,  $a_j$ associated with $x_j\in{\Bbb R}^n$ and $\ell_j\in\Bbb Z$.

The space $H^{p(\cdot),q(\cdot),\infty,s}_{fin,con}({\Bbb R}^n,A)$ and the quasinorm $\|\cdot\|_{H^{p(\cdot),q(\cdot),\infty,s}_{fin,con}({\Bbb R}^n,A)}$ are defined in a similar way by considering continuous $(p(\cdot),q(\cdot),\infty,s)$-atoms.

In Theorem \ref{Th1.3} we establish some conditions that imply that  $H^{p(\cdot),q(\cdot),r,s}_{fin}({\Bbb R}^n,A)$ is dense in $H^{p(\cdot),q(\cdot)}({\Bbb R}^n,A)$.

\begin{Th}\label{Th1.3}
Let $p,q\in {\Bbb P}_0$.
\begin{enumerate}
\item[(i)] Assume that $p(0)<q(0)$. Then, there exist $s_0\in\Bbb N$ and $r_0\in(1,\infty)$ such that, for every  $s\in\Bbb N$, $s\geq s_0$, and $r\in(r_0,\infty)$, $\|\cdot\|_{H^{p(\cdot),q(\cdot),r,s}_{fin}({\Bbb R}^n,A)}$ and $\|\cdot\|_{H^{p(\cdot),q(\cdot)}({\Bbb R}^n,A)}$ are equivalent quasinorms in $H^{p(\cdot),q(\cdot),r,s}_{fin}({\Bbb R}^n,A)$.
\item[(ii)]  There exists $s_0\in\Bbb N$ such that, for every $s\geq s_0$, $\|\cdot\|_{H^{p(\cdot),q(\cdot),\infty,s}_{fin,con}({\Bbb R}^n,A)}$ and $\|\cdot\|_{H^{p(\cdot),q(\cdot)}({\Bbb R}^n,A)}$ are equivalent quasinorms in $H^{p(\cdot),q(\cdot),\infty,s}_{fin,con}({\Bbb R}^n,A)$.
\end{enumerate}
\end{Th}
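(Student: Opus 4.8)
The plan is to follow the now-standard scheme of Meda--Sj\"ogren--Vallarino \cite{MSV} and Yang--Zhou \cite{YZ}, adapted to the anisotropic variable-exponent Lorentz setting, as was done in \cite{LYY} for constant-exponent Lorentz spaces. The inequality $\|f\|_{H^{p(\cdot),q(\cdot)}({\Bbb R}^n,A)}\le C\|f\|_{H^{p(\cdot),q(\cdot),r,s}_{fin}({\Bbb R}^n,A)}$ is immediate from Theorem \ref{Th1.2}(i) (and, in the continuous case of (ii), from the version of Theorem \ref{Th1.2}(i) for continuous atoms, which is proved the same way), since a finite atomic decomposition is in particular an admissible infinite one. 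So the whole content is the reverse inequality: there exists $C>0$ such that $\|f\|_{H^{p(\cdot),q(\cdot),r,s}_{fin}({\Bbb R}^n,A)}\le C\|f\|_{H^{p(\cdot),q(\cdot)}({\Bbb R}^n,A)}$ for every $f\in H^{p(\cdot),q(\cdot),r,s}_{fin}({\Bbb R}^n,A)$.

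First I would fix $f\in H^{p(\cdot),q(\cdot),r,s}_{fin}({\Bbb R}^n,A)$; being a finite linear combination of atoms, $f$ is bounded, compactly supported, and has vanishing moments up to order $s$. Apply the atomic decomposition from (the proof of) Theorem \ref{Th1.2}(ii): writing $\Omega_i=\{x:M_N(f)(x)>2^i\}$ for $i\in\Bbb Z$ and performing the anisotropic Calder\'on--Zygmund decomposition of $f$ at each height $2^i$, one obtains $f=\sum_{i\in\Bbb Z}\sum_j \lambda_{i,j}a_{i,j}$ with $a_{i,j}$ a $(p(\cdot),q(\cdot),\infty,s)$-atom (or an $(p(\cdot),q(\cdot),r,s)$-atom under the hypothesis $p(0)<q(0)$ of (i)) supported in a dilated cube $x_{i,j}+B_{\ell_{i,j}}\subset\Omega_i$, and with $\lambda_{i,j}\lesssim 2^i\|\chi_{x_{i,j}+B_{\ell_{i,j}}}\|_{{\mathcal L}^{p(\cdot),q(\cdot)}({\Bbb R}^n)}$, so that $\bigl\|\sum_{i,j}\lambda_{i,j}\|\chi_{x_{i,j}+B_{\ell_{i,j}}}\|^{-1}_{{\mathcal L}^{p(\cdot),q(\cdot)}}\chi_{x_{i,j}+B_{\ell_{i,j}}}\bigr\|_{{\mathcal L}^{p(\cdot),q(\cdot)}}\lesssim\|f\|_{H^{p(\cdot),q(\cdot)}({\Bbb R}^n,A)}$. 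Because $f$ is bounded with compact support, $M_N(f)$ decays and $\Omega_i=\emptyset$ for $i$ large; the key quantitative point (as in \cite[Lemma 4.?]{LYY}) is that $\sum_{i>i_0}\sum_j\lambda_{i,j}a_{i,j}$ converges uniformly (using the $L^\infty$ bounds on the pieces of the CZ decomposition together with $\sum_j\chi_{x_{i,j}+B_{\ell_{i,j}}}\le C$ and $2^i\to0$) to a function which is, up to a dimensional constant, a single $(p(\cdot),q(\cdot),\infty,s)$-atom; meanwhile $\sum_{i\le i_0}\sum_j$ has only finitely many nonzero terms once we also use that $f\in L^r$ with compact support forces the low-height part to terminate. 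Splitting and regrouping this way exhibits $f$ as a genuinely \emph{finite} sum of atoms whose coefficient-sum quasinorm is controlled by $\|f\|_{H^{p(\cdot),q(\cdot)}({\Bbb R}^n,A)}$, which is exactly the required estimate; the case $r=\infty$ in (ii) is handled by choosing the pieces to be continuous, which is possible by mollifying the CZ bump functions, as in \cite{MSV}.

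The main obstacle is the control of the tail $\sum_{i>i_0}\sum_j\lambda_{i,j}a_{i,j}$ as a single atom: one must verify the support, size, and moment conditions for the regrouped object, and the size bound $\|\cdot\|_r\le b^{\ell/r}\|\chi\|^{-1}_{{\mathcal L}^{p(\cdot),q(\cdot)}}$ is where the variable exponent genuinely enters, since the quasinorm $\|\chi_{x_0+B_k}\|_{{\mathcal L}^{p(\cdot),q(\cdot)}({\Bbb R}^n)}$ is \emph{not} a pure power of $b^k$ and one must use the log-H\"older hypotheses $p,q\in{\Bbb P}_0$ to compare $\|\chi_{x+B_\ell}\|_{{\mathcal L}^{p(\cdot),q(\cdot)}}$ at different scales and locations. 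Here the relevant estimates are: the computation of $\|\chi_{(0,b^k)}\|_{L^{q(\cdot)}(0,\infty)}$-type quantities via $(\chi_{x_0+B_k})^*=\chi_{(0,b^k)}$ noted after Theorem \ref{Th1.2}, and a Fefferman--Stein / maximal-function argument controlling $\sum_j\chi_{\Omega_i\text{-cubes}}$ by $M_{HL}(\chi_{\Omega_i})$, at which point Proposition \ref{Prop1.3} (for $r=1$ applied coordinatewise, or its scalar consequence \cite[Theorem 3.12]{EKS}) gives the needed boundedness on ${\mathcal L}^{p(\cdot),q(\cdot)}({\Bbb R}^n)$. The hypothesis $p(0)<q(0)$ in part (i) is used exactly to pass from $\infty$-atoms to finite-exponent $r$-atoms: it guarantees, via the embedding $\|f\|_{{\mathcal L}^{p(\cdot),q(\cdot)}}\lesssim\|f\|^{(1)}_{{\mathcal L}^{p(\cdot),q(\cdot)}}$ and a Kolmogorov-type argument on $\Omega_i\setminus\Omega_{i+1}$, that the $r$-normalization of the atoms is still summable against the coefficients, and one reads off the admissible range $r_0<r<\infty$ from the constants in \cite[Theorem 2.4]{EKS} together with the requirement $p,q\in{\Bbb P}_1$ after rescaling the exponents.
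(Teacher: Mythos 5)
Your overall strategy is the right one and matches the paper's: the easy inequality comes from Theorem \ref{Th1.2}(i), and for the converse one takes the infinite atomic decomposition of $f$ produced in the proof of Theorem \ref{Th1.2}(ii), splits it at a threshold height $2^{j_0}$, and regroups each half into finitely many atoms. But the way you assign the roles of the two halves is inverted, and as written the regrouping fails. First, for part (i) the atoms are $(p(\cdot),q(\cdot),r,s)$-atoms with $r<\infty$, so $f$ is in $L^r({\Bbb R}^n)$ with compact support but is \emph{not} bounded; hence $M_N(f)$ need not be bounded, $\Omega_i$ need not be empty for large $i$, and the high-height part $\sum_{i>i_0}\sum_j$ does not terminate. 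Second, the part that collapses into a single $(p(\cdot),q(\cdot),\infty,s)$-atom is the \emph{low}-height part $h=\sum_{i\le i_0}\sum_j\lambda_{i,j}a_{i,j}$: it is an infinite sum, but pointwise $|h|\le C\sum_{i\le i_0}2^i\le C2^{i_0}\le C\|\chi_{B_{m_0+4\omega}}\|_{{\mathcal L}^{p(\cdot),q(\cdot)}({\Bbb R}^n)}^{-1}$ by the bounded-overlap property, and its support lies in $B_{m_0+4\omega}$ because both $f$ and the high-height part do. Your claim that this part "has only finitely many nonzero terms" is false (the Whitney covering of each $\Omega_i$ is already countably infinite, and the sum over $i\le i_0$ runs over all $i\to-\infty$), and your claim that the \emph{high}-height tail converges uniformly "using $2^i\to 0$" cannot be right since $2^i\to\infty$ there; a uniform bound on $\sum_{i>i_0}$ would require summing $\sum_{i>i_0}2^i$, which diverges. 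The correct treatment of the high-height part ${\frak l}=\sum_{i>i_0}\sum_j$ is: all its pieces are supported in the fixed ball $B_{m_0+4\omega}$, the series converges in $L^r({\Bbb R}^n)$, and for $J$ large the remainder ${\frak l}-{\frak l}_J$ beyond a finite partial sum ${\frak l}_J$ is a single $(p(\cdot),q(\cdot),r,s)$-atom; then $f=h+{\frak l}_J+({\frak l}-{\frak l}_J)$ is the desired finite decomposition.

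The second, and equally essential, missing ingredient is the estimate that makes the threshold work: for $f$ supported in $B_{m_0}$ one needs
\begin{equation*}
M_N(f)(x)\leq C\,\inf_{u\in B_{m_0}}M_N(f)(u)\leq C_1\,\|\chi_{B_{m_0}}\|_{{\mathcal L}^{p(\cdot),q(\cdot)}({\Bbb R}^n)}^{-1},\qquad x\notin B_{m_0+4\omega},
\end{equation*}
where the second inequality uses $\|f\|_{H^{p(\cdot),q(\cdot)}({\Bbb R}^n,A)}=1$ and the lattice property of the Lorentz quasinorm. This is what lets one choose $j_0$ with $2^{j_0}\sim\|\chi_{B_{m_0+4\omega}}\|_{{\mathcal L}^{p(\cdot),q(\cdot)}({\Bbb R}^n)}^{-1}$ so that $\Omega_j\subset B_{m_0+4\omega}$ for all $j>j_0$; without it there is no fixed ball containing the supports of the high-height atoms and no normalization for the single atom $h$. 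Your proposal never states this estimate, and it is precisely where the variable-exponent Lorentz quasinorm of $\chi_{B_{m_0}}$ (rather than a power of $b^{m_0}$) enters. Until these two points are repaired, the reverse inequality $\|f\|_{H^{p(\cdot),q(\cdot),r,s}_{fin}({\Bbb R}^n,A)}\leq C\|f\|_{H^{p(\cdot),q(\cdot)}({\Bbb R}^n,A)}$ is not established.
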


As application of Theorem \ref{Th1.3} we prove that convolutional type Calder\'on-Zygmund singular integrals are bounded in $H^{p(\cdot),q(\cdot)}({\Bbb R}^n,A)$. A precise definition of the singular integral that we consider can be found in Section 6.

\begin{Th}\label{Th1.4}
Let $p,q\in {\Bbb P}_0$. Assume that $p(0)<q(0)$. If $T$ is a convolutional type Calder\'on-Zygmund singular integral of order $m\in\Bbb N$, $m\geq s_0$ where $s_0$ is as in Theorem \ref{Th1.3}, {\rm (i)}, then
\begin{enumerate}
\item[(i)] $T$ is bounded from $H^{p(\cdot),q(\cdot)}({\Bbb R}^n,A)$ into  ${\mathcal L}^{p(\cdot),q(\cdot)}({\Bbb R}^n)$.
\item[(ii)]  $T$ is bounded from $H^{p(\cdot),q(\cdot)}({\Bbb R}^n,A)$ into  itself.
\end{enumerate}
\end{Th}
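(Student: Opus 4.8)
The plan is to combine the finite atomic characterization of Theorem~\ref{Th1.3}~(i) with the classical Calderón--Zygmund estimates for $T$ and the vector-valued maximal inequality of Proposition~\ref{Prop1.3}. Fix $s_0$ as in Theorem~\ref{Th1.3}~(i), let $r_0>1$ be the corresponding exponent, and pick $r\in(r_0,\infty)$ and $s\in\mathbb{N}$ with $s_0\le s\le m$. Since $p(0)<q(0)$, Theorem~\ref{Th1.3}~(i) tells us that $H^{p(\cdot),q(\cdot),r,s}_{\mathrm{fin}}(\mathbb{R}^n,A)$ is a dense subspace of $H^{p(\cdot),q(\cdot)}(\mathbb{R}^n,A)$ on which the finite atomic quasinorm is equivalent to $\|\cdot\|_{H^{p(\cdot),q(\cdot)}(\mathbb{R}^n,A)}$. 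Hence it suffices to prove the estimates in (i) and (ii) for $f=\sum_{j=1}^{k}\lambda_j a_j$ a finite linear combination of $(p(\cdot),q(\cdot),r,s)$-atoms, and then to extend by density: $T$ is $L^2$-bounded, hence $L^r$-bounded by Calderón--Zygmund theory, and convergence in $H^{p(\cdot),q(\cdot)}(\mathbb{R}^n,A)$ implies convergence in $S'(\mathbb{R}^n)$, so $Tf=\sum_j\lambda_j Ta_j$ is well defined and consistent with the densely-defined operator.

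The core is a molecular estimate for a single atom. Let $a$ be a $(p(\cdot),q(\cdot),r,s)$-atom associated with $x_0\in\mathbb{R}^n$ and $\ell\in\mathbb{Z}$, so that $\operatorname{supp}a\subseteq x_0+B_\ell$, $\|a\|_r\le b^{\ell/r}\|\chi_{x_0+B_\ell}\|^{-1}_{{\mathcal L}^{p(\cdot),q(\cdot)}(\mathbb{R}^n)}$, and $a$ has vanishing moments up to order $s\ge m$. I would choose an integer $\tau=\tau(\omega,H)$ adapted to the size/regularity estimates of the kernel of $T$, and split $Ta=Ta\,\chi_{x_0+B_{\ell+\tau}}+\sum_{i\ge 1}Ta\,\chi_{(x_0+B_{\ell+\tau+i})\setminus(x_0+B_{\ell+\tau+i-1})}$. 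For the local term, $L^r$-boundedness of $T$ gives $\|Ta\,\chi_{x_0+B_{\ell+\tau}}\|_r\lesssim\|a\|_r\lesssim b^{(\ell+\tau)/r}\|\chi_{x_0+B_{\ell+\tau}}\|^{-1}_{{\mathcal L}^{p(\cdot),q(\cdot)}(\mathbb{R}^n)}$, so up to a fixed constant this piece is an $L^r$-normalized block over $x_0+B_{\ell+\tau}$. For the $i$-th annular piece, I would use the vanishing moments of $a$ together with the order-$m$ regularity of the kernel: pairing the kernel with the order-$s$ Taylor remainder at $x-x_0$ and invoking the anisotropic mean-value/size estimates yields a pointwise bound of the form $|Ta(x)|\lesssim \delta^{\,i}\,b^{-(\ell+\tau+i)}b^{\ell}\|a\|_{L^1}\lesssim \delta^{\,i}\,\|\chi_{x_0+B_\ell}\|^{-1}_{{\mathcal L}^{p(\cdot),q(\cdot)}(\mathbb{R}^n)}$ on that annulus, where $\delta=\delta(m)\in(0,1)$ can be taken as small as we wish by enlarging $m$. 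Thus $Ta$ is, up to a fixed constant, a $(p(\cdot),q(\cdot),r,s)$-molecule centered at $x_0+B_\ell$.

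It then remains to sum over $j$. For part~(i) one estimates $\|Tf\|_{{\mathcal L}^{p(\cdot),q(\cdot)}(\mathbb{R}^n)}$ directly; for part~(ii) one first uses sublinearity of the grandmaximal operator, $M_N(Tf)\le\sum_j\lambda_j M_N(Ta_j)$, and observes that $M_N$ of a molecule obeys the same local-plus-tail pointwise structure (this is part of the molecular machinery, and alternatively follows from size estimates for $Ta$ and its derivatives coming from the order-$m$ regularity of the kernel). In both cases one is reduced to bounding, in ${\mathcal L}^{p(\cdot),q(\cdot)}(\mathbb{R}^n)$, a sum $\sum_j\lambda_j\bigl(g_j+\sum_{i\ge1}\delta^{\,i}\|\chi_{Q_j}\|^{-1}_{{\mathcal L}^{p(\cdot),q(\cdot)}(\mathbb{R}^n)}\chi_{x_j+B_{\ell_j+\tau+i}}\bigr)$, where $Q_j=x_j+B_{\ell_j}$ and $g_j$ is supported in $x_j+B_{\ell_j+\tau}$ with $\|g_j\|_r\lesssim b^{(\ell_j+\tau)/r}\|\chi_{x_j+B_{\ell_j+\tau}}\|^{-1}_{{\mathcal L}^{p(\cdot),q(\cdot)}(\mathbb{R}^n)}$. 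After a power trick reducing to exponents $\ge1$ so the (quasi)triangle inequality of ${\mathcal L}^{p(\cdot),q(\cdot)}(\mathbb{R}^n)$ applies, the local pieces are controlled by the auxiliary lemma (proved via Proposition~\ref{Prop1.3} using the pointwise bound $|g_j|\lesssim\|\chi_{Q_j}\|^{-1}_{{\mathcal L}^{p(\cdot),q(\cdot)}(\mathbb{R}^n)}M_{HL}(\chi_{Q_j})^{1/r_1}$ for some $1<r_1<r$), which gives $\left\|\sum_j\lambda_j g_j\right\|_{{\mathcal L}^{p(\cdot),q(\cdot)}(\mathbb{R}^n)}\lesssim\left\|\sum_j\lambda_j\|\chi_{Q_j}\|^{-1}_{{\mathcal L}^{p(\cdot),q(\cdot)}(\mathbb{R}^n)}\chi_{Q_j}\right\|_{{\mathcal L}^{p(\cdot),q(\cdot)}(\mathbb{R}^n)}$; and for each fixed $i$ the tail pieces are treated the same way, using that replacing $Q_j$ by its dilate $x_j+B_{\ell_j+\tau+i}$ enlarges $\|\chi\,\|_{{\mathcal L}^{p(\cdot),q(\cdot)}(\mathbb{R}^n)}$ by at most a factor $Cb^{\tau+i}$, which is absorbed by $\delta^{\,i}$ once $m$ is large enough. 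Summing the geometric series in $i$ and taking the infimum over atomic decompositions, Theorem~\ref{Th1.3}~(i) yields $\|Tf\|_{{\mathcal L}^{p(\cdot),q(\cdot)}(\mathbb{R}^n)}+\|M_N(Tf)\|_{{\mathcal L}^{p(\cdot),q(\cdot)}(\mathbb{R}^n)}\lesssim\|f\|_{H^{p(\cdot),q(\cdot)}(\mathbb{R}^n,A)}$, which is (i) and (ii).

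The main obstacle I anticipate is the tail estimate: one must show that the geometric decay $\delta^{\,i}$ produced by the order-$m$ smoothness of the kernel beats the growth of $\|\chi_{x_j+B_{\ell_j+\tau+i}}\|_{{\mathcal L}^{p(\cdot),q(\cdot)}(\mathbb{R}^n)}$ as $i\to\infty$, uniformly in $j$ and in the location of the centers. This is exactly what forces $m\ge s_0$, and it relies on the log-H\"older continuity of $p,q$ and the homogeneity of the anisotropic balls to control the ratio $\|\chi_{x_0+B_{\ell+i}}\|_{{\mathcal L}^{p(\cdot),q(\cdot)}(\mathbb{R}^n)}/\|\chi_{x_0+B_{\ell}}\|_{{\mathcal L}^{p(\cdot),q(\cdot)}(\mathbb{R}^n)}$ by a controlled power of $b^i$. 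A secondary point requiring care is the density/extension argument, namely verifying that the a priori densely-defined operator $T$ has a well-defined bounded extension and that $Tf=\sum_j\lambda_j Ta_j$ holds in $S'(\mathbb{R}^n)$ for finite atomic combinations.
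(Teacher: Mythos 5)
Your overall architecture (atom $\to$ molecule $\to$ sum the local and tail pieces) is the classical one, and your treatment of the tails is sound in spirit: the annular pieces are $L^\infty$-normalized with geometric decay, and they can indeed be summed by dominating $\chi_{x_j+B_{\ell_j+\tau+i}}$ pointwise by $b^{i}M_{HL}(\chi_{x_j+B_{\ell_j}})$, taking a small power so that Proposition \ref{Prop1.3} applies, and absorbing the resulting $b^{Ci}$ into $\delta^i$. But there is a genuine gap in your treatment of the local pieces $g_j=Ta_j\,\chi_{x_j+B_{\ell_j+\tau}}$. These are only $L^r$-normalized blocks (since $T$ is not bounded on $L^\infty$, this cannot be upgraded even if you start from $\infty$-atoms), and the pointwise bound you invoke, $|g_j|\lesssim\|\chi_{Q_j}\|^{-1}_{{\mathcal L}^{p(\cdot),q(\cdot)}({\Bbb R}^n)}M_{HL}(\chi_{Q_j})^{1/r_1}$, is false: on $Q_j$ the right-hand side is comparable to $\|\chi_{Q_j}\|^{-1}_{{\mathcal L}^{p(\cdot),q(\cdot)}({\Bbb R}^n)}$, so the bound would force $g_j$ to be an $L^\infty$-normalized atom, whereas an $L^r$-block can be unbounded. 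The standard repair, $|g_j|\le (M_{HL}(|g_j|^{r_1}))^{1/r_1}$ with $1<r_1<r$, does not close either, because you need to control the $\ell^1$ sum over $j$ while the vector-valued inequality of Proposition \ref{Prop1.3} only controls $\ell^{r_1}$ sums with $r_1>1$, and $\|\cdot\|_{\ell^1}$ is not dominated by $\|\cdot\|_{\ell^{r_1}}$. This is exactly the obstruction the authors point out in their closing Remark: for these spaces the atomic quasinorm is not $\sum_j\lambda_j$, so the naive summation over atoms fails.

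The paper resolves this by a genuinely different mechanism. Instead of summing molecules directly in ${\mathcal L}^{p(\cdot),q(\cdot)}({\Bbb R}^n)$, it sets up a Rubio de Francia extrapolation scheme (Proposition \ref{Prop6.2}): using the K\"othe dual $({\mathcal L}^{p(\cdot)/p_0,q(\cdot)/p_0})'={\mathcal L}^{(p(\cdot)/p_0)',(q(\cdot)/p_0)'}$ and the iteration algorithm $R$, the desired bound is reduced to \emph{uniform} weighted estimates $\|Ta\|_{L^{p_0}(\nu)}\lesssim \nu(x_0+B_{\ell_0})^{1/p_0}\|\chi_{x_0+B_{\ell_0}}\|^{-1}_{p(\cdot),q(\cdot)}$ (for (i)) and $\|Ta\|_{H^{p_0}({\Bbb R}^n,\nu,A)}\lesssim \nu(x_0+B_{\ell_0})^{1/p_0}\|\chi_{x_0+B_{\ell_0}}\|^{-1}_{p(\cdot),q(\cdot)}$ (for (ii)) over all $\nu\in{\mathcal A}_1\cap RH_{(r/p_0)'}$, with constants depending only on the weight characteristics. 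In the weighted world the local $L^r$-piece is handled by H\"older against the reverse-H\"older weight together with the Kolmogorov inequality (Proposition \ref{Prop6.1} and Lemmas \ref{lem4.6}--\ref{lem4.9}, \ref{Lem6.1}), which is precisely the tool that replaces your missing pointwise bound. For part (ii) the tail of $M_\varphi^0(Ta)$ is not obtained from abstract ``molecular machinery'' but from Lemma \ref{Lem6.2}, which shows that $\varphi_k* T$ is a Calder\'on--Zygmund operator of order $m$ uniformly in $k$; this is a nontrivial statement you would also need to prove. To repair your argument you would have to either import this weighted extrapolation framework or prove a variable-exponent Lorentz molecular decomposition theorem from scratch, and the latter runs into the same $\ell^1$ versus $\ell^{r_1}$ summation problem.
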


Our results, as far as we know, are new even in the isotropic case, that is, for the Hardy-Lorentz $H^{p(\cdot),q(\cdot)}({\Bbb R}^n)$ of variable exponents, extending results in \cite{AT1}.

The paper is organized as follows. A proof of Theorem \ref{Th1.1} is presented in Section 2 where we prove the main properties of variable exponent anisotropic Hardy-Lorentz spaces. Next, in Section 3, Calder\'on-Zygmund decompositions in our setting are investigated. The proof of Theorem \ref{Th1.2}, which is presented distinguishing the cases $r=\infty$ and $r<\infty$, is included in Section 4. Finite atomic decompositions are considered in Section 5 where Theorem \ref{Th1.3} is proved. In Section 6 we define the singular integral that we consider and we prove Theorem \ref{Th1.4} after showing some auxiliary results.

Throughout this paper $C$ always denotes a positive constant that can change its value from a line to another one.

\quad \\
\textbf{Acknowledgements}. We would like to offer thanks to the referee for his suggestions which have improved quite a lot this paper.

%\newpage
%%%%%%%%%%%%%%%%%%%%%%%%%%%%%%%%%%%%%%%%%%%%%%%%%%%%%%%%%%%%%%%%%%%%%%%%%
\section{Maximal characterizations (proof of Theorem \ref{Th1.1})} \label{sec:2}
%%%%%%%%%%%%%%%%%%%%%%%%%%%%%%%%%%%%%%%%%%%%%%%%%%%%%%%%%%%%%%%%%%%%%%%%%
>From now on, for simplicity, we write $\|\cdot\|_{p(\cdot),q(\cdot)}$ and $\|\cdot\|_{q(\cdot)}$ instead of $\|\cdot\|_{{\mathcal L}^{p(\cdot),q(\cdot)}({\Bbb R}^n)}$ and $\|\cdot\|_{L^{q(\cdot)}(0,\infty)}$, respectively.

First of all we establish very useful boundedness results for the anisotropic maximal function $M_{HL}$ on variable exponent Lorentz spaces.
\begin{Prop}\label{propM_{HL}}
 Assume that $p,q\in{\Bbb P}_1(0,\infty)$. Then, the maximal function $M_{HL}$ is bounded from ${\mathcal L}^{p(\cdot),q(\cdot)}({\Bbb R}^n)$ into itself.
\end{Prop}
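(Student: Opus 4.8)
The plan is to reduce the statement to the classical estimate already recorded in \cite[Theorem 3.12]{EKS}, following the scheme of its proof. Since ${\mathcal L}^{p(\cdot),q(\cdot)}({\Bbb R}^n)$ depends on a function only through its decreasing rearrangement, it suffices to establish the rearrangement inequality
$$(M_{HL}f)^*(t)\le C\,f^{**}(t),\qquad t\in(0,\infty),$$
for every $f\in{\mathcal L}^{p(\cdot),q(\cdot)}({\Bbb R}^n)$ (there is nothing to prove otherwise), and then to combine it with \cite[Theorem 2.4]{EKS}. The hypotheses of the latter hold here: $p\in{\Bbb P}_0$, $q\in{\Bbb P}_1$, and $p\in{\Bbb P}_1$ forces $p(0),p(\infty)\ge p_-((0,\infty))>1$.

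First I would record the two elementary bounds for $M_{HL}$. Since $|y+B_k|=b^k$, one has $M_{HL}h(x)\le\|h\|_\infty$ for every $x$, so $\|M_{HL}h\|_\infty\le\|h\|_\infty$. On the other hand, $({\Bbb R}^n,\rho_A,|\cdot|)$ is a space of homogeneous type and $M_{HL}$ is the associated uncentered Hardy--Littlewood maximal operator, hence of weak type $(1,1)$ by a Vitali covering argument (see \cite{Bow1} and \cite{CW}); as usual this yields $(M_{HL}g)^*(t)\le C_1\|g\|_1/t$ for all $t>0$.

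For the rearrangement inequality, fix $t>0$ and split $f=g+h$ with $g=f\chi_{\{|f|>f^*(t)\}}$ and $h=f-g$. Then $\|h\|_\infty\le f^*(t)$, so $(M_{HL}h)^*(t)\le f^*(t)$; and, by the Hardy--Littlewood integral inequality together with $\mu_f(f^*(t))\le t$,
$$\|g\|_1=\int_{\{|f|>f^*(t)\}}|f|\le\int_0^{\mu_f(f^*(t))}f^*(s)\,ds\le\int_0^t f^*(s)\,ds=t\,f^{**}(t).$$
By sublinearity of $M_{HL}$ and the subadditivity $(u+v)^*(2t)\le u^*(t)+v^*(t)$ of the rearrangement,
$$(M_{HL}f)^*(2t)\le (M_{HL}g)^*(t)+(M_{HL}h)^*(t)\le\frac{C_1}{t}\|g\|_1+f^*(t)\le (C_1+1)\,f^{**}(t),$$
and since $f^{**}(t)\le 2f^{**}(2t)$ this gives the claimed inequality. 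Finally, as the weight $t^{1/p(t)-1/q(t)}$ is fixed and positive and $\|\cdot\|_{q(\cdot)}$ is monotone and positively homogeneous,
$$\|M_{HL}f\|_{p(\cdot),q(\cdot)}=\|t^{1/p(t)-1/q(t)}(M_{HL}f)^*(t)\|_{q(\cdot)}\le C\|t^{1/p(t)-1/q(t)}f^{**}(t)\|_{q(\cdot)}=C\,\|f\|^{(1)}_{p(\cdot),q(\cdot)},$$
and \cite[Theorem 2.4]{EKS} bounds the last quantity by $C\|f\|_{p(\cdot),q(\cdot)}$.

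I expect the only genuinely non-routine ingredient to be the rearrangement inequality for $M_{HL}$, and within it the weak type $(1,1)$ bound for the \emph{uncentered} anisotropic maximal operator; this is nonetheless standard, being a consequence of the homogeneous-type structure of $({\Bbb R}^n,\rho_A,|\cdot|)$ with the dilated ellipsoids $B_k$ playing the role of balls. Everything else --- the $L^\infty$ bound, the rearrangement manipulations, and the reduction to \cite[Theorem 2.4]{EKS} --- mirrors the classical argument behind \cite[Theorem 3.12]{EKS}.
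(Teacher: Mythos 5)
Your proposal is correct and follows essentially the same route as the paper: the $L^\infty$ bound and the anisotropic weak type $(1,1)$ estimate from \cite{Bow1} yield $(M_{HL}f)^*\le Cf^{**}$ exactly as in \cite[Theorem 3.8]{BS} (you simply write out the Calder\'on splitting that the paper cites), and the passage from the $f^{**}$-norm back to the $f^*$-norm is the same Hardy-type inequality of \cite{EKS} (the paper invokes it as Theorem 2.2 with $\alpha=1/p-1/q$, $\nu=0$; you invoke the equivalent statement recorded as Theorem 2.4). No gaps.
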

\begin{proof}
This property can be proved as \cite[Theorem 3.12]{EKS}. Indeed, it is clear that $\|M_{HL}f\|_{L^{\infty}({\Bbb R}^n)}\leq \|f\|_{L^{\infty}({\Bbb R}^n)}$, $f\in{L^{\infty}({\Bbb R}^n)}$. On the other hand, according to \cite[p. 14]{Bow1} $M_{HL}$ is bounded from ${L^1({\Bbb R}^n)}$ into ${L^{1,\infty}({\Bbb R}^n)}$. Then, by proceeding as in the proof of \cite[Theorem 3.8, p. 122]{BS} we deduce that, for some $C>0$, $(M_{HL}f)^*\leq Cf^{**}$.
Since $p,q\in \mathbb{P}_1(0,\infty )$, by taking $\alpha =1/p-1/q$ and $\nu =0$ in \cite[Theorem 2.2]{EKS} we can write
\begin{align*}
\|M_{HL}(f)\|_{p(\cdot ), q(\cdot )}&\leq C\|t^{1/p(\cdot )-1/q(\cdot )}f^{**}\|_{q(\cdot )}\leq C\|t^{1/p(\cdot )-1/q(\cdot )}f^{*}\|_{q(\cdot )}\\
&=C\|f\|_{p(\cdot ), q(\cdot )}.
\end{align*}
Thus, the proof of this proposition is finished.
\end{proof}

The following vectorial boundedness result for $M_{HL}$ appears as Proposition 1.3 in the introduction.

\begin{Prop}\label{Prop vectorial}
Assume that $p,q\in{\Bbb P}_1$. For every $r\in(1,\infty)$ there exists $C>0$ such that
\begin{equation}{\label{F1}}\left\|\left(\sum_{j\in\Bbb N}(M_{HL}(f_j))^r\right)^{1/r}\right\|_{p(\cdot),q(\cdot)}\leq C\left\|\left(\sum_{j\in\Bbb N}|f_j|^r\right)^{1/r}\right\|_{p(\cdot),q(\cdot)},
\end{equation}
for each sequence $(f_j)_{j\in\Bbb N}$ of functions in $L^1_{loc}({\Bbb R}^n)$.
\end{Prop}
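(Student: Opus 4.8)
The plan is to derive the vector-valued inequality \eqref{F1} from the scalar boundedness of $M_{HL}$ on ${\mathcal L}^{p(\cdot),q(\cdot)}({\Bbb R}^n)$ (Proposition \ref{propM_{HL}}) by means of the Rubio de Francia extrapolation machinery in the form developed for Banach function spaces (see \cite{CFMP}, \cite{CrW2}, \cite{GK}). The decisive structural fact that makes this work is that, under the hypothesis $p,q\in{\Bbb P}_1$, the space ${\mathcal L}^{p(\cdot),q(\cdot)}({\Bbb R}^n)$ is a Banach function space in the sense of \cite{BS} and its K\"othe dual (associate space) is identified explicitly as ${\mathcal L}^{p'(\cdot),q'(\cdot)}({\Bbb R}^n)$, by \cite[Lemma 2.7 and Theorem 2.8]{EKS}. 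Thus both ${\mathcal L}^{p(\cdot),q(\cdot)}$ and its associate space are spaces on which $M_{HL}$ is bounded (apply Proposition \ref{propM_{HL}} to the pairs $(p,q)$ and $(p',q')$, noting $p',q'\in{\Bbb P}_1$ whenever $p,q\in{\Bbb P}_1$ and $p(0),p(\infty),q(0),q(\infty)>1$).

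The steps, in order, are as follows. First, I would recall the abstract extrapolation theorem for Banach function spaces $X$: if $M_{HL}$ is bounded on $X$ and on its associate space $X'$, then for every $r\in(1,\infty)$ the vector-valued inequality $\big\|(\sum_j (M_{HL}f_j)^r)^{1/r}\big\|_X\le C\big\|(\sum_j|f_j|^r)^{1/r}\big\|_X$ holds. This is the anisotropic analogue of the classical result; it is proved via the Rubio de Francia iteration algorithm, which only uses the Calder\'on--Zygmund/weak-$(1,1)$ structure of $M_{HL}$ recorded in \cite[p. 14]{Bow1} and the $A_p$-weight theory in the space of homogeneous type $({\Bbb R}^n,\rho_A,|\cdot|)$. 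Second, I would verify the two hypotheses of that theorem in our situation: (a) ${\mathcal L}^{p(\cdot),q(\cdot)}({\Bbb R}^n)$ is a Banach function space with associate space ${\mathcal L}^{p'(\cdot),q'(\cdot)}({\Bbb R}^n)$, which is \cite[Lemma 2.7 and Theorem 2.8]{EKS}; (b) $M_{HL}$ is bounded on both ${\mathcal L}^{p(\cdot),q(\cdot)}({\Bbb R}^n)$ and ${\mathcal L}^{p'(\cdot),q'(\cdot)}({\Bbb R}^n)$, which follows from Proposition \ref{propM_{HL}} once one checks $p',q'\in{\Bbb P}_1$. Third, I would invoke the abstract theorem with $X={\mathcal L}^{p(\cdot),q(\cdot)}({\Bbb R}^n)$ to obtain \eqref{F1} directly.

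The main obstacle — and the only point requiring genuine care — is transporting the Rubio de Francia extrapolation theorem from the Euclidean setting to the anisotropic space of homogeneous type $({\Bbb R}^n,\rho_A,|\cdot|)$, and making sure the version we need is for general Banach function spaces (not just $L^{p}$ or $L^{p(\cdot)}$). Concretely, one must know that the Muckenhoupt $A_1$ theory relative to the quasinorm balls $B_k=A^k\Delta$ works as usual: the Rubio de Francia iteration operator $R h=\sum_{m\ge 0}(2\|M_{HL}\|_{X'\to X'})^{-m} M_{HL}^{m} h$ produces an $A_1$-weight with constant controlled by $\|M_{HL}\|_{X'\to X'}$, and for such weights one has the weighted vector-valued inequality for $M_{HL}$ with constant depending only on the $A_1$-constant. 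These facts hold in any space of homogeneous type and in particular here, since $M_{HL}$ is of weak type $(1,1)$ and bounded on $L^\infty$. Once this weighted ingredient is in place, the pairing argument against $X'={\mathcal L}^{p'(\cdot),q'(\cdot)}({\Bbb R}^n)$ using the K\"othe duality from \cite[Theorem 2.8]{EKS} closes the proof with no further difficulty; the remaining manipulations (H\"older's inequality in the duality pairing, summing the geometric series, absorbing constants) are routine and I would not carry them out in detail.
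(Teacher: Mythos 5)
Your overall strategy coincides with the paper's: reduce \eqref{F1} to the scalar boundedness of $M_{HL}$ (Proposition \ref{propM_{HL}}) via Rubio de Francia extrapolation over the Muckenhoupt basis of anisotropic balls, using the K\"othe duality of the variable Lorentz space from \cite{EKS}. But there is a genuine gap in the step you declare routine. You pair $F=\bigl(\sum_j(M_{HL}f_j)^r\bigr)^{1/r}$ directly against $h$ in the associate space $X'={\mathcal L}^{p'(\cdot),q'(\cdot)}({\Bbb R}^n)$ and then replace $h$ by the ${\mathcal A}_1$-weight $Rh$; to close the argument you would need $\|F\|_{L^1(\mathbb{R}^n,w)}\leq C\|G\|_{L^1(\mathbb{R}^n,w)}$ for $w\in{\mathcal A}_1({\Bbb R}^n,A)$, where $G=\bigl(\sum_j|f_j|^r\bigr)^{1/r}$. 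That inequality is false: already for a single function and $w\equiv 1$ the maximal operator is only of weak type $(1,1)$. The weighted vector-valued inequality holds in $L^{p_0}(w)$ only for $p_0>1$, and consequently the extrapolation theorem for a Banach function space $X$ requires $M_{HL}$ to be bounded on the associate space of a suitable \emph{power} of $X$, not on $X'$ itself; ``$M_{HL}$ bounded on $X$ and on $X'$'' plus the naive duality pairing does not by itself yield \eqref{F1}.

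The repair is exactly what the paper does. Choose $\beta\in(0,1)$ with $\beta p,\beta q\in{\Bbb P}_1$ (equivalently $p_0=1/\beta>1$ small enough), identify $({\mathcal L}^{p(\cdot),q(\cdot)}({\Bbb R}^n))^{\beta}={\mathcal L}^{\beta p(\cdot),\beta q(\cdot)}({\Bbb R}^n)$ via \cite[Lemma 2.3]{CrW}, write $\|F\|_{p(\cdot),q(\cdot)}^{p_0}=\||F|^{p_0}\|_{\beta p(\cdot),\beta q(\cdot)}$, and pair against $h\in{\mathcal L}^{(\beta p(\cdot))',(\beta q(\cdot))'}({\Bbb R}^n)=({\mathcal L}^{\beta p(\cdot),\beta q(\cdot)}({\Bbb R}^n))'$. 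The weighted input is then the $L^{p_0}(w)$, $w\in{\mathcal A}_1({\Bbb R}^n,A)$, vector-valued inequality with $p_0>1$, which does hold, and the hypothesis to verify is that $M_{HL}$ is bounded on ${\mathcal L}^{(\beta p(\cdot))',(\beta q(\cdot))'}({\Bbb R}^n)$, which follows from Proposition \ref{propM_{HL}} because $(\beta p)',(\beta q)'\in{\Bbb P}_1$. This is precisely the content of \cite[Corollary 4.8 and Remark 4.9]{CMP}, which also requires (and the paper records) that the anisotropic balls form a Muckenhoupt basis \cite[Proposition 2.6 (ii)]{BLYZ}. With these adjustments your argument becomes the paper's proof.
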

\begin{proof}
According to \cite[Proposition 2.6, (ii)]{BLYZ} the family of anisotropic balls $\{x+B_k\}_{x\in \mathbb{R}^n,\,k\in \mathbb{Z}}$ constitutes a Muckenhoupt basis in $\mathbb{R}^n$. For every $r>0$, we define the $r$-power of the space ${\mathcal L}^{p(\cdot),q(\cdot)}({\Bbb R}^n)$, $({\mathcal L}^{p(\cdot),q(\cdot)}({\Bbb R}^n))^r$, as follows
$$
({\mathcal L}^{p(\cdot),q(\cdot)}({\Bbb R}^n))^r=\{f\,\,\,\mbox{measurable in $\mathbb{R}^n$}\,:|f|^r\in {\mathcal L}^{p(\cdot),q(\cdot)}({\Bbb R}^n)\},
$$
(see \cite[p. 67]{CMP}). By using \cite[Lemma 2.3]{CrW} we deduce that, for every $r>0$, $({\mathcal L}^{p(\cdot),q(\cdot)}({\Bbb R}^n))^r={\mathcal L}^{rp(\cdot),rq(\cdot)}({\Bbb R}^n)$.

We choose $\beta\in (0,1)$ such that $\beta p,\beta q\in \mathbb{P}_1$. According to \cite[Lemma 2.7]{EKS},  $({\mathcal L}^{\beta p(\cdot),\beta q(\cdot)}({\Bbb R}^n))^*=({\mathcal L}^{\beta p(\cdot),\beta q(\cdot)}({\Bbb R}^n))'={\mathcal L}^{(\beta p(\cdot))',(\beta q(\cdot))'}({\Bbb R}^n)$, where the first space represents the associate dual space of ${\mathcal L}^{\beta p(\cdot),\beta q(\cdot)}({\Bbb R}^n)$ in the K\"othe sense (see \cite[p. 25]{LT}). Since $\beta p,\beta q\in \mathbb{P}_1$, Proposition \ref{propM_{HL}} implies that $M_{HL}$ is bounded from ${\mathcal L}^{(\beta p(\cdot))',(\beta q(\cdot))'}({\Bbb R}^n)$ into itself. According to \cite[Corollary 4.8 and Remark 4.9]{CMP} we conclude that (\ref{F1}) holds for every $r\in (1,\infty)$.
\end{proof}

As in \cite[p. 44]{Bow1} we consider the following maximal functions that will be useful in the sequel. If $K\in\Bbb Z$ and $N,L\in\Bbb N$ we define, for every $f\in S'({\Bbb R}^n)$,

$$M_{\varphi}^{0,K,L}(f)(x)=\sup_{k\in\Bbb Z,k\leq K}|(f*{\varphi}_k)(x)|\max(1,\rho(A^{-K}x))^{-L}(1+b^{-k-K})^{-L},\;\;x\in{\Bbb R}^n,$$

$$M_{\varphi}^{K,L}(f)(x)=\sup_{k\in\Bbb Z,k\leq K}\sup_{y\in x+B_k}|(f*{\varphi}_k)(y)|\max(1,\rho(A^{-K}y))^{-L}(1+b^{-k-K})^{-L},\;\;x\in{\Bbb R}^n,$$

$$T_{\varphi}^{N,K,L}(f)(x)=\sup_{k\in\Bbb Z,k\leq K}\sup_{y\in{\Bbb R^n}}{{|(f*{\varphi}_k)(y)|}\over{\max(1,\rho(A^{-k}(x-y)))^N}}{{(1+b^{-k-K})^{-L}}\over{\max(1,\rho(A^{-K}y))^L}},\;\;x\in{\Bbb R}^n,$$

$$M_N^{0,K,L}(f)=\sup_{\varphi\in S_N}M_{\varphi}^{0,K,L}(f),$$
and
$$M_N^{K,L}(f)=\sup_{\varphi\in S_N}M_{\varphi}^{K,L}(f).$$

Now we are going to establish some properties we will need later.

\begin{Lem}\label{lem2.1}
Let $K\in\Bbb Z$, $N,L\in\Bbb N$, $r>0$ and $\varphi\in S({\Bbb R}^n)$. Then, there exists a constant $C>0$ which does not depend neither on $K,L,N,r$ nor $\varphi$ such that, for every $f\in S'({\Bbb R}^n)$
$$\left(T_{\varphi}^{N,K,L}(f)(x)\right)^r\leq C M_{HL}\left(\left(M_{\varphi}^{K,L}(f)\right)^r\right)(x),\;\;\;x\in{\Bbb R}^n.$$
\end{Lem}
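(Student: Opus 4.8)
The inequality to establish,
$$\left(T_{\varphi}^{N,K,L}(f)(x)\right)^r\leq C\, M_{HL}\left(\left(M_{\varphi}^{K,L}(f)\right)^r\right)(x),$$
is the anisotropic/variable-exponent analogue of a classical estimate in Bownik \cite[\S 6]{Bow1} (see also \cite[\S 4]{LYY}), and the strategy is the standard one: fix $x\in\mathbb{R}^n$, fix an admissible pair $(k,y)$ with $k\le K$ that almost realizes the supremum defining $T_{\varphi}^{N,K,L}(f)(x)$, and show that the value $\dfrac{|(f*\varphi_k)(y)|\,(1+b^{-k-K})^{-L}}{\max(1,\rho(A^{-k}(x-y)))^N\,\max(1,\rho(A^{-K}y))^L}$ is controlled, up to a dimensional constant, by the average of $\left(M_{\varphi}^{K,L}(f)\right)^r$ over a suitable anisotropic ball centered at (or containing) $x$. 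Because $M_{HL}$ here is the \emph{nontangential} anisotropic maximal function (the supremum runs over all balls $z+B_m$ with $z\in x+B_m$), we have the freedom to choose a ball that need not be centered at $x$, which is exactly what is needed to absorb the factor $\max(1,\rho(A^{-k}(x-y)))^{-N}$.

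First I would reduce to the case $r=1$ applied to $|f*\varphi_k|$: since raising to the power $r$ commutes with the suprema, it suffices to bound $|(f*\varphi_k)(y)|$, weighted by the two $\max$-factors and the $(1+b^{-k-K})^{-L}$ factor, by a constant times the $r$-th root of an average of $(M_\varphi^{K,L}(f))^r$ over an anisotropic ball. The key geometric observation is that if $z$ ranges over a ball $y+B_j$ with $j\le k$ chosen so that $b^j\asymp 1$ relative to the "scale" seen by the pointwise bound (concretely, pick the integer $j\le k$ comparable to $0$, e.g.\ $j = \min(k,0)$ or $j=k-\lceil \log_b(\dots)\rceil$), then for every such $z$ one has $y\in z+B_j$, hence $|(f*\varphi_k)(z')|$ for $z'$ near $z$ at scale $k$ is captured inside the nontangential maximal function $M_\varphi^{K,L}(f)(z)$ — more precisely $|(f*\varphi_k)(y)|\max(1,\rho(A^{-K}y))^{-L}(1+b^{-k-K})^{-L}\le M_\varphi^{K,L}(f)(z)$ for all $z\in y+B_k$, directly from the definition of $M_\varphi^{K,L}$. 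Therefore $|(f*\varphi_k)(y)|\max(1,\rho(A^{-K}y))^{-L}(1+b^{-k-K})^{-L} \le \left(\dfrac{1}{b^k}\int_{y+B_k}\left(M_\varphi^{K,L}(f)(z)\right)^r dz\right)^{1/r}$, using only that the pointwise bound is constant in $z$.

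Next I would handle the remaining factor $\max(1,\rho(A^{-k}(x-y)))^{-N}$, which is what separates the relevant ball from being centered at $x$. Let $m$ be the integer with $x-y\in B_{m+1}\setminus B_m$ (so $\rho_A(x-y)=b^m$, or $m=-\infty$ if $x=y$), and observe $\rho(A^{-k}(x-y))\asymp b^{m-k}$ by property (b) of the quasinorm; hence $\max(1,\rho(A^{-k}(x-y)))\asymp b^{(m-k)^+}$. Enlarge the ball: replace $y+B_k$ by $x+B_{\ell}$ for the smallest $\ell$ with $y+B_k\subset x+B_\ell$; by the standard anisotropic containment lemmas (using $2B_0\subset B_\omega$ iteratively, \cite[\S 2]{Bow1}) one gets $\ell\le \max(k,m)+c$ for a constant $c$ depending only on $A$, so $b^\ell\le C b^{\max(k,m)} = C b^k \max(1,b^{m-k})\le C b^k\max(1,\rho(A^{-k}(x-y)))^{N}$ provided $N\ge 1$ (and in fact only $N\ge 1$ is needed; for $N=0$ there is nothing to gain and $m\le k$ up to constants can be arranged, or one simply notes the factor is then $1$). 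Then
$$\frac{1}{b^k}\int_{y+B_k}\big(M_\varphi^{K,L}(f)\big)^r
\;\le\; \frac{b^\ell}{b^k}\cdot\frac{1}{b^\ell}\int_{x+B_\ell}\big(M_\varphi^{K,L}(f)\big)^r
\;\le\; C\,\max\!\big(1,\rho(A^{-k}(x-y))\big)^{N}\,M_{HL}\Big(\big(M_\varphi^{K,L}(f)\big)^r\Big)(x),$$
since $x\in x+B_\ell$ makes $x+B_\ell$ admissible for the nontangential $M_{HL}$ at $x$. Combining with the previous display, dividing by $\max(1,\rho(A^{-k}(x-y)))^{N}$, raising to the power $r$, and taking the supremum over admissible $(k,y)$ with $k\le K$ yields the claim, with $C$ depending only on $A$ (equivalently on $b$, $\omega$, $H$) and \emph{not} on $K,L,N,r$ or $\varphi$ — note the constant $c$ above is independent of $N$, and the only place $N$ enters is via the inequality $b^{(m-k)^+}\le \max(1,\rho(A^{-k}(x-y)))^N$, which is valid for all $N\ge 1$ with the same constant, while the $N=0$ case is trivial.

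\textbf{Main obstacle.} The one point requiring care is the uniformity of $C$ in $L$ and in $\varphi$: the $(1+b^{-k-K})^{-L}$ and $\max(1,\rho(A^{-K}y))^{-L}$ weights must be carried through \emph{unchanged} rather than estimated, which the argument above does since they are frozen inside the spatial average over $z\in y+B_k$ (the weights depend on $y$ and $k$, not on the integration variable $z$). Similarly, $\varphi$ never appears except through $f*\varphi_k$, so no regularity or decay of $\varphi$ is consumed. The genuinely substantive ingredient is purely geometric — the quantitative comparison $\ell\le\max(k,m)+c$ with $c=c(A)$ between the scale of $y+B_k$ and that of the concentric-at-$x$ ball containing it — and this is exactly the kind of anisotropic covering estimate recorded in \cite[Lemma 2.x]{Bow1}; I would cite it rather than reprove it.
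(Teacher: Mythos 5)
Your strategy is the one the paper itself uses (following the classical Fefferman--Stein argument as presented in Lu's book, which the paper cites): freeze the weights $(1+b^{-k-K})^{-L}\max(1,\rho(A^{-K}y))^{-L}$, dominate the resulting constant by the $L^r$-average of $M_\varphi^{K,L}(f)$ over $y+B_k$, enlarge $y+B_k$ to a ball admissible for the nontangential $M_{HL}$ at $x$, and absorb the cost of the enlargement into the denominator $\max(1,\rho(A^{-k}(x-y)))^N$. The first two steps are correct, as are your remarks that $L$ and $\varphi$ pass through the argument untouched.

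The gap is in the absorption step, precisely where you assert uniformity in $N$ and $r$. Enlarging $y+B_k$ to $x+B_\ell$ costs $b^{\ell-k}\asymp\max(1,\rho(A^{-k}(x-y)))$ to the \emph{first} power, whereas the denominator available in $\bigl(T_\varphi^{N,K,L}(f)(x)\bigr)^r$ is $\max(1,\rho(A^{-k}(x-y)))^{Nr}$. Carrying out your own bookkeeping (take the $r$-th root of the averaged bound, divide by $\max(1,\rho(A^{-k}(x-y)))^{N}$, raise back to the power $r$) leaves an uncancelled factor $\max(1,\rho(A^{-k}(x-y)))^{N(1-r)}$ --- equivalently $\max(1,\rho(A^{-k}(x-y)))^{1-Nr}$ if one keeps only the sharp first-power cost --- and this is unbounded in $y$ unless $Nr\ge 1$. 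Hence the constant is not uniform in $r$ for $r<1$, and the case $N=0$ is the opposite of trivial: there the weight in $T_\varphi^{0,K,L}$ is identically $1$, the supremum over $y$ genuinely ranges over all of $\mathbb{R}^n$ (one cannot ``arrange $m\le k$''), and nothing is left to absorb the enlargement cost. For what it is worth, the paper's proof has the same soft spot: its final displayed inequality is asserted only for $y\in x+B_k$, where the offending factor is at most $2$, and the passage from there to the supremum over all $y$ is not justified; the estimate, as in its classical isotropic counterpart, really requires $Nr\ge 1$ (or decay supplied by $L$). You should state and use that hypothesis explicitly rather than claim that $N\ge 1$ alone suffices with a constant independent of $r$.
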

\begin{proof}
Our proof is inspired on the ideas presented in \cite[p. 10]{SLur}.

Let $f\in S'(\mathbb{R}^n)$, $k\in\Bbb Z$, $k\leq K$ and $x\in{\Bbb R}^n$. Since
$$\left(|(f*{\varphi}_k)(y)|\max(1,\rho(A^{-K}y))^{-L}(1+b^{-k-K})^{-L}\right)^r\leq\left(M_{\varphi}^{K,L}(f)\right)^r(z),\;\;\;y\in z+B_{k},$$
we can write
$$
\left(|(f*{\varphi}_k)(y)|  \max(1,\rho(A^{-K}y))^{-L}(1+b^{-k-K})^{-L}\right)^r
         \leq {1\over{|y+B_k|}}\int_{y+B_k}\left(M_{\varphi}^{K,L}(f)(z)\right)^rdz,\;\;\;y\in{\Bbb R}^n.
        $$

Suppose that $z\in y+B_k$ and $y\in{\Bbb R}^n$. According to \cite[p. 8]{Bow1} we have that $\rho(z-x)\leq b^{\omega}(\rho(z-y)+\rho(y-x))\leq b^{\omega+k}(1+b^{-k}\rho(y-x))$, where $\omega$ is the smallest integer so that $2B_0\subset B_{\omega}$. We choose $s\in\Bbb Z$ such that $b^{\omega+k}(1+b^{-k}\rho(y-x))\in[b^s,b^{s+1})$. Then, we get

\begin{align*}
       &  \left(|(f*{\varphi}_k)(y)|  \max(1,\rho(A^{-K}y))^{-L}(1+b^{-k-K})^{-L}\right)^r \\
        & \leq b^{\omega}(1+b^{-k}\rho(y-x)) {1\over{b^{\omega+k}(1+b^{-k}\rho(y-x))}}\int_{y+B_k}\left(M_{\varphi}^{K,L}(f)(z)\right)^rdz \\
        & \leq b^{\omega}(1+b^{-k}\rho(y-x)) {1\over{b^s}}\int_{x+B_{s+1}}\left(M_{\varphi}^{K,L}(f)(z)\right)^rdz \\
        & \leq 2 b^{\omega+1}(1+b^{-k}\rho(y-x))^{Nr}M_{HL}\left(\left(M_{\varphi}^{K,L}(f)\right)^r\right)(x),\;\;\;y\in x+B_k.
        \end{align*}
Hence, we obtain
$$\left(T_{\varphi}^{N,K,L}(f)(x)\right)^r\leq C M_{HL}\left(\left(M_{\varphi}^{K,L}(f)\right)^r\right)(x),\;\;\;x\in{\Bbb R}^n.$$
\end{proof}

According to \cite[p. 14]{Bow1}, for every $1<p<\infty$, the Hardy-Littlewood maximal function $M_{HL}$ is bounded from $L^p({\Bbb R}^n)$ into itself. So from Lemma \ref{lem2.1} we deduce that, for every $1<p<\infty$, there exists $C>0$ such that
$$\left\|T_{\varphi}^{N,K,L}(f)\right\|_{L^p({\Bbb R}^n)}\leq C \left\|M_{\varphi}^{K,L}(f)\right\|_{L^p({\Bbb R}^n)},\;\;\;f\in S'({\Bbb R}^n).$$
This property was proved in \cite[Lemma 7.4]{Bow1} by using other procedure.

\begin{Lem}\label{lem2.2}
Let $K\in\Bbb Z$, $N,L\in\Bbb N$ and $\varphi\in S({\Bbb R}^n)$. Assume that $p,q\in{\Bbb P}_0$. Then,
$$\|T_{\varphi}^{N,K,L}(f)\|_{p(\cdot),q(\cdot)}\leq C\|M_{\varphi}^{K,L}(f)\|_{p(\cdot),q(\cdot)},\;\;\;f\in S'({\Bbb R}^n),$$
where $C>0$ does not depend on $(N,K,L,\varphi)$.
\end{Lem}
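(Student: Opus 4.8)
The plan is to deduce Lemma \ref{lem2.2} from Lemma \ref{lem2.1} by an extrapolation-type argument analogous to the one used for Proposition \ref{Prop vectorial}, exploiting that the pointwise inequality in Lemma \ref{lem2.1} holds \emph{for every} $r>0$ with a constant independent of $r$, $K$, $L$, $N$ and $\varphi$. First I would choose $r\in(0,1)$ small enough that $p/r,q/r\in{\Bbb P}_1$; this is possible since $p,q\in{\Bbb P}_0$, the log-H\"older conditions are preserved under division by a constant, and one only needs $(p/r)_-=p_-/r>1$ and $(q/r)_-=q_-/r>1$. By Lemma \ref{lem2.1} we have, pointwise,
$$\left(T_{\varphi}^{N,K,L}(f)\right)^r\leq C\, M_{HL}\!\left(\left(M_{\varphi}^{K,L}(f)\right)^r\right),$$
with $C$ independent of all the relevant parameters.

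Next I would apply the scaling identity $\|g\|_{p(\cdot),q(\cdot)}=\||g|^{r}\|_{p(\cdot)/r,q(\cdot)/r}^{1/r}$, which follows from $({\mathcal L}^{p(\cdot),q(\cdot)}({\Bbb R}^n))^{r}={\mathcal L}^{rp(\cdot),rq(\cdot)}({\Bbb R}^n)$ (the identification already recorded in the proof of Proposition \ref{Prop vectorial} via \cite[Lemma 2.3]{CrW}), used here with the roles of $r$ and $1/r$ interchanged. This gives
$$\|T_{\varphi}^{N,K,L}(f)\|_{p(\cdot),q(\cdot)}
=\left\|\left(T_{\varphi}^{N,K,L}(f)\right)^{r}\right\|_{p(\cdot)/r,q(\cdot)/r}^{1/r}
\leq C^{1/r}\left\|M_{HL}\!\left(\left(M_{\varphi}^{K,L}(f)\right)^{r}\right)\right\|_{p(\cdot)/r,q(\cdot)/r}^{1/r}.$$
Since $p/r,q/r\in{\Bbb P}_1$, Proposition \ref{propM_{HL}} bounds $M_{HL}$ on ${\mathcal L}^{p(\cdot)/r,q(\cdot)/r}({\Bbb R}^n)$, so the last expression is
$$\leq C^{1/r}\left\|\left(M_{\varphi}^{K,L}(f)\right)^{r}\right\|_{p(\cdot)/r,q(\cdot)/r}^{1/r}
=C^{1/r}\,\|M_{\varphi}^{K,L}(f)\|_{p(\cdot),q(\cdot)},$$
using the scaling identity once more in the reverse direction. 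This proves the lemma with the constant $C^{1/r}$, which depends only on $p$, $q$ (through the fixed choice of $r$ and the operator norm of $M_{HL}$) and not on $(N,K,L,\varphi)$.

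The only point requiring a little care — and the main obstacle — is the measurability and finiteness needed to legitimately apply the Banach-function-space machinery: $M_{\varphi}^{K,L}(f)$ is defined as a supremum over $k\le K$ and $y\in x+B_k$ of a continuous function, so it is lower semicontinuous, hence measurable, and the truncation factors $\max(1,\rho(A^{-K}y))^{-L}(1+b^{-k-K})^{-L}$ make it locally bounded for $f\in S'({\Bbb R}^n)$; thus $(M_{\varphi}^{K,L}(f))^{r}\in L^1_{\mathrm{loc}}$ and $M_{HL}$ may be applied to it. If $\|M_{\varphi}^{K,L}(f)\|_{p(\cdot),q(\cdot)}=\infty$ the inequality is trivial, so one may assume it is finite, and then all quantities above are finite and the chain of inequalities is justified. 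I would also remark, as the paper implicitly does, that the constant hidden in Lemma \ref{lem2.1} being independent of $r$ is exactly what allows $C^{1/r}$ to be a genuine (finite) constant once $r$ is fixed in terms of $p,q$ alone.
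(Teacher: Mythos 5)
Your argument is correct and is essentially the paper's own proof: the paper also combines the pointwise bound of Lemma \ref{lem2.1} (with a small exponent), the scaling identity $\|g\|_{p(\cdot),q(\cdot)}=\||g|^{1/r}\|_{rp(\cdot),rq(\cdot)}^{r}$ from \cite[Lemma 2.3]{CrW}, and the boundedness of $M_{HL}$ from Proposition \ref{propM_{HL}}; your $r\in(0,1)$ is just the reciprocal of the paper's $r>1$. No substantive difference.
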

\begin{proof}
We choose $r>0$ such that $rp,rq\in{\Bbb P}_1$. Let $f\in S'({\Bbb R}^n)$. According to \cite[Lemma 2.3]{CrW} and a well known property of the nondecreasing equimeasurable rearrangement we get
\begin{align*}
         \|T_{\varphi}^{N,K,L}(f)\|_{p(\cdot),q(\cdot)}& =\|t^{{1\over{rp(t)}}-{1\over{rq(t)}}}\left([T_{\varphi}^{N,K,L}(f)]^*(t)\right)^{1/r}\|_{rq(\cdot)}^r \\
        & = \|t^{{1\over{rp(t)}}-{1\over{rq(t)}}}\left[\left(T_{\varphi}^{N,K,L}(f)\right)^{1/r}\right]^*(t)\|_{rq(\cdot)}^r  \\
        & =\|\left(T_{\varphi}^{N,K,L}(f)\right)^{1/r}\|_{rp(\cdot),rq(\cdot)}^r.
        \end{align*}
>From Lemma \ref{lem2.1} and Proposition \ref{propM_{HL}} it follows that
$$\|T_{\varphi}^{N,K,L}(f)\|_{p(\cdot),q(\cdot)}\leq C\|\left(M_{\varphi}^{K,L}(f)\right)^{1/r}\|_{rp(\cdot),rq(\cdot)}^r=C\|M_{\varphi}^{K,L}(f)\|_{p(\cdot),q(\cdot)}$$
\end{proof}

Next two results were established in \cite[p. 45-47]{Bow1} as Lemmas 7.5 and 7.6, respectively.
\begin{Lem}\label{lem2.4}
For every $N,L\in \mathbb{N}$, there exists $M_0\in \mathbb{N}$ satisfying the following property: if $\varphi\in S({\Bbb R}^n)$ is such that $\int\varphi(x)dx\neq 0$, then there exists $C>0$ such that, for every $f\in S'({\Bbb R}^n)$ and $K\in\Bbb N$,
$$M_{M_0}^{0,K,L}(f)(x)\leq CT_{\varphi}^{N,K,L}(f)(x),\;\;\;x\in{\Bbb R}^n.$$
\end{Lem}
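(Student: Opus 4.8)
The inequality is pointwise, so no feature of the variable exponent Lorentz spaces intervenes; the plan is simply to reproduce Bownik's argument for \cite[Lemma 7.5]{Bow1}, which rests on a Calder\'on‑type reproducing formula. First I would use the hypothesis $\int\varphi\neq0$, i.e. $\widehat\varphi(0)\neq0$, to construct (as in \cite[\S3]{Bow1}) a function $\psi\in S(\mathbb{R}^n)$ whose Fourier transform is supported in a fixed annulus bounded away from the origin and satisfies $\sum_{j\in\mathbb{Z}}\widehat\psi((A^*)^j\xi)\widehat\varphi((A^*)^j\xi)=1$ for $\xi\neq0$, together with a low‑frequency correction $\Phi\in S(\mathbb{R}^n)$ with $\widehat\Phi$ supported near the origin, so as to obtain a reproducing identity valid for every $f\in S'(\mathbb{R}^n)$ in which only the scales $i\le K$ occur. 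Applying this identity to $f*\phi_k$ for $\phi\in S_{M_0}$ and $k\le K$ one is led to a representation
$$(f*\phi_k)(x)=\sum_{i\le K}\int_{\mathbb{R}^n}(f*\varphi_i)(y)\,\Theta_k^{(i)}(x-y)\,dy+(\text{a single auxiliary term}),$$
where each $\Theta_k^{(i)}=(\phi*\psi_{i-k})_k$ (up to relabelling) is a Schwartz function concentrated at the coarser of the two scales $b^i,b^k$, and the family $\{\Theta_k^{(i)}\}$ enjoys, over the range of indices actually summed, a geometric gain in $|i-k|$ together with rapid spatial decay: for every $M\in\mathbb{N}$ there is $C_M>0$, independent of $\phi\in S_{M_0}$ once $M_0$ exceeds a threshold depending on $M$ and $n$, of the form
$$|\Theta_k^{(i)}(x)|\le C_M\,b^{-\varepsilon|i-k|}\,\frac{b^{-\max(i,k)}}{\big(1+\rho(A^{-\max(i,k)}x)\big)^{M}},\qquad x\in\mathbb{R}^n,$$
for some $\varepsilon>0$ (the gain coming from the Schwartz decay of $\phi$ on one side and from the infinite vanishing moments of $\psi$ on the other); the auxiliary term is controlled directly since $\widehat\Phi$ is supported near the origin and $\Phi_K$ is a legitimate test function at scale $b^K$.

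Next I would carry out the pointwise estimate. Fix $x$ and $k\le K$. For each $i\le K$ the definition of $T_{\varphi}^{N,K,L}$ gives
$$|(f*\varphi_i)(y)|\le T_{\varphi}^{N,K,L}(f)(x)\,\big(1+\rho(A^{-i}(x-y))\big)^{N}\big(1+b^{-i-K}\big)^{L}\big(1+\rho(A^{-K}y)\big)^{L}.$$
I would insert this bound, use the quasi‑triangle inequality for $\rho$ to split $\rho(A^{-K}y)\lesssim\rho(A^{-K}x)+\rho(A^{-K}(x-y))$ together with the standard comparisons between the scales $i,k,K$, and absorb all the resulting powers of $\rho$ and of $b$ into the rapid decay of $\Theta_k^{(i)}$; this is where $M_0$ must be taken larger than $N+L$ plus a dimensional constant, which fixes $M_0=M_0(N,L)$. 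After the absorption the $y$‑integral is finite and uniformly bounded, the sum over $i$ is dominated by $\sum_i b^{-\varepsilon|i-k|}<\infty$, and, carrying along the weight $(1+\rho(A^{-K}x))^{-L}(1+b^{-k-K})^{-L}$ from the definition of $M_{M_0}^{0,K,L}$, one obtains
$$|(f*\phi_k)(x)|\,\max(1,\rho(A^{-K}x))^{-L}(1+b^{-k-K})^{-L}\le C\,T_{\varphi}^{N,K,L}(f)(x)$$
with $C$ independent of $\phi\in S_{M_0}$, of $f$ and of $K$. Taking the supremum over $k\le K$ and over $\phi\in S_{M_0}$ yields the stated inequality.

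The main obstacle is the first step: producing a reproducing formula whose kernels $\Theta_k^{(i)}$ are simultaneously well localized at the coarser scale and geometrically small in $|i-k|$ on the range of indices used, and then verifying that one single choice of $M_0$ (depending only on $N$ and $L$) makes every polynomial weight surfacing in the bound for $(f*\varphi_i)(y)$ summable, with all constants uniform in $K$. Everything else is the routine anisotropic Peetre‑type maximal‑function bookkeeping of \cite{Bow1}, to which I would defer for the elementary estimates.
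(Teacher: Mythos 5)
Your overall strategy (a Calder\'on-type reproducing formula plus Peetre-type bookkeeping) is the right family of arguments --- it is essentially how \cite[Lemma 7.5]{Bow1} is proved, and the paper simply cites that lemma --- but the implementation you propose fails at its central step. You expand $f$ itself over all scales $i\le K$ and claim that the kernels $\Theta_k^{(i)}=\phi_k*\psi_i$ satisfy $|\Theta_k^{(i)}(x)|\le C_M\,b^{-\varepsilon|i-k|}\,b^{-\max(i,k)}\bigl(1+\rho(A^{-\max(i,k)}x)\bigr)^{-M}$ for all $i\le K$, $k\le K$. This is false in the regime $k<i\le K$: on the Fourier side $\widehat{\Theta_k^{(i)}}(\xi)=\widehat{\phi}((A^*)^k\xi)\,\widehat{\psi}((A^*)^i\xi)$, and on the annular support of $\widehat{\psi}((A^*)^i\cdot)$ the point $(A^*)^k\xi$ tends to $0$ as $i-k\to\infty$, so $\widehat{\phi}((A^*)^k\xi)\to\int\phi$, which is not small for a generic $\phi\in S_{M_0}$. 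Hence $\|\Theta_k^{(i)}\|_{L^1}$ stays bounded below and there is no gain $b^{-\varepsilon(i-k)}$; the mechanism you invoke ("Schwartz decay of $\phi$ on one side, vanishing moments of $\psi$ on the other") produces a gain only when the moment-free factor sits at the coarser scale, i.e.\ only for $i\le k$. Without that gain the sum over $k<i\le K$ contributes on the order of $K-k$ terms, each of size comparable to $T_\varphi^{N,K,L}(f)(x)$ after the weight bookkeeping, so your final constant grows like $K-k$ and the asserted uniformity in $K$ (and in $k\le K$) is lost. A second, smaller defect: the low-frequency term $f*\Phi_K*\phi_k$ is not "controlled directly" merely because $\Phi_K$ is a legitimate test function at scale $b^K$ --- controlling $f$ convolved with an arbitrary test function by $T_\varphi^{N,K,L}(f)$ is exactly what the lemma asserts; one must factor $\Phi_K=\theta_K*\varphi_K$ (possible because $\widehat{\Phi}$ is supported where $\widehat{\varphi}\neq 0$) so as to reduce it to a $\varphi_K$-convolution.

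Both defects disappear if you run the decomposition the way Bownik does: expand the test function rather than the distribution, writing $\phi=\sum_{i\ge 0}\eta^{(i)}*\varphi_{-i}$ with $\eta^{(i)}\in S(\mathbb{R}^n)$ whose seminorms decay geometrically in $i$, uniformly for $\phi\in S_{M_0}$ (the decay comes from the rapid decrease of $\widehat{\phi}$ on the frequency annuli carrying the $i$-th Littlewood--Paley piece, and this is where the threshold $M_0=M_0(N,L)$ enters). Then $f*\phi_k=\sum_{i\ge 0}(f*\varphi_{k-i})*(\eta^{(i)})_k$ involves only $\varphi$-scales $k-i\le k\le K$, every term lies in the good regime, and your weight bookkeeping (which is otherwise correct) closes the estimate with constants independent of $K$, $k$ and $\phi\in S_{M_0}$. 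Equivalently, truncate your Calder\'on formula at scale $k$ rather than at $K$.
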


\begin{Lem}\label{lem2.5}
Let $\varphi\in S({\Bbb R}^n)$. Then, for every $M,K\in\Bbb N$ and  $f\in S'({\Bbb R}^n)$ there exist $L\in\Bbb N$ and $C>0$ such that
$$M_{\varphi}^{K,L}(f)(x)\leq C\max(1,\rho_A(x))^{-M},\;\;\;x\in{\Bbb R}^n.$$
Actually, $L$ does not depend on $K\in \mathbb{N}$.
\end{Lem}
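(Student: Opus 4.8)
The plan is to reduce the estimate to the defining continuity property of $f$ as a tempered distribution and then keep careful track of the dependence on $k$ and on $K$. Since $f\in S'({\Bbb R}^n)$, there are $m\in{\Bbb N}$ and $C_f>0$ with $|\langle f,\psi\rangle|\le C_f\,q_m(\psi)$ for every $\psi\in S({\Bbb R}^n)$, where $q_m(\psi)=\sup_{|\alpha|\le m}\sup_{z\in{\Bbb R}^n}(1+\rho_A(z))^m|D^\alpha\psi(z)|$; this is an admissible family of seminorms for $S({\Bbb R}^n)$ because $\rho_A(z)$ is comparable to a fixed power of $|z|$ for large $|z|$ (this is implicit in the definition of $S_N$ used in the paper), and we may work with $\rho_A$ throughout using its equivalence with $\rho$. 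Writing $(f*\varphi_k)(y)=\langle f,\varphi_k(y-\cdot)\rangle$, the task becomes the estimation of $q_m(\varphi_k(y-\cdot))$.

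For $|\alpha|\le m$ the chain rule gives $|D^\alpha_z[\varphi_k(y-z)]|\le C\,b^{-k}\max(1,\|A^{-k}\|)^m\sum_{|\beta|\le m}|(D^\beta\varphi)(A^{-k}(y-z))|$, and since $\varphi\in S({\Bbb R}^n)$ each derivative decays faster than any power; converting powers of $|A^{-k}(y-z)|$ into powers of $\rho_A(A^{-k}(y-z))=b^{-k}\rho_A(y-z)$ yields, for any prescribed $P$, $\sum_{|\beta|\le m}|(D^\beta\varphi)(A^{-k}(y-z))|\le C_P(1+b^{-k}\rho_A(y-z))^{-P}$. Using the submultiplicative bound $1+\rho_A(z)\le(1+H\rho_A(y))(1+H\rho_A(y-z))$ to split the weight and then computing the elementary supremum over $t=\rho_A(y-z)\ge0$ with $P\ge m$, one obtains
$$|(f*\varphi_k)(y)|\le C\,D_k\,(1+\rho_A(y))^m,\qquad D_k:=b^{-k}\max(1,\|A^{-k}\|)^m\max(1,b^k)^m,\qquad y\in{\Bbb R}^n.$$

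Next one absorbs the $k$-growth. For $0\le k\le K$ one has $\|A^{-k}\|\le c$ and $b^{-k}\le1$, so $D_k\le Cb^{mK}$ and hence $D_k(1+b^{-k-K})^{-L}\le C_K$. For $k<0$ we have $\max(1,b^k)^m=1$ and $\|A^{-k}\|=\|A^{|k|}\|\le c\lambda^{|k|}$ for a fixed $\lambda>1$, so $D_k\le C(b\lambda^m)^{|k|}$; this is a $K$-dependent constant when $|k|\le K$, while for $|k|>K$ one uses $(1+b^{-k-K})^{-L}\le b^{-L(|k|-K)}$, giving $D_k(1+b^{-k-K})^{-L}\le Cb^{LK}(b\lambda^mb^{-L})^{|k|}$; choosing $L\ge 1+m\log_b\lambda$ forces $b\lambda^mb^{-L}\le1$ and bounds this by $Cb^{LK}$. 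Thus $D_k(1+b^{-k-K})^{-L}\le C_K$ for all admissible $k$, with $C_K$ depending on $K$ but $L$ not. Enlarging $L$ so that also $L\ge M+m$, an elementary estimate of $(1+Hs)^m\max(1,b^{-K}s)^{-L}$ split at $s=b^K$ shows this quantity is $\le C_K\max(1,s)^{-M}$. Combining the last three displays,
$$|(f*\varphi_k)(y)|\max(1,\rho(A^{-K}y))^{-L}(1+b^{-k-K})^{-L}\le C_K\max(1,\rho_A(y))^{-M}.$$
Finally, for $y\in x+B_k$ with $k\le K$ one has $\rho_A(x-y)<b^K$, so $\rho_A(y)\ge\rho_A(x)/H-b^K$; distinguishing the cases $\rho_A(x)\ge2Hb^K$ and $\rho_A(x)<2Hb^K$ gives $\max(1,\rho_A(y))^{-M}\le C_K\max(1,\rho_A(x))^{-M}$. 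Taking the supremum over $k\le K$ and $y\in x+B_k$ yields $M_\varphi^{K,L}(f)(x)\le C_K\max(1,\rho_A(x))^{-M}$ with $L=\max(1+\lceil m\log_b\lambda\rceil,\,M+m)$, depending only on $f$, $M$ and $A$.

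The main obstacle is the step above dealing with $k\to-\infty$: there the bump $\varphi_k$ is dilated by $A^{-k}=A^{|k|}$, whose operator norm grows like $\lambda^{|k|}$, so the relevant Schwartz seminorm of $\varphi_k(y-\cdot)$ grows exponentially in $|k|$. This growth must be absorbed by the auxiliary factor $(1+b^{-k-K})^{-L}$, and the delicate point — responsible for the final assertion that $L$ is independent of $K$ — is that the required size of $L$ is dictated by $b$, $\lambda$ and $m$ alone, so $L$ can be fixed before any comparison between $|k|$ and $K$ is made. The remaining bookkeeping (the elementary one-variable suprema and the passage from $\rho_A(y)$ to $\rho_A(x)$) only affects the $K$-dependent constant, which the statement allows.
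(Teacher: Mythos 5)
Your proof is correct, and it follows essentially the same route as the argument the paper relies on (the paper does not reprove this lemma but cites Bownik's Lemma 7.6, whose proof likewise estimates $|(f*\varphi_k)(y)|$ via the finite order of $f\in S'({\Bbb R}^n)$ and the seminorms of $\varphi_k(y-\cdot)$, then absorbs the exponential growth in $k\to-\infty$ with the factor $(1+b^{-k-K})^{-L}$ and the polynomial growth in $y$ with $\max(1,\rho(A^{-K}y))^{-L}$). Your bookkeeping correctly isolates why $L$ depends only on $M$, the order of $f$ and the matrix $A$, not on $K$.
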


\begin{Lem}\label{lem2.3}
Let $p,q\in{\Bbb P}_0$. There exists $\alpha_0>0$ such that the function $g_{\alpha}$ defined by
$$g_{\alpha}(x)=\left(\max(1,\rho_A(x))\right)^{-\alpha},\;\;\;x\in{\Bbb R}^n,$$
is in ${\mathcal L}^{p(\cdot),q(\cdot)}({\Bbb R}^n)$, for every $\alpha \geq\alpha_0$.
\end{Lem}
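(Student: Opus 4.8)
The plan is to show that $g_\alpha$ decays fast enough that its rearrangement is integrable against the weight $t^{1/p(t)-1/q(t)}$ in the $L^{q(\cdot)}(0,\infty)$ sense, and then to quote the finiteness criterion for the $L^{q(\cdot)}$-norm. First I would compute the distribution function and the decreasing rearrangement of $g_\alpha$. Since $\rho_A$ is equivalent to any homogeneous quasinorm and $\{x:\rho_A(x)<b^k\}=B_k$ with $|B_k|=b^k$, one has $g_\alpha(x)=1$ on $B_0$ and $g_\alpha(x)=b^{-k\alpha}$ for $x\in B_{k+1}\setminus B_k$, $k\geq 0$. Hence for $s\in(b^{-(k+1)\alpha},b^{-k\alpha}]$, $k\geq 0$, we get $\mu_{g_\alpha}(s)=|B_{k+1}|=b^{k+1}$, and $\mu_{g_\alpha}(s)=0$ for $s\geq 1$. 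Taking inverses, $g_\alpha^*(t)$ is comparable (up to the constant $b^\alpha$, uniformly controlled once $\alpha\geq\alpha_0$ for a fixed $\alpha_0$) to $t^{-\alpha}$ for $t\geq 1$ and equal to $1$ for $0<t<1$. In particular there is $C>0$, independent of $\alpha$ once $\alpha\geq 1$ say, with $g_\alpha^*(t)\leq C\min(1,t^{-\alpha})$, $t\in(0,\infty)$.

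Next I would reduce to an $L^{q(\cdot)}(0,\infty)$ membership statement: by definition $\|g_\alpha\|_{p(\cdot),q(\cdot)}=\|h_\alpha\|_{q(\cdot)}$ where $h_\alpha(t)=t^{1/p(t)-1/q(t)}g_\alpha^*(t)$, so it suffices to exhibit $\lambda>0$ with $\int_0^\infty (h_\alpha(t)/\lambda)^{q(t)}\,dt\leq 1$; equivalently, since $q_+((0,\infty))<\infty$, it suffices to show $\int_0^\infty h_\alpha(t)^{q(t)}\,dt<\infty$ and then rescale. Split the integral at $t=1$. On $(0,1)$, $g_\alpha^*(t)=1$ and the exponent $1/p(t)-1/q(t)$ is bounded, while $q(t)\leq q_+$, so the integrand is bounded by $t^{q(t)(1/p(t)-1/q(t))}$; using $1/p(t)\geq 1/p_+>0$ and $q(t)\geq q_->0$ this is $\leq t^{-c}$ for some... no: actually $1/p(t)-1/q(t)$ can be negative, so I must be a little careful. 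Write $1/p(t)-1/q(t)\geq 1/p_+-1/q_-=:\gamma$ (a fixed real number, possibly negative) and $q(t)(1/p(t)-1/q(t))=q(t)/p(t)-1\geq q_-/p_+-1=:\delta>-1$ provided $p,q\in{\mathfrak P}_0$ with $q_-/p_+$ bounded below — but there is no such hypothesis in general. This is the point where the decay parameter $\alpha$ is irrelevant near $0$; the near-$0$ integrability $\int_0^1 t^{q(t)/p(t)-1}\,dt<\infty$ needs $\inf_t q(t)/p(t)>0$, which holds automatically since $q(t)/p(t)\geq q_-/p_+>0$. So $\int_0^1 t^{q(t)/p(t)-1}dt\leq\int_0^1 t^{q_-/p_+-1}dt<\infty$, and this part is finite independently of $\alpha$.

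For the tail $(1,\infty)$: there $g_\alpha^*(t)\leq Ct^{-\alpha}$ and $t^{1/p(t)-1/q(t)}\leq t^{1/p_+}$ (since $t\geq 1$ and the exponent is $\leq 1/p_-<\infty$; use the larger bound $t^{1/p_-}$). Thus $h_\alpha(t)\leq Ct^{1/p_- -\alpha}$, and $h_\alpha(t)^{q(t)}\leq C^{q_+}t^{q(t)(1/p_- -\alpha)}$. Once $\alpha>1/p_-$, the exponent $q(t)(1/p_- -\alpha)$ is negative; to get an integrable power I need $q(t)(\alpha-1/p_-)>1$, i.e. $\alpha>1/p_- + 1/q(t)$, which is guaranteed as soon as $\alpha>1/p_- + 1/q_-=:\alpha_0$ (a fixed number depending only on $p,q$). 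Then $\int_1^\infty h_\alpha(t)^{q(t)}\,dt\leq C\int_1^\infty t^{-q_-(\alpha-1/p_-)}\,dt<\infty$ because $q_-(\alpha-1/p_-)>q_-/q_- + q_-(1/p_- -1/p_-)$... more simply $q_-(\alpha-1/p_-)>1$ by choice of $\alpha_0$. Combining the two pieces, $\int_0^\infty h_\alpha(t)^{q(t)}\,dt<\infty$, so $h_\alpha\in L^{q(\cdot)}(0,\infty)$ and $g_\alpha\in{\mathcal L}^{p(\cdot),q(\cdot)}({\Bbb R}^n)$ for every $\alpha\geq\alpha_0$. The main (minor) obstacle is bookkeeping the interplay of the variable exponents with the two competing powers of $t$ near $0$ and near $\infty$, and making sure the threshold $\alpha_0$ is chosen uniformly; there is no deep difficulty, only the care needed to absorb the variable exponent into the fixed bounds $p_\pm,q_\pm$.
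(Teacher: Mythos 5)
Your proof is correct and follows essentially the same route as the paper: estimate the decreasing rearrangement $g_\alpha^*$, split the modular integral at a fixed point, use $q(t)/p(t)\ge q_-/p_+>0$ near the origin, and take $\alpha$ large (your explicit $\alpha_0>1/p_-+1/q_-$) for the tail. The only difference is that you compute $g_\alpha^*$ directly from the level sets $B_{k+1}\setminus B_k$, obtaining $g_\alpha^*(t)\le C\min(1,t^{-\alpha})$ with a comparison constant that actually grows like $b^{\alpha}$ rather than being uniform in $\alpha$ (harmless, since only membership for each fixed $\alpha\ge\alpha_0$ is claimed), whereas the paper first majorizes $g_\alpha$ by a Euclidean radial function via Bownik's Lemma 3.2; both reductions lead to the same one-dimensional integrability check.
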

\begin{proof}
Let $\alpha>0$. According to \cite[Lemma 3.2]{Bow1} we have that
$$g_{\alpha}(x)\le h_\alpha(x)=C\left\{\begin{array}{ll}
1,\;\;\;|x|\leq 1, \\
 \\
 |x|^{-\alpha \ln\;b/\ln\;{\lambda}_+},\;\;\;|x|>1,
 \end{array}\right.$$
 for certain $C>0$. Here ${\lambda}_+$ is greater than $\max\{|\lambda|:\;\lambda\;\mbox{is an eingenvalue of}\;A\}$ (for instance we can take $\lambda_+=2\max\{|\lambda|:\;\lambda\;\mbox{is an eingenvalue of}\;A\}$). Note that $g_\alpha^*\le h_\alpha^*$.

To simplify we denote $v_n=|B(0,1)|$. We have that
$$
\mu_{h_\alpha}(s)=\left\{\begin{array}{ll}
                        0,\,\,\, s\ge C,\\
                        \\
                        v_n(C/s)^{n\ln(\lambda_+)/(\alpha \ln(b))}, \,\,\,s\in (0,C).
                        \end{array}
                        \right.
                        $$
                        Then,
$$
h_\alpha^*(t)=C\left\{\begin{array}{ll}
                        1,\,\,\,t\in (0,v_n),\\
                        \\
                        (v_n/t)^{\alpha \ln(b)/(n\ln(\lambda_+)},\,\,\,t\ge v_n.
                        \end{array}
                        \right.
                        $$

Since $q(0)>0$ and $p(0)>0$ we have that $\displaystyle\int_0^{v_n}t^{q(t)/p(t)-1}|g_{\alpha}^*(t)|^{q(t)}dt<\infty$. Also, there exists $\alpha_0>0$ such that $\displaystyle\int_{v_n}^{\infty}t^{q(t)/p(t)-1}|g_{\alpha_0}^*(t)|^{q(t)}dt<\infty$ because $p,q\in{\Bbb P}_0$.

Hence, $g_{\alpha}\in {\mathcal L}^{p(\cdot),q(\cdot)}({\Bbb R}^n)$, for every $\alpha \geq\alpha_0$.

\end{proof}

\begin{Lem}\label{medida}
Let $p,q\in \mathfrak{P}_0$ and let $D$ be a subset of $\mathbb{R}^n$. Then, $\chi_D\in {\mathcal L}^{p(\cdot),q(\cdot)}({\Bbb R}^n)$ if, and only if, $|D|<\infty$.
\end{Lem}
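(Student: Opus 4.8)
The plan is to argue via the non-increasing rearrangement of the characteristic function $\chi_D$. First I would record the elementary fact that, for a measurable set $D\subseteq\mathbb{R}^n$, one has $\mu_{\chi_D}(s)=|D|$ for $0\le s<1$ and $\mu_{\chi_D}(s)=0$ for $s\ge 1$; consequently $(\chi_D)^*=\chi_{(0,|D|)}$ when $|D|<\infty$, and $(\chi_D)^*\equiv 1$ on $(0,\infty)$ when $|D|=\infty$. This reduces the problem to deciding for which $t\mapsto\chi_{(0,m)}(t)$ (with $m\in(0,\infty]$) the weighted function $t^{1/p(t)-1/q(t)}\chi_{(0,m)}(t)$ belongs to $L^{q(\cdot)}(0,\infty)$.

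For the direction $|D|<\infty\Rightarrow\chi_D\in{\mathcal L}^{p(\cdot),q(\cdot)}(\mathbb{R}^n)$, I would simply estimate the modular: since $p,q\in\mathfrak{P}_0$ we have $p_\pm,q_\pm$ finite and positive, so the exponent $1/p(t)-1/q(t)$ is bounded on $(0,\infty)$, and hence for a suitable $\lambda>0$,
$$\int_0^{m}\Big(\frac{t^{1/p(t)-1/q(t)}}{\lambda}\Big)^{q(t)}\,dt\le C\int_0^{m}t^{q(t)(1/p(t)-1/q(t))}\,dt\le C\int_0^{m}\big(t^{\,c_1}+t^{\,-c_2}\big)\,dt,$$
where the exponents $q(t)/p(t)-1$ stay in a bounded interval; the integral is finite near $0$ because $q_-/p_+ -1>-1$, i.e. $q_-/p_+>0$, and finite near $m<\infty$ trivially. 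Adjusting $\lambda$ makes the modular $\le 1$, so $\chi_D\in{\mathcal L}^{p(\cdot),q(\cdot)}(\mathbb{R}^n)$.

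For the converse, suppose $|D|=\infty$. Then $(\chi_D)^*(t)=1$ for all $t>0$, so $\|\chi_D\|_{p(\cdot),q(\cdot)}=\|t^{1/p(t)-1/q(t)}\|_{q(\cdot)}$, and I must show this is infinite. The obstruction to finiteness is the behaviour at $t\to\infty$: for any $\lambda>0$ the integrand $\big(t^{1/p(t)-1/q(t)}/\lambda\big)^{q(t)}$ behaves like $t^{\,q(t)/p(t)-1}$ up to the bounded factor $\lambda^{-q(t)}$, and since $q(t)/p(t)-1\ge q_-/p_+ -1$ which may be negative, one cannot immediately conclude divergence from a crude lower bound — this is the one genuinely delicate point. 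I would handle it by contradiction: if $\|t^{1/p(t)-1/q(t)}\|_{q(\cdot)}=:\Lambda<\infty$, then $\int_0^\infty\big(t^{1/p(t)-1/q(t)}/(2\Lambda)\big)^{q(t)}dt\le 1$, in particular $\int_1^\infty (2\Lambda)^{-q(t)}t^{q(t)/p(t)-1}\,dt<\infty$; bounding $(2\Lambda)^{-q(t)}\ge c>0$ and $t^{q(t)/p(t)-1}\ge t^{q_-/p_+ -1}$ for $t\ge 1$ would only give convergence of $\int_1^\infty t^{q_-/p_+-1}dt$, which need not fail. Instead, the correct comparison uses that $\chi_D$ with $|D|=\infty$ dominates $\chi_E$ for every finite-measure $E\subset D$ of arbitrarily large measure, so $\|\chi_D\|_{p(\cdot),q(\cdot)}\ge\|\chi_E\|_{p(\cdot),q(\cdot)}=\|t^{1/p(t)-1/q(t)}\chi_{(0,|E|)}\|_{q(\cdot)}$; it therefore suffices to show $\|t^{1/p(t)-1/q(t)}\chi_{(0,m)}\|_{q(\cdot)}\to\infty$ as $m\to\infty$. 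By the definition of the $L^{q(\cdot)}$-quasinorm this follows once we check that $\int_0^m t^{q(t)/p(t)-1}\,dt\to\infty$, which is clear since the integrand is positive and locally integrable away from $0$ and the integral over $(1,m)$ already tends to $\infty$ (the integrand is bounded below by a positive constant on each $(1,2^j)$ after splitting into dyadic pieces where $q(t)/p(t)$ is comparable to a constant). Combining the two directions yields the claimed equivalence; the main obstacle, as indicated, is organizing the $t\to\infty$ estimate for the converse through the monotone-exhaustion argument rather than a direct pointwise lower bound.
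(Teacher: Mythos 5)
Your reduction is the same as the paper's: compute $(\chi_D)^*=\chi_{[0,|D|)}$ and decide when the modular $\int_0^{|D|}\lambda^{-q(t)}t^{q(t)/p(t)-1}\,dt$ is finite. The forward direction is fine. However, the ``genuinely delicate point'' you identify in the converse is not delicate, and the claim you make there is false: you assert that bounding the integrand below by $c\,t^{q_-/p_+-1}$ on $(1,\infty)$ ``would only give convergence of $\int_1^\infty t^{q_-/p_+-1}dt$, which need not fail.'' But $\int_1^\infty t^{\alpha}\,dt$ converges if and only if $\alpha<-1$, not merely when $\alpha<0$; since $p,q\in\mathfrak{P}_0$ gives $q_-/p_+>0$, the exponent $q_-/p_+-1$ is strictly greater than $-1$ and the integral \emph{always} diverges. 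So the crude pointwise lower bound you dismiss is exactly what closes the argument, and it is what the paper's one-line proof implicitly uses. Your subsequent exhaustion-by-finite-measure-sets detour is therefore unnecessary; moreover, as literally stated it has a soft spot: ``the integrand is bounded below by a positive constant on each $(1,2^j)$'' does not by itself force divergence of $\int_1^\infty$ (the constants could shrink with $j$; compare $e^{-t}$). What rescues that step is again $q_-/p_+>0$, which gives each dyadic block $(2^{j-1},2^j)$ a contribution of at least $2^{-1}\,2^{j\,q_-/p_+-1}$, bounded away from $0$ uniformly in $j$. In short: the proof is salvageable and ends up resting on the same inequality as the paper's, but you should delete the erroneous remark and replace the detour by the direct estimate $\lambda^{-q(t)}t^{q(t)/p(t)-1}\ge c\,t^{q_-/p_+-1}$ for $t\ge 1$, whose integral over $(1,\infty)$ diverges because $q_-/p_+-1>-1$.
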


\begin{proof} We have that $(\chi_D)^*=\chi_{[0,|D|)}$. Since $p,q\in \mathfrak{P}_0$, for every $\lambda>0$
$$
\int_0^\infty \left(\frac{(\chi_D)^*(t)t^{\frac{1}{p(t)}-\frac{1}{q(t)}}}{\lambda}\right)^{q(t)}dt=\int_0^{|D|}\frac{t^{-1+q(t)/p(t)}}{\lambda^{q(t)}}dt<\infty,
$$
if, and only if, $|D|<\infty$.
\end{proof}

\vspace{3mm}

{\bf  Proof of Theorem \ref{Th1.1}. }

We recall that we are taking $f\in S'(\mathbb{R}^n)$ and $\varphi\in S(\mathbb{R}^n)$ such that $\int\varphi(x)dx\neq 0$. It is clear that, for every $N\in \mathbb{N}$,
$$\|M_{\varphi}^0(f)\|_{p(\cdot),q(\cdot)}\leq\|M_{\varphi}(f)\|_{p(\cdot),q(\cdot)}\leq C\|f\|_{H_N^{p(\cdot),q(\cdot)}({\Bbb R}^n,A)}.$$
Hence, $(i)\Rightarrow (ii)\Rightarrow (iii)$.

Now, we are going to complete the proof. Let $M_0$ be the value in Lemma \ref{lem2.4} for $N=L=0$. Then, for a certain $C>0$,
\begin{equation}\label{eq2.1}
\|M_M^0(g)\|_{p(\cdot),q(\cdot)}\leq C\|M_{\varphi}(g)\|_{p(\cdot),q(\cdot)},\; g\in S'(\mathbb{R}^n)\,\,\,{\rm and}\,\,\,M\geq M_0.
\end{equation}
Indeed, by Lemma \ref{lem2.4}, there exists $C>0$ such that
$$M_M^{0,K,0}(g)(x)\leq CT_{\varphi}^{0,K,0}(g)(x),\;\;\;x\in{\Bbb R}^n,\,\,\,g\in S'(\mathbb{R}^n),\,K\in \mathbb{N},\,\,{\rm and}\,\,M\geq M_0.$$
Then, Lemma \ref{lem2.2} leads to
$$\|M_M^{0, K,0}(g)\|_{p(\cdot),q(\cdot)}\leq C\|M_{\varphi}^{K,0}(g)\|_{p(\cdot),q(\cdot)},\;\;\;g\in S'(\mathbb{R}^n),\;K\in\Bbb N\;\mbox{and}\;M\geq M_0.$$
By using monotone convergence theorem in ${\mathcal L}^{p(\cdot),q(\cdot)}({\Bbb R}^n)$ (see \cite[Definition 2.5 v)]{EKS}) jointly with \cite[Lemma 2.3]{CrW} and by letting $K\rightarrow\infty$ we conclude that (\ref{eq2.1}) holds.

\vspace{3mm}

Our next objective is  to see that, for a certain $C>0$,
\begin{equation}\label{F01}
\|M_{\varphi}(f)\|_{p(\cdot),q(\cdot)}\leq C\|M_{\varphi}^0(f)\|_{p(\cdot),q(\cdot)}.
\end{equation}
Note that by combining (\ref{eq2.1}), (\ref{F01}) and \cite[Proposition 3.10]{Bow1} we conclude that $(iii)\Rightarrow (ii)\Rightarrow (i)$.

In order to show (\ref{F01}) we firstly note that there exists $L_0\in\Bbb N$ such that $M_{\varphi}^{K,L_0}(f)\in \mathcal{L}^{p(\cdot),q(\cdot)}({\Bbb R}^n)$, for every $K\in\Bbb N$. Indeed, we denote by $\alpha_0$ the constant appearing in Lemma \ref{lem2.3}. According to Lemma \ref{lem2.5} we can find $L_0\in\Bbb N$ such that, for every $K\in\Bbb N$, there exists $C>0$ for which

\begin{equation}\label{eq2.2}
M_{\varphi}^{K,L_0}(f)(x)\leq C\max(1,\rho(x))^{-\alpha_0},\;\;\;x\in{\Bbb R}^n.
\end{equation}
Then, Lemma \ref{lem2.3} leads to $M_{\varphi}^{K,L_0}(f)\in {\mathcal L}^{p(\cdot),q(\cdot)}({\Bbb R}^n)$, for each $K\in\Bbb N$.

>From Lemmas \ref{lem2.2} and \ref{lem2.4} we infer that there exist $M_0\in\Bbb N$ and $C_0>0$ such that
\begin{equation}\label{eq2.3}
\|M_{M_0}^{0,K,L_0}(f)\|_{p(\cdot),q(\cdot)}\leq C_0\|M_{\varphi}^{K,L_0}(f)\|_{p(\cdot),q(\cdot)},
\end{equation}
for every $K\in\Bbb N$.

Fix $K_0\in\Bbb N$. We define the set $\Omega_0$ by
$$\Omega_0=\left\{x\in{\Bbb R}^n:\;M_{M_0}^{0,K_0,L_0}(f)(x)\leq C_2M_{\varphi}^{K_0,L_0}(f)(x)\right\},$$
where $C_2>0$ will be specified later.

By using (\ref{eq2.3}), \cite[Lemma 2.3]{CrW} and \cite[Theorem 2.4]{EKS} and choosing $r> 1$ such that $rp,rq\in{\Bbb P}_1$, we get

\begin{align*}
& \|M_{\varphi}^{K_0,L_0}(f)\|_{p(\cdot),q(\cdot)}=\|t^{{1\over{p(t)}}-{1\over{q(t)}}}\left(M_{\varphi}^{K_0,L_0}(f)\right)^*(t)\|_{q(\cdot)} \\
& =\|t^{{1\over{rp(t)}}-{1\over{rq(t)}}}\left([M_{\varphi}^{K_0,L_0}(f)]^*(t)\right)^{1/r}\|_{rq(\cdot)}^r\\
& =\|t^{{1\over{rp(t)}}-{1\over{rq(t)}}}\left([M_{\varphi}^{K_0,L_0}(f)]^{1/r}\right)^*(t)\|_{rq(\cdot)}^r=\|[M_{\varphi}^{K_0,L_0}(f)]^{1/r}\|_{rp(\cdot),rq(\cdot)}^r \\
& \leq \left(\|[M_{\varphi}^{K_0,L_0}(f)]^{1/r}\|_{rp(\cdot),rq(\cdot)}^{(1)}\right)^r  \\
& \leq A_1\left\{\left(\|[M_{\varphi}^{K_0,L_0}(f)\chi_{\Omega_0}]^{1/r}\|_{rp(\cdot),rq(\cdot)}^{(1)}\right)^r+\left(\|[M_{\varphi}^{K_0,L_0}(f)\chi_{\Omega_0^c}]^{1/r}\|_{rp(\cdot),rq(\cdot)}^{(1)}\right)^r \right\} \\
& \leq A_1\left\{\left(\|[M_{\varphi}^{K_0,L_0}(f)\chi_{\Omega_0}]^{1/r}\|_{rp(\cdot),rq(\cdot)}^{(1)}\right)^r+{1\over{C_2}}\left(\|[M_{M_0}^{0,K_0,L_0}(f)]^{1/r}\|_{rp(\cdot),rq(\cdot)}^{(1)}\right)^r \right\} \\
& \leq A_2\left\{\left(\|[M_{\varphi}^{K_0,L_0}(f)\chi_{\Omega_0}]^{1/r}\|_{rp(\cdot),rq(\cdot)}\right)^r+{1\over{C_2}}\left(\|[M_{M_0}^{0,K_0,L_0}(f)]^{1/r}\|_{rp(\cdot),rq(\cdot)}\right)^r \right\} \\
& \leq A_2\left(\|M_{\varphi}^{K_0,L_0}(f)\chi_{\Omega_0}\|_{p(\cdot),q(\cdot)}+{1\over{C_2}}\|M_{M_0}^{0,K_0,L_0}(f)\|_{p(\cdot),q(\cdot)}\right) \\
 & \leq A_2\left(\|M_{\varphi}^{K_0,L_0}(f)\chi_{\Omega_0}\|_{p(\cdot),q(\cdot)}+{{C_0}\over{C_2}}\|M_{\varphi}^{K_0,L_0}(f)\|_{p(\cdot),q(\cdot)}\right),
 \end{align*}
where $A_1,A_2>0$ depend only on $p,q$ and $r$. Hence, by taking $C_2\geq 2C_0A_2$ we obtain
$$\|M_{\varphi}^{K_0,L_0}(f)\|_{p(\cdot),q(\cdot)}\leq 2A_2\|M_{\varphi}^{K_0,L_0}(f)\chi_{\Omega_0}\|_{p(\cdot),q(\cdot)},$$
because $M_{\varphi}^{K_0,L_0}(f)\in {\mathcal L}^{p(\cdot),q(\cdot)}({\Bbb R}^n)$.

According to \cite[(7.16)]{Bow1} we have that
\begin{equation}\label{eq2.4}
M_{\varphi}^{K_0,L_0}(f)(x)\leq C\left[M_{HL}\left(M_{\varphi}^{0,K_0,L_0}(f)^{1/r}\right)(x)\right]^{r},\;\;\;x\in\Omega_0.
\end{equation}
The constant C>0 does not depend on $K_0$ but it depends on $L_0$.

>From (\ref{eq2.4}), Proposition \ref{propM_{HL}} and \cite[Lemma 2.3]{CrW} we obtain

\begin{align*}
         \|M_{\varphi}^{K_0,L_0}(f)\chi_{\Omega_0}\|_{p(\cdot),q(\cdot)}& \leq C \left\|\left(M_{HL}(M_{\varphi}^{0,K_0,L_0}(f)^{1/r})\right)^{r}\right\|_{p(\cdot),q(\cdot)} \\
        & = C \|M_{HL}(M_{\varphi}^{0,K_0,L_0}(f)^{1/r})\|_{rp(\cdot),rq(\cdot)}^{r}\\
        &\leq C \|M_{\varphi}^{0,K_0,L_0}(f)^{1/r}\|_{rp(\cdot),rq(\cdot)}^{r} \\
         & = C \|M_{\varphi}^{0,K_0,L_0}(f)\|_{p(\cdot),q(\cdot)}.
        \end{align*}
We conclude that
$$\|M_{\varphi}^{K_0,L_0}(f)\|_{p(\cdot),q(\cdot)}\leq C \|M_{\varphi}^{0,K_0,L_0}(f)\|_{p(\cdot),q(\cdot)}.$$
Note that again this constant $C>0$ does not depend on $K_0$ and it depends on $L_0$.

We have that $M_{\varphi}^{K,L_0}(f)(x)\;\uparrow\; M_{\varphi}(f)(x)$, as $K\rightarrow\infty$, for every $x\in{\Bbb R}^n$, and $M_{\varphi}^{0,K,L_0}(f)(x)\;\uparrow\; M_{\varphi}^0(f)(x)$, as $K\rightarrow\infty$, for every $x\in{\Bbb R}^n$. Hence, monotone convergence theorem in ${\mathcal L}^{p(\cdot),q(\cdot)}({\Bbb R}^n)$-setting (\cite[Theorem 2.8 and Definition 2.5, v)]{EKS}, jointly with \cite[Lemma 2.3]{CrW}), leads to
$$\|M_{\varphi}(f)\|_{p(\cdot),q(\cdot)}\leq C\|M_{\varphi}^0(f)\|_{p(\cdot),q(\cdot)}.$$
 Observe that the last inequality says that $M_{\varphi}(f)\in {\mathcal L}^{p(\cdot),q(\cdot)}({\Bbb R}^n)$, but the constant $C>0$ depends on $f$ because $L_0$ depends also on $f$.

On the other hand, since $M_{\varphi}(f)\in {\mathcal L}^{p(\cdot),q(\cdot)}({\Bbb R}^n)$, $M_{\varphi}^{K,0}(f)\in {\mathcal L}^{p(\cdot),q(\cdot)}({\Bbb R}^n)$, for every $K\in\Bbb N$. Hence, we can take $L_0=0$ at the beginning of the proof of this part. By proceeding as above we concluded that
$$\|M_{\varphi}(f)\|_{p(\cdot),q(\cdot)}\leq C\|M_{\varphi}^0(f)\|_{p(\cdot),q(\cdot)},$$
where $C>0$ does not depend on $f$.

Thus the proof of the theorem is finished.
$\Box$

The last part of this section is dedicated to establish some properties of the space $H^{p(\cdot),q(\cdot)}({\Bbb R}^n,A)$.

\begin{Prop}\label{prop2.2}
Let $p,q\in{\Bbb P}_0$. Then, $H^{p(\cdot),q(\cdot)}({\Bbb R}^n,A)$ is continuously contained in $S'({\Bbb R}^n)$.
\end{Prop}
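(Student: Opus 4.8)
The plan is to show that if $f \in H^{p(\cdot),q(\cdot)}({\Bbb R}^n,A)$, then the map $\phi \mapsto \langle f,\phi\rangle$ is continuous on $S({\Bbb R}^n)$, with a bound controlled by $\|f\|_{H^{p(\cdot),q(\cdot)}({\Bbb R}^n,A)}$ and finitely many Schwartz seminorms of $\phi$. Fix $\phi \in S({\Bbb R}^n)$, $\phi \neq 0$, and choose $N \geq N_0$ large. For a generic test function, after normalizing by an appropriate Schwartz seminorm, a suitable dilate/reflection of $\phi$ belongs (up to a constant $c_\phi$ depending on finitely many seminorms of $\phi$) to the class $S_N$; then for $x$ in a fixed bounded set, say $x \in B_{k_0}$ for some fixed $k_0$, one has the pointwise estimate $|(f*\tilde\phi_0)(x)| \leq c_\phi\, M_N(f)(y)$ for all $y \in x+B_0$, where $\tilde\phi(z)=\phi(-z)$ and $\langle f,\phi\rangle = (f*\tilde\phi)(0)$. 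The key point is to convert this pointwise bound on a set of positive measure into a bound on $\langle f,\phi\rangle$.

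The mechanism for that conversion is the following: fix a ball, say $B_{0}$ (measure $1$). From the pointwise inequality $|(f*\tilde\phi)(0)| \leq c_\phi M_N(f)(y)$ valid for all $y$ in some fixed anisotropic ball $Q$ (which will be $x+B_k$ for appropriate fixed $x,k$ once one tracks where $0$ sits relative to the nontangential cone), we integrate over $Q$, or rather use that $\inf_{y\in Q} M_N(f)(y) \le$ the value of $(M_N(f)\chi_Q)^*$ at $|Q|$, hence $|(f*\tilde\phi)(0)| \le c_\phi (M_N(f))^{**}(|Q|)$ or more simply $|(f*\tilde\phi)(0)|\,\chi_Q \le c_\phi M_N(f)\chi_Q$ pointwise, and then apply the quasinorm: $|(f*\tilde\phi)(0)|\, \|\chi_Q\|_{p(\cdot),q(\cdot)} \le c_\phi \|M_N(f)\|_{p(\cdot),q(\cdot)}$. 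By Lemma \ref{medida}, $\|\chi_Q\|_{p(\cdot),q(\cdot)}$ is a fixed positive finite constant since $|Q|<\infty$, so we get
$$
|\langle f,\phi\rangle| = |(f*\tilde\phi)(0)| \le C\, c_\phi\, \|M_N(f)\|_{p(\cdot),q(\cdot)} = C\, c_\phi\, \|f\|_{H^{p(\cdot),q(\cdot)}({\Bbb R}^n,A)}.
$$
Since $c_\phi$ is bounded by a finite sum of Schwartz seminorms of $\phi$ (this is exactly what membership of a rescaled $\tilde\phi$ in $S_N$ records), this is precisely the continuity of $f$ as a tempered distribution, with operator bound $\le C\|f\|_{H^{p(\cdot),q(\cdot)}({\Bbb R}^n,A)}$, giving the continuous inclusion.

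The main obstacle, and the step requiring care, is the bookkeeping in the first paragraph: identifying $0 \in x+B_k$ for a fixed pair $(x,k)$ so that $(f*\tilde\phi_k)(0)$ genuinely appears inside the supremum defining $M_N(f)(y)$ for all $y$ in a fixed ball of positive finite measure, and making explicit the constant $c_\phi$ so that a rescaled, reflected copy of $\phi$ lies in $S_N$. Concretely one writes $\tilde\phi = c_\phi\, \psi$ with $\psi \in S_N$ where $c_\phi = \max_{|\alpha|\le N}\sup_{x}(1+\rho_A(x))^N|D^\alpha \tilde\phi(x)|$, uses $(f*\tilde\phi_0)(0) = c_\phi (f*\psi_0)(0)$, and notes $|(f*\psi_0)(0)| \le M_\psi(f)(y) \le M_N(f)(y)$ for every $y$ with $0 \in y+B_0$, i.e. for every $y \in B_0$ (using symmetry of $B_0$ about the origin, or else replacing $B_0$ by a fixed dilate $B_{\omega}$ to absorb the sign). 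Everything else — the pointwise-to-quasinorm passage and the finiteness of $\|\chi_{B_0}\|_{p(\cdot),q(\cdot)}$ — is immediate from Lemma \ref{medida} and the definition of $M_N$, so no serious analytic difficulty remains beyond this indexing.
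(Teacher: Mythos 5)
Your proposal is correct and follows essentially the same route as the paper: represent $\langle f,\varphi\rangle$ as a convolution evaluated at the origin, dominate it pointwise by the (grand)maximal function on a fixed anisotropic ball of measure one, and use that the $\mathcal{L}^{p(\cdot),q(\cdot)}$-quasinorm of the indicator of that ball is a positive finite constant. The paper phrases the last step through the distribution function and the rearrangement restricted to $(1/2,1)$ rather than through monotonicity of the quasinorm plus Lemma \ref{medida}, but this is the same mechanism.
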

\begin{proof}
Let $f\in S'({\Bbb R}^n)$ and $\varphi\in S({\Bbb R}^n)$. We define $\lambda_0=|<f,\varphi >|$. We can write
$$\lambda_0=|(f*\varphi)(0)|\leq \sup_{z\in x+B_0}|(f*\varphi)(z)|\leq M_{\varphi}(f)(x),\;\;\;x\in B_0.$$
Then,
$$|\{x\in{\Bbb R}^n:\;M_{\varphi}(f)(x)>\lambda_0/2\}|\geq 1,$$
and
$$(M_{\varphi}(f))^*(t)\geq\lambda_0/2,\;\;\;t\in (0,1).$$
Hence, we get
$$\|M_{\varphi}(f)\|_{p(\cdot),q(\cdot)}\geq \|t^{{1\over{p(\cdot)}}-{1\over{q(\cdot)}}}(M_{\varphi}(f))^*(t)\chi_{(1/2,1)}(t)\|_{q(\cdot)}\geq {{\lambda_0}\over 2}\|t^{{1\over{p(\cdot)}}-{1\over{q(\cdot)}}}\chi_{(1/2,1)}(t)\|_{q(\cdot)}.$$
Since $\|t^{{1\over{p(\cdot)}}-{1\over{q(\cdot)}}}\chi_{(1/2,1)}(t)\|_{q(\cdot)}>0$ we conclude the desired result.
\end{proof}

\begin{Prop}\label{prop2.3}
Let $p,q\in{\Bbb P}_0$. If $f\in H^{p(\cdot),q(\cdot)}({\Bbb R}^n,A)$, then $f$ is a bounded distribution in $S'({\Bbb R}^n)$.
\end{Prop}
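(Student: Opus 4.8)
The plan is to show that $f$ is a bounded distribution in the sense of Stein, i.e. that $f*\phi\in L^\infty(\mathbb{R}^n)$ for every $\phi\in S(\mathbb{R}^n)$. Fix $f\in H^{p(\cdot),q(\cdot)}(\mathbb{R}^n,A)$ and $\phi\in S(\mathbb{R}^n)$. The key observation is the pointwise bound
$$
|(f*\phi)(x-y)|\leq M_\phi(f)(x),\qquad y\in B_0,
$$
which follows directly from the definition of the nontangential maximal function $M_\phi$ (taking $k=0$ there, so that $x-y\in x+B_0$). Since $M_\phi(f)$ is controlled by $M_N(f)$ for $N$ large enough (via $S_N$ containing a suitable dilate of $\phi$, up to a constant, as in the proof of $(i)\Rightarrow(ii)$ of Theorem \ref{Th1.1}), we may assume $M_\phi(f)\in\mathcal{L}^{p(\cdot),q(\cdot)}(\mathbb{R}^n)$ with norm controlled by $\|f\|_{H^{p(\cdot),q(\cdot)}(\mathbb{R}^n,A)}$.

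Next I would argue by contradiction. Suppose $f*\phi$ is not bounded; then for every $\lambda>0$ the set $E_\lambda=\{x\in\mathbb{R}^n:|(f*\phi)(x)|>\lambda\}$ is nonempty. If $x_0\in E_\lambda$, then for all $x$ with $x-x_0\in B_0$, i.e. $x\in x_0+B_0$ (using that $B_0$ is symmetric, or just absorbing a reflection into $\phi$), the pointwise bound gives $M_\phi(f)(x)\geq |(f*\phi)(x_0)|>\lambda$. Hence $\{M_\phi(f)>\lambda\}$ contains a translate of $B_0$, so
$$
\mu_{M_\phi(f)}(\lambda)=|\{x:M_\phi(f)(x)>\lambda\}|\geq |B_0|=1,
$$
and therefore $(M_\phi(f))^*(t)\geq\lambda$ for $t\in(0,1)$. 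This forces, exactly as in the proof of Proposition \ref{prop2.2},
$$
\|M_\phi(f)\|_{p(\cdot),q(\cdot)}\geq\lambda\,\big\|t^{\frac1{p(\cdot)}-\frac1{q(\cdot)}}\chi_{(1/2,1)}(t)\big\|_{q(\cdot)}=:\lambda\,c_0,
$$
with $c_0>0$ a constant depending only on $p,q$. Letting $\lambda\to\infty$ contradicts the finiteness of $\|M_\phi(f)\|_{p(\cdot),q(\cdot)}$. Consequently $\|f*\phi\|_{L^\infty(\mathbb{R}^n)}\leq c_0^{-1}\|M_\phi(f)\|_{p(\cdot),q(\cdot)}<\infty$.

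I would conclude by noting that this holds for every $\phi\in S(\mathbb{R}^n)$, which is precisely the definition of $f$ being a bounded distribution. The main technical point to watch is the reduction step: one must make sure that $M_\phi(f)$ itself (not merely $M_N(f)$) lies in $\mathcal{L}^{p(\cdot),q(\cdot)}(\mathbb{R}^n)$, which is guaranteed by Theorem \ref{Th1.1} since $\int\phi$ may be assumed nonzero after a harmless modification, or more simply by observing that $M_\phi(f)\leq C M_N(f)$ whenever $\phi$ (suitably normalized) belongs to $S_N$. Everything else is a direct rearrangement estimate mimicking Proposition \ref{prop2.2}, so there is no serious obstacle; the only care needed is the symmetry/reflection bookkeeping in passing from $x_0\in E_\lambda$ to a full translate of $B_0$ inside the super-level set of $M_\phi(f)$.
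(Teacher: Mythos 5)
Your proof is correct and follows essentially the same route as the paper: the pointwise bound $|(f*\phi)(x_0)|\leq M_\phi(f)(x)$ for $x\in x_0+B_0$, the observation that the super-level set of $M_\phi(f)$ then has measure at least $|B_0|=1$, and the resulting lower bound on $\|M_\phi(f)\|_{p(\cdot),q(\cdot)}$ via the rearrangement, exactly as in Proposition \ref{prop2.2}. The contradiction framing is an inessential repackaging of the paper's direct estimate, and your reduction $M_\phi(f)\leq C M_N(f)$ (a normalized multiple of $\phi$ lies in $S_N$) is the same implicit step the paper uses; the symmetry of the ellipsoid $B_0$ makes the reflection bookkeeping harmless.
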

\begin{proof}
Let $f\in H^{p(\cdot),q(\cdot)}({\Bbb R}^n,A)$ and $\varphi\in S({\Bbb R}^n)$. For every $x\in {\Bbb R}^n$, we have that
$$|(f*\varphi)(x)|\leq \sup_{z\in y+B_0}|(f*\varphi)(z)|\leq M_{\varphi}(f)(y),\;\;\;y\in x+B_0.$$
By proceeding as in the proof of Proposition \ref{prop2.2}, we deduce that, for a certain $C>0$,
$$|(f*\varphi)(x)|\leq C \|f\|_{H^{p(\cdot),q(\cdot)}({\Bbb R}^n,A)},\;\;\;x\in{\Bbb R}^n.$$
Thus, we prove that $f$ is a bounded distribution in $S'({\Bbb R}^n)$.
\end{proof}

\begin{Prop}\label{prop2.4}
Assume that $p,q\in{\Bbb P}_0$. Then, $H^{p(\cdot),q(\cdot)}({\Bbb R}^n,A)$ is complete.
\end{Prop}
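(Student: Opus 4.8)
The plan is to show that every sequence $(f_j)_{j\in\Bbb N}$ in $H^{p(\cdot),q(\cdot)}({\Bbb R}^n,A)$ which is rapidly Cauchy (in the sense that one can pass to a subsequence with $\sum_j 2^j\|f_{j+1}-f_j\|_{H^{p(\cdot),q(\cdot)}({\Bbb R}^n,A)}<\infty$) converges in the space; by a standard argument for quasi-Banach spaces this suffices for completeness. First I would invoke Proposition \ref{prop2.2}: since $H^{p(\cdot),q(\cdot)}({\Bbb R}^n,A)\hookrightarrow S'({\Bbb R}^n)$ continuously, a Cauchy sequence in $H^{p(\cdot),q(\cdot)}({\Bbb R}^n,A)$ is Cauchy in $S'({\Bbb R}^n)$, hence converges to some $f\in S'({\Bbb R}^n)$ (the space $S'({\Bbb R}^n)$ is sequentially complete). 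It then remains to check that $f\in H^{p(\cdot),q(\cdot)}({\Bbb R}^n,A)$ and that $f_j\to f$ in the quasinorm of $H^{p(\cdot),q(\cdot)}({\Bbb R}^n,A)$.

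The key step is to control $M_N(f-f_j)$ in ${\mathcal L}^{p(\cdot),q(\cdot)}({\Bbb R}^n)$. Fix $N\geq N_0$ (from Theorem \ref{Th1.1}) so that $\|\cdot\|_{H^{p(\cdot),q(\cdot)}({\Bbb R}^n,A)}$ is realized by $M_N$. Since $M_N$ is subadditive, for the rapidly Cauchy subsequence we have pointwise
\[
M_N(f-f_j)\leq \sum_{i\geq j}M_N(f_{i+1}-f_i),
\]
using that $f-f_j=\sum_{i\geq j}(f_{i+1}-f_i)$ in $S'({\Bbb R}^n)$ and that the grandmaximal function of a convergent series in $S'$ is bounded by the sum of the grandmaximal functions (this follows by interchanging the supremum over $\varphi\in S_N$ and $(k,y)$ with the convergent sum, since $f\mapsto (f*\varphi_k)(y)$ is continuous on $S'$). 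Then I would choose a fixed exponent $\theta\in(0,1)$ with $\theta\leq p_-((0,\infty))$ and $\theta\leq q_-((0,\infty))$, so that $\|\cdot\|_{{\mathcal L}^{p(\cdot),q(\cdot)}({\Bbb R}^n)}^{\theta}$ is subadditive (equivalently, work with the $\theta$-power space ${\mathcal L}^{p(\cdot)/\theta,q(\cdot)/\theta}({\Bbb R}^n)$ via \cite[Lemma 2.3]{CrW}, which is a normable Lorentz space when $p/\theta,q/\theta\in{\Bbb P}_1$). This gives
\[
\|f-f_j\|_{H^{p(\cdot),q(\cdot)}({\Bbb R}^n,A)}^{\theta}=\|M_N(f-f_j)\|_{p(\cdot),q(\cdot)}^{\theta}
\leq \sum_{i\geq j}\|M_N(f_{i+1}-f_i)\|_{p(\cdot),q(\cdot)}^{\theta}
=\sum_{i\geq j}\|f_{i+1}-f_i\|_{H^{p(\cdot),q(\cdot)}({\Bbb R}^n,A)}^{\theta},
\]
and the right-hand side is the tail of a convergent series, hence tends to $0$ as $j\to\infty$. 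Taking $j=j_0$ first shows $M_N(f-f_{j_0})\in{\mathcal L}^{p(\cdot),q(\cdot)}({\Bbb R}^n)$, whence $M_N(f)\leq M_N(f-f_{j_0})+M_N(f_{j_0})\in{\mathcal L}^{p(\cdot),q(\cdot)}({\Bbb R}^n)$ and $f\in H^{p(\cdot),q(\cdot)}({\Bbb R}^n,A)$; letting $j\to\infty$ shows $f_j\to f$ in the quasinorm. Finally, a general Cauchy sequence converges because it has a convergent subsequence.

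The main obstacle I anticipate is the justification that the grandmaximal function behaves subadditively along an $S'$-convergent series, i.e. that $M_N\big(\sum_i g_i\big)\leq \sum_i M_N(g_i)$ pointwise when $\sum_i g_i$ converges in $S'({\Bbb R}^n)$; this requires knowing that each $g_i$, and the partial sums, have grandmaximal functions in ${\mathcal L}^{p(\cdot),q(\cdot)}({\Bbb R}^n)$ so that the interchange of sum and supremum is legitimate, and that for each test function the evaluation is continuous in the $S'$ topology — routine but needing care. A secondary technical point is ensuring the existence of $\theta\in(0,1)$ with $p/\theta,q/\theta\in{\Bbb P}_1$; this is immediate from $p,q\in{\Bbb P}_0$ since the log-Hölder conditions and boundedness are preserved under dividing the exponents by a constant, and one only needs $\theta<\min\{1,p_-,q_-\}$.
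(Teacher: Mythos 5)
Your argument is correct and follows essentially the same route as the paper's proof: reduce completeness to convergence of rapidly convergent series, obtain the limit in $S'(\mathbb{R}^n)$ via Proposition \ref{prop2.2}, bound $M_N$ of the limit pointwise by the sum of the $M_N$'s, and use the power trick $\|\cdot\|_{p(\cdot),q(\cdot)}^{\theta}=\|(\cdot)^{\theta}\|_{p(\cdot)/\theta,q(\cdot)/\theta}$ together with the normability of $\mathcal{L}^{p(\cdot)/\theta,q(\cdot)/\theta}(\mathbb{R}^n)$. The only cosmetic difference is that the paper keeps the constant $C$ coming from the equivalence of $\|\cdot\|$ and $\|\cdot\|^{(1)}$ in \cite[Theorem 2.4]{EKS} (so the $\theta$-power of the quasinorm is subadditive only up to a constant) and justifies the passage to infinite sums by the monotone convergence property of \cite[Definition 2.5 v)]{EKS}, neither of which affects your conclusion.
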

\begin{proof}
We choose $r\in (0,1]$ such that ${{p(\cdot)}\over r},{{q(\cdot)}\over r}\in {\Bbb P}_1$. In order to see that $ H^{p(\cdot),q(\cdot)}({\Bbb R}^n,A)$ is complete it is sufficient to prove that if $(f_k)_{k\in\Bbb N}$ is a sequence in  $H^{p(\cdot),q(\cdot)}({\Bbb R}^n,A)$ such that $\displaystyle\sum_{k\in\Bbb N}\|f_k\|_{H^{p(\cdot),q(\cdot)}({\Bbb R}^n,A)}^r<\infty$, then the series $\displaystyle\sum_{k\in\Bbb N}f_k$ converges in $H^{p(\cdot),q(\cdot)}({\Bbb R}^n,A)$ (see, for instance \cite[Theorem 1.6, p. 5]{BS}). Assume that $(f_k)_{k\in\Bbb N}$ is a sequence in  $H^{p(\cdot),q(\cdot)}({\Bbb R}^n,A)$ such that $\displaystyle\sum_{k\in\Bbb N}\|f_k\|_{H^{p(\cdot),q(\cdot)}({\Bbb R}^n,A)}^r<\infty$. We define, for every $j\in\Bbb N$, $\displaystyle F_j=\sum_{k=0}^{j}f_k$. According to
\cite[Lemma 2.3]{CrW} and \cite[Theorem 2.4]{EKS}, if $j,\ell\in\Bbb N$, $j<\ell$, we get
\begin{align*}
 \|F_{\ell}-F_j\|_{H^{p(\cdot),q(\cdot)}({\Bbb R}^n,A)}^r&=\Big\|\sum_{k=j +1}^{\ell}f_k\Big\|_{H^{p(\cdot),q(\cdot)}({\Bbb R}^n,A)}^r \leq \Big \|\sum_{k=j +1}^{\ell}M_N(f_k)\Big\|_{p(\cdot),q(\cdot)}^r \\
& \hspace{-3cm}=\Big\|\Big(\sum_{k=j +1}^{\ell}M_N(f_k)\Big)^r\Big\|_{p(\cdot)/r,q(\cdot)/r} \leq \Big\|\sum_{k=j +1}^{\ell}(M_N(f_k))^r\Big\|_{p(\cdot)/r,q(\cdot)/r}  \leq \Big\|\sum_{k=j +1}^{\ell}(M_N(f_k))^r\Big\|_{p(\cdot)/r,q(\cdot)/r}^{(1)}\\
& \hspace{-3cm}\leq\sum_{k=j +1}^{\ell} \|(M_N(f_k))^r\|_{p(\cdot)/r,q(\cdot)/r}^{(1)}  \leq C \sum_{k=j +1}^{\ell} \|(M_N(f_k))^r\|_{p(\cdot)/r,q(\cdot)/r} =C\sum_{k=j +1}^{\ell} \|f_k\|_{H^{p(\cdot),q(\cdot)}({\Bbb R}^n,A)}^r.
\end{align*}
Hence, $(F_j)_{j\in\Bbb N}$ is a Cauchy sequence in $H^{p(\cdot),q(\cdot)}({\Bbb R}^n,A)$. By Proposition \ref{prop2.2} $(F_j)_{j\in\Bbb N}$ is a Cauchy sequence in $S'({\Bbb R}^n)$. Then, there exists $F\in S'({\Bbb R}^n)$ such that $F_j\rightarrow F$, as $j\rightarrow\infty$, in $S'({\Bbb R}^n)$. We have that
$$M_N(F)\leq\lim_{j\rightarrow\infty}\sum_{k=0}^jM_N(f_k).$$
According to \cite[Theorem 2.8 and Definition 2.5 v)]{EKS} by proceeding as above we obtain

\begin{align*}
\|M_N(F)\|_{p(\cdot),q(\cdot)}^r& \leq\left\|\lim_{j\rightarrow\infty}\sum_{k=0}^{j}M_N(f_k)\right\|_{p(\cdot),q(\cdot)}^r\\
& = \lim_{j\rightarrow\infty} \left\|\sum_{k=0}^{j}M_N(f_k)\right\|_{p(\cdot),q(\cdot)}^r \\
& \leq  \sum_{k\in\Bbb N} \|(M_N(f_k))^r\|_{p(\cdot)/r,q(\cdot)/r}\\
& = C\sum_{k\in\Bbb N} \|f_k\|_{H^{p(\cdot),q(\cdot)}({\Bbb R}^n,A)}^r.
\end{align*}
Then, $F\in H^{p(\cdot),q(\cdot)}({\Bbb R}^n,A)$. Also, we have that
$$\|F-\sum_{k=0}^{j}f_k\|_{H^{p(\cdot),q(\cdot)}({\Bbb R}^n,A)}^r\leq C\sum_{k=j+1}^{\infty} \|f_k\|_{H^{p(\cdot),q(\cdot)}({\Bbb R}^n,A)}^r,\;\;\;j\in\Bbb N.$$
Hence, $\displaystyle F=\sum_{k\in\Bbb N}f_k$ in the sense of convergence in $H^{p(\cdot),q(\cdot)}({\Bbb R}^n,A)$.
\end{proof}

%\newpage
%%%%%%%%%%%%%%%%%%%%%%%%%%%%%%%%%%%%%%%%%%%%%%%%%%%%%%%%%%%%%%%%%%%%%%%%%%%%%%%%%%
\section{A Calder\'on-Zygmund decomposition} \label{sec:3}
%%%%%%%%%%%%%%%%%%%%%%%%%%%%%%%%%%%%%%%%%%%%%%%%%%%%%%%%%%%%%%%%%%%%%%%%%%%%%%%%%%

In this section we study a Calder\'on-Zygmund decomposition for our anisotropic setting (associated with the matrix dilation $A$) for a distribution $f\in S'({\Bbb R}^n)$ satisfying that $|\{x\in{\Bbb R}^n:\;M_Nf(x)>\lambda\}|<\infty$, where $N\in\Bbb N$, $N\geq 2$ and $\lambda>0$. We will use the ideas and results established in \cite[Section 5, Chapter I]{Bow1}. Also we prove new properties involving variable exponent Hardy-Lorentz norms that will be useful in the sequel.

Let $\lambda>0$, $N\in\Bbb N$, $N\geq 2$ and $f\in S'({\Bbb R}^n)$ such that $|\Omega_\lambda|<\infty$ where
$$\Omega_\lambda=\{x\in{\Bbb R}^n:\;M_N(f)(x)>\lambda\}.$$
According to the Whitney Lemma (\cite[Lemma 2.7]{Bow1}) there exist sequences $(x_j)_{j\in\Bbb N}\subset\Omega_\lambda$ and $(\ell_j)_{j\in\Bbb N}\subset\Bbb Z$ satisfying that
\begin{equation}\label{eq3.1}
\Omega_\lambda=\bigcup_{j\in\Bbb N}(x_j+B_{\ell_j});
\end{equation}

\begin{equation}\label{eq3.2}
(x_i+B_{\ell_i-\omega})\bigcap(x_j+B_{\ell_j-\omega})=\emptyset,\;\;i,j\in\Bbb N,\;i\neq j;
\end{equation}

\begin{equation}\label{eq3.3}
(x_j+B_{\ell_j+4\omega})\bigcap{\Omega_\lambda}^c=\emptyset,\;\;(x_j+B_{\ell_j+4\omega+1})\bigcap{\Omega_\lambda}^c\neq\emptyset,\;\;j\in\Bbb N;
\end{equation}

\begin{equation}\label{eq3.4}
\mbox{if}\; i,j\in\Bbb N\;\mbox{and}\; (x_i+B_{\ell_i+2\omega})\bigcap(x_j+B_{\ell_j+2\omega})\neq\emptyset,\;\;\mbox{then}\;\;|\ell_i-\ell_j|\leq\omega;
\end{equation}

\begin{equation}\label{eq3.5}
\sharp\{j\in\Bbb N:\; (x_i+B_{\ell_i+2\omega})\bigcap(x_j+B_{\ell_j+2\omega})\neq\emptyset\}\leq L,\;\;i\in\Bbb N.
\end{equation}
Here $L$ denotes a nonnegative integer that does not depend on $\Omega_\lambda$. If $E\subset{\Bbb R}^n$ by $\sharp E$ we represent the cardinality of $E$.

Assume now that $\theta\in C^{\infty}({\Bbb R}^n)$ satisfies that $\mbox{supp}\;\theta\subset B_{\omega}$, $0\leq\theta\leq 1$, and $\theta=1$ on $B_0$. We define, for every $j\in\Bbb N$,
$$\theta_j(x)=\theta(A^{-\ell_j}(x-x_j)),\;\;\;x\in{\Bbb R}^n,$$
and, for every $i\in\Bbb N$,
$$\zeta_i(x)=\left\{\begin{array}{ll}
\theta_i(x)/(\sum_{j\in\Bbb N}\theta_j(x)),& x\in\Omega_\lambda, \\
 & \\
 0, & x\in\Omega_\lambda^c.
 \end{array}\right.$$
The sequence $\{\zeta_i\}_{i\in\Bbb N}$ is a smooth partition of unity associated with the covering $\{x_i+B_{\ell_i+\omega}\}_{i\in\Bbb N}$ of $\Omega$.

Let $i,s\in\Bbb N$. By ${\mathcal P}_s$ we denote the linear space of polynomials in ${\Bbb R}^n$ with degree at most $s$. ${\mathcal P}_s$ is endowed with the norm $\|\cdot\|_{i,s}$ defined by
$$\|P\|_{i,s}=\left({1\over{\int\zeta_i}}\int_{{\Bbb R}^n}|P(x)|^2\zeta_i(x)dx\right)^{1/2},\;\;\;P\in{\mathcal P}_s.$$
Thus $({\mathcal P}_s,\|\cdot\|_{i,s})$ is a Hilbert space. We consider on ${\mathcal P}_s$ the functional $T_{f,i,s}$ given by
$$T_{f,i,s}(Q)={1\over{\int\zeta_i}}<f,Q\zeta_i>,\;\;\;Q\in{\mathcal P}_s.$$
$T_{f,i,s}$ is continuous in $({\mathcal P}_s,\|\cdot\|_{i,s})$ and there exists $P_{f,i,s}\in{\mathcal P}_s$ such that
$$T_{f,i,s}(Q)={1\over{\int\zeta_i}}\int_{{\Bbb R}^n}P_{f,i,s}(x)Q(x)\zeta_i(x)dx,\;\;\;Q\in{\mathcal P}_s.$$
To simplify we write $P_i$ to refer to $P_{f,i,s}$. We define $b_i=(f-P_i)\zeta_i$.

We will find values of $s$ and $N$ for which the series $\displaystyle\sum_{i\in\Bbb N} b_i$ converges in $S'({\Bbb R}^n)$ provided that $f\in H^{p(\cdot),q(\cdot)}({\Bbb R}^n,A)$. Then, we define $\displaystyle g=f-\sum_{i\in\Bbb N} b_i$.

The representation $\displaystyle f=g+\sum_{i\in\Bbb N} b_i$ is known as the Calder\'on-Zygmund decomposition of $f$ of degree $s$ and height $\lambda$ associated with $M_N(f)$.

Note firstly that if $f\in H^{p(\cdot),q(\cdot)}({\Bbb R}^n,A)$ and $N\in\Bbb N$, $N\geq N_0$, then $\|\chi_{\{x\in{\Bbb R}^n:\;M_N(f)(x)>\mu\}}\|_{p(\cdot),q(\cdot)}<\infty$ for every $\mu>0$, and by Lemma \ref{medida}, $|\{x\in{\Bbb R}^n:\;M_N(f)(x)>\mu\}|<\infty$, for every $\mu>0$. Here $N_0$ is the one defined in Theorem \ref{Th1.1}.

Our next objective is to prove that $L^1_{loc}({\Bbb R}^n)\cap H^{p(\cdot),q(\cdot)}({\Bbb R}^n,A)$ is a dense subspace of $H^{p(\cdot),q(\cdot)}({\Bbb R}^n,A)$. This property will be useful to deal with the proof that every element of $H^{p(\cdot),q(\cdot)}({\Bbb R}^n,A)$ can be represented as a sum of a special kind of distributions, so called atoms, which will be developed in the next section.

We need to establish some auxiliary results. Firstly we prove the absolute continuity of the norm $\|\cdot\|_{p(.),q(.)}$.

\begin{Prop}\label{prop2.1}
Let $(E_k)_{k\in\Bbb N}$ be a sequence of measurable sets satisfying that $E_k\supset E_{k+1}$, $k\in\Bbb N$, $|E_1|<\infty$, and $|{\cap}_{k\in\Bbb N}E_k|=0$. Assume that $p,q\in{\Bbb P}_0$. If $f\in {\mathcal L}^{p(\cdot),q(\cdot)}({\Bbb R}^n)$, then
$$\|f\chi_{E_k}\|_{p(\cdot),q(\cdot)}\rightarrow\;0,\;\;\;\mbox{as}\;k\rightarrow\infty.$$
\end{Prop}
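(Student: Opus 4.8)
The plan is to prove the absolute continuity of the quasinorm $\|\cdot\|_{p(\cdot),q(\cdot)}$ by transferring the problem to the quasinorm $\|\cdot\|_{q(\cdot)}$ on $L^{q(\cdot)}(0,\infty)$ via the rearrangement, and then exploiting the dominated convergence theorem together with the key geometric fact that intersecting a set with the $E_k$ can only shrink the distribution function, hence the decreasing rearrangement. First I would observe that if $E_k\supset E_{k+1}$ with $|\cap_k E_k|=0$, then for each fixed $s>0$ the sets $\{|f\chi_{E_k}|>s\}=\{|f|>s\}\cap E_k$ form a decreasing sequence whose intersection has measure zero, so $\mu_{f\chi_{E_k}}(s)\downarrow 0$ as $k\to\infty$. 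By the definition of the decreasing rearrangement this forces $(f\chi_{E_k})^*(t)\downarrow 0$ as $k\to\infty$, for every $t>0$. Moreover $(f\chi_{E_k})^*(t)\le f^*(t)$ for all $t$ and all $k$, since $\mu_{f\chi_{E_k}}\le\mu_f$ pointwise.

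Next I would reduce to a statement purely about $L^{q(\cdot)}(0,\infty)$. Writing $w(t)=t^{1/p(t)-1/q(t)}$, we have $\|f\chi_{E_k}\|_{p(\cdot),q(\cdot)}=\|w(t)(f\chi_{E_k})^*(t)\|_{q(\cdot)}$. From the previous paragraph, the functions $g_k(t):=w(t)(f\chi_{E_k})^*(t)$ satisfy $0\le g_k\le g$ pointwise, where $g(t)=w(t)f^*(t)$ belongs to $L^{q(\cdot)}(0,\infty)$ because $f\in{\mathcal L}^{p(\cdot),q(\cdot)}({\Bbb R}^n)$, and $g_k(t)\downarrow 0$ pointwise a.e. on $(0,\infty)$. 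So the claim becomes: in $L^{q(\cdot)}(0,\infty)$ with $q\in{\Bbb P}_0$, if $0\le g_k\le g\in L^{q(\cdot)}$ and $g_k\to 0$ a.e., then $\|g_k\|_{q(\cdot)}\to 0$. This is the dominated convergence property (absolute continuity of the norm) for variable exponent Lebesgue spaces. One way to see it: fix $\varepsilon>0$; since $g_k/\varepsilon\le g/\varepsilon$ and $g/\varepsilon\in L^{q(\cdot)}$, the modular $\rho_{q(\cdot)}(g_k/\varepsilon)=\int_0^\infty (g_k(t)/\varepsilon)^{q(t)}dt$ is dominated by the integrable function $t\mapsto\max(1,(g(t)/\varepsilon)^{q(t)})$ (using $q_+<\infty$ to control the values below $1$, splitting where $g/\varepsilon\lessgtr1$) and tends to $0$ pointwise, so by the ordinary dominated convergence theorem $\rho_{q(\cdot)}(g_k/\varepsilon)\to 0$; in particular it is $\le 1$ for $k$ large, giving $\|g_k\|_{q(\cdot)}\le\varepsilon$ eventually. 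Since $\varepsilon>0$ is arbitrary, $\|g_k\|_{q(\cdot)}\to 0$, which is exactly $\|f\chi_{E_k}\|_{p(\cdot),q(\cdot)}\to 0$.

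I expect the main obstacle to be the careful handling of the modular estimate with the variable exponent $q(t)$: one must be slightly careful that a domination of $g_k$ by $g$ does not immediately dominate $(g_k/\varepsilon)^{q(t)}$ by an integrable function unless one separates the region where $g(t)/\varepsilon\le 1$ (where the power is bounded by $1$, integrable only over the finite-measure part — but here the whole line is involved, so one instead uses that on this region $(g_k/\varepsilon)^{q(t)}\to0$ pointwise and is bounded by $1$, and one restricts attention to where $g_k>0$, controlling the tail via integrability of $g$ itself) from the region where $g(t)/\varepsilon>1$ (where $(g_k/\varepsilon)^{q(t)}\le(g/\varepsilon)^{q(t)}$, which is integrable since $g/\varepsilon\in L^{q(\cdot)}$). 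A cleaner route avoiding this dichotomy is to invoke directly that $L^{q(\cdot)}(0,\infty)$ has absolutely continuous norm (a standard fact for variable exponent Lebesgue spaces when $q_+<\infty$; see \cite{CF} or \cite{DHHR}), combined with the monotone pointwise decay $g_k\downarrow0$ and the domination $g_k\le g\in L^{q(\cdot)}$; then the result is immediate. Either way, all the anisotropic/Lorentz structure is absorbed into the single reduction $\|f\chi_{E_k}\|_{p(\cdot),q(\cdot)}=\|w(t)(f\chi_{E_k})^*(t)\|_{q(\cdot)}$ and the elementary rearrangement monotonicity, so the proof is short once that reduction is in place.
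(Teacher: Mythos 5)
Your proof is correct and follows essentially the same route as the paper's: both reduce the claim to showing that $t^{1/p(t)-1/q(t)}(f\chi_{E_k})^*(t)$ tends to $0$ pointwise (the paper notes it is in fact eventually identically zero at each fixed $t>0$, since $|E_k|\to 0$ forces $(f\chi_{E_k})^*(t)=0$ once $|E_k|\le t$) while being dominated by $t^{1/p(t)-1/q(t)}f^*(t)\in L^{q(\cdot)}(0,\infty)$, and then apply dominated convergence in the variable exponent Lebesgue space. The only difference is cosmetic: the paper cites the dominated convergence lemma of Diening et al.\ directly, whereas you unwind it at the level of the modular; your worry about the domination is unnecessary, since $x\mapsto x^{q(t)}$ is increasing and $0<q_-\le q_+<\infty$ makes the modular of $g/\varepsilon$ finite for every $\varepsilon>0$.
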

\begin{proof}
Let $f\in {\mathcal L}^{p(\cdot),q(\cdot)}({\Bbb R}^n)$ and $k\in\Bbb N$. We have that $(f\chi_{E_k})^*\leq f^*$. Then, $f\chi_{E_k}\in{\mathcal L}^{p(\cdot),q(\cdot)}({\Bbb R}^n)$. Moreover, since $|\cap_{k\in\Bbb N} E_k|=\lim_{k\to\infty}|E_k|=0$, for every $t>0$ there exists $k_0\in \mathbb{N}$ such that $(f\chi_{E_k})^*(t)=0$, $k\in \mathbb{N}$, $k\ge k_0$. Hence, for every $t>0$,
$$t^{{1\over{p(\cdot)}}-{1\over{q(\cdot)}}}(f\chi_{E_k})^*(t)\rightarrow 0,\;\;\;\mbox{as}\;k\rightarrow\infty.$$
By using dominated convergence theorem (\cite[Lemma 3.2.8]{DHHR}) jointly with \cite[Lemma 2.3]{CrW} and by taking into account that $q\in \mathbb{P}_0$ and that  $f\in {\mathcal L}^{p(\cdot),q(\cdot)}({\Bbb R}^n)$, we obtain
$$\|f\chi_{E_k}\|_{p(\cdot),q(\cdot)}\rightarrow 0,\;\;\;\mbox{as}\;k\rightarrow\infty.$$
\end{proof}

Note that the last property also holds by more general exponent functions $p$ and $q$.

\begin{Prop}\label{prop3.2}
Assume that $p,q\in{\Bbb P}_0$. There exists $s_0\in \mathbb{N}$, such that, for every $s\in \mathbb{N}$, $s\ge s_0$, and each $N\in \mathbb{N}$, $N>max\{N_0,s\}$, where $N_0$ is defined in Theorem \ref{Th1.1}, the following two properties holds.

(i) Let $f\in H^{p(\cdot),q(\cdot)}({\Bbb R}^n,A)$ and $\lambda>0$. If $f=g+\sum_{i\in\Bbb N} b_{i}$ is the anisotropic Calder\'on-Zygmund decomposition of $f$ associated to $M_Nf$ of height $\lambda$ and degree $s$, then the series $\sum_{i\in\Bbb N} b_i$ converges in $H^{p(\cdot),q(\cdot)}({\Bbb R}^n,A)$.

(ii) Suppose that $f\in H^{p(\cdot),q(\cdot)}({\Bbb R}^n,A)$ and that, for every $j\in\Bbb Z$, $f=g_j+\sum_{i\in\Bbb N} b_{i,j}$ is the anisotropic Calder\'on-Zygmund decomposition of $f$ associated to $M_Nf$ of height $2^j$ and degree $s$. Then, $(g_j)_{j\in\Bbb Z}\subset H^{p(\cdot),q(\cdot)}({\Bbb R}^n,A)$ and $(g_j)_{j\in\Bbb Z}$ converges to $f$, as $j\to +\infty$, in $ H^{p(\cdot),q(\cdot)}({\Bbb R}^n,A)$.
\end{Prop}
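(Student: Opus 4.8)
The plan is to deduce both parts from a single pointwise estimate for $\sum_{i\in\mathbb{N}}M_N(b_i)$, after which everything follows from the completeness of $H^{p(\cdot),q(\cdot)}(\mathbb{R}^n,A)$ (Proposition~\ref{prop2.4}) and the absolute continuity of $\|\cdot\|_{p(\cdot),q(\cdot)}$ (Proposition~\ref{prop2.1}). First I would fix $\beta>1$ large enough that $\beta p,\beta q\in\mathbb{P}_1$ (possible since $p,q\in\mathbb{P}_0$), and then take $s_0\in\mathbb{N}$ so large that, for every $s\ge s_0$ and every $N>\max\{N_0,s\}$, the anisotropic Calder\'on--Zygmund estimates of Bownik \cite[Section~5]{Bow1} produce an off-diagonal decay exponent at least $\beta$; concretely, for every $i\in\mathbb{N}$ and $x\in\mathbb{R}^n$,
\begin{equation*}
M_N(b_i)(x)\le C\,M_N(f)(x)\,\chi_{x_i+B_{\ell_i+2\omega}}(x)+C\lambda\Big(\frac{b^{\ell_i}}{b^{\ell_i}+\rho_A(x-x_i)}\Big)^{\beta}\chi_{(x_i+B_{\ell_i+2\omega})^c}(x),
\end{equation*}
the first summand coming from $x_i+B_{\ell_i+2\omega}\subset\Omega_\lambda$ and the bound $M_N(b_i)\le C\,M_N(f)$ on $x_i+B_{\ell_i+2\omega}$, and the second from the moment cancellation of $b_i$ together with the decay of the functions in $S_N$. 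Since $(\mathbb{R}^n,\rho_A,|\cdot|)$ is a space of homogeneous type, $b^{\ell_i}/(b^{\ell_i}+\rho_A(x-x_i))\le C\,M_{HL}(\chi_{x_i+B_{\ell_i}})(x)$, so the estimate reads $M_N(b_i)(x)\le C\,M_N(f)(x)\chi_{x_i+B_{\ell_i+2\omega}}(x)+C\lambda\,[M_{HL}(\chi_{x_i+B_{\ell_i}})(x)]^{\beta}$ for all $x\in\mathbb{R}^n$.

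Next I would sum in $i$. By \eqref{eq3.5} one has $\sum_i\chi_{x_i+B_{\ell_i+2\omega}}\le L$, so, using also \eqref{eq3.1}, $\sum_i\chi_{x_i+B_{\ell_i}}\le L\chi_{\Omega_\lambda}$, and hence
\begin{equation*}
G:=\sum_{i\in\mathbb{N}}M_N(b_i)\le C\,M_N(f)\,\chi_{\Omega_\lambda}+C\lambda\sum_{i\in\mathbb{N}}[M_{HL}(\chi_{x_i+B_{\ell_i}})]^{\beta}.
\end{equation*}
The first term has $\|\cdot\|_{p(\cdot),q(\cdot)}$-norm at most $\|M_N(f)\|_{p(\cdot),q(\cdot)}=\|f\|_{H^{p(\cdot),q(\cdot)}(\mathbb{R}^n,A)}$. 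For the second, by \cite[Lemma~2.3]{CrW} I would move to the $\beta$-power spaces and invoke Proposition~\ref{Prop vectorial} with exponent $\beta$ (licit since $\beta p,\beta q\in\mathbb{P}_1$):
\begin{align*}
\Big\|\sum_{i}[M_{HL}(\chi_{x_i+B_{\ell_i}})]^{\beta}\Big\|_{p(\cdot),q(\cdot)}&=\Big\|\Big(\sum_{i}[M_{HL}(\chi_{x_i+B_{\ell_i}})]^{\beta}\Big)^{1/\beta}\Big\|_{\beta p(\cdot),\beta q(\cdot)}^{\beta}\\
&\le C\Big\|\Big(\sum_{i}\chi_{x_i+B_{\ell_i}}\Big)^{1/\beta}\Big\|_{\beta p(\cdot),\beta q(\cdot)}^{\beta}=C\Big\|\sum_{i}\chi_{x_i+B_{\ell_i}}\Big\|_{p(\cdot),q(\cdot)}\\
&\le CL\,\|\chi_{\Omega_\lambda}\|_{p(\cdot),q(\cdot)}.
\end{align*}
This is finite by Lemma~\ref{medida}, because $f\in H^{p(\cdot),q(\cdot)}(\mathbb{R}^n,A)$ and $N\ge N_0$ force $|\Omega_\lambda|<\infty$. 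Thus $G\in\mathcal{L}^{p(\cdot),q(\cdot)}(\mathbb{R}^n)$ with $\|G\|_{p(\cdot),q(\cdot)}\le C\big(\|f\|_{H^{p(\cdot),q(\cdot)}(\mathbb{R}^n,A)}+\lambda\,\|\chi_{\Omega_\lambda}\|_{p(\cdot),q(\cdot)}\big)$.

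To finish (i), note that $G\in\mathcal{L}^{p(\cdot),q(\cdot)}(\mathbb{R}^n)$ is finite a.e., so the tails $\sum_{i>k}M_N(b_i)$ decrease to $0$ a.e.\ and are dominated by $G$; the dominated convergence theorem in $\mathcal{L}^{p(\cdot),q(\cdot)}(\mathbb{R}^n)$, as used in the proof of Proposition~\ref{prop2.1}, then yields $\big\|\sum_{i>k}M_N(b_i)\big\|_{p(\cdot),q(\cdot)}\to0$ as $k\to\infty$. Since $M_N$ is subadditive, for $j<k$ we get $\big\|\sum_{i=j+1}^{k}b_i\big\|_{H^{p(\cdot),q(\cdot)}(\mathbb{R}^n,A)}\le\big\|\sum_{i>j}M_N(b_i)\big\|_{p(\cdot),q(\cdot)}$, which tends to $0$ as $j\to\infty$; hence the partial sums are a Cauchy sequence in the complete space $H^{p(\cdot),q(\cdot)}(\mathbb{R}^n,A)$ (Proposition~\ref{prop2.4}), and, since $H^{p(\cdot),q(\cdot)}(\mathbb{R}^n,A)\hookrightarrow S'(\mathbb{R}^n)$ continuously (Proposition~\ref{prop2.2}), the limit coincides with the distributional sum $\sum_{i}b_i$.

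For (ii), applying (i) with $\lambda=2^j$ shows that $\sum_i b_{i,j}$ converges in $H^{p(\cdot),q(\cdot)}(\mathbb{R}^n,A)$, whence $g_j=f-\sum_i b_{i,j}\in H^{p(\cdot),q(\cdot)}(\mathbb{R}^n,A)$, and the estimate above gives $\|g_j-f\|_{H^{p(\cdot),q(\cdot)}(\mathbb{R}^n,A)}=\big\|M_N(\sum_i b_{i,j})\big\|_{p(\cdot),q(\cdot)}\le\big\|\sum_i M_N(b_{i,j})\big\|_{p(\cdot),q(\cdot)}\le C\,\|M_N(f)\chi_{\Omega_{2^j}}\|_{p(\cdot),q(\cdot)}+C\,2^j\,\|\chi_{\Omega_{2^j}}\|_{p(\cdot),q(\cdot)}$. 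As $j\to+\infty$ the sets $\Omega_{2^j}$ decrease, $\big|\bigcap_{j\in\mathbb{Z}}\Omega_{2^j}\big|=|\{M_N(f)=\infty\}|=0$ because $M_N(f)\in\mathcal{L}^{p(\cdot),q(\cdot)}(\mathbb{R}^n)$ is finite a.e., and $|\Omega_{2^{j_0}}|<\infty$ for some $j_0$; so Proposition~\ref{prop2.1} gives $\|M_N(f)\chi_{\Omega_{2^j}}\|_{p(\cdot),q(\cdot)}\to0$, and since $2^j<M_N(f)$ on $\Omega_{2^j}$ also $2^j\,\|\chi_{\Omega_{2^j}}\|_{p(\cdot),q(\cdot)}\le\|M_N(f)\chi_{\Omega_{2^j}}\|_{p(\cdot),q(\cdot)}\to0$. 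Hence $g_j\to f$ in $H^{p(\cdot),q(\cdot)}(\mathbb{R}^n,A)$ as $j\to+\infty$. The step I expect to be the main obstacle is the first one: carrying Bownik's pointwise bounds for $M_N(b_i)$ over to the present normalization and making sure the off-diagonal decay exponent can be taken $>1$ and compatible with the constraint $\beta p,\beta q\in\mathbb{P}_1$ needed to apply Proposition~\ref{Prop vectorial}, uniformly over all admissible pairs $(s,N)$; once that estimate is available, the remaining steps only assemble tools already at hand.
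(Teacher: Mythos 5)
Your proof is correct and follows essentially the same route as the paper's: the same pointwise bounds on $M_N(b_i)$ from Bownik's Lemmas 5.4 and 5.6 (your single power $\beta$ of $b^{\ell_i}/(b^{\ell_i}+\rho_A(x-x_i))$, with $s_0$ chosen so that $\lambda_-^{s+1}\geq b^{\beta}$, is exactly the paper's annulus sum under the condition $\lambda_-^{-s}b^{r}\leq 1$), the same passage to the $\beta$-power space to apply Proposition \ref{Prop vectorial}, and the same use of absolute continuity of the quasinorm together with completeness. The only cosmetic difference is that you control the Cauchy tails by dominated convergence against the dominating function $G=\sum_{i}M_N(b_i)$, whereas the paper bounds $\|M_N(\sum_{i=j}^m b_i)\|_{p(\cdot),q(\cdot)}$ by $\|M_N(f)\chi_{E_j}\|_{p(\cdot),q(\cdot)}$ with $E_j=\bigcup_{i\geq j}(x_i+B_{\ell_i+2\omega})$ and invokes Proposition \ref{prop2.1} directly.
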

\begin{proof}

(i) Let $s,N\in \mathbb{N}$, $N>\max\{N_0,s\}$. The Calder\'on-Zygmund decomposition of $f$ associated to $M_Nf$ of height $\lambda>0$ and degree $s$ is
$f=g+\sum_{i\in\Bbb N} b_i$. We are going to specify $s$ and $N$ in order that the series $\sum_{i\in\Bbb N} b_i$ converges in $H^{p(\cdot),q(\cdot)}({\Bbb R}^n,A)$.

 By using \cite[Lemmas 5.4 and 5.6]{Bow1} we get that there exists $C>0$ so that, for every $i\in \mathbb{N}$,
\begin{align*}
         M_N(b_{i})(x)\leq C\left(M_Nf(x)\chi_{ x_i+B_{\ell_i+2\omega}}(x)+\lambda\sum_{k\in\Bbb N} \lambda_-^{-k(s+1)}\chi_{ x_i+(B_{\ell_i+2\omega+1+k}\backslash B_{\ell_i+2\omega+k})}(x) \right),\;\;x\in{\Bbb R}^n.
        \end{align*}
Let $j,m\in \mathbb{N}$, $j<m$. We infer, for every $x\in \mathbb{R}^n$,
\begin{align*}
          M_N\left( \sum_{i=j}^mb_i\right)(x)& \leq \sum_{i=j}^m M_N(b_{i})(x) \\
         & \leq C\left(M_Nf(x)\sum_{i=j}^m\chi_{x_i+B_{\ell_i+2\omega}}(x)+\lambda\sum_{i=j}^m\sum_{k\in\Bbb N} \lambda_-^{-k(s+1)}\chi_{ x_i+(B_{\ell_i+2\omega+1+k}\backslash B_{\ell_i+2\omega+k})}(x) \right).
        \end{align*}
We also have that, for every $x\in x_i+(B_{\ell_i+2\omega+1+k}\backslash B_{\ell_i+2\omega+k})$, with $i,k\in \mathbb{N}$, $i\le m$,
$$M_{HL}\left(\chi_{ x_i+B_{\ell_i+2\omega}}\right)(x)\geq {1\over{|x_i+B_{\ell_i+2\omega+1+k}|}}\int_{x_i+B_{\ell_i+2\omega+1+k}}\chi_{ x_i+B_{\ell_i+2\omega}}(y)dy=b^{-k-1}.$$

We choose $r>1$ such that $rp,rq\in{\Bbb P}_1$. Then, we take $s\in \Bbb N$ such that $\lambda_-^{-s}b^r\leq 1$ and $N_0<s$. We get, for every $i\in \mathbb{N}$, $i\le m$,
\begin{align*}
        \sum_{k=0}^\infty \lambda_-^{-k(s+1)}\chi_{x_i+(B_{\ell_i+2\omega+1+k}\backslash B_{\ell_i+2\omega+k})}(x) & \leq C\max_{k\in\Bbb N}(\lambda_-^{-s-1}b^r)^k\left(M_{HL}\left(\chi_{ x_i+B_{\ell_i+2\omega}}\right)(x)\right)^r \\
        & \leq C \left(M_{HL}\left(\chi_{ x_i+B_{\ell_i+2\omega}}\right)(x)\right)^r,\;\;\;x\in (x_i+B_{\ell_i+2\omega})^c.
        \end{align*}
Hence we obtain
$$ M_N\left( \sum_{i=j}^m b_{i}\right)(x)\leq C_0\left(M_Nf(x)\sum_{i=j}^m\chi_{x_i+B_{\ell_i+2\omega}}(x)+\lambda\sum_{i=j}^m \left(M_{HL}\left(\chi_{ x_i+B_{\ell_i+2\omega}}\right)(x)\right)^r\right),\;\;\;x\in{\Bbb R}^n.$$

By using \cite[Lemma 2.3]{CrW},  since $\mathcal{L}^{p(\cdot),q(\cdot)}(\mathbb{R}^n)$ is a quasi Banach space we obtain
\begin{align}\label{X1}
       & \left\|M_N\left(\sum_{i=j}^m b_{i}\right)\right\|_{p(\cdot),q(\cdot)}\leq C\left(\left\|M_N(f)\sum_{i=j}^m\chi_{ x_i+B_{\ell_i+2\omega}}\right\|_{p(\cdot),q(\cdot)}\right.\nonumber \\
        & \left.+ \lambda\left\|\sum_{i=j}^m\left(M_{HL}\left(\chi_{ x_i+B_{\ell_i+2\omega}}\right)\right)^r\right\|_{p(\cdot),q(\cdot)}\right) \nonumber\\
       & = C\left(\left\|M_N(f)\sum_{i=j}^m\chi_{x_i+B_{\ell_i+2\omega}}\right\|_{p(\cdot),q(\cdot)}+ \lambda\left\|\left(\sum_{i=j}^m\left(M_{HL}\left(\chi_{ x_i+B_{\ell_i+2\omega}}\right)\right)^r\right)^{1/r}\right\|_{rp(\cdot),rq(\cdot)}^r\right)
        \end{align}
By using Proposition \ref{Prop vectorial} we get
\begin{align*}
        \left\|\left(\sum_{i=j}^m\left(M_{HL}\left(\chi_{ x_i+B_{\ell_i+2\omega}}\right)\right)^r\right)^{1/r}\right\|_{rp(\cdot),rq(\cdot)}^r & \leq C\left\|\left(\sum_{i=j}^m\chi_{ x_i+B_{\ell_i+2\omega}}\right)^{1/r}\right\|_{rp(\cdot),rq(\cdot)}^r \\
        & = C\left\|\sum_{i=j}^m\chi_{ x_i+B_{\ell_i+2\omega}}\right\|_{p(\cdot),q(\cdot)}.
        \end{align*}

>From (\ref{eq3.5}) and (\ref{X1}) it follows that

\begin{align*}
        \left\|M_N(\sum_{i=j}^mb_{i})\right\|_{p(\cdot),q(\cdot)} & \leq  C\left(\left\|M_N(f)\sum_{i=j}^m\chi_{ x_i+B_{\ell_i+2\omega}}\right\|_{p(\cdot),q(\cdot)}
        + \lambda \left\|\sum_{i=j}^m\chi_{x_i+B_{\ell_i+2\omega}}\right\|_{p(\cdot),q(\cdot)}\right) \\
       & \leq C \left\|M_N(f)\sum_{i=j}^m\chi_{x_i+B_{\ell_i+2\omega}}\right\|_{p(\cdot),q(\cdot)}\\
       &  \leq C \left\|M_N(f)\chi_{\cup_{i=j}^\infty(x_i+B_{\ell_i+2\omega})}\right\|_{p(\cdot),q(\cdot)}.
        \end{align*}

We define, for every $k\in \mathbb{N}$, $E_k=\cup_{i\in\Bbb N} (x_i+B_{\ell_i+2\omega})$. By (\ref{eq3.5}) there exists $C>0$ such that $\sum_{i=k}^\infty \chi_{x_i+B_{\ell_i+2\omega}}\le C\chi_{E_k}$, $k\in \mathbb{N}$. By (\ref{eq3.1}) and (\ref{eq3.2}), $\cup_{i\in\Bbb N}(x_i+B_{\ell_i-\omega})\subset \Omega_\lambda$, and then $\sum_{i\in\Bbb N} |x_i+B_{\ell_i-\omega}|=b^{-\omega}\sum_{i\in\Bbb N} b^{\ell_i}\le |\Omega_\lambda|<\infty$, where $\Omega_\lambda=\{x\in \mathbb{R}^n:\,M_N(f)(x)>\lambda\}$. We deduce that
$$
|E_k|\le \sum_{i=k}^\infty |x_i+B_{\ell_i+2\omega}|=b^{2\omega}\sum_{i=k}^\infty b^{\ell_i},\,\,\,k\in \mathbb{N}.
$$
Proposition \ref{prop2.1} implies that
$$
\lim_{k\to \infty}\|M_N(f)\chi_{E_k}\|_{p(\cdot),q(\cdot)}=0.
$$
Hence, the sequence $\{\sum_{i=0}^k b_i\}_{k\in\Bbb N}$ is Cauchy in $H^{p(\cdot),q(\cdot)}({\Bbb R}^n,A)$. Since $H^{p(\cdot),q(\cdot)}({\Bbb R}^n,A)$ is complete (Proposition \ref{prop2.4}), the series $\sum_{i\in\Bbb N} b_i$ converges in $H^{p(\cdot),q(\cdot)}({\Bbb R}^n,A)$.

(ii) In order to prove this property we can proceed as in the proof of (i). Assume that $j\in\Bbb Z$. We define $\Omega_j=\{x\in{\Bbb R}^n:\;M_Nf(x)>2^j\}$. By putting $b_j=\sum_{i\in\Bbb N} b_{i,j}$, since, as we have just proved in (i), the last series converges in $H^{p(\cdot),q(\cdot)}({\Bbb R}^n,A)$ and then in $S'(\mathbb{R}^n)$, we obtain, for a chosen $r>1$ verifying that $rp,rq\in{\Bbb P}_1$,
$$M_N(b_{j})(x)\leq C_0\left(M_Nf(x)\chi_{\Omega_j}(x)+2^j\sum_{i\in\Bbb N}\left(M_{HL}\left(\chi_{ x_i+B_{\ell_i+2\omega}}\right)(x)\right)^r\right),\;\;\;x\in{\Bbb R}^n.$$
It follows that
\begin{align}\label{X111}
       & \|M_N(b_{j})\|_{p(\cdot),q(\cdot)}\nonumber\\
       & \leq C\left(\|M_N(f)\chi_{\Omega_j}\|_{p(\cdot),q(\cdot)}+ 2^j\left\|\left(\sum_{i\in\Bbb N}\left(M_{HL}\left(\chi_{ x_i+B_{\ell_i+2\omega}}\right)\right)^r\right)^{1/r}\right\|_{rp(\cdot),rq(\cdot)}^r\right)
        \end{align}
>From Proposition \ref{Prop vectorial} we get
\begin{align*}
       \left\|\left(\sum_{i\in\Bbb N}\left(M_{HL} \left(\chi_{\{ x_i+B_{\ell_i+2\omega}\}}\right)\right)^r\right)^{1/r}\right\|_{rp(\cdot),rq(\cdot)}^r&\leq C\left\|\left(\sum_{i\in\Bbb N}\chi_{x_i+B_{\ell_i+2\omega}}\right)^{1/r}\right\|_{rp(\cdot),rq(\cdot)}^r \\
        & = C\left\|\sum_{i\in\Bbb N}\chi_{x_i+B_{\ell_i+2\omega}}\right\|_{p(\cdot),q(\cdot)}\\
        & \le C\|\chi_{\Omega_j}\|_{p(\cdot),q(\cdot)}.
       \end{align*}

>From (\ref{X111}) it follows that

\begin{align*}
        \|M_N(b_{j})\|_{p(\cdot),q(\cdot)}  & \leq  C\left(\|M_N(f)\chi_{\Omega_j}\|_{p(\cdot),q(\cdot)}
        + 2^j \|\chi_{\Omega_j}\|_{p(\cdot),q(\cdot)}\right) \\
       & \leq C \|M_N(f)\chi_{\Omega_j}\|_{p(\cdot),q(\cdot)}.
        \end{align*}

Since $f\in  H^{p(\cdot),q(\cdot)}({\Bbb R}^n,A)$, by invoking again \cite[Lemma 2.3]{CrW}, we have that
$$\|M_N(f)\|_{p(\cdot),q(\cdot)}^{1/r}=\|(M_N(f))^{1/r}\|_{rp(\cdot),rq(\cdot)}<\infty.$$
Then, by \cite[Theorem 2.8]{EKS} (see \cite[Definition 2.5 vii)]{EKS}), $M_N(f)(x)<\infty$, a.e. $x\in{\Bbb R}^n$. Hence, $M_N(f)\chi_{\Omega_j}\downarrow 0$, as $j\rightarrow +\infty$, for a.e. $x\in{\Bbb R}^n$. According to Proposition \ref{prop2.1} we conclude that $\|M_N(f)\chi_{\Omega_j}\|_{p(\cdot),q(\cdot)}\rightarrow 0$, as $j\rightarrow +\infty$. Hence, $\|M_N(b_j)\|_{p(\cdot),q(\cdot)}\rightarrow 0$, as $j\rightarrow +\infty$, and  $\|f-g_j\|_{H^{p(\cdot),q(\cdot)}({\Bbb R}^n,A)}\rightarrow 0$, as $j\rightarrow +\infty$.

The proof of this Proposition is now finished.
\end{proof}

By $C_c^\infty(\mathbb{R}^n)$ we denote the space of smooth functions with compact support in $\mathbb{R}^n$. We say that a distribution $h\in S'(\mathbb{R}^n)$ is in $L^1_{loc}(\mathbb{R}^n)$ when there exists a (unique) $H\in L^1_{loc}(\mathbb{R}^n)$ such that
$$
<h,\phi>=\int_{\mathbb{R}^n}H(x)\phi(x)dx,\,\,\,\phi\in C_c^\infty(\mathbb{R}^n).
$$
The space $S'(\mathbb{R}^n)\cap L^1_{loc}(\mathbb{R}^n)$ is also sometimes denoted by $S_r(\mathbb{R}^n)$ and it was studied, for instance, in \cite{Die}, \cite{Szm2} and \cite{Szm1}.

\begin{Prop}\label{prop3.3}
If $f\in S'({\Bbb R}^n)$, $\lambda >0$, $s,N\in \mathbb{N}$, $N\ge 2$ and $s<N$, and $f=g+\sum_{i\in\Bbb N}b_i$ is the anisotropic Calder\'on-Zygmund decomposition of $f$ associated to $M_N(f)$ of height $\lambda$ and degree $s$, then $g\in L_{loc}^1({\Bbb R}^n)$.
\end{Prop}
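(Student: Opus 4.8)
The plan is to identify the distribution $g=f-\sum_i b_i$ with an essentially bounded function, which in particular gives $g\in L^1_{loc}(\mathbb{R}^n)$. Recall that $b_i=(f-P_i)\zeta_i$, that $\supp\zeta_i\subseteq x_i+B_{\ell_i+\omega}\subseteq x_i+B_{\ell_i+4\omega}\subseteq\Omega_\lambda$ (the last inclusion by \eqref{eq3.3}), and that, since $N\ge 2$ and $s<N$, the series $\sum_i b_i$ converges in $S'(\mathbb{R}^n)$, so that $g$ is well defined (this is part of the construction of the decomposition, see \cite[\S 5]{Bow1}). I will analyse $g$ on $\Omega_\lambda$ and on its complement and then patch across $\partial\Omega_\lambda$.

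On $\Omega_\lambda$: let $\phi\in C_c^\infty(\Omega_\lambda)$. Since the cover $\{x_i+B_{\ell_i+\omega}\}_i$ is locally finite inside $\Omega_\lambda$ and $\sum_i\zeta_i\equiv 1$ there, the function $\phi\big(1-\sum_i\zeta_i\big)$ vanishes identically, so
\[
\langle g,\phi\rangle=\langle f,\phi\rangle-\sum_i\langle(f-P_i)\zeta_i,\phi\rangle=\Big\langle f,\phi\Big(1-\sum_i\zeta_i\Big)\Big\rangle+\int_{\mathbb{R}^n}h\,\phi=\int_{\mathbb{R}^n}h\,\phi ,
\]
where $h:=\sum_i P_i\zeta_i$. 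This $h$ is a genuine function: by \eqref{eq3.3} there is $z_i\in(x_i+B_{\ell_i+4\omega+1})\cap\Omega_\lambda^c$, so $M_N(f)(z_i)\le\lambda$, and the standard estimate for the projection polynomials (see \cite[\S 5]{Bow1}) gives $\sup_{x_i+B_{\ell_i+\omega}}|P_i|\le C\lambda$; combined with the bounded overlap \eqref{eq3.5} and $\supp\zeta_i\subseteq\Omega_\lambda$ this yields $|h|\le C\lambda\,\chi_{\Omega_\lambda}$. As $|\Omega_\lambda|<\infty$, $h\in L^\infty(\mathbb{R}^n)$, $h$ vanishes off $\Omega_\lambda$, and $g=h$ on $\Omega_\lambda$.

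On $\Omega_\lambda^c$: if $\phi\in C_c^\infty(\mathbb{R}^n)$ has $\supp\phi\cap\Omega_\lambda=\emptyset$, then $\zeta_i\phi\equiv 0$ for every $i$, so $\langle b_i,\phi\rangle=0$ and $\langle g,\phi\rangle=\langle f,\phi\rangle$; hence $g=f$ on the open set $(\overline{\Omega_\lambda})^c$, where $M_N(f)\le\lambda$. Fix $\varphi\in C_c^\infty(\mathbb{R}^n)$ with $\int\varphi=1$; then $c\varphi\in S_N$ for a suitable $c>0$, so $|(f*\varphi_k)(x)|\le M_\varphi(f)(x)\le c^{-1}M_N(f)(x)\le c^{-1}\lambda$ for every $k\in\mathbb{Z}$ and $x\in(\overline{\Omega_\lambda})^c$. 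Since $\{f*\varphi_k\}_k$ is bounded in $L^\infty((\overline{\Omega_\lambda})^c)$ and $f*\varphi_k\to f$ in the sense of distributions on $(\overline{\Omega_\lambda})^c$ as $k\to-\infty$, passing to a weak-$*$ convergent subsequence shows that $g=f$ coincides on $(\overline{\Omega_\lambda})^c$ with an $L^\infty$ function bounded by $c^{-1}\lambda$. Thus $g$ already agrees with an essentially bounded function on the open set $\Omega_\lambda\cup(\overline{\Omega_\lambda})^c=\mathbb{R}^n\setminus\partial\Omega_\lambda$.

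The remaining point, controlling $g$ near $\partial\Omega_\lambda$ (a set that need not be null), is the crux. Here one uses the pointwise bound for $M_N(b_i)$ established in the proof of Proposition~\ref{prop3.2},
\[
M_N(b_i)(x)\le C\Big(M_N(f)(x)\,\chi_{x_i+B_{\ell_i+2\omega}}(x)+\lambda\sum_{k\in\mathbb{N}}\lambda_-^{-k(s+1)}\chi_{x_i+(B_{\ell_i+2\omega+1+k}\setminus B_{\ell_i+2\omega+k})}(x)\Big).
\]
For $x\notin\Omega_\lambda$ the first term vanishes (as $x_i+B_{\ell_i+2\omega}\subseteq\Omega_\lambda$), and the bounded-intersection properties \eqref{eq3.4}--\eqref{eq3.5} together with $\lambda_->1$ give $\sum_i M_N(b_i)(x)\le C\lambda$; hence $M_N(g)(x)\le M_N(f)(x)+\sum_i M_N(b_i)(x)\le C\lambda$ for every $x\in\Omega_\lambda^c$, in particular for $x\in\partial\Omega_\lambda$. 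Writing $g=h+u$ with $u:=g-h$ supported in $\Omega_\lambda^c$, one gets $M_N(u)\le M_N(g)+M_N(h)\le C\lambda$ at every point of $\supp u$ — here $M_N(h)\le C\|h\|_\infty\le C\lambda$ everywhere, which is where $N\ge 2$ enters, since it makes $\sup_{\psi\in S_N}\|\psi\|_{L^1(\mathbb{R}^n)}$ finite. A standard manipulation with the weight $(1+\rho_A)^N$ defining $S_N$ (moving the centre of a test bump to the nearest point of $\supp u$ costs only a bounded factor) transfers this to $M_N(u)\le C\lambda$ on all of $\mathbb{R}^n$, and then the reproducing argument above, applied globally to $u$, gives $u\in L^\infty(\mathbb{R}^n)$. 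Therefore $g=h+u$ is an essentially bounded function, and in particular $g\in L^1_{loc}(\mathbb{R}^n)$. The main obstacle is precisely this last step: the geometric estimate $\sum_i M_N(b_i)\le C\lambda$ on $\Omega_\lambda^c$ and the passage from a bound for $M_N(u)$ on $\supp u$ to a global bound, where the hypotheses $N\ge 2$, $s<N$ and the expansiveness of $A$ are genuinely used, following \cite[\S 5]{Bow1}.
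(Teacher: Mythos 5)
Your identification of $g$ on $\Omega_\lambda$ (where it equals $h=\sum_iP_i\zeta_i$, bounded by $C\lambda$) and on $(\overline{\Omega_\lambda})^c$ (where it equals $f$, bounded there by the approximate-identity/weak-$*$ argument) is sound, and you correctly locate the difficulty at $\partial\Omega_\lambda$. But the step you use to resolve it fails. You claim that for $x\in\Omega_\lambda^c$ the bound $\sum_iM_N(b_i)(x)\le C\lambda$ follows from (\ref{eq3.4})--(\ref{eq3.5}) together with $\lambda_->1$. Those properties give bounded overlap only of the balls $x_i+B_{\ell_i+2\omega}$; they say nothing about the dilated annuli $x_i+(B_{\ell_i+2\omega+1+k}\setminus B_{\ell_i+2\omega+k})$, whose overlap grows without bound as $k$ increases. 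What must be controlled is $\sum_i\lambda_-^{-t_i(x)(s+1)}$ (in the notation of the paper's proof of Proposition \ref{prop3.3}), and this quantity is \emph{not} uniformly bounded on $\Omega_\lambda^c$: if $x\in\partial\Omega_\lambda$ is approached by infinitely many Whitney balls whose size is comparable to their distance to $x$ (for instance when $\Omega_\lambda$ contains a disjoint union $\bigcup_mB(y_m,|y_m|/100)$ with $y_m\to x$), then $t_i(x)=O(1)$ for infinitely many $i$ and the sum diverges. Consequently the asserted bound $M_N(g)\le C\lambda$ on $\Omega_\lambda^c$, and with it the conclusion that $g$ is essentially bounded, is not established. (A secondary issue: the ``standard manipulation'' transferring $M_N(u)\le C\lambda$ from $\supp u$ to all of $\mathbb{R}^n$ is not justified as stated, since for $\psi\in S_N$ without compact support and scales $k$ with $b^k\ll\rho(x-y)$ the recentering costs an unbounded factor.)

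What is true, and what the paper uses instead, is the \emph{integrated} version of this estimate: by \cite[Lemma 5.9]{Bow1} one has $M_N(g)\le C\lambda\sum_i\lambda_-^{-t_i(s+1)}+M_N(f)\chi_{\Omega_\lambda^c}$, and by the proof of \cite[Lemma 5.10(i)]{Bow1}, $\int_{\mathbb{R}^n}\sum_i\lambda_-^{-t_i(x)(s+1)}dx\le C|\Omega_\lambda|<\infty$; hence $M_N(g)\in L^1_{loc}(\mathbb{R}^n)$, not $L^\infty(\mathbb{R}^n)$. One then runs your approximate-identity argument globally rather than piecewise: $|g*\varphi_{k}|\le CM_N(g)$ uniformly in $k$, so on each compact set $F$ the family $\{g*\varphi_{k}\}$ is dominated by a fixed $L^1(F)$ function and admits a weakly convergent subsequence in $L^1(F)$; a diagonal argument produces $G\in L^1_{loc}(\mathbb{R}^n)$ with $g*\varphi_{k_j}\to G$ weak-$*$ on every compact, while $g*\varphi_{k_j}\to g$ in $S'(\mathbb{R}^n)$, so $G$ represents $g$. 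If you replace your pointwise claim on $\Omega_\lambda^c$ by this integral estimate and apply the weak-$*$ argument to $g$ itself (instead of trying to split off an $L^\infty$ remainder near $\partial\Omega_\lambda$), your argument becomes the paper's proof.
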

\begin{proof}
Let $\lambda >0$, $N\in\Bbb N$, $N\geq 2$, $s\in \mathbb{N}$, $s<N$, and $f\in S'({\Bbb R}^n)$ such that $|\Omega_\lambda|<\infty$ where  $\Omega_\lambda=\{x\in{\Bbb R}^n:\;M_N(f)(x)>\lambda\}$. We write $f=g+\sum_{i\in\Bbb N} b_{i}$ the Calder\'on-Zygmund decomposition of $f$ associated to $M_N(f)$ of height $\lambda$ and degree $s$.

According to \cite[Lemma 5.9]{Bow1} we have that
$$M_N(g)(x)\leq C\lambda\sum_{i\in\Bbb N}{\lambda}_-^{-t_i(s+1)}+M_N(f)(x)\chi_{\Omega_\lambda^c}(x),\;\;\;x\in{\Bbb R}^n,$$
where
$$t_i=t_i(x)=\left\{\begin{array}{l}
t,\;\;\;\mbox{if}\; x\in  x_i+(B_{\ell_i+2\omega+t+1}\backslash B_{\ell_i+2\omega+t}),\;\mbox{for some}\;{t\in\Bbb N} ,\\
 \\
 0,\;\;\;\mbox{otherwise}.
 \end{array}\right.$$

As it was shown in the proof of  \cite[Lemma 5.10 (i), p. 34]{Bow1} we get
$$\int_{{\Bbb R}^n}\sum_{i\in\Bbb N}{\lambda}_-^{-t_i(x)(s+1)}dx\leq C|\Omega_\lambda|.$$
Then, since $M_N(f)(x)\leq\lambda$, $x\in\Omega_\lambda^c$, we obtain that $M_N(g)\in L_{loc}^1({\Bbb R}^n)$.

Let $\varphi\in S(\mathbb{R}^n)$. Since for a certain $C>0$ $g*\varphi_k\le CM_N(g)$, $k\in \mathbb{N}$, by proceeding as in the proof of \cite[Theorem 3.9]{Bow1} we can prove that, for every compact subset $F$ of ${\Bbb R}^n$ there exists a sequence $\{k_j\}_{j\in\Bbb N}\subset\Bbb Z$ such that $k_j\rightarrow -\infty$, as  $j\rightarrow \infty$, and $g*\varphi_{k_j}\rightarrow G_F$, as  $j\rightarrow \infty$, in the weak topology of $L^1(F)$ for a certain $G_F\in L^1(F)$. A diagonal argument allows us to get a sequence $\{k_j\}_{j\in\Bbb N}\subset\Bbb Z$ such that $k_j\rightarrow -\infty$, as  $j\rightarrow \infty$, and $g*\varphi_{k_j}\rightarrow G$, in the weak * topology of ${\mathcal M}(K)$ (the space of complex measures supported in $K$) for every compact subset $K$ of ${\Bbb R}^n$, being $G\in L^1_{loc}(\mathbb{R}^n)$. According to \cite[Lemma 3.8]{Bow1}  $g*\varphi_{k_j}\rightarrow g$,  as  $j\rightarrow \infty$ in $S'({\Bbb R}^n)$. If $\phi\in C_c^\infty({\Bbb R}^n)$, we have that

\begin{equation}\label{Z1}
<g,\phi>=\lim_{j\rightarrow \infty}\int_{{\Bbb R}^n}(g*\varphi_{k_j})(x)\phi(x)dx=\int_{{\Bbb R}^n}G(x)\phi(x)dx.
\end{equation}

Since $C_c^\infty(\mathbb{R}^n)$ is a dense subspace of $S(\mathbb{R}^n)$, $g$ is characterized by (\ref{Z1}).
\end{proof}

\begin{Cor}\label{cor1}
Assume that $p,q\in{\Bbb P}_0$. Then, $L_{loc}^1({\Bbb R}^n)\cap H^{p(\cdot),q(\cdot)}({\Bbb R}^n,A)$ is a dense subspace of $H^{p(\cdot),q(\cdot)}({\Bbb R}^n,A)$.
\end{Cor}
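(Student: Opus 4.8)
The plan is to deduce Corollary \ref{cor1} from the Calder\'on-Zygmund machinery developed in Propositions \ref{prop3.2} and \ref{prop3.3}. Fix $s_0\in\Bbb N$ as in Proposition \ref{prop3.2} and choose $N\in\Bbb N$ with $N>\max\{N_0,s_0\}$ and $N\ge 2$, where $N_0$ is the integer from Theorem \ref{Th1.1}, so that $H^{p(\cdot),q(\cdot)}({\Bbb R}^n,A)=H^{p(\cdot),q(\cdot)}_N({\Bbb R}^n,A)$ and all the preceding results apply simultaneously with $s=s_0$. Given $f\in H^{p(\cdot),q(\cdot)}({\Bbb R}^n,A)$, for each $j\in\Bbb Z$ let $f=g_j+\sum_{i\in\Bbb N}b_{i,j}$ be the anisotropic Calder\'on-Zygmund decomposition of $f$ associated with $M_N(f)$ of height $2^j$ and degree $s_0$; this is legitimate because, as remarked after the construction, $f\in H^{p(\cdot),q(\cdot)}({\Bbb R}^n,A)$ forces $|\{x:M_N(f)(x)>2^j\}|<\infty$ for every $j$.

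First I would invoke Proposition \ref{prop3.3} with $\lambda=2^j$, $s=s_0$, and the above $N$ (note $s_0<N$ and $N\ge 2$): it yields $g_j\in L^1_{loc}({\Bbb R}^n)$ for every $j\in\Bbb Z$. Next I would invoke Proposition \ref{prop3.2}(ii): the very same sequence $(g_j)_{j\in\Bbb Z}$ lies in $H^{p(\cdot),q(\cdot)}({\Bbb R}^n,A)$ and converges to $f$ in $H^{p(\cdot),q(\cdot)}({\Bbb R}^n,A)$ as $j\to+\infty$. Combining the two, each $g_j$ belongs to $L^1_{loc}({\Bbb R}^n)\cap H^{p(\cdot),q(\cdot)}({\Bbb R}^n,A)$, and $g_j\to f$ in $H^{p(\cdot),q(\cdot)}({\Bbb R}^n,A)$. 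Since $f$ was an arbitrary element of $H^{p(\cdot),q(\cdot)}({\Bbb R}^n,A)$, this shows that $L^1_{loc}({\Bbb R}^n)\cap H^{p(\cdot),q(\cdot)}({\Bbb R}^n,A)$ is dense, which is exactly the assertion of the corollary.

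There is essentially no genuine obstacle here: the corollary is a two-line consequence of the two propositions immediately preceding it. The only point requiring a moment's care is the bookkeeping on the parameters $s$ and $N$ — one must pick a single pair $(s,N)$, with $s=s_0\ge$ the threshold of Proposition \ref{prop3.2}, $N>\max\{N_0,s\}$, and $N\ge 2$, so that the hypotheses of Proposition \ref{prop3.2}(ii) and Proposition \ref{prop3.3} are met at once and so that the Hardy-Lorentz space does not depend on $N$. Once that is fixed, the argument is immediate. I would write it as a short paragraph rather than a displayed multi-step proof.
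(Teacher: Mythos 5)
Your proposal is correct and is exactly the argument the paper intends: the corollary is stated there as an immediate consequence of Propositions \ref{prop3.2} and \ref{prop3.3}, applied to the Calder\'on--Zygmund decompositions of height $2^j$ with a single admissible choice of $(s,N)$. Your extra care with the parameter bookkeeping is appropriate but does not change the route.
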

\begin{proof}
This property is a consequence of Propositions \ref{prop3.2} and \ref{prop3.3}.
\end{proof}

We finish this section with a convergence property for the good parts of Calder\'on-Zygmund decomposition of distributions in $L_{loc}^1({\Bbb R}^n)\cap H^{p(\cdot),q(\cdot)}({\Bbb R}^n,A)$ which we will use in the proof of atomic decompositions of  the elements of $H^{p(\cdot),q(\cdot)}({\Bbb R}^n,A)$.

\begin{Prop}\label{Propnew}
Assume that $p,q\in{\Bbb P}_0$, and $f\in L_{loc}^1({\Bbb R}^n)\cap H^{p(\cdot),q(\cdot)}({\Bbb R}^n,A)$. For every $j\in \mathbb{N}$, $f=g_j+\sum_{i\in\Bbb N} b_{i,j}$ is the anisotropic Calder\'on-Zygmund decomposition of $f$ associated to $M_N(f)$ of height $2^j$ and degree $s$, with $s,N\in \mathbb{N}$, $s\ge s_0$ and $N> max\{s,N_0\}$, where $N_0$ is as in Theorem \ref{Th1.1} and $s_0$ is as in Proposition \ref{prop3.2}. Then, $g_j\to 0$, as $j\to -\infty$, in $S'(\mathbb{R}^n)$.
\end{Prop}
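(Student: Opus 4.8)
The plan is to exploit the fact that, by the definition of the Calderón–Zygmund decomposition, $g_j = f - \sum_{i\in\Bbb N} b_{i,j}$, where $b_{i,j} = (f-P_{i,j})\zeta_{i,j}$ is built from the Whitney covering of $\Omega_j = \{x\in{\Bbb R}^n:\,M_N(f)(x)>2^j\}$. Since $f\in L^1_{loc}({\Bbb R}^n)$, all the operations above can be read at the level of locally integrable functions, and the support of $\sum_{i\in\Bbb N}b_{i,j}$ is contained in $\Omega_j$. So it suffices to show that, for every fixed test function $\phi\in S({\Bbb R}^n)$, $\langle g_j,\phi\rangle\to 0$ as $j\to-\infty$, and in turn that $\langle f,\phi\rangle - \langle g_j,\phi\rangle = \sum_{i\in\Bbb N}\langle b_{i,j},\phi\rangle$ equals $\int_{{\Bbb R}^n} f\phi$ in the limit, or — what is cleaner — that $\langle g_j,\phi\rangle = \int_{{\Bbb R}^n\setminus\Omega_j} f(x)\phi(x)\,dx + \text{(error terms supported in }\Omega_j)$, with both pieces tending to $0$.

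First I would record the size estimates from Bownik's lemmas already used in Proposition~\ref{prop3.3}: $|g_j(x)|\le M_N(g_j)(x)$ (after identifying $g_j$ with its $L^1_{loc}$ representative as in Proposition~\ref{prop3.3}) and
$$M_N(g_j)(x)\le C\,2^j\sum_{i\in\Bbb N}\lambda_-^{-t_i(x)(s+1)} + M_N(f)(x)\chi_{\Omega_j^c}(x),\;\;\;x\in{\Bbb R}^n,$$
together with $\int_{{\Bbb R}^n}\sum_{i\in\Bbb N}\lambda_-^{-t_i(x)(s+1)}dx\le C|\Omega_j|$. The key extra input is that $|\Omega_j|\to 0$ as $j\to-\infty$: indeed $\chi_{\Omega_j}\to 0$ a.e. because $M_N(f)<\infty$ a.e. (this follows, as in the proof of Proposition~\ref{prop3.2}(ii), from $\|(M_N f)^{1/r}\|_{rp(\cdot),rq(\cdot)}<\infty$ and \cite[Theorem 2.8]{EKS}), and $|\Omega_{j_0}|<\infty$ for some $j_0$ since $f\in H^{p(\cdot),q(\cdot)}({\Bbb R}^n,A)$ and $\chi_{\Omega_{j_0}}\in{\mathcal L}^{p(\cdot),q(\cdot)}({\Bbb R}^n)$, hence $|\Omega_{j_0}|<\infty$ by Lemma~\ref{medida}; dominated convergence then gives $|\Omega_j|\to 0$. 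Consequently the first term of $M_N(g_j)$ has $L^1$-norm bounded by $C\,2^j|\Omega_j|\to 0$, and the second term is $M_N(f)\chi_{\Omega_j^c}$, which is dominated by $M_N(f)\in L^1_{loc}$ and tends to $M_N(f)\chi_\emptyset=0$ a.e. as $j\to-\infty$ (again because $|\Omega_j|\to0$, so $\Omega_j^c\uparrow$ a set of full measure, but $M_N f\chi_{\Omega_j^c}\to 0$ only if ... ) — here care is needed: actually $\chi_{\Omega_j^c}\to 1$ a.e., so the second term does NOT obviously vanish pointwise. I would instead argue directly: for $\phi\in S({\Bbb R}^n)$ supported in a compact set $F$,
$$|\langle g_j,\phi\rangle| \le \int_F |g_j(x)||\phi(x)|\,dx \le \|\phi\|_\infty\left(C\,2^j|\Omega_j| + \int_{F\cap\Omega_j^c} M_N(f)(x)\,dx\right).$$

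The main obstacle is thus controlling $\int_{F\cap\Omega_j^c}M_N(f)$ as $j\to-\infty$, since $\Omega_j^c$ grows to (almost) all of ${\Bbb R}^n$. The resolution is that $g_j$ is obtained from $f$ by subtracting functions supported in $\Omega_j$; more precisely, on $\Omega_j^c$ one has $g_j = f$ pointwise (the partition of unity $\sum_i\zeta_{i,j}$ equals $1$ on $\Omega_j$ and $0$ off it, so $\sum_i b_{i,j} = \sum_i(f-P_{i,j})\zeta_{i,j}$ is supported in $\Omega_j$, whence $g_j=f$ a.e. on $\Omega_j^c$). Therefore
$$\langle g_j,\phi\rangle = \int_{\Omega_j^c} f(x)\phi(x)\,dx + \int_{\Omega_j}\Big(g_j(x) - \textstyle\sum_i b_{i,j}(x) - f(x)\Big)\chi... $$
— cleaner: $\langle g_j,\phi\rangle = \int_{\Omega_j}g_j\phi + \int_{\Omega_j^c}f\phi$. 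The second integral is $\int_{\Omega_j^c}f\phi$; since $f\phi\in L^1({\Bbb R}^n)$ (as $f\in L^1_{loc}$ and $\phi$ has compact support) and $|\Omega_j|\to 0$, this tends to $0$ by absolute continuity of the Lebesgue integral. The first integral obeys $|\int_{\Omega_j}g_j\phi|\le\|\phi\|_\infty\int_{\Omega_j}|g_j|\le\|\phi\|_\infty\big(C2^j|\Omega_j| + \int_{\Omega_j\cap\Omega_j^c}M_N(f)\big) = \|\phi\|_\infty\,C\,2^j|\Omega_j|\to 0$, using $M_N(g_j)\le C2^j\sum_i\lambda_-^{-t_i(s+1)} + M_N(f)\chi_{\Omega_j^c}$ and that the last term vanishes on $\Omega_j$. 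Combining the two estimates gives $\langle g_j,\phi\rangle\to 0$ for every $\phi\in S({\Bbb R}^n)$, i.e. $g_j\to 0$ in $S'({\Bbb R}^n)$, which is the assertion. The one technical point to verify carefully is that the distributional $g_j$ agrees a.e. with the $L^1_{loc}$ function described in Proposition~\ref{prop3.3} and that the identity $g_j=f$ a.e. on $\Omega_j^c$ and $\sum_i b_{i,j}\in L^1(\Omega_j)$ hold — these are read off from \cite[Section 5, Chapter I]{Bow1} together with Proposition~\ref{prop3.3}.
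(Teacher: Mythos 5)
There is a genuine error in your argument: you have the monotonicity of the level sets backwards. As $j\to-\infty$ the threshold $2^j$ decreases to $0$, so $\Omega_j=\{x:M_N(f)(x)>2^j\}$ \emph{increases}; for $f\neq 0$ it increases to a set of full measure (the grand maximal function of a nonzero distribution is strictly positive), and $|\Omega_j|$ typically tends to $+\infty$, not to $0$. The fact you invoke ($M_N(f)<\infty$ a.e., hence $|\Omega_j|\to 0$) is the behaviour as $j\to+\infty$, which is what Proposition \ref{prop3.2}(ii) uses; it is irrelevant here. This breaks two steps. First, $2^j|\Omega_j|\to 0$ is unjustified (with $|\Omega_j|\nearrow\infty$ this is a nontrivial claim about $\lambda\,\mu_{M_Nf}(\lambda)$ as $\lambda\to 0$). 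Second, and more seriously, your treatment of the term carrying $f$ collapses: the assertion that $\int_{\Omega_j^c}f\phi\to 0$ ``by absolute continuity since $|\Omega_j|\to 0$'' rests on the false premise and is also internally inconsistent — if $|\Omega_j|$ really tended to $0$, that integral would converge to $\int f\phi$, not to $0$.

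The repair is the observation the paper's proof is built on: on $\Omega_j^c$ one has $g_j=f$ and $|f|\le M_N(f)\le 2^j$ a.e. (by the very definition of $\Omega_j$, using $|f|\le\sup_{k,\varphi\in C_c^\infty\cap S_N}|f*\varphi_k|\le M_N(f)$ for $f\in L^1_{loc}$), while on $\Omega_j$ Bownik's Lemma 5.3 together with the bounded overlap of the Whitney balls gives the pointwise bound $|\sum_iP_{i,j}\zeta_{i,j}|\le C2^j$. Hence $|g_j|\le C2^j$ a.e.\ on all of $\mathbb{R}^n$, with $C$ independent of $j$, and $|\langle g_j,\phi\rangle|\le C2^j\|\phi\|_{L^1}\to 0$ for every $\phi\in S(\mathbb{R}^n)$ — no measure-of-level-set argument is needed, and this uniform bound also handles test functions that are not compactly supported (your pairing $\langle g_j,\phi\rangle=\int_F g_j\phi$ with $F=\operatorname{supp}\phi$ compact only covers $C_c^\infty$, which then has to be upgraded to $S$). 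Your remaining technical caveats (identification of the distribution $g_j$ with its $L^1_{loc}$ representative, summability of $\sum_ib_{i,j}$ against test functions) are legitimate and are exactly the points the paper verifies before deriving the bound.
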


\begin{proof} Since $f\in L^1_{loc}(\mathbb{R}^n)$, there exists a unique $F\in L^1_{loc}(\mathbb{R}^n)$ such that
$$
<f,\phi>=\int_{\mathbb{R}^n}F(x)\phi(x)dx,\,\,\,\phi\in C_c^\infty(\mathbb{R}^n).
$$
According to Proposition \ref{prop3.3}, for every $j\in \mathbb{Z}$, there exists a unique $G_j\in L^1_{loc}(\mathbb{R}^n)$ for which
\begin{equation}\label{M0}
<g_j,\phi>=\int_{\mathbb{R}^n}G_j(x)\phi(x)dx,\,\,\,\phi\in C_c^\infty(\mathbb{R}^n).
\end{equation}
Let $j\in \mathbb{Z}$ and $\phi\in C_c^\infty(\mathbb{R}^n)$. We are going to see that
$$
\sum_{i\in\Bbb N}\int_{\mathbb{R}^n}|(F(x)-P_{i,j}(x))\zeta_{i,j}(x)||\phi(x)|dx<\infty.
$$
For every $i\in \mathbb{N}$, by \cite[Lemma 5.3]{Bow1} we have that
\begin{align*}
& \int_{\mathbb{R}^n}|(F(x)-P_{i,j}(x))\zeta_{i,j}(x)||\phi(x)|dx\\
&\le C\left(\int_{(x_{i,j}+B_{l_{i,j}+\omega})\cap \rm{supp}(\phi)}|F(x)|dx+2^j|(x_{i,j}+B_{l_{i,j}+\omega})\cap \rm{supp}(\phi)|\right).
\end{align*}
Then,
$$
\sum_{i\in\Bbb N}\int_{\mathbb{R}^n}|(F(x)-P_{i,j}(x))\zeta_{i,j}(x)||\phi(x)|dx\le C\left(\int_{\rm{supp}(\phi)}|F(x)|dx+2^j|\rm{supp}(\phi)|\right).
$$
Hence, from Proposition \ref{prop3.2}, (i), we get
\begin{align*}
\int_{\mathbb{R}^n}\sum_{i\in\Bbb N} (F(x)-P_{i,j}(x))\zeta_{i,j}(x)\phi(x)dx& = \sum_{i\in\Bbb N} \int_{\mathbb{R}^n} (F(x)-P_{i,j}(x))\zeta_{i,j}(x)\phi(x)dx\\
& =\sum_{i\in\Bbb N} <(f-P_{i,j})\zeta_{i,j},\phi>\\
& = <\sum_{i\in\Bbb N} (f-P_{i,j})\zeta_{i,j},\phi>.
\end{align*}
Then, there exists a measurable subset $E\subset \mathbb{R}^n$ such that $|\mathbb{R}^n\setminus E|=0$, and
$$
G_j(x)=F(x)-\sum_{i\in\Bbb N} (F(x)-P_{i,j}(x))\zeta_{i,j}(x),\,\,\,x\in E\,\,\,\rm{and}\,\,\,j\in \mathbb{Z},
$$
for a suitable sense of the convergence of series. Note that we have used a diagonal argument to justify the convergence for every $j\in \mathbb{Z}$.

We can write
$$
G_j(x)=F(x)\chi_{\Omega_j^c}(x)-\sum_{i\in\Bbb N} P_{i,j}(x)\zeta_{i,j}(x),\,\,\,x\in E\,\,\,\rm{and}\,\,\,j\in \mathbb{Z},
$$
where $\Omega_j=\{x\in \mathbb{R}^n:M_N(f)(x)>2^j\}$, $j\in \mathbb{Z}$. Note that the last series is actually a finite sum for every $x\in \mathbb{R}^n$.

Let $j\in \mathbb{Z}$. According to \cite[Lemma 5.3]{Bow1} we obtain
$$
|G_j(x)|\le C2^j,\,\,\,{\rm a.e.}\,\,\,x\in \Omega_j.
$$
On the other hand, $G_j(x)=F(x)$, a.e. $x\in \Omega_j^c$. Also, we have that
$$
|F|\le \sup_{k\in \mathbb{Z},\,\,\varphi\in C_c^\infty(\mathbb{R}^n)\cap S_N}|f*\varphi_k|\le M_N(f).
$$
Then, $|G_j(x)|\le C2^j$, a.e. $x\in \Omega_j^c$. Hence, we conclude that
\begin{equation}\label{M1}
|G_j(x)|\le C2^j,\,\,\,{\rm a.e.}\,\,\,x\in \mathbb{R}^n.
\end{equation}

We consider the functional $T_j$ defined on $S(\mathbb{R}^n)$ by
$$
T_j(\phi)=\int_{\mathbb{R}^n}G_j(x)\phi(x)dx,\,\,\,\phi\in S(\mathbb{R}^n).
$$
>From (\ref{M1}) we deduce that $T_j\in S'(\mathbb{R}^n)$. By (\ref{M0}), $T_j(\phi)=<g_j,\phi>$, $\phi\in C_c^\infty(\mathbb{R}^n)$. Then,
$$
<g_j,\phi>=\int_{\mathbb{R}^n}G_j(x)\phi(x)dx,\,\,\,\phi\in S(\mathbb{R}^n),
$$
and, again from (\ref{M1}) it follows that $g_j\to 0$, as $j\to -\infty$, in $S'(\mathbb{R}^n)$.
\end{proof}

%\newpage
%%%%%%%%%%%%%%%%%%%%%%%%%%%%%%%%%%%%%%%%%%%%%%%%%%%%%%%%%%%%%%%%%%%%%%%%%%%%%%%%%%
\section{Atomic characterization (Proof of Theorem \ref{Th1.2})} \label{sec:4}
%%%%%%%%%%%%%%%%%%%%%%%%%%%%%%%%%%%%%%%%%%%%%%%%%%%%%%%%%%%%%%%%%%%%%%%%%%%%%%%%%%
As we mentioned in the introduction, we are going to prove Theorem \ref{Th1.2} in two steps, firstly in the case that $r=\infty$ and then when $r<\infty$.

%\bigskip
%
%\noindent \begin{tabular}{| l | }
%     \hline
%     $r=\infty$ \\
%     \hline
%   \end{tabular}
\subsection{Proof of Theorem \ref{Th1.2} when $r=\infty$.}

\noindent{\bf (i)} Suppose that, for every $j\in\Bbb N$, $a_j$ is a $(p(\cdot),q(\cdot),\infty,s)$-atom associated with  $x_j\in{\Bbb R}^n$ and $\ell_j\in\Bbb Z$. Here $s\in \mathbb{N}$ will be fixed later. Assume also that $(\lambda_j)_{j\in \mathbb{N}}\subset (0,\infty)$ and that
\begin{equation}\label{Y0}
\left\|\sum_{j\in\Bbb N}{{\lambda_j}\over{\|\chi_{x_j+B_{\ell_j}}\|_{p(\cdot),q(\cdot)}}}
       \chi_{x_j+B_{\ell_j}}\right\|_{p(\cdot),q(\cdot)}< \infty.
\end{equation}
We are going to show that the series $\sum_{j\in\Bbb N}\lambda_ja_j$ converges in $H^{p(\cdot),q(\cdot)}({\Bbb R}^n,A)$. Let $\ell,m\in\Bbb N$, $\ell<m$. We define
$$f_{\ell,m}=\sum_{j=\ell}^m\lambda_ja_j,$$
and we take $\varphi\in S({\Bbb R}^n)$. We have that

\begin{align}\label{Y1}
       \|M_{\varphi}(f_{\ell,m})\|_{p(\cdot),q(\cdot)}& \leq  \left\|\sum_{j=\ell}^m{\lambda_jM_{\varphi}(a_j)}\right\|_{p(\cdot),q(\cdot)}\nonumber \\
       & \leq C\left(\left\|\sum_{j=\ell}^m\lambda_jM_{\varphi}(a_j)
       \chi_{x_j+B_{\ell_j+\omega}}\right\|_{p(\cdot),q(\cdot)}+\left\|\sum_{j=\ell}^m\lambda_jM_{\varphi}(a_j)
       \chi_{x_j+B_{\ell_j+\omega}^c}\right\|_{p(\cdot),q(\cdot)}\right)\nonumber \\
       & =I_1+I_2.
       \end{align}
       We now estimate $I_i,\;i=1,2$. Firstly we study $I_1$. Let $j\in\Bbb N$. Since $a_j$ is a $(p(\cdot),q(\cdot),\infty,s)$-atom, we can write
$$M_{\varphi}(a_j)(x)\leq \|a_j\|_\infty\|\varphi\|_1\leq C\|\chi_{x_j+B_{\ell_j}}\|_{p(\cdot),q(\cdot)}^{-1},\,\,\,x\in \mathbb{R}^n.$$

By defining $g_j=\chi_{x_j+B_{\ell_j}}(\|\chi_{x_j+B_{\ell_j}}\|_{p(\cdot),q(\cdot)}^{-1}\lambda_j)^\alpha$ it follows that
\begin{align*}
        M_{HL}g_j(x)& \geq (\|\chi_{x_j+B_{\ell_j}}\|_{p(\cdot),q(\cdot)}^{-1}\lambda_j)^{\alpha}{1\over{|B_{\ell_j+\omega}|}}\int_{x_j+B_{\ell_j+\omega}}
       \chi_{x_j+B_{\ell_j}}(y)dy \\
       & =b^{-\omega}(\|\chi_{x_j+B_{\ell_j}}\|_{p(\cdot),q(\cdot)}^{-1}\lambda_j)^{\alpha},\;\;\;x\in x_j+B_{\ell_j+\omega},
       \end{align*}
where $\alpha\in (0,1)$ is such that $p(\cdot)/\alpha,q(\cdot)/\alpha\in{\Bbb P}_1$.
According to Proposition \ref{Prop vectorial} and \cite[Lemma 2.3]{CrW} we have that
\begin{align}\label{Y2}
       I_1 & \leq  C \left\|\sum_{j=\ell}^m\lambda_j\|\chi_{x_j+B_{\ell_j}}\|_{p(\cdot),q(\cdot)}^{-1}
       \chi_{x_j+B_{\ell_j}+\omega}\right\|_{p(\cdot),q(\cdot)} \nonumber\\
       & \leq C\left\|\sum_{j=\ell}^m(M_{HL}g_j)^{1/\alpha}\right\|_{p(\cdot),q(\cdot)}\nonumber\\
        & \leq C\left\|\left(\sum_{j=\ell}^m(M_{HL}g_j)^{1/\alpha}\right)^{\alpha}\right\|_{p(\cdot)/\alpha,q(\cdot)/\alpha}^{1/\alpha}\nonumber \\
       & \leq C\left\|\left(\sum_{j=\ell}^mg_j^{1/\alpha}\right)^{\alpha}\right\|_{p(\cdot)/\alpha,q(\cdot)/\alpha}^{1/\alpha}\nonumber\\
       & = C \left\|\sum_{j=\ell}^m\lambda_j\|\chi_{x_j+B_{\ell_j}}\|_{p(\cdot),q(\cdot)}^{-1}
       \chi_{x_j+B_{\ell_j}}\right\|_{p(\cdot),q(\cdot)}.
        \end{align}

Suppose now that $a$ is a $(p(\cdot),q(\cdot),\infty,s)$-atom associated with $z\in{\Bbb R}^n$ and $k\in\Bbb Z$. Let $m\in\Bbb N$. By proceeding as in \cite[p. 19 and 20]{Bow1} we obtain
\begin{align*}
        M_\varphi(a)(x)& \leq C{1\over{\|\chi_{z+B_{k}}\|_{p(\cdot),q(\cdot)}}}(b\lambda_-^{s+1})^{-m} \\
        & \leq C{1\over{\|\chi_{z+B_{k}}\|_{p(\cdot),q(\cdot)}}}b^{m(\gamma-1)}\lambda_-^{-m(s+1)}\left(
        {1\over{|B_{k+m+\omega+1}|}}\int_{z+B_{k+m+\omega+1}}
       \chi_{z+B_{k}}(y)dy\right)^\gamma \\
       & \leq C{{b^{m(\gamma-1)}\lambda_-^{-m(s+1)}}\over{\|\chi_{z+B_{k}}\|_{p(\cdot),q(\cdot)}}}\left(
        M_{HL}(\chi_{z+B_{k}})(x)\right)^\gamma,\;\;\;x\in z+(B_{k+m+\omega+1}\backslash B_{k+m+\omega}).
       \end{align*}
 Here $\gamma$ is chosen such that $\gamma p(\cdot),\gamma q(\cdot)\in{\Bbb P}_1$. We now take $s\in\Bbb N$, satisfying that  $b^{\gamma-1}\lambda_-^{-(s+1)}\leq 1$. We obtain
$$M_\varphi(a)(x)\leq C{1\over{\|\chi_{z+B_{k}}\|_{p(\cdot),q(\cdot)}}}\left(
        M_{HL}(\chi_{z+B_{k}})(x)\right)^\gamma,
        \,\,\,x\notin z+B_{k+\omega}.$$

By proceeding as above we get
\begin{align}\label{Y3}
       I_2 & \leq  C\left \|\sum_{j=\ell}^m\lambda_j\|\chi_{x_j+B_{\ell_j}}\|_{p(\cdot),q(\cdot)}^{-1}
       \left(M_{HL}(\chi_{x_j+B_{\ell_j}})\right)^\gamma\right\|_{p(\cdot),q(\cdot)}\nonumber \\
       & =C\left \|\left(\sum_{j=\ell}^m\left(\lambda_j^{1/\gamma}\|\chi_{x_j+B_{\ell_j}}\|_{p(\cdot),q(\cdot)}^{-1/\gamma}
       M_{HL}(\chi_{x_j+B_{\ell_j}})\right)^\gamma\right)^{1/\gamma}\right\|_{\gamma p(\cdot),\gamma q(\cdot)}^\gamma \nonumber\\
       & \leq  C \left\|\sum_{j=\ell}^m\lambda_j\|\chi_{x_j+B_{\ell_j}}\|_{p(\cdot),q(\cdot)}^{-1}
       \chi_{x_j+B_{\ell_j}}\right\|_{p(\cdot),q(\cdot)}.
        \end{align}

By combining (\ref{Y1}), (\ref{Y2}) and (\ref{Y3}) we infer that the sequence $(\sum_{j=0}^k\lambda_ja_j)_{k\in\Bbb N}$ is Cauchy in $H^{p(\cdot),q(\cdot)}({\Bbb R}^n,A)$. Since $H^{p(\cdot),q(\cdot)}({\Bbb R}^n,A)$ is complete (Proposition \ref{prop2.4}), the series $\sum_{j\in\Bbb N}\lambda_ja_j$ converges in $H^{p(\cdot),q(\cdot)}({\Bbb R}^n,A)$. Moreover, we get

$$\left\|\sum_{j\in\Bbb N}\lambda_ja_j\right\|_{H^{p(\cdot),q(\cdot)}({\Bbb R}^n,A)}\leq  C \left\|\sum_{j\in\Bbb N}{{\lambda_j}\over{\|\chi_{x_j+B_{\ell_j}}\|_{p(\cdot),q(\cdot)}}}
       \chi_{x_j+B_{\ell_j}}\right\|_{p(\cdot),q(\cdot)}.$$

\noindent{\bf(ii)} Assume that $f\in H^{p(\cdot),q(\cdot)}({\Bbb R}^n,A)\cap L_{loc}^1({\Bbb R}^n)$,  $s\ge s_0$ ($s_0$ was defined in Proposition \ref{prop3.2}) and $N>\max\{N_0,s\}$ ($N_0$ was defined in Theorem \ref{Th1.1}). We recall that $H^{p(\cdot),q(\cdot)}({\Bbb R}^n,A)\cap L_{loc}^1({\Bbb R}^n)$  is a dense subspace of $H^{p(\cdot),q(\cdot)}({\Bbb R}^n,A)$ (Corollary \ref{cor1}). Let $j\in\Bbb Z$. We define $\Omega_j=\{x\in{\Bbb R}^n:\;M_N(f)(x)>2^j\}$. According to \cite[Chapter 1, Section 5]{Bow1} we can write $f=g_j+\sum_{k\in\Bbb N}b_{j,k}$, that is, the Calder\'on-Zygmund decomposition of degree $s$ and height $2^j$ associated with $M_Nf$. The properties of $g_j$ and $b_{j,k}$ will be specified when we need each of them.

As it was proved in Proposition \ref{prop3.2}, (ii), $g_j\rightarrow f$, as $j\rightarrow +\infty$, in both $H^{p(\cdot),q(\cdot)}({\Bbb R}^n,A)$ and $S'({\Bbb R}^n)$, and in Proposition \ref{Propnew} $g_j\rightarrow 0$, as  $j\rightarrow -\infty$, in $S'({\Bbb R}^n)$. We have that
$$f=\sum_{j\in\Bbb Z}(g_{j+1}-g_j),\;\;\;\mbox{in}\;\;S'({\Bbb R}^n).$$
As in \cite[p. 38]{Bow1} we can write, for every $j\in\Bbb Z$,
$$g_{j+1}-g_j=\sum_{i\in\Bbb N} h_{i,j}, \;\;\;\mbox{in}\;\;S'({\Bbb R}^n),$$
where
$$h_{i,j}=(f-P_i^j)\zeta_i^j-\sum_{k\in\Bbb N}((f-P_k^{j+1})\zeta_i^j-P_{i,k}^{j+1})\zeta_k^{j+1},\;\;\;i\in\Bbb N.$$
According to the properties of the polynomials $P$'s and the functions $\zeta$'s it follows that, for every $P\in{\mathcal P}_s$,
$$\int_{{\Bbb R}^n}h_{i,j}(x) P(x)\;dx=0,\;\;\;i,j\in\Bbb N.$$
We also have that, for certain $C_0>0$, $\|h_{i,j}\|_{\infty}\leq C_02^j$ and ${\rm supp}\;h_{i,j}\subset x_{i,j}+B_{\ell_{i,j}+4\omega}$, for every $i,j\in\Bbb N$ (\cite[(6.12) and (6.13), p. 38]{Bow1}). Hence, for every $i,j\in\Bbb N$, the function $a_{i,j}=h_{i,j}2^{-j}C_0^{-1}\|\chi_{x_{i,j}+B_{\ell_{i,j}+4\omega}}\|_{p(\cdot),q(\cdot)}^{-1}$ is a $(p(\cdot),q(\cdot),\infty,s)$-atom. Moreover,
\begin{equation}\label{con1}
f=\sum_{i\in\Bbb N,j\in\Bbb Z}\lambda_{i,j}a_{i,j}\;\;\;\mbox{in}\;\;S'({\Bbb R}^n),
\end{equation}
where $\displaystyle\lambda_{i,j}=2^{j}C_0\|\chi_{x_{i,j}+B_{\ell_{i,j}+4\omega}}\|_{p(\cdot),q(\cdot)}$, for every $i\in\Bbb N,\;j\in\Bbb Z$.

We are going to explain the convergence of the double series in (\ref{con1}).

We now choose $\beta >1$ such that $\beta p,\beta q \in{\Bbb P}_1$. Assume that $\pi=(\pi_1,\pi_2):\mathbb{N}\rightarrow \mathbb{N}\times \mathbb{Z}$ is a bijection. By proceeding as before we get, for every $k\in \mathbb{N}$,
\begin{align*}
        \Big\|\sum_{m=0}^k{{\lambda_{\pi(m)}}\over{\|\chi_{x_{\pi(m)}+B_{\ell_{\pi(m)}+4\omega}}\|_{p(\cdot),q(\cdot)}}} &
       \chi_{x_{\pi(m)}+B_{\ell_{\pi(m)}+4\omega}}\Big\|_{p(\cdot),q(\cdot)}
       \leq C\left\|\sum_{m=0}^k2^{\pi_2(m)}
       \chi_{x_{\pi(m)}+B_{\ell_{\pi(m)}+4\omega}}\right\|_{p(\cdot),q(\cdot)} \\
       & \leq C\left\|\sum_{m=0}^k\left(2^{\pi_2(m)/\beta}
       \chi_{x_{\pi(m)}+B_{\ell_{\pi(m)}+4\omega}}\right)^{\beta}\right\|_{p(\cdot),q(\cdot)} \\
       & \leq C\left\|\sum_{m=0}^k\left(2^{\pi_2(m)/\beta}
       M_{HL}\left(\chi_{x_{\pi(m)}+B_{\ell_{\pi(m)}+2\omega}}\right)\right)^{\beta}\right\|_{p(\cdot),q(\cdot)} \\
       & =  C\left\|\left(\sum_{m=0}^k\left(
       M_{HL}\left(2^{\pi_2(m)/\beta}\chi_{x_{\pi(m)}+B_{\ell_{\pi(m)}+2\omega}}\right)\right)^{\beta}\right)^{1/\beta}
       \right\|_{\beta p(\cdot),\beta q(\cdot)}^{\beta} \\
       & \leq C\left\|\left(\sum_{m=0}^k
       2^{\pi_2(m)}\chi_{x_{\pi(m)}+B_{\ell_{\pi(m)}+2\omega}}\right)^{1/\beta}\right\|_{\beta p(\cdot),\beta q(\cdot)}^{\beta} \\
       & \le C\left\|\sum_{j\in\Bbb Z}
       2^{j}\sum_{i\in\Bbb N}\chi_{x_{i,j}+B_{\ell_{i,j}+2\omega}}\right\|_{p(\cdot),q(\cdot)} \\
       & \leq C\left\|\sum_{j\in\Bbb Z}2^j\chi_{\Omega_j}\right\|_{p(\cdot),q(\cdot)}.
        \end{align*}

Since $f\in H^{p(\cdot),q(\cdot)}({\Bbb R}^n,A)$, by \cite[Theorem 2.8 and Definition 2.5 vii)]{EKS}, $M_N(f)(x)<\infty$, a.e. $x\in{\Bbb R}^n$. Let $x\in{\Bbb R}^n$ such that $M_N(f)(x)<\infty$. There exists $j_0\in\Bbb Z$ such that $2^{j_0}<M_N(f)(x)\leq 2^{j_0+1}$. We have that
$$\sum_{j\in\Bbb Z}2^j\chi_{\Omega_j}(x)=\sum_{j\leq j_0}2^j=2^{j_0+1}\leq 2M_N(f)(x).$$
We conclude that
\begin{align*}
&\left\|\sum_{m=0}^k{{\lambda_{\pi(m)}}\over{\|\chi_{x_{\pi(m)}+B_{\ell_{\pi(m)}+4\omega}}\|_{p(\cdot),q(\cdot)}}}
       \chi_{x_{\pi(m)}+B_{\ell_{\pi(m)}+4\omega}}\right\|_{p(\cdot),q(\cdot)}\\
       &=\left\|\left(\sum_{m=0}^k{{\lambda_{\pi(m)}}\over{\|\chi_{x_{\pi(m)}+B_{\ell_{\pi(m)}+4\omega}}\|_{p(\cdot),q(\cdot)}}}
       \chi_{x_{\pi(m)}+B_{\ell_{\pi(m)}+4\omega}}\right)^{1/\beta}\right\|_{\beta p(\cdot),\beta q(\cdot)}^\beta\leq C\|f\|_{H^{p(\cdot),q(\cdot)}({\Bbb R}^n,A)},
\end{align*}
where $C>0$ does not depend on $(k,\pi)$.

According to \cite[Theorem 2.8 and Definition 2.5, v)]{EKS} we deduce that
$$
\left\|\sum_{m\in\Bbb N}{{\lambda_{\pi(m)}}\over{\|\chi_{x_{\pi(m)}+B_{\ell_{\pi(m)}+4\omega}}\|_{p(\cdot),q(\cdot)}}}
       \chi_{x_{\pi(m)}+B_{\ell_{\pi(m)}+4\omega}}\right\|_{p(\cdot),q(\cdot)}\leq C\|f\|_{H^{p(\cdot),q(\cdot)}({\Bbb R}^n,A)}.
       $$

>From the property we have just established in the part (i) of this proof we deduce that the series $\sum_{m\in\Bbb N} \lambda_{\pi(m)}a_{\pi(m)}$ converges both in $H^{p(\cdot),q(\cdot)}({\Bbb R}^n,A)$ and $S'(\mathbb{R}^n)$. Hence, for every $\phi\in S(\mathbb{R}^n)$, the series $\sum_{m\in\Bbb N} \lambda_{\pi(m)}<a_{\pi(m)},\phi>$ converges in $\mathbb{C}$.

Also we have that if $\Lambda:\mathbb{N}\times \mathbb{Z}\rightarrow \mathbb{N}\times \mathbb{Z}$ is a bijection, then the series $\sum_{m\in\Bbb N} \lambda_{\Lambda\circ\pi(m)}<a_{\Lambda\circ\pi(m)},\phi>$ converges in $\mathbb{C}$, for every $\phi\in S(\mathbb{R}^n)$. In other words, the series $\sum_{m\in\Bbb N} \lambda_{\pi(m)}<a_{\pi(m)},\phi>$ converges unconditionally in $\mathbb{C}$, for every $\phi\in S(\mathbb{R}^n)$. Hence, $\sum_{m\in\Bbb N} \lambda_{\pi(m)}|<a_{\pi(m)},\phi>|<\infty$, for every $\phi\in S(\mathbb{R}^n)$.

Let $\phi\in S(\mathbb{R}^n)$. Since $\sum_{m\in\Bbb N} \lambda_{\pi(m)}|<a_{\pi(m)},\phi>|<\infty$, the double series $\sum_{(i,j)\in \mathbb{N}\times \mathbb{Z}}\lambda_{i,j}<a_{i,j},\phi>$ is summable, that is, $\sup_{m\in \mathbb{N}}\sum_{1\leq i\le m,\,|j|\le m}\lambda_{i,j}|<a_{i,j},\phi>|<\infty$. Then, for every bijection $\pi:\mathbb{N}\rightarrow\mathbb{N}\times \mathbb{Z}$, we have that
$$
<f,\phi>=\sum_{i\in\Bbb N}\left(\sum_{j\in\Bbb Z}\lambda_{i,j}<a_{i,j},\phi>\right)=\sum_{m\in\Bbb N} \lambda_{\pi(m)}<a_{\pi(m)},\phi>.
$$

Suppose now that $f\in H^{p(\cdot),q(\cdot)}({\Bbb R}^n,A)$. Then, there exists a sequence $\{f_j\}_{j\in\Bbb N}$ in $L^1_{loc}({\Bbb R}^n)\cap H^{p(\cdot),q(\cdot)}({\Bbb R}^n,A)$ such that $f_1=0$, $f_j\rightarrow f$, as $j\rightarrow\infty$, in $H^{p(\cdot),q(\cdot)}({\Bbb R}^n,A)$, and $\|f_{j+1}-f_j\|_{H^{p(\cdot),q(\cdot)}({\Bbb R}^n,A)}<2^{-j}\|f\|_{H^{p(\cdot),q(\cdot)}({\Bbb R}^n,A)}$, for every $j\in\Bbb N$. Then, we can write
$$f=\sum_{j\in\Bbb N}(f_{j+1}-f_j),$$
in the sense of convergence in both $H^{p(\cdot),q(\cdot)}({\Bbb R}^n,A)$ and $S'({\Bbb R}^n)$. For every $j\in\Bbb N$, there exist a sequence $\{\lambda_{i,j}\}_{i\in\Bbb N}\subset (0,\infty)$ and a sequence $\{a_{i,j}\}_{i\in\Bbb N}$ of $(p(\cdot),q(\cdot),\infty,s)$-atoms, being for every $i\in\Bbb N$, $a_{i,j}$ associated with $x_{i,j}\in{\Bbb R}^n$ and $\ell_{i,j}\in\Bbb Z$, satisfying that
$$f_{j+1}-f_j=\sum_{i\in\Bbb N} \lambda_{i,j}a_{i,j},\;\;\;\mbox{in}\;S'({\Bbb R}^n),$$
and
$$\left\|\sum_{i\in\Bbb N}{{\lambda_{i,j}}\over{\|\chi_{x_{i,j}+B_{\ell_{i,j}}}\|_{p(\cdot),q(\cdot)}}}
       \chi_{x_{i,j}+B_{\ell_{i,j}}}\right\|_{p(\cdot),q(\cdot)}\leq C2^{-j}\|f\|_{H^{p(\cdot),q(\cdot)}({\Bbb R}^n,A)}.$$
Here $C>0$ does not depend on $f$.

We have that
\begin{align*}
\left\|\sum_{i\in\Bbb N,j\in\Bbb Z}{{\lambda_{i,j}}\over{\|\chi_{x_{i,j}+B_{\ell_{i,j}}}\|_{p(\cdot),q(\cdot)}}}
       \chi_{x_{i,j}+B_{\ell_{i,j}}}\right\|_{p(\cdot),q(\cdot)} & \le \sum_{j\in\Bbb Z}\left\|\sum_{i\in\Bbb N}{{\lambda_{i,j}}\over{\|\chi_{x_{i,j}+B_{\ell_{i,j}}}\|_{p(\cdot),q(\cdot)}}}
       \chi_{x_{i,j}+B_{\ell_{i,j}}}\right\|_{p(\cdot),q(\cdot)}\\
       & \leq C\|f\|_{H^{p(\cdot),q(\cdot)}({\Bbb R}^n,A)}.
\end{align*}

By proceeding as above we can write
$$
f=\sum_{m\in\Bbb N}\lambda_{\pi(m)}a_{\pi(m)},\;\;\;\mbox{in}\;S'({\Bbb R}^n),
$$
for every bijection $\pi:\mathbb{N}\rightarrow\mathbb{N}\times \mathbb{N}$.

Thus the proof of this case is finished.

%\end{enumerate}
%\end{proof}

\subsection{Proof of Theorem \ref{Th1.2} when $r<\infty$.}

In order to prove this property we proceed in a series of steps establishing auxiliary and partial results.

\begin{Prop}\label{prop4.4}
Let $1<r< \infty $ and let $p,q\in \mathbb{P}_0$. There exists $s_0\in \mathbb{N}$ satisfying that if $s\in \mathbb{N}$, $s\ge s_0$, we can find $C>0$ for which, for every $f\in H^{p(\cdot ),q(\cdot)}(\mathbb{R}^n,A)$, there exist, for each $j\in \mathbb{N}$, $\lambda _j>0$ and a $(p(\cdot ),q(\cdot), r,s)$-atom $a_j$ associated with some $x_j\in \mathbb{R}^n$ and $\ell _j \in \mathbb{Z}$, such that
$$
\left\|\sum_{j\in \mathbb{N}}\lambda _j\|\chi_{x_j+B_{\ell _j}}\|_{p(\cdot ),q(\cdot)}^{-1}\chi _{x_j+B_{\ell _j}}\right\|_{p(\cdot ),q(\cdot)}\leq C\|f\|_{H^{p(\cdot ),q(\cdot)}(\mathbb{R}^n,A)},
$$
and $f=\sum_{j\in \mathbb{N}}\lambda _ja_j$ in $S'(\mathbb{R}^n)$.
\end{Prop}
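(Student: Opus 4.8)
The plan is to deduce this from the case $r=\infty$ of Theorem \ref{Th1.2}, (ii), which has already been established in the previous subsection. First I would recall the output of that case: if $s_0$ is the integer provided by Proposition \ref{prop3.2} and $s\geq s_0$, then every $f\in H^{p(\cdot),q(\cdot)}({\Bbb R}^n,A)$ can be written as $f=\sum_{j\in\Bbb N}\lambda_ja_j$ in $S'({\Bbb R}^n)$, where, for each $j$, $\lambda_j>0$ and $a_j$ is a $(p(\cdot),q(\cdot),\infty,s)$-atom associated with some $x_j\in{\Bbb R}^n$ and $\ell_j\in\Bbb Z$, and
$$\left\|\sum_{j\in\Bbb N}\lambda_j\|\chi_{x_j+B_{\ell_j}}\|_{p(\cdot),q(\cdot)}^{-1}\chi_{x_j+B_{\ell_j}}\right\|_{p(\cdot),q(\cdot)}\leq C\|f\|_{H^{p(\cdot),q(\cdot)}({\Bbb R}^n,A)}.$$

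The key point is the elementary observation that, for every $r\in(1,\infty)$, a $(p(\cdot),q(\cdot),\infty,s)$-atom $a$ associated with $x_0\in{\Bbb R}^n$ and $k\in\Bbb Z$ is automatically a $(p(\cdot),q(\cdot),r,s)$-atom associated with the same $x_0$ and $k$: the support condition (a) and the vanishing-moment conditions (c) do not involve $r$, while the size condition (b) follows from H\"older's inequality, since
$$\|a\|_r\leq\|a\|_\infty|x_0+B_k|^{1/r}=\|a\|_\infty b^{k/r}\leq b^{k/r}\|\chi_{x_0+B_k}\|_{p(\cdot),q(\cdot)}^{-1}.$$
Hence the decomposition produced above for $r=\infty$ serves without any modification --- same $s_0$, same $\lambda_j$, $x_j$ and $\ell_j$, same convergence in $S'({\Bbb R}^n)$ --- and the displayed estimate is exactly the one required. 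In this route essentially nothing further needs to be checked once the comparison of atoms is noted, so I do not anticipate any real obstacle.

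As an alternative (and in keeping with the way the remaining results of this subsection are set up), one could give a self-contained proof by repeating the Calder\'on--Zygmund construction of Section \ref{sec:3}. Namely, for $f\in L^1_{loc}({\Bbb R}^n)\cap H^{p(\cdot),q(\cdot)}({\Bbb R}^n,A)$ one writes $f=\sum_{j\in\Bbb Z}(g_{j+1}-g_j)$ and $g_{j+1}-g_j=\sum_{i\in\Bbb N}h_{i,j}$ in $S'({\Bbb R}^n)$ exactly as in the proof of the case $r=\infty$, uses \cite[(6.12) and (6.13)]{Bow1} to see that $\|h_{i,j}\|_\infty\leq C_02^j$, $\supp h_{i,j}\subset x_{i,j}+B_{\ell_{i,j}+4\omega}$ and $\int h_{i,j}P=0$ for all $P\in{\mathcal P}_s$, so that the normalized functions $a_{i,j}=h_{i,j}2^{-j}C_0^{-1}\|\chi_{x_{i,j}+B_{\ell_{i,j}+4\omega}}\|_{p(\cdot),q(\cdot)}^{-1}$ are $(p(\cdot),q(\cdot),r,s)$-atoms, and then estimates the associated control sum by $C\|f\|_{H^{p(\cdot),q(\cdot)}({\Bbb R}^n,A)}$ by means of Proposition \ref{Prop vectorial} and the pointwise bound $\sum_{j\in\Bbb Z}2^j\chi_{\Omega_j}\leq 2M_N(f)$, exactly as there; the hypothesis $f\in L^1_{loc}({\Bbb R}^n)$ is finally removed through the density statement of Corollary \ref{cor1} together with the completeness of $H^{p(\cdot),q(\cdot)}({\Bbb R}^n,A)$ (Proposition \ref{prop2.4}). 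All the remaining verifications --- in particular the unconditional summability in $S'({\Bbb R}^n)$ of the double series indexed by $\Bbb N\times\Bbb Z$ --- are carried out verbatim as in the $r=\infty$ part.
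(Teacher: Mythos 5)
Your primary argument is exactly the paper's proof: one observes that every $(p(\cdot),q(\cdot),\infty,s)$-atom is automatically a $(p(\cdot),q(\cdot),r,s)$-atom via $\|a\|_r\leq b^{k/r}\|a\|_\infty\leq b^{k/r}\|\chi_{x_0+B_k}\|_{p(\cdot),q(\cdot)}^{-1}$, and then invokes the already-established case $r=\infty$ of Theorem \ref{Th1.2}, (ii). The argument is correct and matches the paper's; the alternative self-contained route you sketch is unnecessary but consistent.
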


\begin{proof}
Suppose that $a$ is a $(p(\cdot ),q(\cdot),\infty,s)$-atom associated with $x_0\in \mathbb{R}^n$ and $k\in \mathbb{Z}$. We have that
$$
\|a\|_r=\left(\int_{x_0+B_k}|a(x)|^rdx\right)^{1/r}\leq b^{k/r}\|a\|_\infty\leq b^{k/r}\|\chi _{x_0+B_k}\|_{p(\cdot ),q(\cdot)}^{-1}.
$$
Hence, $a$ is a $(p(\cdot ),q(\cdot),r,s)$-atom associated with $x_0\in \mathbb{R}^n$ and $k\in \mathbb{Z}$. Then, this property follows from the previous case $r=\infty$.
\end{proof}

We are going to see that the $(p(\cdot ), q(\cdot), r,s)$-atoms are in $H^{p(\cdot ),q(\cdot)}(\mathbb{R}^n,A)$.
\begin{Prop}\label{prop4.3}
Let $p,q\in \mathbb{P}_0$ such that $p(0)<q(0)$. Assume that $max\{1,q_+\}<r<\infty$. There exists $s_0\in \mathbb{N}$ such that if $a$ is a $(p(\cdot),q(\cdot ),r,s_0)$-atom, then $a\in H^{p(\cdot),q(\cdot )}(\mathbb{R}^n,A)$.
\end{Prop}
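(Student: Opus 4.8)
The plan is to fix a $(p(\cdot),q(\cdot),r,s_0)$-atom $a$ associated with $x_0\in\mathbb{R}^n$ and $k\in\mathbb{Z}$ (with $s_0$ to be chosen and $N$ any sufficiently large integer; by Theorem \ref{Th1.1} and the very definition of $H^{p(\cdot),q(\cdot)}(\mathbb{R}^n,A)$ it suffices to bound $\|M_N(a)\|_{p(\cdot),q(\cdot)}$ by a constant not depending on $a$) and to follow the pattern of the case $r=\infty$ already treated for Theorem \ref{Th1.2}, the only new ingredient being the local part of the estimate. Since $\|\cdot\|_{p(\cdot),q(\cdot)}$ depends only on the decreasing rearrangement, it is translation invariant, so I may take $x_0=0$; thus $\supp a\subset B_k$, $\|a\|_r\le b^{k/r}\|\chi_{B_k}\|_{p(\cdot),q(\cdot)}^{-1}$ and $\int a(x)x^\alpha\,dx=0$ for $|\alpha|\le s_0$. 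Using that $\mathcal{L}^{p(\cdot),q(\cdot)}(\mathbb{R}^n)$ is a quasi-Banach space together with \cite[Lemma 2.3]{CrW}, I write $\|M_N(a)\|_{p(\cdot),q(\cdot)}\le C(I_{\mathrm{loc}}+I_{\mathrm{glob}})$ with $I_{\mathrm{loc}}=\|M_N(a)\chi_{B_{k+\omega}}\|_{p(\cdot),q(\cdot)}$ and $I_{\mathrm{glob}}=\|M_N(a)\chi_{B_{k+\omega}^c}\|_{p(\cdot),q(\cdot)}$.

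For $I_{\mathrm{glob}}$ I would argue exactly as on \cite[pp.~19--20]{Bow1} (and as in the proof of Theorem \ref{Th1.2} for $r=\infty$): choosing $\gamma>1$ with $\gamma p,\gamma q\in\mathbb{P}_1$, using the vanishing moments of $a$ up to order $s_0$, a Taylor expansion of the kernels, the uniform bounds on the derivatives of order $\le s_0+1$ of functions in $S_N$ (so $N\ge s_0+1$), and the crude estimate $\|a\|_1\le b^{k(1-1/r)}\|a\|_r\le b^k\|\chi_{B_k}\|_{p(\cdot),q(\cdot)}^{-1}$ --- which is the very same $L^1$-bound available for $\infty$-atoms, so nothing changes --- one obtains, for $x\in B_{k+m+\omega+1}\setminus B_{k+m+\omega}$,
$$M_N(a)(x)\le C\,b^{m(\gamma-1)}\lambda_-^{-m(s_0+1)}\|\chi_{B_k}\|_{p(\cdot),q(\cdot)}^{-1}\big(M_{HL}(\chi_{B_k})(x)\big)^{\gamma}.$$
Taking $s_0$ so large that $b^{\gamma-1}\lambda_-^{-(s_0+1)}\le1$ absorbs the powers of $b$ and $\lambda_-$, and then \cite[Lemma 2.3]{CrW}, Proposition \ref{propM_{HL}} (valid since $\gamma p,\gamma q\in\mathbb{P}_1$) and $((\chi_{B_k})^{\gamma})^*=\chi_{(0,b^k)}$ give $I_{\mathrm{glob}}\le C\|\chi_{B_k}\|^{-1}\|M_{HL}\chi_{B_k}\|_{\gamma p(\cdot),\gamma q(\cdot)}^{\gamma}\le C\|\chi_{B_k}\|^{-1}\|\chi_{B_k}\|_{\gamma p(\cdot),\gamma q(\cdot)}^{\gamma}=C$.

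For $I_{\mathrm{loc}}$ I would first record the elementary pointwise bound $M_N(a)(x)\le C\,M_{HL}(|a|)(x)$, uniform over $\varphi\in S_N$ (split $\mathbb{R}^n$ into anisotropic dyadic annuli around $x$ and use $|\varphi(z)|\le(1+\rho_A(z))^{-N}$, $N\ge2$). Since $M_{HL}$ is bounded on $L^r(\mathbb{R}^n)$ for $1<r<\infty$ \cite[p.~14]{Bow1}, this gives $\|M_N(a)\chi_{B_{k+\omega}}\|_{L^r}\le C\|a\|_{L^r}\le C\,b^{k/r}\|\chi_{B_k}\|_{p(\cdot),q(\cdot)}^{-1}$. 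As $M_N(a)\chi_{B_{k+\omega}}$ is supported in $B_{k+\omega}$, its rearrangement vanishes on $[b^{k+\omega},\infty)$ and is $\le t^{-1/r}\|M_N(a)\chi_{B_{k+\omega}}\|_{L^r}$ on $(0,b^{k+\omega})$. Writing $1/q(t)=1/r+1/w(t)$ --- a legitimate exponent $w$ with $0<w_-\le w_+<\infty$, the decisive point being $w_+<\infty$, i.e.\ $q_+<r$ --- Hölder's inequality in $L^{q(\cdot)}(0,\infty)$ yields
$$I_{\mathrm{loc}}\le C\,\|M_N(a)\chi_{B_{k+\omega}}\|_{L^r}\,\big\|t^{1/p(t)-1/q(t)}\chi_{(0,b^{k+\omega})}(t)\big\|_{L^{w(\cdot)}(0,\infty)}.$$
A direct computation with the log-Hölder conditions on $p,q$ (in the spirit of \cite{EKS}) shows that the last factor is finite --- here one uses $r>p(0)$, which holds since $p(0)<q(0)\le q_+<r$, to guarantee integrability near $t=0$ --- and is $\le C\,b^{k(1/p_\star-1/r)}$, where $p_\star=p(0)$ if $b^k\le1$ and $p_\star=p(\infty)$ if $b^k\ge1$, while $\|\chi_{B_k}\|_{p(\cdot),q(\cdot)}\ge c\,b^{k/p_\star}$ for the same $p_\star$; hence $I_{\mathrm{loc}}\le C\,b^{k/r}\|\chi_{B_k}\|^{-1}b^{k(1/p_\star-1/r)}=C\,b^{k/p_\star}\|\chi_{B_k}\|^{-1}\le C$. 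Combined with $I_{\mathrm{glob}}\le C$ this gives $\|M_N(a)\|_{p(\cdot),q(\cdot)}\le C$, so $a\in H^{p(\cdot),q(\cdot)}(\mathbb{R}^n,A)$.

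The main obstacle is $I_{\mathrm{loc}}$. In the $\infty$-atom case a pointwise $L^\infty$-bound on $a$ trivializes this part; here only $L^r$-control is available, so the estimate must be routed through the $L^r$-boundedness of $M_{HL}$ and a Hölder inequality in $L^{q(\cdot)}$, which is exactly what forces $r>q_+$, and one then has to control, uniformly over all $k\in\mathbb{Z}$ (both $b^k$ small and $b^k$ large), how the $L^{q(\cdot)}(0,\infty)$- and $L^{w(\cdot)}(0,\infty)$-norms of the weights $t^{1/p(t)-1/q(t)}\chi_{(0,b^k)}$ scale in $b^k$, via the log-Hölder continuity. It is at the small-scale end of this analysis that the hypothesis $p(0)<q(0)$ --- equivalently, that $r$ may be taken above $p(0)$ --- is used, to keep the relevant integral convergent near the origin.
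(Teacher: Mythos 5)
Your argument does establish the proposition, and its global part ($I_{\mathrm{glob}}$) is exactly the paper's: the same Taylor-expansion estimate from \cite[pp.~19--20]{Bow1} giving $M(a)\leq C\|\chi_{x_0+B_k}\|_{p(\cdot),q(\cdot)}^{-1}(M_{HL}(\chi_{x_0+B_k}))^{\gamma}$ off $x_0+B_{k+\omega}$, the same choice of $s_0$ via $b^{\gamma-1}\lambda_-^{-(s_0+1)}\leq 1$, and Proposition \ref{propM_{HL}}. The difference is in the local part. The paper argues more crudely: since $(M_\varphi(a)\chi_{x_0+B_{k+\omega}})^*$ vanishes for $t\geq b^{k+\omega}$ and $p(0)<q(0)$ makes $t^{1/p(t)-1/q(t)}$ bounded on $(0,b^{k+\omega})$, it simply bounds $I_1\leq C\|(M_\varphi(a))^*\chi_{(0,b^{k+\omega})}\|_{q(\cdot)}$ and then uses the embedding of $L^r$ into $L^{q(\cdot)}$ on a set of finite measure (this is where $r>q_+$ enters), accepting a constant that depends on $k$, i.e.\ on the atom. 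You instead run a generalized H\"older inequality with the explicit conjugate $1/q(t)=1/r+1/w(t)$ and try to track the $b^k$-dependence so as to obtain a bound \emph{uniform} in the atom. That extra precision is where your argument has a gap: the scaling $\|t^{1/p(t)-1/q(t)}\chi_{(0,T)}\|_{L^{w(\cdot)}}\lesssim T^{1/p_\star-1/r}$ for large $T$ requires $(1/p(\infty)-1/q(\infty))w(\infty)>-1$, which unwinds to $r>p(\infty)$ --- and nothing in the hypotheses ($r>\max\{1,q_+\}$, $p(0)<q(0)$) prevents $p(\infty)\geq r$. In that regime the $L^{w(\cdot)}$-norm stabilizes to a constant instead of decaying, and your final bound degenerates to $I_{\mathrm{loc}}\lesssim b^{k(1/r-1/p(\infty))}$, which is unbounded as $k\to+\infty$. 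This does not harm the proposition, which only asserts $a\in H^{p(\cdot),q(\cdot)}(\mathbb{R}^n,A)$ (each factor in your H\"older bound is finite for every fixed atom, precisely because $p(0)<q(0)$ handles the origin), but the uniform constant you announce at the outset is not actually delivered by your computation. The paper is explicit on this point: the remark following Proposition \ref{prop4.3} notes that the constant there depends on the atom, and the uniform statement is only obtained later (Proposition \ref{prop4.6}) via Rubio de Francia extrapolation through weighted anisotropic Hardy spaces, with a possibly larger threshold $r_0$.
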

\begin{proof}
Let $\varphi \in S(\mathbb{R}^n)$. Assume that $a$ is a $(p(\cdot),q(\cdot ), r,s)$-atom associated with $x_0\in \mathbb{R}^n$ and $\ell _0\in \mathbb{Z}$, where $s\in \mathbb{N}$ will be specified later. We have that
$$
\|M_\varphi (a)\|_{p(\cdot),q(\cdot )}\leq C\left(\|M_\varphi (a)\chi _{x_0+B_{\ell _0+w}}\|_{p(\cdot),q(\cdot )}+\|M_\varphi (a)\chi _{(x_0+B_{\ell _0+w})^{\rm c}}\|_{p(\cdot),q(\cdot )}\right)=I_1+I_2.
$$
It is clear that
$$
(M_\varphi (a)\chi _{x_0+B_{\ell _0+w}})^*(t)=0,\quad t\geq |x_0+B_{\ell _0+w}|=b^{\ell _0+w}.
$$
Then, since $0<p(0)=\lim_{t\rightarrow 0^+}p(t)<q(0)=\lim_{t\rightarrow 0^+}q(t)$, we can write
$$
I_1\leq C\Big\|t^{1/p(t)-1/q(t)}(M_\varphi (a))^*\chi _{(0,b^{\ell _0+w})}\Big\|_{q(\cdot)}\leq C\Big\|(M_\varphi (a))^*\chi _{(0,b^{\ell _0+w})}\Big\|_{q(\cdot)}.
$$
By using \cite[Lemma 2.2]{CrW} and since $r>\max\{1,q_+\}$ we obtain
$$
I_1\leq C\|(M_\varphi (a))^*\|_{L^r(0,\infty )}=C\|M_\varphi (a)\|_{L^r(\mathbb{R}^n)}\leq C\|a\|_{L^r(\mathbb{R}^n)}\leq Cb^{\ell _0/r}\|\chi _{x_0+B_{\ell _0}}\|_{p(\cdot ),q(\cdot)}^{-1}<\infty.
$$

By proceeding as in the proof of the case $r=\infty$ (see \cite[p. 19-21]{Bow1}) we get
$$
M_\varphi (a)(x)\leq \frac{C}{\|\chi _{x_0+B_{\ell _0}}\|_{p(\cdot ),q(\cdot)}}(M_{HL}(\chi _{x_0+B_{\ell _0}})(x))^\gamma ,\quad x\not \in x_0+B_{\ell _0+w},
$$
provided that $s\ge \frac{\gamma-1}{\log_b(\lambda_-)}-1$, where $\gamma >1$ is such that $\gamma p,\gamma q\in \mathbb{P}_1$. Then, Proposition \ref{propM_{HL}} implies that
$$
I_2\leq C\frac{\|(M_{HL}(\chi _{x_0+B_{\ell _0}}))^\gamma \|_{p(\cdot ),q(\cdot)}}{\|\chi _{x_0+B_{\ell _0}}\|_{p(\cdot ),q(\cdot)}}
=C\frac{\|M_{HL}(\chi _{x_0+B_{\ell _0}})\|^\gamma _{\gamma p(\cdot ),\gamma q(\cdot)}}{\|\chi _{x_0+B_{\ell _0}}\|_{p(\cdot ),q(\cdot)}}\leq C.
$$
Thus, we have shown that $a\in H^{p(\cdot ),q(\cdot)}(\mathbb{R}^n,A)$.
\end{proof}

Note that the constant $C$ in the proof of the last proposition depends on the atom $a$. This fact indicates that this next result cannot be a consequence of Proposition \ref{prop4.3}. We need a more involved argument to show the following property.

\begin{Prop}\label{prop4.6}
Let $p,q\in \mathbb{P}_0$ being $p(0)<q(0)$. There exist $s_0\in \mathbb{N}$ and $r_0>1$ such that, for every $r\geq r_0$ we can find $C>0$ satisfying that if, for every $j\in \mathbb{N}$, $\lambda _j>0$ and $a_j$ is a $(p(\cdot ),q(\cdot), r,s_0)$-atom associated with $x_j\in \mathbb{R}^n$ and $\ell _j \in \mathbb{Z}$ such that
$$
\sum_{j\in \mathbb{N}}\lambda _j\|\chi_{x_j+B_{\ell _j}}\|_{p(\cdot ),q(\cdot)}^{-1}\chi _{x_j+B_{\ell _j}}\in \mathcal{L}^{p(\cdot ),q(\cdot)}(\mathbb{R}^n),
$$
then $f=\sum_{j\in \mathbb{N}}\lambda _ja_j\in H^{p(\cdot ),q(\cdot)}(\mathbb{R}^n,A)$, and
$$
\|f\|_{H^{p(\cdot ),q(\cdot)}(\mathbb{R}^n,A)}\leq C\left\|\sum_{j\in \mathbb{N}}\lambda _j\|\chi_{x_j+B_{\ell _j}}\|_{p(\cdot ),q(\cdot)}^{-1}\chi _{x_j+B_{\ell _j}}\right\|_{p(\cdot ),q(\cdot)}.
$$
\end{Prop}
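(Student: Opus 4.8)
The plan is to reduce first to the case of a \emph{finite} sum $f=\sum_{j=1}^{M}\lambda_ja_j$ of $(p(\cdot),q(\cdot),r,s_0)$-atoms and to prove the asserted estimate with a constant $C$ independent of $M$ and of the finite family (here I write $\mu_j=\lambda_j\|\chi_{x_j+B_{\ell_j}}\|_{p(\cdot),q(\cdot)}^{-1}$). The countable case then follows: since $\mathcal{L}^{p(\cdot),q(\cdot)}(\mathbb{R}^n)$ is a lattice, the right-hand sides of all the partial-sum estimates are dominated by $\|\sum_{j\in\Bbb N}\mu_j\chi_{x_j+B_{\ell_j}}\|_{p(\cdot),q(\cdot)}$, so the partial sums form a Cauchy sequence in $H^{p(\cdot),q(\cdot)}(\mathbb{R}^n,A)$; by completeness (Proposition \ref{prop2.4}) and the continuous inclusion into $S'(\mathbb{R}^n)$ (Proposition \ref{prop2.2}) they converge to a distribution $f=\sum_{j\in\Bbb N}\lambda_ja_j$, and the monotone convergence property of $\mathcal{L}^{p(\cdot),q(\cdot)}(\mathbb{R}^n)$ (\cite[Theorem 2.8 and Definition 2.5, v)]{EKS}) transfers the bound to $f$.

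For a finite sum $f$, and $N>\max\{N_0,s_0\}$ large enough, subadditivity of $M_N$ gives $M_N(f)\le\sum_j\lambda_jM_N(a_j)\chi_{x_j+B_{\ell_j+\omega}}+\sum_j\lambda_jM_N(a_j)\chi_{(x_j+B_{\ell_j+\omega})^{\rm c}}=:I+II$. The far part $II$ is handled exactly as in the proof of Theorem \ref{Th1.2} in the case $r=\infty$ (following \cite[pp.~19--21]{Bow1}): the only way the atom enters there is through $\|a_j\|_1\le b^{\ell_j}\|\chi_{x_j+B_{\ell_j}}\|_{p(\cdot),q(\cdot)}^{-1}$, which for an $r$-atom is Hölder's inequality, and through the vanishing moments; choosing $s_0$ so that $b^{\gamma-1}\lambda_-^{-(s_0+1)}\le1$ for some $\gamma>1$ with $\gamma p,\gamma q\in\Bbb P_1$ yields $M_N(a_j)(x)\le C\|\chi_{x_j+B_{\ell_j}}\|_{p(\cdot),q(\cdot)}^{-1}\big(M_{HL}(\chi_{x_j+B_{\ell_j}})(x)\big)^{\gamma}$ for $x\notin x_j+B_{\ell_j+\omega}$, and then Proposition \ref{Prop vectorial} together with \cite[Lemma 2.3]{CrW} gives $\|II\|_{p(\cdot),q(\cdot)}\le C\|\sum_j\mu_j\chi_{x_j+B_{\ell_j}}\|_{p(\cdot),q(\cdot)}$.

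The new difficulty, and the part that forces the hypotheses $p(0)<q(0)$ and $r\ge r_0$ (we take $r_0>\max\{1,q_+\}$), is the near part $I$, since an $r$-atom carries no $L^\infty$ bound and the pointwise estimate $M_\varphi(a_j)\le\|a_j\|_\infty\|\varphi\|_1$ used for $\infty$-atoms is unavailable. Here the plan is to invoke the $L^r(\mathbb{R}^n)$-boundedness of the anisotropic grand maximal operator $M_N$ — a consequence of the boundedness of $M_{HL}$ on $L^r$ (\cite[p.~14]{Bow1}) and the pointwise bound $M_N(g)\le C\big(M_{HL}(|g|^{\gamma})\big)^{1/\gamma}$ valid for $1<\gamma<r$ and $N$ large — which, combined with the size condition, gives $\|M_N(a_j)\chi_{x_j+B_{\ell_j+\omega}}\|_{L^r(\mathbb{R}^n)}\le Cb^{\ell_j/r}\|\chi_{x_j+B_{\ell_j}}\|_{p(\cdot),q(\cdot)}^{-1}$; so $M_N(a_j)\chi_{x_j+B_{\ell_j+\omega}}$ is, up to a constant, a moment-free $r$-atom on the dilated ball $x_j+B_{\ell_j+\omega}$. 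Using that these functions are supported in $x_j+B_{\ell_j+\omega}$, a pointwise Hölder inequality with exponents $r$ and $r'$ factors $I$ into a product, and the generalized Hölder inequality in the Banach function space $\mathcal{L}^{p(\cdot),q(\cdot)}(\mathbb{R}^n)$, the identity $(\mathcal{L}^{p(\cdot),q(\cdot)}(\mathbb{R}^n))^{t}=\mathcal{L}^{tp(\cdot),tq(\cdot)}(\mathbb{R}^n)$, Proposition \ref{Prop vectorial}, and the inequality $\chi_{x_j+B_{\ell_j+\omega}}\le C\,M_{HL}(\chi_{x_j+B_{\ell_j}})$ reduce $\|I\|_{p(\cdot),q(\cdot)}$ to $\|\sum_j\mu_j\chi_{x_j+B_{\ell_j}}\|_{p(\cdot),q(\cdot)}$; the hypothesis $p(0)<q(0)$ enters, exactly as in the proof of Proposition \ref{prop4.3}, to control the local contribution (the factor $t^{1/p(t)-1/q(t)}$ being bounded near $t=0$ precisely because $1/p(0)-1/q(0)>0$) by an $L^r$-norm through \cite[Lemma 2.2]{CrW}. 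I expect this near-part bookkeeping — and in particular keeping the constant uniform over all atoms regardless of their scales $\ell_j$ — to be the main obstacle; once $I$ and $II$ are estimated, adding the two bounds completes the proof.
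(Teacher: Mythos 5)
Your reduction to finite sums, the limiting argument via completeness and monotone convergence, and the treatment of the far part $II$ are all sound and consistent with what the paper does (the estimate $M_N(a_j)(x)\leq C\|\chi_{x_j+B_{\ell_j}}\|_{p(\cdot),q(\cdot)}^{-1}(M_{HL}(\chi_{x_j+B_{\ell_j}})(x))^{\gamma}$ off the dilated ball indeed only needs the $L^1$ size of the atom and its moments, exactly as in the proof of Proposition \ref{prop4.3}). The gap is in the near part $I$, which is precisely the heart of the proposition. Your description --- ``a pointwise H\"older inequality with exponents $r$ and $r'$ factors $I$ into a product, and the generalized H\"older inequality in $\mathcal{L}^{p(\cdot),q(\cdot)}(\mathbb{R}^n)$ \dots reduce $\|I\|_{p(\cdot),q(\cdot)}$ to $\|\sum_j\mu_j\chi_{x_j+B_{\ell_j}}\|_{p(\cdot),q(\cdot)}$'' --- is not a proof, and I do not see how these ingredients can combine to give it. The obstruction is structural: the functions $M_N(a_j)\chi_{x_j+B_{\ell_j+\omega}}$ are only $L^r$-normalized bumps on balls of arbitrary scales with unbounded overlap, and there is no pointwise domination of $\sum_j\mu_jM_N(a_j)\chi_{x_j+B_{\ell_j+\omega}}$ by any maximal function of $\sum_j\mu_j\chi_{x_j+B_{\ell_j}}$. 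The classical substitutes are (a) duality against $M_{r'}(g)=(M_{HL}(|g|^{r'}))^{1/r'}$ acting on the K\"othe dual, which is unavailable here because $p$ may be smaller than $1$ near the origin, so one must first raise to a power $1/\alpha$, and then the needed comparison $\|\sum_j\mu_j^{1/\alpha}\chi_{E_j}\|_{\alpha p(\cdot),\alpha q(\cdot)}\lesssim\|\sum_j\mu_j\chi_{E_j}\|_{p(\cdot),q(\cdot)}^{1/\alpha}$ goes the wrong way without bounded overlap; or (b) an $A_1$-weight argument in which the $p_0$-triangle inequality ($p_0<1$) and the reverse H\"older property of the weight absorb the overlap (Str\"omberg--Torchinsky's Lemma 5). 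The paper takes route (b): it first builds the weighted theory (Lemmas \ref{lem4.6}, \ref{lem4.7}, \ref{lem4.8}, \ref{lem4.9}), then runs the Rubio de Francia iteration algorithm on the K\"othe dual $\mathcal{L}^{(\alpha p(\cdot))',(\alpha q(\cdot))'}(\mathbb{R}^n)$ to produce weights $R(h)\in\mathcal{A}_1(\mathbb{R}^n,A)\cap RH_{(r\alpha)'}(\mathbb{R}^n,A)$ with uniform constants, and recovers the $\mathcal{L}^{p(\cdot),q(\cdot)}$ bound by duality. None of this machinery appears in your outline, and it cannot be bypassed by H\"older alone.

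A second, related warning: you suggest that the hypothesis $p(0)<q(0)$ enters ``exactly as in the proof of Proposition \ref{prop4.3}'' to control the local contribution by an $L^r$-norm. But the constant obtained that way for a single atom is $Cb^{\ell_0/r}\|\chi_{x_0+B_{\ell_0}}\|_{p(\cdot),q(\cdot)}^{-1}$, which depends on the scale $\ell_0$ of the atom and is not uniformly comparable to $1$; the paper explicitly remarks after Proposition \ref{prop4.3} that for this reason Proposition \ref{prop4.6} cannot be deduced from the single-atom result and ``a more involved argument'' is required. In the paper's actual proof, $p(0)<q(0)$ is used only qualitatively (through Proposition \ref{prop4.3}, to know that the finite sums already lie in $H^{p(\cdot),q(\cdot)}(\mathbb{R}^n,A)$); the quantitative, scale-uniform estimate comes entirely from the extrapolation step. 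To repair your proof you would need to supply the weighted lemmas and the extrapolation argument, or some genuine substitute for them.
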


In order to prove this proposition we need to establish some previous properties.

\begin{Lem}\label{lem4.6}
Assume that $(\lambda _k)_{k\in\Bbb N} $ is a sequence in $(0,\infty)$, $(\ell_k)_{k\in\Bbb N}$ is a sequence in $\mathbb{Z}$, $(x_k)_{k\in\Bbb N} $ is a sequence in $\mathbb{R}^n$, $\nu$ is a doubling weight, that is, $\nu dx$ is a doubling measure,  (with respect to the anisotropic balls), $\ell\in \mathbb{N}$,  $\ell\geq 1$, and $0<p<\infty$. Then,
\begin{equation}\label{z0}
\left\|\sum_{k\in\Bbb N} \lambda _k\chi _{x_k+B_{\ell_k+\ell}}\right\|_{L^p(\mathbb{R}^n,\nu)}\leq Cb^{\ell\delta} \left\|\sum_{k\in\Bbb N} \lambda _k\chi _{x_k+B_{\ell_k}}\right\|_{L^p(\mathbb{R}^n,\nu)}.
\end{equation}
Here $C,\delta >0$ depends only on $\nu$.
\end{Lem}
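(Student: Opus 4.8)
The plan is to reduce the estimate to the case $\ell = 1$ and then iterate. For a single step, I would compare $\sum_k \lambda_k \chi_{x_k + B_{\ell_k + 1}}$ with $\sum_k \lambda_k \chi_{x_k + B_{\ell_k}}$ pointwise after applying a maximal-type operator. The key observation is that, for each $k$, the ball $x_k + B_{\ell_k+1}$ is contained in a bounded number of translates of $x_k + B_{\ell_k}$, or more usefully: if $x \in x_k + B_{\ell_k + 1}$, then
\[
\frac{1}{|x_k + B_{\ell_k+1}|}\int_{x_k+B_{\ell_k+1}} \chi_{x_k+B_{\ell_k}}(y)\,dy = b^{-1},
\]
so that $b^{-1}\chi_{x_k+B_{\ell_k+1}}(x) \le M_{HL}(\chi_{x_k+B_{\ell_k}})(x)$ for every $x \in \mathbb{R}^n$. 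Summing in $k$ gives
\[
\sum_{k\in\Bbb N}\lambda_k \chi_{x_k+B_{\ell_k+1}}(x) \le b\sum_{k\in\Bbb N}\lambda_k M_{HL}(\chi_{x_k+B_{\ell_k}})(x),\quad x\in\Bbb R^n.
\]
Thus the lemma will follow once we know that $M_{HL}$ (or the relevant sublinear variant acting on the sum) is bounded on $L^p(\mathbb{R}^n,\nu)$ with a norm controlled by a power of the doubling constant of $\nu$, and that this boundedness survives summation of the $\lambda_k$'s.

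The second ingredient is a vector-valued / summed Fefferman--Stein type inequality for $M_{HL}$ on $L^p(\mathbb{R}^n,\nu)$ when $\nu$ is a doubling weight (not necessarily Muckenhoupt). Here I would not use $A_p$ theory; instead, since each summand $\chi_{x_k+B_{\ell_k}}$ is the indicator of an anisotropic ball, I would argue more directly. The cleanest route is: the sublinear operator $f \mapsto M_{HL}(f)$ satisfies $(M_{HL} f)(x) \le C_\nu\, M_{HL}^\nu(f\,\mathbf{1})$-type bounds, but since we only need it applied to characteristic functions of balls, one can instead invoke the weak-type $(1,1)$ bound for $M_{HL}$ with respect to the doubling measure $\nu\,dx$ on the space of homogeneous type $(\mathbb{R}^n,\rho_A,\nu\,dx)$, interpolate with the trivial $L^\infty$ bound, and control the resulting $L^p(\nu)$ norm of $M_{HL}(\sum_k \lambda_k\chi_{x_k+B_{\ell_k}})$ by that of $\sum_k \lambda_k \chi_{x_k+B_{\ell_k}}$ itself, with constant a power of the doubling constant (which is where $\delta$, depending only on $\nu$, enters). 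Combining this with the pointwise bound above yields \eqref{z0} for $\ell = 1$ with a multiplicative constant $Cb^{\delta}$.

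For general $\ell \ge 1$ I would then iterate the one-step estimate $\ell$ times: each application of $B_{\ell_k+j}\mapsto B_{\ell_k+j+1}$ costs a factor $Cb^{\delta}$, so after $\ell$ steps we pick up $(Cb^{\delta})^\ell$, which can be absorbed into the form $C b^{\ell\delta'}$ after enlarging $\delta$ (renaming $\delta' \log b \ge \delta \log b + \log C$, or simply noting $(Cb^\delta)^\ell \le C^\ell b^{\ell\delta} \le b^{\ell\delta''}$ for $\delta''$ depending only on $C$ and $b$, hence only on $\nu$). I expect the main obstacle to be making the doubling maximal-function bound quantitative in the doubling constant while applying it to an infinite sum of ball indicators: one must check that the weak-$(1,1)$ constant of $M_{HL}$ on $(\mathbb{R}^n,\rho_A,\nu\,dx)$ depends only on the doubling constant of $\nu$ (a standard Vitali-covering argument) and that the interpolation and summation do not introduce dependence on the number of balls or on the geometry of the configuration $(x_k,\ell_k)$. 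Care is also needed because $p$ may be less than $1$, so I would phrase the summation step using the $\max$-of-balls trick or the pointwise domination by a single maximal function rather than the triangle inequality in $L^p(\nu)$.
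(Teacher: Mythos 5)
There is a genuine gap in the summation step, and it is not the technical nuisance you anticipate but a structural one. Your pointwise bound gives
$\sum_{k}\lambda_k\chi_{x_k+B_{\ell_k+1}}\leq b\sum_{k}\lambda_k M_{HL}(\chi_{x_k+B_{\ell_k}})$,
i.e.\ an $\ell^1$-sum of maximal functions, and to conclude you would need
$\bigl\|\sum_k M_{HL}(f_k)\bigr\|_{L^p(\nu)}\leq C\bigl\|\sum_k |f_k|\bigr\|_{L^p(\nu)}$.
No scalar boundedness of $M_{HL}$, nor interpolation of a weak $(1,1)$ bound with $L^\infty$, yields this: the Fefferman--Stein vector-valued inequality holds for $\ell^r$ with $1<r\leq\infty$ and is false for $r=1$, even for Lebesgue measure. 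There is a second conflation: the operator $M_{HL}$ averages against Lebesgue measure, and its boundedness on $L^p(\nu)$ is equivalent to $\nu\in\mathcal{A}_p$, which a general doubling weight need not satisfy. The operator controlled by the doubling hypothesis alone is $M_\nu$, whose averages are taken against $\nu\,dx$ (and which is bounded on $L^p(\nu)$ only for $p>1$); your weak-type argument on the space of homogeneous type $(\mathbb{R}^n,\rho_A,\nu\,dx)$ concerns $M_\nu$, not $M_{HL}$, while your pointwise domination is stated for $M_{HL}$. Finally, the case $0<p\leq 1$ cannot be reached by interpolating maximal bounds at all, and the ``max-of-balls trick'' you gesture at is not specified enough to close it.

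The paper avoids all of this by splitting into two cases. For $p>1$ it dualizes: for $0\leq g\in L^{p'}(\nu)$, doubling gives $\int_{y+B_{k+\ell}}g\,\nu\,dx\leq b^{\ell\delta}\int_{y+B_k}M_\nu(g)\,\nu\,dx$, so pairing $\sum_k\lambda_k\chi_{x_k+B_{\ell_k+\ell}}$ with $g$ transfers the enlargement onto $M_\nu(g)$, and the maximal theorem for $M_\nu$ on $L^{p'}(\nu)$ (valid since $\nu\,dx$ is doubling) finishes the estimate with a single factor $b^{\ell\delta}$ --- no iteration and no vector-valued inequality. For $0<p\leq 1$ it rewrites both sums as area integrals of the discrete measure $\sum_k\lambda_k\delta_{(x_k,\ell_k)}$ on $\mathbb{R}^{n+1}$ and invokes the Str\"omberg--Torchinsky change-of-aperture argument for doubling weights. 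If you want to salvage a maximal-function route, you would have to (a) replace $M_{HL}$ by $M_\nu$ in the pointwise bound (which directly gives $\chi_{x_k+B_{\ell_k+\ell}}\leq b^{\ell\delta}M_\nu(\chi_{x_k+B_{\ell_k}})$ in one step), (b) use that these are characteristic functions to write $\chi_{x_k+B_{\ell_k+\ell}}\leq b^{\ell\delta\gamma}(M_\nu(\chi_{x_k+B_{\ell_k}}))^{\gamma}$ for a $\gamma>1$ with $p\gamma>1$, and (c) import a Fefferman--Stein $\ell^\gamma$-valued inequality for $M_\nu$ on the space of homogeneous type $(\mathbb{R}^n,\rho_A,\nu\,dx)$ --- a nontrivial external ingredient that your proposal neither names nor proves.
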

\begin{proof}
Suppose firstly that $p>1$. We follow the ideas in the proof of \cite[Theorem 2, p. 53]{STor}.
We take $0\leq g\in L^{p'}(\mathbb{R}^n,\nu)$, where $p'$ is the exponent conjugated to $p$, that is, $p'=p/(p-1)$. Let $y\in \mathbb{R}^n$ and $k\in \mathbb{Z}$. We define the maximal operator $M_\nu$ by
$$
M_\nu(h)(z)=\sup_{m\in \mathbb{Z},\,y\in z+B_m}\frac{1}{\nu(y+B_m)}\int_{y+B_m}|h(x)|\nu (x)dx,\,\,\,z\in \mathbb{R}^n.
$$
Since $\nu$ is doubling with respect to the anisotropic balls, for a certain $\delta >0$, we have that
\begin{align*}
\int_{y+B_{k+\ell}}g(x)\nu (x)dx& \leq b^{\ell\delta} \frac{\nu(y+B_k)}{\nu(y+B_{k+\ell})}\int_{y+B_{k+\ell}}g(x)\nu (x)dx\\
& \leq b^{\ell\delta} \int_{y+B_k}M_\nu(g)(x)\nu (x)dx,\;\;\;y\in{\Bbb R}^n,\;k\in\Bbb Z.
\end{align*}
We have taken into account that
$$
M_\nu(g)(z)\geq \frac{1}{\nu (y+B_{\ell+k})}\int_{y+B_{\ell+k}}g(x)\nu(x)dx,\quad z\in y+B_k.
$$
Let $m\in \mathbb{N}$. We can write
\begin{align*}
\int_{\mathbb{R}^n}\sum_{k=0}^m\lambda _k\chi _{x_k+B_{\ell+\ell_k}}(x)g(x)\nu (x)dx&=\sum_{k=0}^m\lambda _k\int_{x_k+B_{\ell+\ell_k}}g(x)\nu (x)dx\\
&\leq b^{\ell\delta} \sum_{k=0}^m\lambda_k\int_{x_k+B_{\ell_k}}M_\nu (g)(x)\nu (x)dx.
\end{align*}
Hence, the maximal theorem \cite[Theorem 3, p. 3]{STor} leads to
\begin{align*}
\left|\int_{\mathbb{R}^n}\sum_{k=0}^m\lambda _k\chi_{x_k+B_{\ell+\ell_k}}(x)g(x)\nu (x)dx\right|&\leq b^{\ell\delta}\left\|\sum_{k=0}^m\lambda _k\chi _{x_k+B_{\ell_k}}\right\|_{L^p(\mathbb{R}^n,\nu)}\|M_\nu (g)\|_{L^{p'}(\mathbb{R}^n,\nu)}\\
&\leq Cb^{\ell\delta} \left\|\sum_{k=0}^m\lambda _k\chi _{x_k+B_{\ell_k}}\right\|_{L^p(\mathbb{R}^n,\nu)}\|g\|_{L^{p'}(\mathbb{R}^n,\nu)}.
\end{align*}

We conclude that
$$
\left\|\sum_{k=0}^m\lambda _k\chi _{x_k+B_{\ell+\ell_k}}(x)\right\|_{L^p(\mathbb{R}^n,\nu)}\leq Cb^{\ell\delta} \left\|\sum_{k=0}^m\lambda _k\chi _{x_k+B_{\ell_k}}\right\|_{L^p(\mathbb{R}^n,\nu)}.
$$

By taking $m\rightarrow \infty$, the  monotone convergence theorem allows us to establish \eqref{z0} in this case.

Assume now that $0<p\leq 1$. For every $x_0\in \mathbb{R}^n$ and $k_0\in \mathbb{Z}$, we denote by $\delta _{(x_0,k_0)}$ the Dirac measure in $\mathbb{R}^{n+1}$ supported in $(x_0,k_0)$. Let $m\in \mathbb{N}$. We have that
\begin{align*}
\int_{x\in y+B_{\ell+j}}\sum_{k=0}^m\lambda _k\delta_{(x_k,\ell_k)}(y,j)&=\sum_{k=0}^m\lambda _k\int_{\mathbb{R}^{n+1}}\chi _{\{(y,j):x\in y+B_{\ell+j}\}}(y,j)\delta _{(x_k,\ell_k)}(y,j)\\
&=\sum_{k=0}^m\lambda_k\chi _{\{(y,j):x\in y+B_{\ell+j}\}}(x_k,\ell_k)=\sum_{k=0}^m\lambda _k\chi _{x_k+B_{\ell+\ell_k}}(x),\quad x\in \mathbb{R}^n.
\end{align*}
Also, we can write
$$
\int_{x\in y+B_{j}}\sum_{k=0}^m\lambda _k\delta_{(x_k,\ell_k)}(y,j)=\sum_{k=0}^m\lambda _k\chi _{x_k+B_{\ell_k}}(x),\quad x\in \mathbb{R}^n.
$$

By arguing as in the proof of \cite[Theorem 1, p. 52]{STor} replacing the area Littlewood-Paley functions by our area integrals we can prove that
$$
\left\|\int_{x\in y+B_{\ell+j}}\sum_{k=0}^m\lambda _k\delta_{(x_k,\ell_k)}(y,j)\right\|_{L^p(\mathbb{R}^n,\nu)}\leq Cb^{\ell\delta} \left\|\int_{x\in y+B_{j}}\sum_{k=0}^m\lambda _k\delta_{(x_k,\ell_k)}(y,j)\right\|_{L^p(\mathbb{R}^n,\nu)}.
$$
By letting $m\rightarrow \infty$ and using monotone convergence theorem we conclude \eqref{z0}.

\end{proof}

We now recall definitions of anisotropic $\mathcal{A}_r$-weights and anisotropic weighted Hardy spaces (see \cite{BLYZ} and \cite{STor}).

Let $r\in (1,\infty)$ and $\nu$ be a nonnegative measurable function on $\mathbb{R}^n$. The function $\nu$ is said to be a weight in the anisotropic Muckenhoupt class $\mathcal{A}_r(\mathbb{R}^n,A)$ when
$$
[\nu]_{\mathcal{A}_r(\mathbb{R}^n,A)}=:\sup_{x\in \mathbb{R}^n,\,k\in \mathbb{Z}}\left(\frac{1}{|B_k|}\int_{x+B_k}\nu (y)dy\right)\left(\frac{1}{|B_k|}\int_{x+B_k}(\nu(y))^{-1/(r-1)}dy\right)^{r-1}<\infty.
$$
We say that $\nu$ belongs to the anisotropic Muckenhoupt class $\mathcal{A}_1(\mathbb{R}^n,A)$ when
$$
[\nu]_{\mathcal{A}_1(\mathbb{R}^n,A)}=:\sup_{x\in \mathbb{R}^n,\,k\in \mathbb{Z}}\left(\frac{1}{|B_k|}\int_{x+B_k}\nu (y)dy\right)\sup_{y\in x+B_k}(\nu (y))^{-1}<\infty.
$$
We define $\mathcal{A}_\infty(\mathbb{R}^n,A)=\bigcup_{1\le r<\infty}\mathcal{A}_r(\mathbb{R}^n,A)$.

The weight $\nu$ satisfies the reverse H\"older condition $RH_r(\mathbb{R}^n,A)$ (in short, $\nu\in RH_r(\mathbb{R}^n,A)$) if there exists $C>0$ such that
$$
\left(\frac{1}{|B_k|}\int_{x+B_k}(\nu (y))^rdy\right)^{1/r}\le C\frac{1}{|B_k|}\int_{x+B_k}\nu (y)dy,\,\,\,x\in \mathbb{R}^n\,\,\,\rm{and}\,\,\,k\in \mathbb{Z}.
$$
The classes $\mathcal{A}_r(\mathbb{R}^n,A)$ and
$RH_\alpha(\mathbb{R}^n,A)$ are closely connected. In particular, if
$\nu\in \mathcal{A}_1(\mathbb{R}^n,A)$, there exists $\alpha\in
(1,\infty)$ such that $\nu\in RH_\alpha(\mathbb{R}^n,A)$
(\cite[Theorem 1.3]{Kin1}).

Let $1\leq r<\infty$ and $\nu\in \mathcal{A}_r(\mathbb{R}^n,A)$. For every $N\in \mathbb{N}$, the anisotropic Hardy space $H^r_N(\mathbb{R}^n,\nu,A)$ consists of all those $f\in S'(\mathbb{R}^n)$ such that $M_N(f)\in L^r(\mathbb{R}^n,\nu)$. There exists $N_{r,\nu}\in \mathbb{N}$ satisfying that $H^r_N(\mathbb{R}^n,\nu,A)=H^r_{N_{r,\nu}}(\mathbb{R}^n,\nu,A)$, for every $N\ge N_{r,\nu}$. Moreover, when $N\ge N_{r,\nu}$ the quantities $\|M_N(f)\|_{L^r(\mathbb{R}^n,\nu)}$ and $\|M_{N_{r,\nu}}(f)\|_{L^r(\mathbb{R}^n,\nu)}$ are equivalent, for every $f\in H^r_{N_{r,\nu}}(\mathbb{R}^n,\nu,A)$. We denote $H^r(\mathbb{R}^n,\nu,A)$ to the space $H^r_{N_{r,\nu}}(\mathbb{R}^n,\nu,A)$.

By proceeding as in the proof of \cite[Lemma 5, p. 116]{STor} we can obtain the following property.
\begin{Lem}\label{lem4.7}
Let $p\in (0,\infty )$ and $q>\max\{1,p\}$. Assume that $\nu\in RH_{(q/p)'}(\mathbb{R}^n,A)$. Then, there exists $C>0$ such that if, for every $k\in \mathbb{N}$, the measurable function $a_k$ has its support contained in the ball $x_k+B_{\ell_k}$, where $x_k\in \mathbb{R}^n$, $\ell_k\in \mathbb{Z}$, $\|a_k\|_q\leq \|\chi _{x_k+B_{\ell_k}}\|_q$, and $\lambda _k>0$, we have that
$$
\left\|\sum_{k\in\Bbb N} \lambda _ka_k\right\|_{L^p(\mathbb{R}^n,\nu)}\leq C\left\|\sum_{k\in\Bbb N} \lambda _k\chi _{x_k+B_{\ell_k}}\right\|_{L^p(\mathbb{R}^n,\nu)}.
$$
\end{Lem}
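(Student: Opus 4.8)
The plan is to transplant the argument of \cite[Lemma~5, p.~116]{STor} to the anisotropic setting, using Lemma~\ref{lem4.6} to absorb the dilations of balls that the geometry produces. Write $B_k=x_k+B_{\ell_k}$ and $g=\sum_{k\in\mathbb{N}}\lambda_k\chi_{B_k}$; we may assume $g\in L^p(\mathbb{R}^n,\nu)$, since otherwise the right-hand side is infinite. First I would organize the balls $B_k$ by the level sets of $g$. For $i\in\mathbb{Z}$ the set $\Omega_i=\{x\in\mathbb{R}^n:g(x)>2^i\}$ is open (as $g$ is lower semicontinuous) and decreasing in $i$; Whitney--decomposing each $\Omega_i$ into anisotropic balls (by \cite[Lemma~2.7]{Bow1}, adapted to the anisotropic dyadic structure of \cite{Bow1}) and gathering these balls over all $i$ produces a family $\{R_l\}_l$, with $R_l$ a Whitney ball of $\Omega_{i_l}$, whose fixed dilates have bounded overlap, and with $\sum_l 2^{i_lp}\nu(R_l)\le C\sum_i 2^{ip}\nu(\Omega_i)\le C\|g\|_{L^p(\mathbb{R}^n,\nu)}^p$. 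Since $g\ge\lambda_k$ on $B_k$, each $B_k$ lies in $\Omega_i$ whenever $2^i<\lambda_k$, and a standard stopping-time assignment $k\mapsto l$ gives, for each $k$, an index $l$ with $B_k$ contained in $R_l$ (or in a fixed dilate of it) and $2^{i_l}\approx\lambda_k$; this yields a decomposition $f=\sum_l f^{(l)}$, with $f^{(l)}=\sum_{k\to l}\lambda_k a_k$ supported in $R_l$ and $\lambda_k\le C\,2^{i_l}$ for every $k$ assigned to $l$.

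The heart of the matter is the local $L^q$ bound
\begin{equation*}
\|f^{(l)}\|_{L^q(R_l)}\le C\,2^{i_l}|R_l|^{1/q},\qquad l,
\end{equation*}
with $C$ independent of everything. Granting it, on each $R_l$ H\"older's inequality with the exponents $q/p>1$ and $(q/p)'$ gives $\int_{R_l}|f^{(l)}|^p\nu\,dx\le\|f^{(l)}\|_{L^q(R_l)}^p\big(\int_{R_l}\nu^{(q/p)'}\big)^{1/(q/p)'}$, and the hypothesis $\nu\in RH_{(q/p)'}(\mathbb{R}^n,A)$ bounds the last factor by $C|R_l|^{1/(q/p)'-1}\nu(R_l)=C|R_l|^{-p/q}\nu(R_l)$; combining with the local bound the powers of $|R_l|$ cancel and $\int_{R_l}|f^{(l)}|^p\nu\,dx\le C\,2^{i_lp}\nu(R_l)$. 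Because the supports of the $f^{(l)}$ have bounded overlap, for every $p\in(0,\infty)$ one has $\|f\|_{L^p(\nu)}^p\le C\sum_l\|f^{(l)}\|_{L^p(\nu)}^p$ (for $p\le1$ by the $p$-subadditivity of $\|\cdot\|_{L^p(\nu)}^p$, and for $p>1$ because at each point only boundedly many $f^{(l)}$ do not vanish), so that $\|f\|_{L^p(\nu)}^p\le C\sum_l 2^{i_lp}\nu(R_l)\le C\|g\|_{L^p(\nu)}^p$, which is the assertion.

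It therefore remains to prove the local $L^q$ bound, and this is the main obstacle: no a priori control is assumed on the overlap of the $B_k$, so the naive estimate $\|f^{(l)}\|_{L^q(R_l)}\le\sum_{k\to l}\lambda_k\|a_k\|_q\le\sum_{k\to l}\lambda_k|B_k|^{1/q}$ is hopeless. I would argue by duality, as in \cite{STor}: for $\psi\ge0$ with $\mathrm{supp}\,\psi\subseteq R_l$ and $\|\psi\|_{q'}\le1$,
\begin{equation*}
\Big|\int f^{(l)}\psi\,dx\Big|\le\sum_{k\to l}\lambda_k\int_{B_k}|a_k|\psi\,dx\le\sum_{k\to l}\lambda_k|B_k|^{1/q}\|\psi\|_{L^{q'}(B_k)},
\end{equation*}
and since $|B_k|^{1/q}\|\psi\|_{L^{q'}(B_k)}=|B_k|\big(\tfrac1{|B_k|}\int_{B_k}\psi^{q'}\big)^{1/q'}\le|B_k|\inf_{B_k}M_{HL}(\psi^{q'})^{1/q'}\le\int_{B_k}M_{HL}(\psi^{q'})^{1/q'}\,dx$, the sum collects into $\int\big(\sum_{k\to l}\lambda_k\chi_{B_k}\big)M_{HL}(\psi^{q'})^{1/q'}\,dx\le\int_{cR_l}g\,M_{HL}(\psi^{q'})^{1/q'}\,dx$, where $cR_l$ is a fixed dilate of $R_l$ containing all $B_k$ with $k\to l$ (this containment, and any reorganization of these sums, is where Lemma~\ref{lem4.6} enters). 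The stopping-time structure of the assignment $k\mapsto l$ --- in particular that $\lambda_k\approx 2^{i_l}$ and that the points of $R_l$ where $\sum_{k\to l}\lambda_k\chi_{B_k}$ is large are buried in higher level sets, hence reabsorbed at later stages --- then furnishes the Calder\'on--Zygmund type control needed to bound the last integral by $C\,2^{i_l}|R_l|^{1/q}\|\psi\|_{q'}$, exactly as in \cite{STor}. Taking the supremum over such $\psi$ gives the local $L^q$ bound, and assembling the estimates above completes the proof.
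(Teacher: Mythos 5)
Your proposal reproduces the overall architecture of the Str\"omberg--Torchinsky argument that the paper itself merely cites (level sets of $g$, Whitney decomposition, a local $L^q$ estimate converted by H\"older and the reverse H\"older hypothesis into a weighted bound on each Whitney piece, and summation), but the two places where the proof actually has content are not right as written. First, the stopping time: you assign $k$ to a Whitney ball of $\Omega_{i_l}$ with $2^{i_l}\approx\lambda_k$, and with that assignment the local bound $\|f^{(l)}\|_{L^q(R_l)}\le C\,2^{i_l}|R_l|^{1/q}$ is simply false. Take $N$ copies of the same ball $B$, each with $\lambda_k=1$ and $a_k=\chi_B$: then $2^{i_l}\approx 1$ and $R_l$ is comparable to $B$, while $\|f^{(l)}\|_{L^q(R_l)}=N|B|^{1/q}$. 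The correct stopping time is $i(k)=\max\{i:\,B_k\subseteq\widetilde\Omega_i\}$, where $\widetilde\Omega_i=\{M_{HL}(\chi_{\Omega_i})>c\}$ is an expanded level set; this gives only the one-sided bound $\lambda_k\le C2^{i(k)}$ (the two-sided relation you assert is neither true nor needed) and, crucially, it guarantees $|B_k\setminus\widetilde\Omega_{i(k)+1}|\ge c|B_k|$, which is what makes the local estimate provable. Second, the local $L^q$ estimate itself, which you rightly call the heart of the matter, is exactly the step you defer to \cite{STor}; and as you have set it up the deferral cannot be repaired, because you majorize by $\int_{cR_l}g\,(M_{HL}(\psi^{q'}))^{1/q'}$ with the full function $g$, which is in general unbounded on $cR_l$. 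The correct move is to replace $\int_{B_k}$ by $\int_{B_k\setminus\widetilde\Omega_{i+1}}$ (at the cost of the bounded factor $|B_k|/|B_k\setminus\widetilde\Omega_{i+1}|$), note that on $\Omega_{i+1}^{\,c}$ one has $g\le 2^{i+1}$, and control the remaining integral $\int_{cR_l}(M_{HL}(\psi^{q'}))^{1/q'}$ by $C|R_l|^{1/q}\|\psi\|_{q'}$ via Kolmogorov's inequality for the weak $(1,1)$ bound of $M_{HL}$. None of these three ingredients appears in your write-up.

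A further gap is the summation step for $p>1$: the supports of the $f^{(l)}$ do not have bounded overlap, because a point where $2^{i_0}<g\le 2^{i_0+1}$ lies in one Whitney ball of $\Omega_i$ for every $i\le i_0$, hence in infinitely many of the dilated $R_l$; bounded overlap holds only within each fixed level $i$. For $p\le 1$ the $p$-subadditivity of $\|\cdot\|_{L^p(\nu)}^p$ still yields $\|f\|_{L^p(\nu)}^p\le\sum_l\|f^{(l)}\|_{L^p(\nu)}^p\le C\sum_i2^{ip}\nu(\Omega_i)\le C\|g\|_{L^p(\nu)}^p$, but for $p>1$ your justification fails and a separate argument, as in \cite{STor}, is required.
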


If $1<r\leq \infty$ and $N\in \mathbb{N}$ we say that a function $a\in L^r(\mathbb{R}^n)$ is a $(r,N)$-atom associated with $x_0\in \mathbb{R}^n$ and $j_0\in \mathbb{Z}$, when $a$ satisfies the following properties:
\begin{itemize}
\item[$(i)$] ${\rm supp }\; a\subset x_0+B_{j_0}$,
\item[$(ii)$] $\|a\|_r\leq b^{j_0/r}$,
\item[$(iii)$] $\displaystyle \int_{\mathbb{R}^n}a(x)x^\alpha dx=0$, for all $|\alpha|\leq N$, $\alpha \in \mathbb{N}^n$.
\end{itemize}

Next result is an anisotropic version of the second part of \cite[Theorem 1, p. 112]{STor}.

\begin{Lem}\label{lem4.8}
Let $0<p<\infty$. Assume that $\nu\in RH_{(q/p)'}(\mathbb{R}^n,A)$ where $q>\max \{1,p\}$. There exists $N_1\in \mathbb{N}$ and $C>0$ such that if, for every $k\in \mathbb{N}$, $a_k$ is a $(q,N_1)$-atom associated with $x_k\in \mathbb{R}^n$ and $\ell _k\in \mathbb{Z}$, and $\lambda _k>0$, satisfying that
$$
\Big\|\sum_{k=1}^\infty \lambda _k\chi _{x_k+B_{\ell _k}}\Big\|_{L^p(\mathbb{R}^n,\nu)}<\infty,
$$
then the series $\sum_{k=1}^\infty\lambda _ka_k$ converges in both $S'(\mathbb{R}^n)$ and $H^p(\mathbb{R}^n,\nu,A)$ to an element $f\in H^p(\mathbb{R}^n,\nu,A)$ such that
$$
\|f\|_{H^p(\mathbb{R}^n,\nu,A)}\leq C\Big\|\sum_{k=1}^\infty \lambda _k\chi _{x_k+B_{\ell _k}}\Big\|_{L^p(\mathbb{R}^n,\nu)}.
$$
\end{Lem}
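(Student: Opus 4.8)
The plan is to deduce the statement from a pointwise bound on the grand maximal function of a single atom, fed into the two quantitative Lemmas \ref{lem4.6} and \ref{lem4.7}. Note first that $q>\max\{1,p\}$ forces $q>p$, so $(q/p)'\in(1,\infty)$ and $\nu\in RH_{(q/p)'}(\mathbb{R}^n,A)$ is in particular a doubling weight (an $\mathcal{A}_\infty(\mathbb{R}^n,A)$ weight); hence Lemma \ref{lem4.6} applies to $\nu$ with constants $C,\delta>0$ depending only on $\nu$. Fix $N\in\mathbb{N}$ large enough that $M_N$ is the stable maximal function defining $H^p(\mathbb{R}^n,\nu,A)$ and that the atom estimates below hold. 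If $a$ is a $(q,N_1)$-atom associated with $x_0\in\mathbb{R}^n$ and $\ell_0\in\mathbb{Z}$, I split
\[
M_N(a)=M_N(a)\chi_{x_0+B_{\ell_0+\omega}}+\sum_{m\ge 1}M_N(a)\chi_{A_m},\qquad A_m:=x_0+\big(B_{\ell_0+\omega+m}\setminus B_{\ell_0+\omega+m-1}\big).
\]
For the local part, the pointwise domination $M_N(a)\le CM_{HL}(|a|)$ and the $L^q$-boundedness of $M_{HL}$ ($q>1$, \cite[p.\ 14]{Bow1}) give $\|M_N(a)\chi_{x_0+B_{\ell_0+\omega}}\|_q\le C\|a\|_q\le Cb^{\ell_0/q}\le C\|\chi_{x_0+B_{\ell_0+\omega}}\|_q$. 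For the far parts, since $\|a\|_1\le b^{\ell_0}=|B_{\ell_0}|$, using the vanishing moments of $a$ exactly as in the atom estimates of \cite{Bow1} (and as in the case $r=\infty$ treated above) one obtains $M_N(a)(x)\le C(b\lambda_-^{N_1+1})^{-m}$ for $x\in A_m$, whence $\|M_N(a)\chi_{A_m}\|_q\le C(b\lambda_-^{N_1+1})^{-m}\|\chi_{x_0+B_{\ell_0+\omega+m}}\|_q$.

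Next I assemble these for the series. For $M'\le M$ put $S_{M',M}=\sum_{k=M'+1}^M\lambda_k a_k$, so $M_N(S_{M',M})\le\sum_{k=M'+1}^M\lambda_k M_N(a_k)$. Splitting each $M_N(a_k)$ as above and renormalising, the functions proportional to $M_N(a_k)\chi_{x_k+B_{\ell_k+\omega}}$, resp.\ to $(b\lambda_-^{N_1+1})^m M_N(a_k)\chi_{A_{k,m}}$, satisfy the hypotheses of Lemma \ref{lem4.7} relative to the balls $x_k+B_{\ell_k+\omega}$, resp.\ $x_k+B_{\ell_k+\omega+m}$. Applying Lemma \ref{lem4.7} and then Lemma \ref{lem4.6} gives
\[
\Big\|\sum_{k=M'+1}^M\lambda_k M_N(a_k)\chi_{x_k+B_{\ell_k+\omega}}\Big\|_{L^p(\mathbb{R}^n,\nu)}\le Cb^{\omega\delta}\Big\|\sum_{k=M'+1}^M\lambda_k\chi_{x_k+B_{\ell_k}}\Big\|_{L^p(\mathbb{R}^n,\nu)}
\]
and
\[
\Big\|\sum_{k=M'+1}^M\lambda_k M_N(a_k)\chi_{A_{k,m}}\Big\|_{L^p(\mathbb{R}^n,\nu)}\le C(b\lambda_-^{N_1+1})^{-m}b^{(\omega+m)\delta}\Big\|\sum_{k=M'+1}^M\lambda_k\chi_{x_k+B_{\ell_k}}\Big\|_{L^p(\mathbb{R}^n,\nu)}.
\]
Summing the far parts over $m\ge 1$ by the $\min\{1,p\}$-subadditivity of $\|\cdot\|_{L^p(\mathbb{R}^n,\nu)}$, the resulting series converges provided $\lambda_-^{N_1+1}>b^{\delta-1}$, which holds once $N_1=N_1(\nu)$ is chosen large enough (recall $\lambda_->1$). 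With such an $N_1$ I obtain
\[
\|M_N(S_{M',M})\|_{L^p(\mathbb{R}^n,\nu)}\le C\Big\|\sum_{k=M'+1}^M\lambda_k\chi_{x_k+B_{\ell_k}}\Big\|_{L^p(\mathbb{R}^n,\nu)}.
\]

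Finally I pass to the limit. The nonnegative partial sums $\sum_{k\le M}\lambda_k\chi_{x_k+B_{\ell_k}}$ increase to an element of $L^p(\mathbb{R}^n,\nu)$, so their tails tend to $0$ in $L^p(\mathbb{R}^n,\nu)$; hence the last display shows $(S_{0,M})_M$ is Cauchy in $H^p(\mathbb{R}^n,\nu,A)$ and, by the continuous embedding $H^p(\mathbb{R}^n,\nu,A)\hookrightarrow S'(\mathbb{R}^n)$ (standard; argue as in Proposition \ref{prop2.2}, cf.\ \cite{STor}), also Cauchy in $S'(\mathbb{R}^n)$. Thus $S_{0,M}\to f$ in $S'(\mathbb{R}^n)$, i.e.\ $f=\sum_k\lambda_k a_k$ in $S'(\mathbb{R}^n)$. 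Since $S_{0,M}*\varphi_j\to f*\varphi_j$ pointwise for every $\varphi$ and $j$, Fatou's lemma yields $M_N(f)\le\sum_k\lambda_k M_N(a_k)$, so letting $M'=0$, $M\to\infty$ in the estimates above gives $f\in H^p(\mathbb{R}^n,\nu,A)$ with the asserted norm bound, and the same estimate applied to $f-S_{0,M}=\sum_{k>M}\lambda_k a_k$ gives convergence of the series in $H^p(\mathbb{R}^n,\nu,A)$ as well. The delicate point is the single-atom maximal estimate — the $L^q$ control near the atom and, above all, the geometric decay $(b\lambda_-^{N_1+1})^{-m}$ far from it coming from the cancellation conditions — together with choosing $N_1$ so that this decay dominates the doubling loss $b^{m\delta}$ of Lemma \ref{lem4.6}; once that is arranged the remainder is a routine assembly of Lemmas \ref{lem4.6} and \ref{lem4.7} with standard limiting arguments, with no new difficulty beyond the case $r=\infty$ and \cite{Bow1,STor}.
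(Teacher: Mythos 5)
Your proposal is correct and follows essentially the same route as the paper's proof: split the maximal function of a single atom into a local piece controlled in $L^q$ (a renormalized $(q,-1)$-atom in the paper's language) plus geometrically decaying pieces $(b\lambda_-^{N_1+1})^{-m}\chi_{x_0+B_{\ell_0+\omega+m+1}}$ on dilated balls, feed these into Lemmas \ref{lem4.7} and \ref{lem4.6}, and choose $N_1$ so that $(\delta-1)\ln b/\ln\lambda_-<N_1+1$ to sum the series. The only cosmetic differences are that you work with the grand maximal function $M_N$ (dominated by $M_{HL}$) where the paper uses $M_\varphi$ for a fixed $\varphi$ together with the maximal theorem, and that you spell out the limiting/Fatou arguments that the paper dismisses as standard.
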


\begin{proof}
Suppose that $a$ is a $(q,N)$-atom associated with $x_0\in \mathbb{R}^n$ and $\ell _0\in \mathbb{Z}$. Here $N\in \mathbb{N}$ will be specified later.

We choose $\varphi \in S(\mathbb{R}^n)$. We now estimate $\|M_\varphi (a)\|_{L^q(\mathbb{R}^n)}$ by considering in a separate way the regions $x_0+B_{\ell _0+w}$ and $(x_0+B_{\ell _0+w})^{\rm c}$.

Since $q>1$ the maximal theorem (\cite[Theorem 3.6]{Bow1}) implies that
$$
\left(\int_{\mathbb{R}^n}\chi _{x_0+B_{\ell _0+w}}(x)|M_\varphi (a)(x)|^qdx\right)^{1/q}\leq \|M_\varphi (a)\|_{L^q(\mathbb{R}^n)}\leq Cb^{\ell _0/q}\leq Cb^{(\ell _0+w)/q}.
$$

Hence, the function $\beta _0=\frac{1}{C}\chi _{x_0+B_{\ell _0+w}}M_\varphi (a)$ is a $(q,-1)$-atom associated with $x_0$ and $\ell _0+w$. The index $-1$ means that no null moment condition need to be satisfied.

By proceeding as in \cite[p. 20]{Bow1} we get, for every $m\in \mathbb{N}$,
$$
M_\varphi (a)(x)\leq C(b\lambda _-^{N+1})^{-m},\quad x\in x_0+(B_{\ell _0+w+m+1}\setminus B_{\ell _0+w+m}).
$$

We define $\rho_m=\chi _{x_0+B_{\ell _0+w+m+1}}$, $m\in \mathbb{N}$. It is clear that $\rho_m$ is a $(q,-1)$-atom associated with $x_0$ and $\ell _0+w+m+1$, for every $m\in \mathbb{N}$, and that
$$
\chi _{(x_0+B_{\ell _0+w})^{\rm c}} M_\varphi (a)\leq C\sum_{m\in\Bbb N} (b\lambda _-^{N+1})^{-m}\rho_m.
$$
Hence, we obtain
\begin{equation}\label{z1}
M_\varphi (a)\leq C\left(\beta _0+\sum_{m\in\Bbb N} (b\lambda _-^{N+1})^{-m}\rho_m\right).
\end{equation}
Here $C>0$ does not depend on $a$.

Suppose that  $k\in \mathbb{N}$ and, for every $j\in \mathbb{N}$, $j\leq k$, $\lambda _j>0$ and $a_j$ is a $(q,N)$-atom associated with $x_j\in \mathbb{R}^n$ and $\ell _j\in \mathbb{Z}$. According to \eqref{z1} we get
$$
M_\varphi \left(\sum_{j=0}^k\lambda _ja_j\right)\leq C\left(\sum_{j=0}^k\lambda _j(\beta _{0,j}+\sum_{m=0}^\infty (b\lambda _-^{N+1})^{-m}\rho_{m,j})\right),
$$
where $\beta _{0,j}$ and $\rho_{m,j}$, $j=1,...,k$, and $m\in \mathbb{N}$ have the obvious meaning and are $(q,-1)$-atoms. By using Lemmas \ref{lem4.6} and \ref{lem4.7}, and by taking $p_1=\min\{1,p\}$ we have that
\begin{align*}
\Big\|\sum_{j=0}^k\lambda _ja_j\big\|_{H^p(\mathbb{R}^n,\nu,A)}^{p_1}&\leq C\Big\|\sum_{j=0}^k\lambda _j(\beta _{0,j}+\sum_{m\in\Bbb N} (b\lambda _-^{N+1})^{-m}\rho_{m,j})\Big\|_{L^p(\mathbb{R}^n,\nu)}^{p_1}\\
&\leq C\left(\sum_{m\in\Bbb N} (b\lambda _-^{N+1})^{-mp_1}\Big\|\sum_{j=0}^k\lambda _j\rho_{m,j}\Big\|_{L^p(\mathbb{R}^n,\nu)}^{p_1}+\Big\|\sum_{j=0}^k\lambda _j\beta _{0,j}\Big\|_{L^p(\mathbb{R}^n,\nu)}^{p_1}\right)\\
&\leq C\left(\sum_{m\in\Bbb N} (b\lambda _-^{N+1})^{-mp_1}\Big\|\sum_{j=0}^k\lambda _j\chi _{x_j+B_{\ell _j+w+m+1}}\Big\|_{L^p(\mathbb{R}^n,\nu)}^{p_1}+\Big\|\sum_{j=0}^k\lambda _j\chi _{x_j+B_{\ell _j}}\Big\|_{L^p(\mathbb{R}^n,\nu)}^{p_1}\right)\\
&\leq C\left(\sum_{m\in\Bbb N}(b\lambda _-^{N+1})^{-mp_1}b^{\delta mp_1}+1\right)\Big\|\sum_{j=0}^k\lambda _j\chi _{x_j+B_{\ell _j+w}}\Big\|_{L^p(\mathbb{R}^n,\nu)}^{p_1},
\end{align*}
for a certain $\delta >0$. Hence, if $(\delta -1)\ln b/\ln (\lambda _-)<N+1$, we conclude that
$$
\Big\|\sum_{j=0}^k\lambda _ja_j\Big\|_{H^p(\mathbb{R}^n,\nu,A)}\leq C\Big\|\sum_{j=0}^k\lambda _j\chi _{x_j+B_{\ell _j}}\Big\|_{L^p(\nu)}.
$$
Standard arguments allow us to finish the proof of this property.
\end{proof}

>From Lemma \ref{lem4.8} we can deduce the following.

\begin{Lem}\label{lem4.9}
Assume that $p,q\in \mathbb{P}_0$, $p_0\in (0,\infty )$, $q_0>\max\{1,p_0\}$ and $\nu\in \mathcal{A}_1(\mathbb{R}^n,A)\bigcap RH_{(q_0/p_0)'}(\mathbb{R}^n,A)$. Suppose that, for every $k\in \mathbb{N}$, $\lambda _k>0$ and $a_k$ is a $(p(\cdot), q(\cdot ),q_0,N_1)$-atom associated with $x_k\in \mathbb{R}^n$ and $\ell _k\in \mathbb{Z}$, satisfying that
$$
\Big\|\sum_{k\in\Bbb N} \lambda _k\|\chi _{x_k+B_{\ell _k}}\|_{p(\cdot),q(\cdot )}^{-1}\chi _{x_k+B_{\ell _k}}\Big\|_{L^{p_0}(\mathbb{R}^n,\nu)}<\infty.
$$
Here $N_1$ is the one defined in Lemma \ref{lem4.8}.

Then, the series $f=\sum_{k\in\Bbb N} \lambda _ka_k$ converges in $H^{p_0}(\mathbb{R}^n,\nu,A)$ and
$$
\|f\|_{H^{p_0}(\mathbb{R}^n,\nu,A)}\leq C\Big\|\sum_{k\in\Bbb N} \lambda _k\|\chi _{x_k+B_{\ell _k}}\|_{p(\cdot),q(\cdot )}^{-1}\chi _{x_k+B_{\ell _k}}\Big\|_{L^{p_0}(\mathbb{R}^n,\nu)}.
$$
Here $C$ does not depend on $\{\lambda _k\}_{k\in\Bbb N} $ and $\{a_k\}_{k\in\Bbb N} $.
\end{Lem}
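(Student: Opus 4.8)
The plan is to reduce the statement to Lemma \ref{lem4.8} by a simple renormalization of the atoms.

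First I would observe that, for each $k\in\Bbb N$, the function
$$\widetilde{a}_k:=\|\chi_{x_k+B_{\ell _k}}\|_{p(\cdot),q(\cdot)}\,a_k$$
is a $(q_0,N_1)$-atom associated with $x_k$ and $\ell_k$ in the sense of the definition preceding Lemma \ref{lem4.8}. Indeed, condition $(i)$ is immediate from $\mathrm{supp}\,a_k\subset x_k+B_{\ell_k}$; for $(ii)$ one uses that $(\chi_{x_k+B_{\ell_k}})^*=\chi_{(0,b^{\ell_k})}$, so $\|\chi_{x_k+B_{\ell_k}}\|_{q_0}=b^{\ell_k/q_0}$, and hence from property (b) of $(p(\cdot),q(\cdot),q_0,N_1)$-atoms,
$$\|\widetilde{a}_k\|_{q_0}=\|\chi_{x_k+B_{\ell_k}}\|_{p(\cdot),q(\cdot)}\,\|a_k\|_{q_0}\leq b^{\ell_k/q_0};$$
and $(iii)$ holds because $a_k$ already has vanishing moments of order $\leq N_1$. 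Setting $\mu_k:=\lambda_k\|\chi_{x_k+B_{\ell_k}}\|_{p(\cdot),q(\cdot)}^{-1}$ we then have $\lambda_k a_k=\mu_k\widetilde{a}_k$ for every $k$, and the hypothesis of the lemma reads exactly
$$\Big\|\sum_{k\in\Bbb N}\mu_k\chi_{x_k+B_{\ell_k}}\Big\|_{L^{p_0}(\mathbb{R}^n,\nu)}<\infty.$$

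Next I would invoke Lemma \ref{lem4.8} with the exponents $p_0$ and $q_0$ (the requirement $q_0>\max\{1,p_0\}$ holds by assumption), with the weight $\nu$, which by hypothesis lies in $RH_{(q_0/p_0)'}(\mathbb{R}^n,A)$, and with $N=N_1$, the value furnished by that lemma. Since $\nu\in\mathcal{A}_1(\mathbb{R}^n,A)\subset\mathcal{A}_\infty(\mathbb{R}^n,A)$, the space $H^{p_0}(\mathbb{R}^n,\nu,A)$ is well defined. Lemma \ref{lem4.8} then gives that $\sum_{k\in\Bbb N}\mu_k\widetilde{a}_k$ converges in $S'(\mathbb{R}^n)$ and in $H^{p_0}(\mathbb{R}^n,\nu,A)$ to some $f\in H^{p_0}(\mathbb{R}^n,\nu,A)$ with
$$\|f\|_{H^{p_0}(\mathbb{R}^n,\nu,A)}\leq C\Big\|\sum_{k\in\Bbb N}\mu_k\chi_{x_k+B_{\ell_k}}\Big\|_{L^{p_0}(\mathbb{R}^n,\nu)}.$$
Undoing the substitution $\mu_k=\lambda_k\|\chi_{x_k+B_{\ell_k}}\|_{p(\cdot),q(\cdot)}^{-1}$ then gives $f=\sum_{k\in\Bbb N}\lambda_k a_k$ together with the asserted estimate, and $C$ does not depend on $\{\lambda_k\}_{k\in\Bbb N}$ or $\{a_k\}_{k\in\Bbb N}$.

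I do not expect any serious obstacle: essentially all the analytic content sits in Lemma \ref{lem4.8}. The only points requiring a little care are the identification of the normalized functions $\widetilde{a}_k$ as genuine $(q_0,N_1)$-atoms — in particular checking $\|\chi_{x_k+B_{\ell_k}}\|_{q_0}=b^{\ell_k/q_0}$ and matching the moment order $N_1$ between the two notions of atom — and verifying that the hypotheses on $p_0$, $q_0$ and $\nu$ in Lemma \ref{lem4.8} are indeed met.
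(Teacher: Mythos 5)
Your proposal is correct and is exactly the paper's argument: the paper's proof is the one-line observation that $a_k\|\chi_{x_k+B_{\ell_k}}\|_{p(\cdot),q(\cdot)}$ is a $(q_0,N_1)$-atom (plus the remark that $\nu\in\mathcal{A}_1(\mathbb{R}^n,A)$ is doubling), after which Lemma \ref{lem4.8} applies verbatim. Your write-up just spells out the renormalization and the verification of the atom conditions in more detail.
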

\begin{proof}
It is sufficient to note that, for every $k\in \mathbb{N}$, $a_k\|\chi _{x_k+B_{\ell _k}}\|_{p(\cdot),q(\cdot)}$ is a $(q_0,N_1)$-atom and $\nu$ is doubling with respect to anisotropic balls.
\end{proof}

%*****  Nota: Debemos ajustar el n\'umero de momentos nulos que pedimos. Depende de la propiedad doblante.****

\begin{proof}[Proof of Proposition \ref{prop4.6}]
We choose $\alpha>1$ such that $\alpha p,\alpha q\in \mathbb{P}_1$. Then, $(\alpha p)',(\alpha q)'\in \mathbb{P}_1$. We recall that the dual space $(\mathcal{L}^{\alpha p(\cdot ),\alpha q(\cdot )}(\mathbb{R}^n))^*$ of $\mathcal{L}^{\alpha p(\cdot ),\alpha q(\cdot )}(\mathbb{R}^n)$ is $\mathcal{L}^{(\alpha p(\cdot ))', (\alpha q(\cdot ))'}(\mathbb{R}^n)$ and the maximal operator $M_{HL}$ is bounded from $\mathcal{L}^{(\alpha p(\cdot ))', (\alpha q(\cdot ))'}(\mathbb{R}^n)$ into itself (Proposition \ref{propM_{HL}}).

In the sequel our argument is as in \cite{CrW} supported in the Rubio de Francia iteration algorithm. Given a function $h$ we define $M_{HL}^0(h)=|h|$ and, for every $i\in\mathbb{N}$, $i\ge 1$, $M_{HL}^i(h)=M_{HL}\circ M_{HL}^{i-1}(h)$. We consider
$$
R(h)=\sum_{i=0}^\infty \frac{M_{HL}^i(h)}{2^i\|M_{HL}\|_{(\alpha p(\cdot))',(\alpha q(\cdot))'}^i}.
$$

We have that

$(i)$ $|h|\leq R(h)$;

$(ii)$ $R$ is bounded from $\mathcal{L}^{(\alpha p(\cdot))',(\alpha q(\cdot))'}(\mathbb{R}^n)$ into itself and $\|R(h)\|_{(\alpha p(\cdot))',(\alpha q(\cdot))'}\leq 2\|h\|_{(\alpha p(\cdot))',(\alpha q(\cdot))'}$;

$(iii)$ $R(h)\in \mathcal{A}_1(\mathbb{R}^n,A)$ and $[R(h)]_{\mathcal{A}_1(\mathbb{R}^n,A)}\leq 2\|M_{HL}\|_{(\alpha p(\cdot))',(\alpha q(\cdot))'}$. Hence, there exists $\beta _0>1$ such that $R(h)\in RH_{\beta _0}(\mathbb{R}^n,A)$.

We choose $r>\max\{1,q_+\}$ such that $R(h)\in RH_{(r\alpha)'}(\mathbb{R}^n,A)$. It is sufficient to take $r>\max\{1,q_+,\frac{\beta_0}{\alpha(\beta_0-1)}\}$.

Suppose that $k\in \mathbb{N}$ and, for every $j \in \mathbb{N}$, $j\leq k$, $\lambda _j>0$ and $a_j$ is a $(p(\cdot ),q(\cdot ), r,N_1)$-atom associated with $x_j\in \mathbb{R}^n$ and $\ell _j\in \mathbb{Z}$. Here $N_1$ is the one defined in Lemma \ref{lem4.8}. We define $f_k=\sum_{j=0}^k\lambda _ja_j$. According to Proposition \ref{prop4.3}, $f_k\in H^{p(\cdot ),q(\cdot )}(\mathbb{R}^n,A)$.

Since $R(h)\in \mathcal{A}_1(\mathbb{R}^n,A)\bigcap RH_{(r\alpha)'}(\mathbb{R}^n,A)$, by Lemma \ref{lem4.9}, $f_k\in H^{1/\alpha}(\mathbb{R}^n,R(h),A)$ and
\begin{equation}\label{z3}
\|f_k\|_{H^{1/\alpha}(\mathbb{R}^n,R(h),A)}\leq C\Big\|\sum_{j=0}^k\lambda _j\|\chi _{x_j+B_{\ell _j}}\|_{p(\cdot ),q(\cdot )}^{-1}\chi _{x_j+B_{\ell _j}}\Big\|_{L^{1/\alpha}(\mathbb{R}^n,R(h))}.
\end{equation}

Let $\varphi \in S(\mathbb{R}^n)$. By using \cite[Lemma 2.3]{CrW} and \cite[Lemma 2.7]{EKS} we can write
$$
\|M_\varphi (f_k)\|_{p(\cdot),q(\cdot)}^{1/\alpha}=\|(M_\varphi (f_k))^{1/\alpha}\|_{\alpha p(\cdot ),\alpha q(\cdot )}\leq C\sup_{h}\int_{\mathbb{R}^n}(M_\varphi (f_k)(x))^{1/\alpha}h(x)dx,
$$
where the supremum is taken over all the functions $0\le h \in \mathcal{L}^{(\alpha p(\cdot ))', (\alpha q(\cdot ))'}(\mathbb{R}^n)$ such that $\|h\|_{(\alpha p(\cdot ))', (\alpha q(\cdot ))'}\leq 1$.

By the above properties $(i)$, $(ii)$ and $(iii)$ and \eqref{z3}, for every $0\le h\in \mathcal{L}^{(\alpha p(\cdot ))', (\alpha q(\cdot ))'}(\mathbb{R}^n)$ such that $\|h\|_{(\alpha p(\cdot ))', (\alpha q(\cdot ))'}\leq 1$, we get
\begin{align*}
\int_{\mathbb{R}^n}(M_\varphi (f_k)(x))^{1/\alpha}h(x)dx&\leq \int_{\mathbb{R}^n}(M_\varphi (f_k)(x))^{1/\alpha}R(h)(x)dx\\
&\leq C\int_{\mathbb{R}^n}\left(\sum_{j=0}^k\lambda _j\|\chi _{x_j+B_{\ell _j}}\|_{p(\cdot ),q(\cdot )}^{-1}\chi _{x_j+B_{\ell _j}}(x)\right)^{1/\alpha}R(h)(x)dx\\
&\leq C\left\|\left(\sum_{j=0}^k\lambda _j\|\chi _{x_j+B_{\ell _j}}\|_{p(\cdot ),q(\cdot )}^{-1}\chi _{x_j+B_{\ell _j}}\right)^{1/\alpha}\right\|_{\alpha p(\cdot ),\alpha q(\cdot )}\|R(h)\|_{(\alpha p(\cdot ))',(\alpha q(\cdot ))'}\\
&\leq C\left\|\sum_{j=0}^k\lambda _j\|\chi _{x_j+B_{\ell _j}}\|_{p(\cdot ),q(\cdot )}^{-1}\chi _{x_j+B_{\ell _j}}\right\|_{p(\cdot ),q(\cdot )}^{1/\alpha}\|h\|_{(\alpha p(\cdot ))',(\alpha q(\cdot ))'}\\
&\leq C\left\|\sum_{j=0}^k\lambda _j\|\chi _{x_j+B_{\ell _j}}\|_{p(\cdot ),q(\cdot )}^{-1}\chi _{x_j+B_{\ell _j}}\right\|_{p(\cdot ),q(\cdot )}^{1/\alpha}.
\end{align*}
Hence, we obtain
$$
\|f_k\|_{H^{p(\cdot ),q(\cdot )}(\mathbb{R}^n,A)}\leq C\left\|\sum_{j=0}^k\lambda _j\|\chi _{x_j+B_{\ell _j}}\|_{p(\cdot ),q(\cdot )}^{-1}\chi _{x_j+B_{\ell _j}}\right\|_{p(\cdot ),q(\cdot )}.
$$
We finish the proof by using standard arguments.
\end{proof}

%%%%%%%%%%%%%%%%%%%%%%%%%%%%%%%%%%%%%%%%%%%%%%%%%%%%%%%%%%%%%%%%%%%%%%%%%%%%%%%%%%
\section{Finite atomic decomposition (proof of Theorem \ref{Th1.3})}
%%%%%%%%%%%%%%%%%%%%%%%%%%%%%%%%%%%%%%%%%%%%%%%%%%%%%%%%%%%%%%%%%%%%%%%%%%%%%%%%%%

The proof of this result follows the ideas developed in  \cite{BLYZ}  and \cite{MSV}. Here we only show those points where variable exponent Lorentz space norm appears.

\begin{enumerate}
\item[(i)] Assume that $r_0<r<\infty$ and $s\in\Bbb N$, $s\geq s_0$, being $r_0$ and $s_0$ the parameters appearing  in  Theorem \ref{Th1.2} (i). By using this result we get that $H^{p(\cdot),q(\cdot),r,s}_{fin}({\Bbb R}^n,A)\subset H^{p(\cdot),q(\cdot)}({\Bbb R}^n,A)$ and, for every $f\in H^{p(\cdot),q(\cdot),r,s}_{fin}({\Bbb R}^n,A)$,
    $$\|f\|_{H^{p(\cdot),q(\cdot)}({\Bbb R}^n,A)}\leq C \|f\|_{H^{p(\cdot),q(\cdot),r,s}_{fin}({\Bbb R}^n,A)}.$$
    We now prove that, there exists $C>0$ such that $\|f\|_{H^{p(\cdot),q(\cdot),r,s}_{fin}({\Bbb R}^n,A)}\leq C$, provided that $f\in H^{p(\cdot),q(\cdot),r,s}_{fin}({\Bbb R}^n,A)$ and $\|f\|_{H^{p(\cdot),q(\cdot)}({\Bbb R}^n,A)}=1$.

Let $f\in H^{p(\cdot),q(\cdot),r,s}_{fin}({\Bbb R}^n,A)$ such that $\|f\|_{H^{p(\cdot),q(\cdot)}({\Bbb R}^n,A)}=1$. We have that $f\in L^r({\Bbb R}^n)$ and $\mbox{supp}\; f\subset B_{m_0}$ for some $m_0\in\Bbb Z$. For every $j\in\Bbb Z$ we define the set $\Omega_j=\{x\in{\Bbb R}^n:\;M_N(f)(x)>2^j\}$, where $N\in\Bbb N$, $N>\max\{N_0,s\}$ (here $N_0$ is as in Theorem \ref{Th1.1}). According to the proof of  Theorem \ref{Th1.2} and \cite[p. 3088]{BLYZ},  for every $i\in\Bbb N$ and $j\in\Bbb Z$ there exist $\lambda_{i,j}>0$ and a $(p(\cdot),q(\cdot),\infty,s)$-atom $a_{i,j}$ satisfying the following properties:
\begin{enumerate}
\item[(a)] $f=\sum_{i,j}\lambda_{i,j}a_{i,j}$, where the series converges unconditionally in $S'({\Bbb R}^n)$.
\item[(b)] $|\lambda_{i,j}a_{i,j}|\leq C 2^j$, $i\in\Bbb N$ and $j\in\Bbb Z$; 

for certain sequences $\{x_{i,j}\}_{i\in\Bbb N, j\in\Bbb Z}\subset{\Bbb R}^n$ and $\{\ell_{i,j}\}_{i\in\Bbb N, j\in\Bbb Z}\subset\Bbb Z$,
\item[(c)] $\mbox{supp}\;(a_{i,j})\subset x_{i,j}+B_{\ell_{i,j}+4\omega}$;
\item[(d)] $\Omega_j=\cup_{i\in\Bbb N}(x_{i,j}+B_{\ell_{i,j}+4\omega})$;
\item[(e)] $(x_{i,j}+B_{\ell_{i,j}-2\omega})\cap(x_{k,j}+B_{\ell_{k,j}-2\omega})=\emptyset$, $j\in \mathbb{Z}$, $i,k\in \mathbb{N}$, $i\not=k$;
\item[(f)] $\left\|\sum_{i\in\Bbb N,j\in\Bbb Z}\lambda_{i,j}\|\chi_{x_{i,j}+B_{\ell_{i,j}+4\omega}}\|^{-1}_{p(\cdot),q(\cdot)}\chi_{x_{i,j}+B_{\ell_{i,j}+4\omega}}\right\|_{p(\cdot),q(\cdot)}\leq C\|f\|_{H^{p(\cdot ),q(\cdot)}(\mathbb{R}^n,A)}=C.$
\end{enumerate}
The constants $C$ in (b) and (f) do not depend on $f$.

By using (b), (c), (d) and (e), we obtain
\begin{align*}
         \int_{{\Bbb R}^n}\sum_{j\in\Bbb Z}\sum_{i\in\Bbb N}|\lambda_{i,j}a_{i,j}(x)|dx &\leq C \sum_{j\in\Bbb Z}\sum_{i\in\Bbb N}2^j|x_{i,j}+B_{\ell_{i,j}+4\omega}| \\
       & \leq C\sum_{j\in\Bbb Z}2^j\sum_{i\in\Bbb N}|x_{i,j}+B_{\ell_{i,j}-2\omega}|=C\sum_{j\in\Bbb Z}2^j\Big|\bigcup_{i\in\Bbb N}(x_{i,j}+B_{\ell_{i,j}+4\omega})\Big| \\
       & \leq C\sum_{j\in\Bbb Z}2^j|\Omega_j|=C \int_{{\Bbb R}^n}\sum_{j\in\Bbb Z}2^j\chi_{\Omega_j}(x)dx\leq C\int_{{\Bbb R}^n}M_N(f)(x)dx.
        \end{align*}
Note that $M_N(f)\in L^1({\Bbb R}^n)$ because $f$ is a multiple of a $(1,r,s)$-atom. Let $\pi=(\pi_1,\pi_2):\mathbb{N}\rightarrow \mathbb{N}\times \mathbb{Z}$ be a bijection. We have that $\int_{{\Bbb R}^n}\sum_{m\in\Bbb N}|\lambda_{\pi(m)}a_{\pi(m)}(x)|dx<\infty$. Then, there exist a monotone function $\mu:\Bbb N\rightarrow\Bbb N$ and a subset $E\subset{\Bbb R}^n$ such that $\sum_{m\in\Bbb N}|\lambda_{\pi(\mu(m))}a_{\pi(\mu(m))}(x)|<\infty$, for every $x\in E$ and $|{\Bbb R}^n\backslash E|=0$. Hence, $\sum_{m\in\Bbb N}|\lambda_{\pi(m)}a_{\pi(m)}(x)|<\infty$, for every $x\in E$. Since the last series has positive terms, we conclude that the series $\sum_{m\in\Bbb N}\lambda_{\pi(m)}a_{\pi(m)}(x)$ is unconditionally convergent, for every $x\in E$, and $\sum_{m\in\Bbb N}\lambda_{\pi(m)}a_{\pi(m)}(x)=\sum_{j\in\Bbb Z}(\sum_{i\in\Bbb N}\lambda_{i,j}a_{i,j}(x))$, $x\in E$. Moreover, the arguments in the proof of Theorem \ref{Th1.2}, (ii) (see also \cite[pp. 3088 and 3089]{BLYZ}) lead us to
$$f(x)=\sum_{j\in\Bbb Z}\Big(\sum_{i\in\Bbb N}\lambda_{i,j}a_{i,j}(x)\Big),\;\;\;x\in E.$$

We have that
\begin{equation}\label{A1}
M_N(f)(x)\leq C_1\|\chi_{B_{m_0}}\|_{p(\cdot),q(\cdot)}^{-1},\;\;\;x\in(B_{m_0+4\omega})^{c}.
\end{equation}
Indeed, let $x\in(B_{m_0+4\omega})^{c}$. It was proved in \cite[pp. 3092 and 3093]{BLYZ} that
$$M_N(f)(x)\leq C \inf_{u\in B_{m_0}}M_N(f)(u).$$
Then, we obtain
\begin{align*}
        M_N(f)(x)\leq & {C\over{\|\chi_{B_{m_0}}\|_{p(\cdot),q(\cdot)}}}\Big\|\inf_{u\in B_{m_0}}[M_N(f)(u)]\chi_{B_{m_0}}\Big\|_{p(\cdot),q(\cdot)} \\
       & \leq {C\over{\|\chi_{B_{m_0}}\|_{p(\cdot),q(\cdot)}}}\|M_N(f)\chi_{B_{m_0}}\|_{p(\cdot),q(\cdot)} \\
       & \leq {C\over{\|\chi_{B_{m_0}}\|_{p(\cdot),q(\cdot)}}}\|M_N(f)\|_{p(\cdot),q(\cdot)} \\
       & \leq C_1\|\chi_{B_{m_0}}\|_{p(\cdot),q(\cdot)}^{-1}.
        \end{align*}
Thus, (\ref{A1}) is established.

We now choose $j_0\in\Bbb Z$ such that $2^{j_0}< C_1\|\chi_{B_{m_0+4\omega}}\|_{p(\cdot),q(\cdot)}^{-1}\leq 2^{j_0+1}$, where $C_1$ is the constant appearing in (\ref{A1}). We have that
$$\Omega_j\subset B_{m_0+4\omega},\;\;\;j>j_0.$$
By following the ideas developed in \cite{MSV} (see also \cite{BLYZ}) we define
$$h=\sum_{j\leq j_0}\sum_{i\in \mathbb{N}}\lambda_{i,j}a_{i,j}\;\;\;\mbox{and}\;\;\;{\frak l}=\sum_{j> j_0}\sum_{i\in \mathbb{N}}\lambda_{i,j}a_{i,j}.$$
Note that the series converges unconditionally in $S'({\Bbb R}^n)$ and almost everywhere. We have that $\cup_{j>j_0}\Omega_j\subset B_{m_0+4\omega}$. Then, $\mbox{supp}\;{\frak l}\subset B_{m_0+4\omega}$. Since $\mbox{supp}\;f\subset B_{m_0+4\omega}$, also that $\mbox{supp}\;h\subset B_{m_0+4\omega}$. As above we can see that
$$\int_{{\Bbb R}^n}\sum_{j> j_0}\sum_{i\in \mathbb{N}}|\lambda_{i,j}a_{i,j}(x)x^{\alpha}|dx=\int_{B_{m_0+4\omega}}\sum_{j> j_0}\sum_{i\in \mathbb{N}}|\lambda_{i,j}a_{i,j}(x)x^{\alpha}|dx<\infty.$$
Then $\int_{{\Bbb R}^n}\frak{l}(x)x^{\alpha}dx=0$, for every $\alpha\in{\Bbb N}^n$ and $|\alpha|\leq s$. Since  $\int_{{\Bbb R}^n}f(x)x^{\alpha}dx=0$,  for every $\alpha\in{\Bbb N}^n$ and $|\alpha|\leq s$, we have also that $\int_{{\Bbb R}^n}h(x)x^{\alpha}dx=0$,  for every $\alpha\in{\Bbb N}^n$ and $|\alpha|\leq s$.

Moreover, by using (\ref{eq3.5})  we get
$$|h(x)|\leq C\sum_{j\leq j_0}2^j\leq C2^{j_0}\leq C_2\|\chi_{B_{m_0+4\omega}}\|_{p(\cdot),q(\cdot)}^{-1}.$$
Here $C_2$ does not depend on $f$. Hence, $h/C_2$ is a $(p(\cdot),q(\cdot),\infty,s)$-atom associated to the ball $B_{m_0+4\omega}$.

As in \cite[Step 4, p. 3094]{BLYZ} we can see that, if $F_J=\{(i,j):\;i\in\Bbb N,\;j\in\Bbb Z,\;j>j_0\;\mbox{and}\;i+|j|\leq J\}$, and ${\frak l}_J=\sum_{(i,j)\in F_J}\lambda_{i,j}a_{i,j}$, for every $J\in\Bbb N$ such that $J>|j_0|$, then $\lim_{J\rightarrow +\infty}{\frak l}_J={\frak l}$, in $L^r({\Bbb R}^n)$. Moreover, we can find $J$ large enough such that ${\frak l}-{\frak l}_{J}$ is a $(p(\cdot),q(\cdot),r,s)$-atom associated to the ball $B_{m_0+4\omega}$. We have that $f=h+{\frak l}_{J}+({\frak l}-{\frak l}_{J})$ and

\begin{align*}
    \|f\|_{H^{p(\cdot),q(\cdot),r,s}({\Bbb R}^n,A)}&\leq \Big\|C_2{{\chi_{B_{m_0+4\omega}}}\over{\|\chi_{B_{m_0+4\omega}}\|_{p(\cdot),q(\cdot)}}}\\
&\quad +\sum_{(i,j)\in F_J}\lambda_{i,j}{{\chi_{x_{i,j}+B_{\ell_{i,j}+4\omega}}}\over{\|\chi_{x_{i,j}+B_{\ell_{i,j}+4\omega}}\|_{p(\cdot),q(\cdot)}}} +  {{\chi_{B_{m_0+4\omega}}}\over{\|\chi_{B_{m_0+4\omega}}\|_{p(\cdot),q(\cdot)}}}\Big\|_{p(\cdot),q(\cdot)} \\
& \leq C\Big(C_2+1+\Big\| \sum_{(i,j)\in F_J}\lambda_{i,j}{{\chi_{x_{i,j}+B_{\ell_{i,j}+4\omega}}}\over{\|\chi_{x_{i,j}+B_{\ell_{i,j}+4\omega}}\|_{p(\cdot),q(\cdot)}}} \Big\|_{p(\cdot),q(\cdot)}\Big) \leq C. \\
\end{align*}
Thus, (i) is established.

\item[(ii)] This assertion can be proved by using Theorem \ref{Th1.2} and by proceeding as in \cite[Steps 5 and 6, pp. 3094 and 3095]{BLYZ} (see also \cite[pp. 2926 and 2927]{MSV}).

\end{enumerate}

%%%%%%%%%%%%%%%%%%%%%%%%%%%%%%%%%%%%%%%%%%%%%%%%%%%%%%%%%%%%%%%%%%%%%%%%%%%%%%%%%%
\section{Applications (proof of Theorem \ref{Th1.4})}
%%%%%%%%%%%%%%%%%%%%%%%%%%%%%%%%%%%%%%%%%%%%%%%%%%%%%%%%%%%%%%%%%%%%%%%%%%%%%%%%%%

In this section we present a proof of Theorem \ref{Th1.4}. We are inspired in some ideas due to Cruz-Uribe and Wang \cite{CrW} but their arguments have to be modified to adapt them to anisotropic setting and variable exponent Lorentz spaces.

First of all we formulate the type of operator which we are working  with. We consider an operator $T:S({\Bbb R}^n)\rightarrow S'({\Bbb R}^n)$ that commutes with translations. It is well-known that this commuting property is equivalent to both the fact that $T$ commutes with convolutions and that there exists $L\in  S'({\Bbb R}^n)$ such that
$$T(\phi)=L*\phi,\;\;\;\phi\in S({\Bbb R}^n).$$
Assume that
\begin{enumerate}
\item[(i)] The Fourier transform $\hat L$ of $L$ is in $L^\infty({\Bbb R}^n)$.

\noindent This property is equivalent to that the operator $T$ can be extended to $L^2({\Bbb R}^n)$ as a bounded operator from  $L^2({\Bbb R}^n)$  into itself.

We say that $T$ is associated to a measurable function $K:{\Bbb R}^n\backslash\{0\}\rightarrow\Bbb R$ when, for every $\phi\in L^\infty_{c}({\Bbb R}^n)$, the space of $L^\infty({\Bbb R}^n)$-functions with compact support,
\begin{equation}\label{Ap1}
T(\phi)(x)=\int_{{\Bbb R}^n}K(x-y)\phi(y)dy,\;\;\;\;x\notin \mbox{supp }\phi.
\end{equation}
We assume that $K$ satisfies the following properties: there exists $C_K>0$ such that
\item[(ii)] $|K(x)|\leq {{C_K}\over{\rho(x)}}$, $x\in {\Bbb R}^n\backslash\{0\}$,
\item[(iii)] for some $\gamma>0$,
$$|K(x-y)-K(x)|\leq C_K{{{\rho(y)}^\gamma}\over{{\rho(x)}^{\gamma+1}}},\;\;\;\;b^{2\omega}\rho(y)\leq\rho(x).$$
\end{enumerate}
An operator $T$ satisfying the above properties is usually called a Calder\'on-Zygmund singular integral in our anisotropic context. These operators and other ones related with them have been studied, for instance, in \cite{Bow1}, \cite{LYY} and \cite{Wang}. In \cite{Wang} some sufficient conditions are given in order that a measurable function $K:{\Bbb R}^n\backslash\{0\}\rightarrow\Bbb R$ defines by (\ref{Ap1}) a principal value integral tempered distribution having a Fourier transform in $L^\infty({\Bbb R}^n)$.

If $T$ is an anisotropic Calder\'on-Zygmund singular integral, $T$ can be extended from $L^2({\Bbb R}^n)\cap L^p({\Bbb R}^n,\nu)$ to $L^p({\Bbb R}^n,\nu)$ as a bounded operator from $L^p({\Bbb R}^n,\nu)$ into itself, for every $1<p<\infty$ and $\nu\in{\mathcal A}_p({\Bbb R}^n,A)$, and as a bounded operator from  $L^1({\Bbb R}^n,\nu)$ into $L^{1,\infty}({\Bbb R}^n,\nu)$, for every $\nu\in{\mathcal A}_1({\Bbb R}^n,A)$. Also,  anisotropic Calder\'on-Zygmund singular integrals satisfy the following Kolmogorov type inequality.

\begin{Prop}\label{Prop6.1}
Let $T$ be an anisotropic Calder\'on-Zygmund singular integral. If $\nu\in{\mathcal A}_1({\Bbb R}^n,A)$ and $0<r<1$, there exists $C>0$ such that, for every $x_0\in{\Bbb R}^n$ and $\ell\in\Bbb Z$.
$$\int_{x_0+B_{\ell}}|Tf(x)|^r\nu(x)dx\leq C\nu(x_0+B_{\ell})^{1-r}\Big(\int_{{\Bbb R}^n}|f(x)|\nu(x)dx\Big)^r,\;\;\;\;f\in L^1({\Bbb R}^n,\nu).$$
Here $C=C([\nu]_{\mathcal{A}_1(\mathbb{R}^n,A)},r)$.
\end{Prop}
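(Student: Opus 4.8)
The plan is to reduce the statement to the classical (weighted, unweighted-dilation) Kolmogorov inequality, which is the standard companion of the weak-$(1,1)$ bound for singular integrals, and then adapt it to the anisotropic setting using the properties of $\mathcal{A}_1(\mathbb{R}^n,A)$-weights. First I would fix $\nu\in\mathcal{A}_1(\mathbb{R}^n,A)$, $0<r<1$, $f\in L^1(\mathbb{R}^n,\nu)$, and an anisotropic ball $Q=x_0+B_\ell$. I would split $f=f_1+f_2$ with $f_1=f\chi_{x_0+B_{\ell+2\omega}}$ (the ``local'' part, enlarged slightly so that the anisotropic triangle inequality with constant $b^\omega$ works comfortably) and $f_2=f-f_1$ supported away from $Q$. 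For the local part I would use the fact, recalled just before the statement, that $T$ is bounded from $L^1(\mathbb{R}^n,\nu)$ into $L^{1,\infty}(\mathbb{R}^n,\nu)$ for $\nu\in\mathcal{A}_1(\mathbb{R}^n,A)$, together with the elementary inequality that for any measurable $g$ and any set $E$ of finite $\nu$-measure,
$$
\int_E|g(x)|^r\nu(x)dx\le \frac{1}{1-r}\,\nu(E)^{1-r}\,\|g\|_{L^{1,\infty}(\mathbb{R}^n,\nu)}^r,
$$
which is proved by the usual layer-cake decomposition $\int_E|g|^r\nu = r\int_0^\infty t^{r-1}\nu(\{x\in E:|g(x)|>t\})dt$ and estimating $\nu(\{|g|>t\})$ by $\min\{\nu(E),\|g\|_{L^{1,\infty}}/t\}$, cutting the integral at $t=\|g\|_{L^{1,\infty}}/\nu(E)$. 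Applying this with $E=Q$ and $g=Tf_1$ gives
$$
\int_{Q}|Tf_1(x)|^r\nu(x)dx\le C\,\nu(Q)^{1-r}\Big(\int_{\mathbb{R}^n}|f_1(x)|\nu(x)dx\Big)^r\le C\,\nu(Q)^{1-r}\Big(\int_{\mathbb{R}^n}|f(x)|\nu(x)dx\Big)^r,
$$
with $C$ depending only on $[\nu]_{\mathcal{A}_1(\mathbb{R}^n,A)}$ and $r$.

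For the far part $f_2$ I would bound $|Tf_2(x)|$ pointwise on $Q$ using the kernel representation \eqref{Ap1} (valid since $x\notin\operatorname{supp}f_2$) and the size estimate (ii): for $x\in Q$ and $y\in\operatorname{supp}f_2$ one has $\rho(x_0-y)$ comparable to $\rho(x-y)$ because $\rho(x-x_0)\le b^\ell$ while $\rho(x_0-y)\gtrsim b^{\ell+2\omega}$, so $|K(x-y)|\le C_K/\rho(x-y)\le C/\rho(x_0-y)$. Hence $|Tf_2(x)|\le C\int_{(x_0+B_{\ell+2\omega})^c}\frac{|f(y)|}{\rho(x_0-y)}dy$, which is a constant $M$ independent of $x\in Q$. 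The crucial point is to control $M$ by the weighted $L^1$ norm of $f$ times $\nu(Q)^{-1}$: slicing the annuli $x_0+(B_{\ell+2\omega+k+1}\setminus B_{\ell+2\omega+k})$, on which $\rho(x_0-\cdot)\sim b^{\ell+2\omega+k}$, and using that $\nu$ is doubling with the $\mathcal{A}_1$ lower bound $\nu(x_0+B_{\ell+2\omega+k})\ge c\,b^{k}\nu(Q)$ (this is exactly where $\mathcal{A}_1(\mathbb{R}^n,A)$, equivalently the $\mathcal{A}_\infty$ reverse-doubling, enters, through \cite[Theorem 1.3]{Kin1} or directly from the definition of $[\nu]_{\mathcal{A}_1(\mathbb{R}^n,A)}$), I get
$$
\int_{x_0+(B_{\ell+2\omega+k+1}\setminus B_{\ell+2\omega+k})}\frac{|f(y)|}{\rho(x_0-y)}dy
\le \frac{C}{b^{\ell+2\omega+k}}\int_{x_0+B_{\ell+2\omega+k+1}}|f(y)|dy
\le \frac{C}{\nu(Q)}\,b^{-k\eta}\int_{\mathbb{R}^n}|f(y)|\nu(y)dy
$$
for some $\eta>0$, after using $|f(y)|\le |f(y)|\nu(y)\cdot\sup_{y+B_m}\nu^{-1}$ controlled via the $\mathcal{A}_1$ condition on the ball $x_0+B_{\ell+2\omega+k+1}$. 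Summing the geometric series in $k$ yields $M\le \frac{C}{\nu(Q)}\int_{\mathbb{R}^n}|f|\nu$, whence $\int_{Q}|Tf_2|^r\nu\le \nu(Q)M^r\le C\,\nu(Q)^{1-r}\big(\int_{\mathbb{R}^n}|f|\nu\big)^r$.

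Combining the two estimates via $|Tf|^r\le |Tf_1|^r+|Tf_2|^r$ (subadditivity of $t\mapsto t^r$ for $0<r<1$) gives the claimed inequality with $C=C([\nu]_{\mathcal{A}_1(\mathbb{R}^n,A)},r)$. I expect the main obstacle to be the far-part estimate: carefully passing the weight through the dyadic annuli so that the geometric decay $b^{-k\eta}$ survives requires invoking both directions of the $\mathcal{A}_1(\mathbb{R}^n,A)$ condition (the averaging bound to replace $|f|$ by $|f|\nu$, and the reverse-doubling/$RH$ property to compare $\nu(Q)$ with $\nu$ of the large annuli), and keeping track that all constants depend only on $[\nu]_{\mathcal{A}_1(\mathbb{R}^n,A)}$ and $r$. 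The local part is essentially routine once the weighted weak-$(1,1)$ boundedness recalled in the text is granted.
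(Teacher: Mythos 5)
Your proof is correct, but the decomposition $f=f_1+f_2$ and the entire far-part estimate are unnecessary. The paper's proof is precisely your ``local part'' computation applied directly to $Tf$: since the weighted weak-$(1,1)$ bound $\nu(\{|Tf|>\lambda\})\le C\lambda^{-1}\|f\|_{L^1(\mathbb{R}^n,\nu)}$ is global, one writes $\int_{x_0+B_\ell}|Tf(x)|^r\nu(x)dx=r\int_0^\infty\lambda^{r-1}\nu(\{x\in x_0+B_\ell:|Tf(x)|>\lambda\})\,d\lambda$, bounds the measure of the level set by $\min\{\nu(x_0+B_\ell),\,C\|f\|_{L^1(\mathbb{R}^n,\nu)}/\lambda\}$, and splits the $\lambda$-integral at $\|f\|_{L^1(\mathbb{R}^n,\nu)}/\nu(x_0+B_\ell)$ --- exactly your layer-cake argument. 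Kolmogorov's inequality does not require the function fed into the weak-type bound to be supported near the ball: the trivial bound $\nu(\{x\in x_0+B_\ell:\cdots\})\le\nu(x_0+B_\ell)$ already provides all the localization needed. Your far-part estimate is therefore a correct but redundant re-derivation of part of what the weak-$(1,1)$ bound (granted in the text just before the proposition) already yields; if you did keep that route, note that the lower bound $\nu(x_0+B_{\ell+2\omega+k})\gtrsim b^{k\eta}\,\nu(x_0+B_\ell)$ is a reverse-doubling property, not literally the $\mathcal{A}_1$ condition --- it follows from doubling on the unbounded space $(\mathbb{R}^n,\rho_A,|\cdot|)$, or from $\nu\in RH_\alpha(\mathbb{R}^n,A)$ via \cite[Theorem 1.3]{Kin1} --- and would need explicit justification. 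The short route costs nothing and keeps the constant visibly of the form $C([\nu]_{\mathcal{A}_1(\mathbb{R}^n,A)},r)$.
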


\begin{proof}
This property can be proved by taking into account that the operator $T$ is bounded from $L^1({\Bbb R}^n,\nu)$ into $L^{1,\infty}({\Bbb R}^n,\nu)$, provided that $\nu\in{\mathcal A}_1({\Bbb R}^n,A)$. Indeed, let $\nu\in{\mathcal A}_1({\Bbb R}^n,A)$ and $0<r<1$. For every $f\in L^1({\Bbb R}^n,\nu)$, we have that
\begin{align*}
       &  \int_{a+B_{\ell}}|Tf(x)|^r\nu(x)dx=r\int_0^\infty\lambda^{r-1}\nu(\{x\in a+B_{\ell}:\;|Tf(x)|>\lambda\})d\lambda \\
       & \leq C\int_0^\infty\lambda^{r-1}\min\Big\{\nu(a+B_{\ell}),{{\|f\|_{L^1({\Bbb R}^n,\nu)}}\over{\lambda}}\Big\}d\lambda \\
       & \leq C\Big(\nu(a+B_{\ell})\int_0^{{\|f\|_{L^1({\Bbb R}^n,\nu)}}\over{\nu(a+B_{\ell})}}\lambda^{r-1}d\lambda+\|f\|_{L^1({\Bbb R}^n,\nu)}\int_{{\|f\|_{L^1({\Bbb R}^n,\nu)}}\over{\nu(a+B_{\ell})}}^\infty\lambda^{r-2}d\lambda\Big) \\
       & \leq C\nu(a+B_{\ell})^{1-r}\Big(\int_{{\Bbb R}^n}|f(x)|\nu(x)dx\Big)^r.
        \end{align*}
\end{proof}

In order to study Calder\'on-Zygmund singular integrals in Hardy spaces it is usual to require on the kernel $K$ more restrictive regularity conditions than the above ones (ii) and (iii).

As in \cite[p. 61]{Bow1} (see also \cite{Hu}) we say that the anisotropic Calder\'on-Zygmund singular integral $T$ associated with the kernel $K$ is of order $m$ when $K\in C^m({\Bbb R}^n\backslash\{0\})$ and there exists $C_{K,m}>0$ such that, for every $x,y\in{\Bbb R}^n$, $x\neq y$,
\begin{equation}\label{Ap2}
|(\partial_y^\alpha \tilde K)(x,A^{-k}y)|\leq{{C_{K,m}}\over{\rho(x-y)}}=C_{K,m}b^{-k},\quad \alpha\in{\Bbb N}^n,\;|\alpha|\leq m,
\end{equation}
where $k$ is the unique integer such that $x-y\in B_{k+1}\backslash B_k$. Here $\tilde K$ is defined by
$$\tilde K(x,y)=K(x-A^{k}y),\;\;\;x,y\in{\Bbb R}^n,\;x-y\in B_{k+1}\backslash B_k.$$
As it can be seen in \cite[p. 61]{Bow1} this property reduces to the usual condition in the isotropic setting.

In order to prove Theorem \ref{Th1.4} we need to consider weighted finite atomic anisotropic Hardy spaces as follows.

Let $p,q\in {\Bbb P}_0$, $r>1$, $s\in\Bbb N$, $p_0\in(0,1)$ and $\nu\in {\mathcal A}_1({\Bbb R}^n,A)$. The space $H^{p(\cdot),q(\cdot),r,s}_{p_0,\nu,fin}({\Bbb R}^n,A)$ consists of all finite sums of multiple of $(p(\cdot),q(\cdot),r,s)$-atoms and it is endowed with the norm $\|\cdot\|_{H^{p(\cdot),q(\cdot),r,s}_{p_0,\nu,fin}({\Bbb R}^n,A)}$ defined as follows: for every $f\in H^{p(\cdot),q(\cdot),r,s}_{p_0,\nu,fin}({\Bbb R}^n,A)$,
$$\|f\|_{H^{p(\cdot),q(\cdot),r,s}_{p_0,\nu,fin}({\Bbb R}^n,A)}=\inf\Big\{\Big\|\sum_{j=1}^k\lambda_j^{p_0}\|\chi_{x_j+B_{\ell_j}}\|^{-p_0}_{p(\cdot),q(\cdot)}\chi_{x_j+B_{\ell_j}}\Big\|_{L^1(\mathbb{R}^n,\nu)}^{1/p_0}:\;f=\sum_{j=1}^k\lambda_ja_j\Big\},$$
where the infimum, as usual, is taken over all the possible finite decompositions. Note that according to Proposition \ref{prop4.3}, if $\max\{1,q_+\}<r<\infty$ and $s\geq s_0$, being $s_0$ the same as in Proposition \ref{prop4.3}, then $H^{p(\cdot),q(\cdot),r,s}_{p_0,\nu,fin}({\Bbb R}^n,A)=H^{p(\cdot),q(\cdot),r,s}_{fin}({\Bbb R}^n,A)$ as sets.

The following property will be useful in the sequel.
\begin{Lem}\label{Lem6.1}
Let $p,q\in {\Bbb P}_0$,  $\max\{1,q_+\}<r<\infty$ and $s\in\Bbb N$. There exists $s_0\in\Bbb N$ such that if $s\geq s_0$, $p_0<\min\{p_-,q_-\}$ and $\nu\in{\mathcal A}_1({\Bbb R}^n,A)\cap \mathcal{L}^{(p(\cdot)/p_0)',(q(\cdot)/p_0)'}({\Bbb R}^n)$ we can find $C>0$ such that, for every $f\in H^{p(\cdot),q(\cdot),r,s}_{p_0,\nu,fin}({\Bbb R}^n,A)$,
$$\|f\|_{H^{p(\cdot),q(\cdot),r,s}_{p_0,\nu,fin}({\Bbb R}^n,A)}\leq C\|f\|_{H^{p_0}({\Bbb R}^n,\nu,A)}.$$
\end{Lem}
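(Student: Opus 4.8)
The plan is to compare the finite atomic Hardy-Lorentz quasinorm with the weighted Hardy norm $\|\cdot\|_{H^{p_0}(\mathbb{R}^n,\nu,A)}$ by producing, for a given $f\in H^{p(\cdot),q(\cdot),r,s}_{p_0,\nu,fin}(\mathbb{R}^n,A)$, a \emph{finite} atomic decomposition of $f$ whose associated sum estimate is controlled by $\|f\|_{H^{p_0}(\mathbb{R}^n,\nu,A)}$. The natural source of such a decomposition is the Calder\'on-Zygmund decomposition of $f$ adapted to the level sets $\Omega_j=\{M_N(f)>2^j\}$, exactly as in the proof of Theorem~\ref{Th1.3}~(i): since $f\in L^r(\mathbb{R}^n)$ with compact support, $M_N(f)\in L^1(\mathbb{R}^n)\cap L^r(\mathbb{R}^n)$, and one obtains atoms $a_{i,j}$ and coefficients $\lambda_{i,j}$ satisfying properties (a)--(f) there, in particular $|\lambda_{i,j}a_{i,j}|\le C2^j$, $\operatorname{supp} a_{i,j}\subset x_{i,j}+B_{\ell_{i,j}+4\omega}$, and the finite-overlap property of the dilated balls $x_{i,j}+B_{\ell_{i,j}+4\omega}$ over fixed $j$ coming from \eqref{eq3.5}. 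As in Theorem~\ref{Th1.3}~(i) one truncates: for a suitable $j_0$ one lumps the low levels $j\le j_0$ into a single atom $h/C_2$ supported in a large ball, and for the high levels $j>j_0$ one truncates the (now finite-support) tail to get a finite sum plus one correcting atom. The upshot is a finite decomposition $f=h+\mathfrak l_J+(\mathfrak l-\mathfrak l_J)$ into a bounded number of multiples of $(p(\cdot),q(\cdot),r,s)$-atoms, and it remains to bound the $L^1(\mathbb{R}^n,\nu)$-norm of $\big(\sum\lambda_j^{p_0}\|\chi_{\cdot}\|_{p(\cdot),q(\cdot)}^{-p_0}\chi_{\cdot}\big)^{1/p_0}$ by a constant times $\|f\|_{H^{p_0}(\mathbb{R}^n,\nu,A)}$.

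First I would reduce $\|\chi_{x_{i,j}+B_{\ell_{i,j}+4\omega}}\|_{p(\cdot),q(\cdot)}^{-1}$: since $p_0<\min\{p_-,q_-\}$ and $\nu\in\mathcal{L}^{(p(\cdot)/p_0)',(q(\cdot)/p_0)'}(\mathbb{R}^n)$, the K\"othe duality of Lorentz spaces (\cite[Lemma 2.7]{EKS}) combined with $(\mathcal L^{p(\cdot),q(\cdot)})^r=\mathcal L^{rp(\cdot),rq(\cdot)}$ gives
$$
\nu(x_{i,j}+B_{\ell_{i,j}+4\omega})=\int \chi_{x_{i,j}+B_{\ell_{i,j}+4\omega}}\,\nu\,dx\le \|\chi_{x_{i,j}+B_{\ell_{i,j}+4\omega}}\|_{p(\cdot)/p_0,q(\cdot)/p_0}\,\|\nu\|_{(p(\cdot)/p_0)',(q(\cdot)/p_0)'},
$$
hence, using that $(\chi_E)^{p_0}=\chi_E$ and $\|\chi_E\|_{p(\cdot),q(\cdot)}^{p_0}=\|\chi_E\|_{p(\cdot)/p_0,q(\cdot)/p_0}$, one gets $\nu(x_{i,j}+B_{\ell_{i,j}+4\omega})\le C\|\chi_{x_{i,j}+B_{\ell_{i,j}+4\omega}}\|_{p(\cdot),q(\cdot)}^{p_0}$. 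Therefore, using the finite overlap \eqref{eq3.5} and properties (c)--(e) (covering/disjointness),
$$
\sum_{i,j}\lambda_{i,j}^{p_0}\|\chi_{x_{i,j}+B_{\ell_{i,j}+4\omega}}\|_{p(\cdot),q(\cdot)}^{-p_0}\,\nu(x_{i,j}+B_{\ell_{i,j}+4\omega})
\le C\sum_{i,j}\lambda_{i,j}^{p_0}\,\text{(a bounded multiple of)}\,2^{jp_0}\cdot\frac{\nu(x_{i,j}+B_{\ell_{i,j}+4\omega})}{\|\chi\|^{p_0}}\cdot\dots,
$$
which after using $|\lambda_{i,j}a_{i,j}|\le C2^j$, $\|a_{i,j}\|_\infty\le C\|\chi_{x_{i,j}+B_{\ell_{i,j}+4\omega}}\|_{p(\cdot),q(\cdot)}^{-1}$ and $\sum_j 2^{jp_0}\chi_{\Omega_j}\le C (M_N(f))^{p_0}$ (exactly the pointwise estimate used in Theorem~\ref{Th1.3}) collapses to $\le C\int (M_N(f))^{p_0}\nu\,dx=C\|f\|_{H^{p_0}(\mathbb{R}^n,\nu,A)}^{p_0}$, using the doubling of $\nu$ to pass from $B_{\ell+4\omega}$ to $B_{\ell}$ where needed. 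For the two extra atoms $h$ and $\mathfrak l-\mathfrak l_J$ supported in the single ball $B_{m_0+4\omega}$, their contribution is $\le C\,\nu(B_{m_0+4\omega})^{1/p_0}\|\chi_{B_{m_0+4\omega}}\|_{p(\cdot),q(\cdot)}^{-1}$, which by the same duality bound and by the argument giving \eqref{A1} (namely $M_N(f)\ge c\,\|\chi_{B_{m_0+4\omega}}\|_{p(\cdot),q(\cdot)}^{-1}$ on $B_{m_0+4\omega}$, up to the comparability $M_N(f)(x)\le C\inf_{u\in B_{m_0}}M_N(f)(u)$) is again $\le C\big(\int_{B_{m_0+4\omega}}(M_N(f))^{p_0}\nu\big)^{1/p_0}\le C\|f\|_{H^{p_0}(\mathbb{R}^n,\nu,A)}$.

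The main obstacle I anticipate is making the truncation step fully rigorous in the weighted setting: one must check that the tail $\mathfrak l-\mathfrak l_J$ (a finite-$L^r$ limit of the truncated series) is genuinely a fixed multiple of a single $(p(\cdot),q(\cdot),r,s)$-atom with the moment conditions intact and with $L^r$-size controlled by $\|\chi_{B_{m_0+4\omega}}\|_{p(\cdot),q(\cdot)}^{-1}$ times $b^{(m_0+4\omega)/r}$, uniformly; this requires that $\mathfrak l$ and each $a_{i,j}$ with $j>j_0$ be supported in $B_{m_0+4\omega}$ and that $\|\mathfrak l-\mathfrak l_J\|_r\to0$, which follows from $f\in L^r$ and (b) as in \cite[Step 4]{BLYZ}, but the bookkeeping with the weighted norm must be tracked carefully. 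A secondary technical point is justifying the interchange of summation and integration / unconditional convergence needed to write $f=h+\mathfrak l$ pointwise a.e. and in $S'(\mathbb{R}^n)$; this is handled exactly as in the proof of Theorem~\ref{Th1.3}~(i) via the absolute summability $\int_{\mathbb{R}^n}\sum_{i,j}|\lambda_{i,j}a_{i,j}|\,dx<\infty$ established there. Once these are in place, assembling $f=h+\mathfrak l_J+(\mathfrak l-\mathfrak l_J)$, the triangle inequality for the $\ell^{p_0}$-type expression inside $L^1(\mathbb{R}^n,\nu)$ (valid since $p_0\le1$ makes $t\mapsto t^{p_0}$ subadditive) yields the claimed bound $\|f\|_{H^{p(\cdot),q(\cdot),r,s}_{p_0,\nu,fin}(\mathbb{R}^n,A)}\le C\|f\|_{H^{p_0}(\mathbb{R}^n,\nu,A)}$ with $C$ independent of $f$.
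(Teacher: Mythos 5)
Your proposal follows essentially the same route as the paper: normalize (implicitly) so that $\|f\|_{H^{p_0}({\Bbb R}^n,\nu,A)}=1$, run the Calder\'on--Zygmund/atomic machinery of Theorem \ref{Th1.3}\,(i) to write $f=h+{\frak l}_J+({\frak l}-{\frak l}_J)$, control the finite part via $\sum_k 2^{kp_0}\chi_{\Omega_k}\leq C(M_N(f))^{p_0}$ integrated against $\nu$, and control the two exceptional atoms via the K\"othe duality bound $\nu(B)\leq \|\chi_B\|_{p(\cdot)/p_0,q(\cdot)/p_0}\|\nu\|_{(p(\cdot)/p_0)',(q(\cdot)/p_0)'}$, which is exactly where the hypothesis $\nu\in\mathcal{L}^{(p(\cdot)/p_0)',(q(\cdot)/p_0)'}({\Bbb R}^n)$ enters in the paper as well. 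One caveat: your auxiliary claim that $M_N(f)\geq c\,\|\chi_{B_{m_0+4\omega}}\|_{p(\cdot),q(\cdot)}^{-1}$ on $B_{m_0+4\omega}$ is false in general (scale $f$ by a small constant), but it is also unnecessary --- the duality bound alone gives $\nu(B_{m_0+4\omega})^{1/p_0}\|\chi_{B_{m_0+4\omega}}\|_{p(\cdot),q(\cdot)}^{-1}\leq \|\nu\|_{(p(\cdot)/p_0)',(q(\cdot)/p_0)'}^{1/p_0}$, a constant depending only on $\nu$, which together with the normalization and homogeneity closes the argument exactly as in the paper.
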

\begin{proof}
The proof of this property follows the same ideas that the ones in the proof of Theorem \ref{Th1.3}. Let $s_0$ as in Proposition \ref{prop4.3} and $f\in H^{p(\cdot),q(\cdot),r,s}_{p_0,\nu,fin}({\Bbb R}^n,A)$, with $s\geq s_0$. Then, $f\in H^{p(\cdot),q(\cdot),r,s}_{fin}({\Bbb R}^n,A)$ and there exists $m_0\in\Bbb Z$ such that $\mbox{supp} f\subset B_{m_0}$. Also, $f\in L^r({\Bbb R}^n)$ and, as we proved in (\ref{A1}), $M_N(f)(x)\leq C_1\|\chi_{B_{m_0}}\|_{p(\cdot),q(\cdot)}^{-1}$, when $x\in(B_{m_0+4\omega})^{c}$.

Assume that $\|f\|_{H^{p_0}({\Bbb R}^n,\nu,A)}=1$. Our objective is to see that $\|f\|_{H^{p(\cdot),q(\cdot),r,s}_{p_0,\nu,fin}({\Bbb R}^n,A)}\leq C$, for some $C>0$ that does not depend on $f$.

A careful reading of the proof of \cite[Lemma 5.4]{BLYZ} allows us to see that there exist a sequence $\{x_{i,k}\}_{i\in\Bbb N,k\in\Bbb Z}\subset \mathbb{R}^n$, a sequence $\{\ell _{i,k}\}_{i\in\Bbb N,k\in\Bbb Z}\subset \mathbb{Z}$ and a bounded sequence $\{b_{i,k}\}_{i\in\Bbb N,k\in\Bbb Z}$ such that
\begin{enumerate}
\item $f=\sum_{k\in\Bbb Z}(\sum_{i\in\Bbb N}2^kb_{i,k})$,

\noindent where the convergence is unconditional in $S'({\Bbb R}^n)$ and almost everywhere of ${\Bbb R}^n$;
\item for a certain $s_1\in\Bbb N$, $\int_{{\Bbb R}^n}b_{i,k}(x)x^{\alpha}dx=0$, $\alpha\in{\Bbb N}^n$ and $|\alpha|\leq s_1$;
\item $\mbox{supp}(b_{i,k})\subset x_{i,k}+B_{\ell_{i,k}+4\omega}$, $k\in\Bbb Z$ and $i\in\Bbb N$;
\item $\Omega_k:=\{x\in{\Bbb R}^n:\;M_N(f)(x)>2^k\}=\cup_{i\in\Bbb N}(x_{i,k}+B_{\ell_{i,k}+4\omega})$, $k\in\Bbb Z$;
\item there exists $L\in\Bbb N$ for which $\sharp\{j\in\Bbb N:\;(x_{i,k}+B_{\ell_{i,k}+2\omega})\cap(x_{j,k}+B_{\ell_{j,k}+2\omega})\neq \emptyset\}\leq L$, $i\in\Bbb N$ and $k\in\Bbb Z$.
\end{enumerate}
We define, for every $k\in\Bbb Z$ and $i\in\Bbb N$, $\lambda_{i,k}=2^k\|\chi_{x_{i,k}+B_{\ell_{i,k}}}\|_{p(\cdot),q(\cdot)}$ and $a_{i,k}=b_{i,k}\|\chi_{x_{i,k}+B_{\ell_{i,k}}}\|_{p(\cdot),q(\cdot)}^{-1}$. There exists $C_0>0$ such that $C_0a_{i,k}$ is a $(p(\cdot),q(\cdot),\infty,s_1)$-atom, for every $k\in\Bbb Z$ and $i\in\Bbb N$.

We have that
$$\sum_{i\in \mathbb{N}}\lambda_{i,k}^{p_0}{{\chi_{x_{i,k}+B_{\ell_{i,k}}}(x)}\over{\|\chi_{x_{i,k}+B_{\ell_{i,k}}}\|^{p_0}_{p(\cdot),q(\cdot)}}}\leq C2^{kp_0}\chi_{\Omega_k}(x),\;\;\;k\in\Bbb Z\;\mbox{and}\;x\in{\Bbb R}^n.$$
Then,
\begin{align*}
\Big\|\sum_{k\in\Bbb Z}\sum_{i\in \mathbb{N}}\lambda_{i,k}^{p_0} & {{\chi_{x_{i,k}+B_{\ell_{i,k}}}}\over{\|\chi_{x_{i,k}+B_{\ell_{i,k}}}\|^{p_0}_{p(\cdot),q(\cdot)}}}\Big\|_{L^1(\mathbb{R}^n,\nu)}\leq C\Big\|\sum_{k\in\Bbb Z}2^{kp_0}\chi_{\Omega_k}\Big\|_{L^1(\mathbb{R}^n,\nu)} \\
& \leq C\|M_N(f)^{p_0}\|_{L^1(\mathbb{R}^n,\nu)}=C\|M_N(f)\|_{L^{p_0}(\mathbb{R}^n,\nu)}^{p_0}=C\|f\|_{H^{p_0}({\Bbb R}^n,\nu,A)}^{p_0}.
 \end{align*}
 We now choose $k_0\in\Bbb Z$ such that $2^{k_0}\leq \|\chi_{B_{m_0}}\|^{-1}_{p(\cdot),q(\cdot)}$. We define, as in the proof of Theorem \ref{Th1.3},
 $$h=\sum_{k\leq k_0}\sum_{i\in \mathbb{N}}\lambda_{i,k}a_{i,k}\;\;\;\mbox{and}\;\;\;{\frak l}=\sum_{k> k_0}\sum_{i\in \mathbb{N}}\lambda_{i,k}a_{i,k},$$
 where the convergence of the two series is unconditional in $S'({\Bbb R}^n)$ and almost everywhere of ${\Bbb R}^n$. We have that
 \begin{enumerate}
 \item[(a)] There exists $C_1>0$ that does not depend on $f$  such that $h/C_1$ is a $(p(\cdot),q(\cdot),\infty,s_1)$-atom;
 \item[(b)] By defining, for every $J\in\Bbb N$, $F_J$ and ${\frak l}_J$ as in the proof of Theorem \ref{Th1.3}, there exists $J_1\in\Bbb N$ such that ${\frak l}-{\frak l}_{J_1}$ is a $(p(\cdot),q(\cdot),r,s_1)$-atom;
 \item[(c)] $f=C_1{h\over{C_1}}+ ({\frak l}-{\frak l}_{J_1})+{\frak l}_{J_1}$.
 \end{enumerate}
 Then, for $s\geq \max\{s_0,s_1\}$ we can write
 \begin{align*}
         \|f\|_{H^{p(\cdot),q(\cdot),r,s}_{p_0,\nu,fin}({\Bbb R}^n,A)}&\\
&\hspace{-2cm} \leq \Big\|C_1^{p_0}{{\chi_{B_{m_0}}}\over{\|\chi_{B_{m_0}}\|_{p(\cdot),q(\cdot)}^{p_0}}}+{{\chi_{B_{m_0}}}\over{\|\chi_{B_{m_0}}\|_{p(\cdot),q(\cdot)}^{p_0}}}+\sum_{(i,k)\in F_{J_1}} \lambda_{i,k}^{p_0}{{\chi_{x_{i,k}+B_{\ell_{i,k}+4\omega}}}\over{\|\chi_{x_{i,k}+B_{\ell_{i,k}+4\omega}}}\|_{p(\cdot),q(\cdot)}^{p_0}}\Big\|_{L^1(\mathbb{R}^n,\nu)} \\
&\hspace{-2cm} \leq C\Big(1+{{\nu(B_{m_0})}\over{\|\chi_{B_{m_0}}\|_{p(\cdot),q(\cdot)}^{p_0}}}\Big)=C\Big(1+{{\nu(B_{m_0})}\over{\|\chi_{B_{m_0}}\|_{p(\cdot)/p_0,q(\cdot)/p_0}}}\Big)
  \leq C\Big(1+\|\nu\|_{(p(\cdot)/p_0)',(q(\cdot)/p_0)'}\Big).
        \end{align*}
 The last inequality follows because ${\mathcal L}^{(p(\cdot)/p_0)',(q(\cdot)/p_0)'}=( {\mathcal L}^{p(\cdot)/p_0,q(\cdot)/p_0})'$ (see \cite{EKS}), since $p_0<\min\{p_-,q_-\}$, and then
 $$\nu(B_{m_0})=\int_{{\Bbb R}^n}\chi_{B_{m_0}}(x)\nu(x)dx\leq \|\chi_{B_{m_0}}\|_{p(\cdot)/p_0,q(\cdot)/p_0}\|\nu\|_{(p(\cdot)/p_0)',(q(\cdot)/p_0)'}.$$
 Hence, $\|f\|_{H^{p(\cdot),q(\cdot),r,s}_{p_0,\nu,fin}({\Bbb R}^n,A)}\leq C$, where $C$ does not depend on $f$.
\end{proof}

We now prove a general boundedness result for sublinear operators.

\begin{Prop}\label{Prop6.2}
Assume that $p,q\in {\Bbb P}_0$, $p(0)<q(0)$, $0<p_0<\min\{p_-,q_-,1\}$, $\max\{1,q_+,\}<r$ and $s\in\Bbb N$. There exists $s_0\in\Bbb N$ and $r_0>1$ such that if $s\geq s_0$, $r>r_0$ and $T$ is a sublinear operator defined on $\mbox{span}\{a:\;a\;\mbox{is a }\;(p(\cdot),q(\cdot),r,s)\mbox{-atom}\}$, then
\begin{enumerate}
\item[(i)] $T$ has a (unique) extension on $H^{p(\cdot),q(\cdot)}({\Bbb R}^n,A)$ as a bounded operator from $H^{p(\cdot),q(\cdot)}({\Bbb R}^n,A)$ into ${\mathcal L}^{p(\cdot),q(\cdot)}({\Bbb R}^n)$, provided that for each $\nu\in {\mathcal A}_1({\Bbb R}^n,A)\cap RH_{(r/p_0)'}({\Bbb R}^n,A)$ there exists $C=C([\nu]_{{\mathcal A}_1({\Bbb R}^n,A)},[\nu]_{RH_{(r/p_0)'}({\Bbb R}^n,A)})>0$ such that
    $$\|Ta\|_{L^{p_0}(\mathbb{R}^n, \nu)}\leq C {{\nu(x_0+B_{\ell_0})^{1/p_0}}\over{\|\chi_{x_0+B_{\ell_0}}\|_{p(\cdot),q(\cdot)}}},$$
    for every $(p(\cdot),q(\cdot),r/p_0,s)$-atom $a$ associated with $x_0\in{\Bbb R}^n$ and $\ell_0\in\Bbb Z$.
\item[(ii)] $T$ has a (unique) extension on $H^{p(\cdot),q(\cdot)}({\Bbb R}^n,A)$ as a bounded operator from $H^{p(\cdot),q(\cdot)}({\Bbb R}^n,A)$ into itself, provided that for each $\nu\in {\mathcal A}_1({\Bbb R}^n,A)\cap RH_{(r/p_0)'}({\Bbb R}^n,A)$ there exists $C=C([\nu]_{{\mathcal A}_1({\Bbb R}^n,A)},[\nu]_{RH_{(r/p_0)'}({\Bbb R}^n,A)})>0$ such that
    $$\|Ta\|_{H^{p_0}({\Bbb R}^n,\nu,A)}\leq C {{\nu(x_0+B_{\ell_0})^{1/p_0}}\over{\|\chi_{x_0+B_{\ell_0}}\|_{p(\cdot),q(\cdot)}}},$$
    for every $(p(\cdot),q(\cdot),r/p_0,s)$-atom $a$ associated with $x_0\in{\Bbb R}^n$ and $\ell_0\in\Bbb Z$.
\end{enumerate}
\end{Prop}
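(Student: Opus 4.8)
The plan is to obtain both parts from a single scheme: reduce to finite linear combinations of atoms by density, estimate such combinations weight by weight using the hypothesis, and then remove the weight by duality and a Rubio de Francia iteration exactly as in the proof of Proposition~\ref{prop4.6}. I describe it for $(i)$; part $(ii)$ is identical after replacing $\|T\,\cdot\|_{L^{p_0}(\mathbb{R}^n,\nu)}$ by $\|T\,\cdot\|_{H^{p_0}(\mathbb{R}^n,\nu,A)}$ and $\|T\,\cdot\|_{p(\cdot),q(\cdot)}$ by $\|T\,\cdot\|_{H^{p(\cdot),q(\cdot)}(\mathbb{R}^n,A)}$, using in addition that $M_N$ is sublinear and that $H^{p(\cdot),q(\cdot)}(\mathbb{R}^n,A)$ is complete (Proposition~\ref{prop2.4}). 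First I would fix $s_0$ and $r_0$ large enough that all the auxiliary statements used below apply (those of Theorems~\ref{Th1.2} and~\ref{Th1.3}, Propositions~\ref{prop4.3} and~\ref{prop4.6} and Lemma~\ref{Lem6.1}, used with atom exponent $r/p_0$) and that $r_0\geq\max\{1,q_+\}$; note $r/p_0>r\geq r_0$ since $p_0<1$. The finite linear combinations of $(p(\cdot),q(\cdot),\infty,s)$-atoms are dense in $H^{p(\cdot),q(\cdot)}(\mathbb{R}^n,A)$: one truncates an atomic decomposition of $f$ given by Theorem~\ref{Th1.2} and applies Theorem~\ref{Th1.2}\,(i) together with Proposition~\ref{prop2.1} to see that the tails tend to $0$ in $H^{p(\cdot),q(\cdot)}(\mathbb{R}^n,A)$. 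Since every $(p(\cdot),q(\cdot),\infty,s)$-atom is a $(p(\cdot),q(\cdot),r/p_0,s)$-atom, it suffices to prove $\|Tf\|_{p(\cdot),q(\cdot)}\leq C\|f\|_{H^{p(\cdot),q(\cdot)}(\mathbb{R}^n,A)}$ for $f$ a finite combination of $(p(\cdot),q(\cdot),r/p_0,s)$-atoms, and then to extend by completeness of $\mathcal{L}^{p(\cdot),q(\cdot)}(\mathbb{R}^n)$, uniqueness of the extension being immediate from density.

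For the weighted estimate, let $f=\sum_{j=1}^k\lambda_ja_j$ with each $a_j$ a $(p(\cdot),q(\cdot),r/p_0,s)$-atom associated with $x_j+B_{\ell_j}$, and let $\nu\in\mathcal{A}_1(\mathbb{R}^n,A)\cap RH_{(r/p_0)'}(\mathbb{R}^n,A)$. Since $T$ is sublinear and $0<p_0<1$, $|Tf|^{p_0}\leq\sum_j\lambda_j^{p_0}|Ta_j|^{p_0}$, so the hypothesis on $\|Ta_j\|_{L^{p_0}(\mathbb{R}^n,\nu)}$ gives
$$\|Tf\|_{L^{p_0}(\mathbb{R}^n,\nu)}^{p_0}\leq C\sum_{j=1}^k\lambda_j^{p_0}\frac{\nu(x_j+B_{\ell_j})}{\|\chi_{x_j+B_{\ell_j}}\|_{p(\cdot),q(\cdot)}^{p_0}}=C\Big\|\sum_{j=1}^k\lambda_j^{p_0}\|\chi_{x_j+B_{\ell_j}}\|_{p(\cdot),q(\cdot)}^{-p_0}\chi_{x_j+B_{\ell_j}}\Big\|_{L^1(\mathbb{R}^n,\nu)},$$
with $C$ depending only on $[\nu]_{\mathcal{A}_1(\mathbb{R}^n,A)}$ and $[\nu]_{RH_{(r/p_0)'}(\mathbb{R}^n,A)}$. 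Taking the infimum over finite decompositions of $f$ yields $\|Tf\|_{L^{p_0}(\mathbb{R}^n,\nu)}\leq C\|f\|_{H^{p(\cdot),q(\cdot),r/p_0,s}_{p_0,\nu,fin}(\mathbb{R}^n,A)}$, and if moreover $\nu\in\mathcal{L}^{(p(\cdot)/p_0)',(q(\cdot)/p_0)'}(\mathbb{R}^n)$, then Lemma~\ref{Lem6.1} (used with exponent $r/p_0$) gives $\|Tf\|_{L^{p_0}(\mathbb{R}^n,\nu)}\leq C\|f\|_{H^{p_0}(\mathbb{R}^n,\nu,A)}=C\|M_N(f)\|_{L^{p_0}(\mathbb{R}^n,\nu)}$ for $N$ large enough, the constant depending in addition on $\|\nu\|_{(p(\cdot)/p_0)',(q(\cdot)/p_0)'}$.

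To remove the weight, I would use that $p(\cdot)/p_0,q(\cdot)/p_0\in\mathbb{P}_1$ (because $p_0<\min\{p_-,q_-\}$), so that $\mathcal{L}^{p(\cdot)/p_0,q(\cdot)/p_0}(\mathbb{R}^n)$ is a Banach function space with associate space $\mathcal{L}^{(p(\cdot)/p_0)',(q(\cdot)/p_0)'}(\mathbb{R}^n)$, on which $M_{HL}$ is bounded (Proposition~\ref{propM_{HL}}). By \cite[Lemma~2.3]{CrW} and duality, $\|Tf\|_{p(\cdot),q(\cdot)}^{p_0}=\|(Tf)^{p_0}\|_{p(\cdot)/p_0,q(\cdot)/p_0}\leq C\sup_h\int_{\mathbb{R}^n}(Tf)^{p_0}h\,dx$, the supremum over $0\leq h$ with $\|h\|_{(p(\cdot)/p_0)',(q(\cdot)/p_0)'}\leq1$. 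For such $h$, the Rubio de Francia operator $R(h)=\sum_{i\geq0}M_{HL}^i(h)/(2^i\|M_{HL}\|_{(p(\cdot)/p_0)',(q(\cdot)/p_0)'}^i)$ satisfies $h\leq R(h)$, $\|R(h)\|_{(p(\cdot)/p_0)',(q(\cdot)/p_0)'}\leq2$, and $R(h)\in\mathcal{A}_1(\mathbb{R}^n,A)$ with $\mathcal{A}_1$ constant bounded by $2\|M_{HL}\|_{(p(\cdot)/p_0)',(q(\cdot)/p_0)'}$; hence $R(h)\in RH_{\beta_0}(\mathbb{R}^n,A)$ for a fixed $\beta_0>1$ (\cite[Theorem~1.3]{Kin1}), and after enlarging $r_0$ so that $(r/p_0)'\leq\beta_0$ we get that $R(h)$ lies in $\mathcal{A}_1(\mathbb{R}^n,A)\cap RH_{(r/p_0)'}(\mathbb{R}^n,A)\cap\mathcal{L}^{(p(\cdot)/p_0)',(q(\cdot)/p_0)'}(\mathbb{R}^n)$ with all relevant characteristics under control uniformly in $h$. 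Then $\int(Tf)^{p_0}h\leq\int(Tf)^{p_0}R(h)=\|Tf\|_{L^{p_0}(\mathbb{R}^n,R(h))}^{p_0}$; by the previous paragraph applied with $\nu=R(h)$ this is $\leq C\|M_N(f)\|_{L^{p_0}(\mathbb{R}^n,R(h))}^{p_0}$, and by Hölder's inequality in $\mathcal{L}^{p(\cdot)/p_0,q(\cdot)/p_0}(\mathbb{R}^n)$ together with $\|R(h)\|_{(p(\cdot)/p_0)',(q(\cdot)/p_0)'}\leq2$ this is $\leq C\|M_N(f)\|_{p(\cdot),q(\cdot)}^{p_0}=C\|f\|_{H^{p(\cdot),q(\cdot)}(\mathbb{R}^n,A)}^{p_0}$. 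Taking the supremum over $h$ completes the argument.

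I expect the main obstacle to be the uniform control of constants through the extrapolation. The atomic estimate is available only with a constant depending on $[\nu]_{\mathcal{A}_1(\mathbb{R}^n,A)}$ and $[\nu]_{RH_{(r/p_0)'}(\mathbb{R}^n,A)}$, and Lemma~\ref{Lem6.1} contributes a constant depending on $\|\nu\|_{(p(\cdot)/p_0)',(q(\cdot)/p_0)'}$; it is therefore crucial that the weight $R(h)$ produced by the Rubio de Francia algorithm has its $\mathcal{A}_1$ characteristic, its reverse H\"older characteristic and its $\mathcal{L}^{(p(\cdot)/p_0)',(q(\cdot)/p_0)'}(\mathbb{R}^n)$-norm all bounded in terms of $\|M_{HL}\|_{(p(\cdot)/p_0)',(q(\cdot)/p_0)'}$ alone, independently of $h$ and of $f$, and that $r_0$ can be fixed depending only on $p,q,p_0$ so that the resulting universal reverse H\"older exponent dominates $(r/p_0)'$. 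A secondary technical point is to pick the grand maximal order $N$ larger than the thresholds occurring in $H^{p(\cdot),q(\cdot)}(\mathbb{R}^n,A)$ (Theorem~\ref{Th1.1}), in $H^{p_0}(\mathbb{R}^n,R(h),A)$ and in Lemma~\ref{Lem6.1}, simultaneously for all weights $R(h)$, which is possible because those thresholds stay uniformly bounded.
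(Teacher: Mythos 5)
Your proposal is correct and follows essentially the same route as the paper: reduction to finite atomic combinations by density, the weighted estimate from sublinearity plus the hypothesis and Lemma \ref{Lem6.1}, and removal of the weight via duality and the Rubio de Francia iteration with uniform control of $[R(h)]_{\mathcal{A}_1(\mathbb{R}^n,A)}$, the reverse H\"older exponent, and $\|R(h)\|_{(p(\cdot)/p_0)',(q(\cdot)/p_0)'}$. The only cosmetic difference is that the paper invokes Theorem \ref{Th1.3} for the density step rather than truncating the decomposition of Theorem \ref{Th1.2} directly, which amounts to the same thing.
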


\begin{proof}
\begin{enumerate}
\item[(i)] Suppose that for every $\nu\in {\mathcal A}_1({\Bbb R}^n,A)\cap RH_{(r/p_0)'}({\Bbb R}^n,A)$ there exists $C>0$ such that, for every $(p(\cdot),q(\cdot),r/p_0,s)$-atom $a$ associated with $x_0\in{\Bbb R}^n$ and $\ell_0\in\Bbb Z$,
\begin{equation}\label{eq6.1}
\|Ta\|_{L^{p_0}({\Bbb R}^n,\nu)}\leq C {{\nu(x_0+B_{\ell_0})^{1/p_0}}\over{\|\chi_{x_0+B_{\ell_0}}\|_{p(\cdot),q(\cdot)}}}.
\end{equation}
Here $C$ can depend on $[\nu]_{{\mathcal A}_1({\Bbb R}^n,A)}$ and $[\nu]_{RH_{(r/p_0)'}({\Bbb R}^n,A)}$.

The set $H^{p(\cdot),q(\cdot),r/p_0,s}_{fin}({\Bbb R}^n,A)$ is dense in $H^{p(\cdot),q(\cdot)}({\Bbb R}^n,A)$ (see Theorem \ref{Th1.3}). Hence, in order to see that there exists an extension $\widetilde{T}$ of $T$ to $H^{p(\cdot),q(\cdot)}({\Bbb R}^n,A)$ as a bounded operator from $H^{p(\cdot),q(\cdot)}({\Bbb R}^n,A)$ into ${\mathcal L}^{p(\cdot),q(\cdot)}({\Bbb R}^n)$, it is sufficient to prove that, there exists $C>0$ such that
$$\|T(f)\|_{p(\cdot),q(\cdot)}\leq C\|f\|_{H^{p(\cdot),q(\cdot)}({\Bbb R}^n,A)}, \;\;\;f\in H^{p(\cdot),q(\cdot),r/p_0,s}_{fin}({\Bbb R}^n,A).$$
Let $f\in H^{p(\cdot),q(\cdot),r/p_0,s}_{fin}({\Bbb R}^n,A)$. As in the proof of Proposition \ref{prop4.6} the Rubio de Francia's iteration algorithm allows us to write,
$$\|T(f)\|_{p(\cdot),q(\cdot)}^{p_0}=\|(Tf)^{p_0}\|_{p(\cdot)/p_0,q(\cdot)/p_0}\leq \sup \int_{{\Bbb R}^n}|Tf(x)|^{p_0}Rh(x)dx,$$
where the supremum is taken over all the functions $h\in {\mathcal L}^{(p(\cdot)/p_0)',(q(\cdot)/p_0)'}({\Bbb R}^n)$ such that $\|h\|_{(p(\cdot)/p_0)',(q(\cdot)/p_0)'}\leq 1$. Also, there exists $r_1>1$ such that if $r>r_1$ we can find $C>0$ such that, for every $h\in {\mathcal L}^{(p(\cdot)/p_0)',(q(\cdot)/p_0)'}({\Bbb R}^n)$, $Rh\in {\mathcal A}_1({\Bbb R}^n,A)\cap RH_{(r/p_0)'}({\Bbb R}^n,A)$ and $[Rh]_{{\mathcal A}_1({\Bbb R}^n,A)}+ [Rh]_{RH_{(r/p_0)'}({\Bbb R}^n,A)}\leq C$.

Let $h\in {\mathcal L}^{(p(\cdot)/p_0)',(q(\cdot)/p_0)'}({\Bbb R}^n)$ such that $\|h\|_{(p(\cdot)/p_0)',(q(\cdot)/p_0)'}\leq 1$. We are going to estimate $\|T(f)\|_{L^{p_0}({\Bbb R}^n,Rh)}$. As it was mentioned above 
$$
H^{p(\cdot),q(\cdot),r/p_0,s}_{fin}({\Bbb R}^n,A)=H^{p(\cdot),q(\cdot),r/p_0,s}_{p_0, Rh,fin}({\Bbb R}^n,A).
$$
 We write $f=\sum_{j=1}^k\lambda_j a_j$, where  for every $j\in\Bbb N$, $j\leq k$, $\lambda_j>0$ and $a_j$ is a $(p(\cdot),q(\cdot),r/p_0,s)$-atom associated with $x_j\in{\Bbb R}^n$ and $\ell_j\in\Bbb Z$. Since $0<p_0<1$ and $T$ is sublinear, from (\ref{eq6.1}) we deduce that

\begin{align*}
       \|T(f)\|_{L^{p_0}(\mathbb{R}^n, Rh)}^{p_0} & = \int_{{\Bbb R}^n}|T(f)(x)|^{p_0}Rh(x)dx\leq \sum_{j=1}^k\lambda_j^{p_0}\int_{{\Bbb R}^n}|Ta_j(x)|^{p_0}Rh(x)dx \\
       & \leq C\sum_{j=1}^k\lambda_j^{p_0}{{Rh(x_j+B_{\ell_j})}\over{\|\chi_{x_j+B_{\ell_j}}\|_{p(\cdot),q(\cdot)}^{p_0}}}=C\Big\|\sum_{j=1}^k\lambda_j^{p_0}{{\chi_{x_j+B_{\ell_j}}}\over{\|\chi_{x_j+B_{\ell_j}}\|
       _{p(\cdot),q(\cdot)}^{p_0}}}\Big\|_{L^1(\mathbb{R}^n,Rh)}.
       \end{align*}
As it was established in the proof of Proposition \ref{prop4.6}, $Rh\in {\mathcal A}_1({\Bbb R}^n,A)\cap {\mathcal L}^{(p(\cdot)/p_0)',(q(\cdot)/p_0)' }({\Bbb R}^n)$. According to Lemma \ref{Lem6.1}, the arbitrariness of the representation of $f$ leads to
$$ \|T(f)\|_{L^{p_0}(\mathbb{R}^n,Rh)}\leq C\|f\|_{H^{p_0}({\Bbb R}^n,Rh,A)}.$$
Since $R$ is bounded from ${\mathcal L}^{(p(\cdot)/p_0)',(q(\cdot)/p_0)' }({\Bbb R}^n)$ into itself, we can write
\begin{align*}
     &  \|T(f)\|_{L^{p_0}(\mathbb{R}^n,Rh)}\leq C \|f\|_{H^{p_0}(\mathbb{R}^n,Rh,A)}\leq C \int_{{\Bbb R}^n}(M_N(f)(x))^{p_0}Rh(x)dx \\
       & \leq C\|(M_N(f))^{p_0}\|_{p(\cdot)/p_0,q(\cdot)/p_0}\|Rh\|_{(p(\cdot)/p_0)',(q(\cdot)/p_0)'} \\
  & \leq C\|(M_N(f))^{p_0}\|_{p(\cdot)/p_0,q(\cdot)/p_0}=C\|M_N(f)\|_{p(\cdot),q(\cdot)}^{p_0}=C\|f\|_{H^{p(\cdot),q(\cdot)}({\Bbb R}^n,A)}^{p_0},
       \end{align*}
provided that  $h\in {\mathcal L}^{(p(\cdot)/p_0)',(q(\cdot)/p_0)'}({\Bbb R}^n)$ and $\|h\|_{(p(\cdot)/p_0)',(q(\cdot)/p_0)'}\leq 1$.

We conclude that
$$
\|T(f)\|_{p(\cdot),q(\cdot)}\leq C\|f\|_{H^{p(\cdot),q(\cdot)}({\Bbb R}^n,A)},
$$
and the proof of (i) is finished.

 \item[(ii)] In order to prove the property (ii), we proceed in a similar way as in the proof of (i). Assume tha $\varphi\in S({\Bbb R}^n)$ such that $\int\varphi dx\neq 0$. Let $f\in H^{p(\cdot),q(\cdot),r/p_0,s}_{fin}({\Bbb R}^n,A)$, with $s\geq s_0$, and $s_0$ as before. We have that

 \begin{align*}
       \|T(f)\|_{H^{p(\cdot),q(\cdot)}({\Bbb R}^n,A)}^{p_0} & \leq C \|M_\varphi^0(Tf)\|^{p_0}_{p(\cdot),q(\cdot)}=C \|(M_\varphi^0(Tf))^{p_0}\|_{p(\cdot)/p_0,q(\cdot)/p_0} \\ & \leq C\sup\int_{{\Bbb R}^n}(M_\varphi^0(Tf)(x))^{p_0}Rh(x)dx\leq C\sup \|T(f)\|_{H^{p_0}({\Bbb R}^n,Rh,A)}^{p_0},
       \end{align*}
       where the supremum is taken over all the functions $h\in {\mathcal L}^{(p(\cdot)/p_0)',(q(\cdot)/p_0)'}({\Bbb R}^n)$ such that $\|h\|_{(p(\cdot)/p_0)',(q(\cdot)/p_0)'}\leq 1$.

 We now finish the proof as the one of (i) provided that, for every $\nu\in {\mathcal A}_1({\Bbb R}^n,A)\cap RH_{(r/p_0)'}({\Bbb R}^n,A)$ there exists $C>0$ such that
 $$\|Ta\|_{H^{p_0}({\Bbb R}^n,\nu,A)}\leq C {{\nu(x_j+B_{\ell_j})^{1/p_0}}\over{\|\chi_{x_j+B_{\ell_j}}\|_{p(\cdot),q(\cdot)}}},$$
 for every $(p(\cdot),q(\cdot),r/p_0,s)$-atom $a$ associated with $x_j\in{\Bbb R}^n$ and $\ell_j\in\Bbb Z$. Here the constant $C$ can depend on $[\nu]_{{\mathcal A}_1({\Bbb R}^n,A)}$ and $[\nu]_{RH_{(r/p_0)'}({\Bbb R}^n,A)}$.

\end{enumerate}
\end{proof}

We now prove Theorem \ref{Th1.4}  by applying  the criterions established in Proposition \ref{Prop6.2}.

\begin{proof}[Proof of Theorem \ref{Th1.4}, $(i)$.]
Assume that $a$ is a $(p(\cdot ),q(\cdot ),r/p_0, s)$-atom associated with $x_0\in \mathbb{R}^n$ and $\ell _0\in \mathbb{Z}$, and $\nu \in \mathcal{A}_1(\mathbb{R}^n,A)\bigcap RH_{(r/p_0)'}(\mathbb{R}^n, A)$. Here $p_0, r$ and $s$ are as in Proposition \ref{Prop6.2}. We can write
$$
\|Ta\|_{L^{p_0}(\mathbb{R}^n,\nu )}^{p_0}=\int_{x_0+B_{\ell _0+w}}|T(a)(x)|^{p_0}\nu (x)dx+\int_{(x_0+B_{\ell _0+w})^c}|T(a)(x)|^{p_0}\nu (x)dx=I_1+I_2.
$$
According to Proposition \ref{Prop6.1} there exits $C>0$ such that
\begin{align*}
I_1&\leq C\nu (x_0+B_{\ell _0+w})^{1-p_0}\left(\int_{\mathbb{R}^n}|a(x)|\nu (x)dx\right)^{p_0}\\
&\leq C\nu (x_0+B_{\ell _0})^{1-p_0}|B_{\ell _0}|^{p_0}\left[\left(\frac{1}{|B_{\ell _0}|}\int_{x_0+B_{\ell _0}}|a(x)|^{r/p_0}dx\right)^{p_0/r}\left(\frac{1}{|B_{\ell _0}|}\int_{x_0+B_{\ell _0}}\nu(x)^{(r/p_0)'}dx\right)^{1/(r/p_0)'}\right]^{p_0}.
\end{align*}

We have used that $\nu$ is a doubling measure.

By taking into account that $a$ is a $p(\cdot), q(\cdot ), r/p_0,s)$-atom and that $\nu\in RH_{(r/p_0)'}(\mathbb{R}^n,A)$ we obtain
\begin{align*}
I_1&\leq C\nu (x_0+B_{\ell _0+w})^{1-p_0}|B_{\ell_0}|^{p_0}\left( \frac{1}{\|\chi _{x_0+B_{\ell _0}}\|_{p(\cdot ),q(\cdot )}|B_{\ell _0}|}\int_{x_0+B_{\ell _0}}\nu (x)dx\right)^{p_0}\\
&\leq C\frac{\nu (x_0+B_{\ell _0})}{\|\chi _{x_0+B_{\ell _0}}\|_{p(\cdot ),q(\cdot )}^{p_0}}.
\end{align*}
Note that $C=C([\nu ]_{A_1(\mathbb{R}^n,A)},[\nu ]_{RH_{(r/p_0)'}(\mathbb{R}^n,A)})$.

Since $a$ is a $(p(\cdot ),q(\cdot ), r/p_0, s)$-atom associated with $x_0\in \mathbb{R}^n$ and $\ell _0\in \mathbb{Z}$, by using the condition (\ref{Ap2}) and by proceeding as in \cite[pp. 64 and 65]{Bow1} we deduce that, for every $x\in (x_0+B_{\ell _0+w+\ell +1})\setminus (x_0+B_{\ell_0+w+\ell })$, with $\ell \in \mathbb{N}$,
\begin{align*}
|Ta(x)|&\leq Cb^{-\ell _0-\ell}\sup_{z\in B_{-\ell }}|z|^m\int_{x_0+B_{\ell _0}}|a(y)|dy\\
&\leq Cb^{-\ell _0-\ell }(\lambda _-^{-\ell })^m|B_{\ell _0}|^{1/(r/p _0)'}\|a\|_{r/p_0}\\
&\leq Cb^{-\ell _0}b^{-\ell (\delta +1)}b^{\ell _0/(r/p_0)'}\|a\|_{r/p_0}\\
&\leq Cb^{-\ell _0p_0/r}(\rho (x,x_0)b^{-\ell _0-w})^{-(\delta +1)}\|a\|_{r/p_0}\\
&\leq Cb^{-\ell _0p_0/r}(\rho (x-x_0)b^{-\ell _0-w})^{-(\delta +1)}\|a\|_{r/p_0},
\end{align*}
where $\delta =m\ln \lambda_-/\ln b$.

Then,
\begin{align*}
|Ta(x)|&\leq C\frac{b^{\ell _0(\delta +1)}}{\rho (x-x_0)^{\delta +1}}b^{-\ell _0p_0/r}\frac{|B_{\ell _0}|^{p_0/r}}{\|\chi _{x_0+B_{\ell _0}}\|_{p(\cdot ),q(\cdot )}}\\
&=C\frac{|B_{\ell _0}|^{\delta +1}}{\rho (x-x_0)^{\delta +1}\|\chi _{x_0+B_{\ell _0}}\|_{p(\cdot ),q(\cdot )}},\quad x\not\in x_0+B_{\ell _0+w}.
\end{align*}
Thus,
$$
I_2\leq C\frac{|B_{\ell _0}|^{p_0(\delta +1)}}{\|\chi _{x_0+B_{\ell _0}}\|_{p(\cdot ),q(\cdot )}^{p_0}}\int_{(x_0+B_{\ell _0+w})^c}\frac{\nu (x)}{\rho (x-x_0)^{p_0(\delta +1)}}dx.
$$
Since $(x_0+B_{\ell _0+w})^c=\bigcup_{i=0}^\infty (x_0+B_{\ell _0+w+i+1})\setminus (x_0+B_{\ell _0+w+i})$ and $b^{\ell _0+w+i}\leq \rho (x-x_0)\leq b^{\ell _0+w+i+1}$,
for every $x\in (x_0+B_{\ell _0+w+i+1})\setminus (x_0+B_{\ell _0+w+i})$, $i\in \mathbb{N}$, we have that
\begin{align*}
\int_{(x_0+B_{\ell _0+w})^c}\frac{\nu (x)}{\rho (x-x_0)^{p_0(\delta +1)}}dx&=\sum_{i=0}^\infty \int_{(x_0+B_{\ell _0+w+i+1})\setminus (x_0+B_{\ell _0+w+i})}\frac{\nu (x)}{\rho (x-x_0)^{p_0(\delta +1)}}dx\\
&\hspace{-2cm}\leq \sum_{i=0}^\infty b^{-(\ell _0+w+i)p_0(\delta +1)}\int_{x_0+B_{\ell _0+w+i+1}}\nu (x)dx\\
&\hspace{-2cm}\leq [\nu ]_{A_1(\mathbb{R}^n,A)}\sum_{i=0}^\infty b^{-(\ell _0+w+i)p_0(\delta +1)}|B_{\ell _0+w+i+1}|\mbox{essinf}_{x\in x_0+B_{\ell _0+w+i+1}} \nu (x)\\
&\hspace{-2cm}\leq b[\nu ]_{A_1(\mathbb{R}^n,A)}\sum_{i=0}^\infty b^{-(\ell _0+w+i)(p_0(\delta +1)-1)}\mbox{essinf}_{x\in x_0+B_{\ell _0}} \nu (x)\\
&\hspace{-2cm}\leq b[\nu ]_{A_1(\mathbb{R}^n,A)}\frac{1}{|B_{\ell _0}|}\int_{x_0+B_{\ell _0}}\nu (z)dz\sum_{i=0}^\infty b^{-i(p_0(\delta +1)-1)}b^{-(\ell _0+w)(p_0(\delta +1)-1)}\\
&\hspace{-2cm}=b[\nu ]_{A_1(\mathbb{R}^n,A)}\frac{1}{b^{\ell _0}}\frac{b^{-(\ell _0+w)(p_0(\delta +1)-1)}}{1-b^{-p_0(\delta +1)+1}}\nu (x_0+B_{\ell _0}).
\end{align*}
Note that $p_0>1/(\delta +1)$.

We get
$$
I_2\leq C[\nu]_{A_1(\mathbb{R}^n,A)}\frac{\nu (x_0+B_{\ell _0})}{\|\chi _{x_0+B_{\ell _0}}\|_{p(\cdot ),q(\cdot )}^{p_0}},
$$
where $C$ does not depend on $\nu$.

Hence, for a certain $C=C([\nu]_{A_1(\mathbb{R}^n,A)},[\nu]_{RH_{(1/p_0)'}(\mathbb{R}^n,A)})$,
$$
\|Ta\|_{L^{p_0}(\mathbb{R}^n,\nu )}^{p_0}\leq C\frac{\nu (x_0+B_{\ell _0})}{\|\chi _{x_0+B_{\ell _0}}\|_{p(\cdot ),q(\cdot )}^{p_0}}.
$$

The proof finishes applying Proposition \ref{Prop6.2} $(i)$.

\end{proof}

Before proving Theorem \ref{Th1.4} $(ii)$, we establish the following auxiliar result.

\begin{Lem}\label{Lem6.2}
Let $\phi \in \mathcal{S}(\mathbb{R}^n)$ such that $\supp \phi \subset B_0$ and $\int \phi (x)dx\not =0$. Assume that $L\in \mathcal{S}'(\mathbb{R}^n)$ and that $T_L$ is a Calder\'on-Zygmund singular integral of order $m$. Then, for every $\ell \in \mathbb{Z}$, the operator $S_{(\ell)} =T_{\phi _\ell }\circ T_L$ is a Calder\'on-Zygmund singular integral of order $m$. Moreover, if $S_{(\ell)}$ is associated with the kernel $K_\ell$, there exists $C>0$ such that
$$
\sup_{\ell \in \mathbb{N}}\left\{\|\widehat{S_{(\ell)}}\|_\infty ,C_{K_\ell }, C_{K_{\ell },m}\right\}\leq C.
$$
\end{Lem}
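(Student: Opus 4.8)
The plan is to verify directly that $S_{(\ell)}=T_{\phi_\ell}\circ T_L$ satisfies conditions (i), (ii) and (iii) in the definition of a Calder\'on-Zygmund singular integral, together with the order-$m$ regularity condition \eqref{Ap2}, and to keep track of the dependence of all constants on $\ell$. First I would note that $T_{\phi_\ell}$ is convolution with the Schwartz function $\phi_\ell$, so $\widehat{T_{\phi_\ell}}=\widehat{\phi_\ell}$ is bounded with $\|\widehat{\phi_\ell}\|_\infty=\|\widehat{\phi}(A^{*\ell}\,\cdot\,)\|_\infty=\|\widehat\phi\|_\infty$, a constant independent of $\ell$. Since $\widehat{S_{(\ell)}}=\widehat{\phi_\ell}\,\widehat L$, it lies in $L^\infty(\mathbb{R}^n)$ with $\|\widehat{S_{(\ell)}}\|_\infty\le\|\widehat\phi\|_\infty\|\widehat L\|_\infty$, giving the bound on $\|\widehat{S_{(\ell)}}\|_\infty$ and the $L^2$-boundedness of $S_{(\ell)}$ with norm uniform in $\ell$.

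Next I would produce the kernel. For $\varphi\in L^\infty_c(\mathbb{R}^n)$ and $x\notin\operatorname{supp}\varphi$ one writes $S_{(\ell)}(\varphi)(x)=\int_{\mathbb{R}^n}\phi_\ell(x-z)\,T_L(\varphi)(z)\,dz$, and then inserts the kernel representation $T_L(\varphi)(z)=\int K(z-y)\varphi(y)\,dy$ valid for $z\notin\operatorname{supp}\varphi$. The region where $z\in\operatorname{supp}\varphi$ contributes only for $z$ in a fixed compact set, and a cutoff/limiting argument (approximating $K$ away from the diagonal and using the $L^2$-boundedness just established to control the diagonal piece) shows that $S_{(\ell)}$ is associated with the kernel
\[
K_\ell(x)=\int_{\mathbb{R}^n}\phi_\ell(w)\,K(x-w)\,dw=(\phi_\ell * K)(x).
\]
Because $\operatorname{supp}\phi_\ell\subset B_\ell$ has measure $b^\ell$ (it may be safer to record $\operatorname{supp}\phi_\ell\subset B_\ell$ and note $|\phi_\ell|\le b^{-\ell}\|\phi\|_\infty$), the integral defining $K_\ell$ is over $w\in B_\ell$, where $\rho(w)\le b^{\ell}$. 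For the size estimate (ii): if $b^{2\omega}\rho(w)\le\rho(x)$ one has $\rho(x-w)\simeq\rho(x)$ by the quasi-triangle inequality, so $|K(x-w)|\le C_K/\rho(x)$ there; for the remaining $w$ (those with $\rho(w)$ comparable to $\rho(x)$, forcing $\rho(x)\le b^{2\omega+1}b^{\ell}$ as well since $w\in B_\ell$), one uses that $|\phi_\ell|\le b^{-\ell}\|\phi\|_\infty$ and crude integration to bound $\int |\phi_\ell(w)||K(x-w)|\,dw$ by $C/\rho(x)$. The H\"older-type condition (iii) for $K_\ell$ is obtained similarly by writing $K_\ell(x-y)-K_\ell(x)=\int\phi_\ell(w)[K(x-y-w)-K(x-w)]\,dw$ and invoking (iii) for $K$ in the range where the argument shift is admissible, again splitting off the near-diagonal contribution. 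All resulting constants depend only on $C_K$, $\gamma$, $\|\phi\|_\infty$, $\omega$ and $b$, not on $\ell$, because the dilation structure rescales $\phi_\ell$ in a norm-preserving way.

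Finally, for the order-$m$ condition \eqref{Ap2} I would differentiate $K_\ell=\phi_\ell*K$ in the $y$-variable of $\widetilde{K_\ell}(x,y)$: the derivatives $\partial_y^\alpha$ either hit the smooth function $\phi_\ell$ (each derivative of $\phi_\ell$ costs a factor $b^{-\ell}$ but is supported in $B_\ell$, which is then absorbed) or are transferred, via the convolution, to the $m$ derivatives already controlled for $K$ through \eqref{Ap2} applied to $T_L$. Carrying this out carefully, using that $T_L$ is of order $m$ with constant $C_{K,m}$ and that convolution with $\phi_\ell$ is an $L^1$-normalized-in-the-right-scale operation, yields $C_{K_\ell,m}\le C$ with $C$ independent of $\ell$. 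Collecting the three bounds gives $\sup_{\ell}\{\|\widehat{S_{(\ell)}}\|_\infty, C_{K_\ell}, C_{K_\ell,m}\}\le C$. The main obstacle I anticipate is the bookkeeping in the near-diagonal region $\rho(w)\simeq\rho(x)$, where one cannot use the pointwise kernel bounds for $K$ directly; handling it cleanly requires combining the $L^2$-boundedness of $S_{(\ell)}$ (or of $T_L$) with the compact support of $\phi_\ell$ and checking that the constants produced there still do not depend on $\ell$, which is exactly where the scale-invariance of the $\phi_\ell$ normalization must be used with care.
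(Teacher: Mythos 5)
There is a genuine gap, and it sits exactly where you anticipated trouble: the near-diagonal regime. The first problem is the identification of the kernel. The kernel of $S_{(\ell)}=T_{L*\phi_\ell}$ is the smooth function $L*\phi_\ell$, and this coincides with $\phi_\ell*K$ only for $\rho(x)$ large compared with $b^{\ell}$; on $B_{\ell+\omega}$ (roughly) the two can differ, because $K$ only describes $L$ away from the origin and says nothing about the singular part of $L$ at $0$. (Take $T_L=\mathrm{Id}$, i.e.\ $L=\delta_0$ and $K\equiv 0$: then $K_\ell=\phi_\ell\neq 0=\phi_\ell*K$ on $B_\ell$.) Since the association \eqref{Ap1} must hold for all $x\notin\supp\varphi$, including $x$ arbitrarily close to $\supp\varphi$, the kernel must be correct near the origin, so your formula $K_\ell=\phi_\ell*K$ is not the right starting point there. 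The second problem is that even where you do use it, the proposed ``crude integration'' bound $\int|\phi_\ell(w)||K(x-w)|\,dw\le C/\rho(x)$ fails when $\rho(x)\lesssim b^{\ell}$: the pointwise bound $|K|\le C_K/\rho$ is not locally integrable at the origin in this setting, since $\int_{B_0}\rho(x)^{-1}dx=\sum_{k<0}b^{-k}\,|B_{k+1}\setminus B_k|=\sum_{k<0}(b-1)=\infty$. The $L^2$-boundedness you invoke as a fallback gives norm control of $S_{(\ell)}\varphi$, not the pointwise bounds $|K_\ell(x)|\le C/\rho(x)$ and \eqref{Ap2} that the definition requires near the diagonal.

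The missing ingredient is the Fourier-side representation. The paper rescales, writes $(L*\phi_\ell)_{-\ell}=L_{-\ell}*\phi$, and on the near region uses $(L_{-\ell}*\phi)(x)=\int e^{-2\pi i x\cdot y}\widehat{L_{-\ell}}(y)\widehat{\phi}(y)\,dy$, which gives the uniform bound $\|L*\phi_\ell\|_\infty\le b^{-\ell}\|\widehat L\|_\infty\|\widehat\phi\|_1\le Cb^{1+\omega}/\rho(x)$ for $\rho(x)\le b^{\ell+1+\omega}$; the kernel representation through $K$ is used only for $\rho(x)$ large, where your convolution argument is indeed valid. The same dichotomy is needed for the regularity conditions: the mean value theorem applied to the Fourier representation handles the near regime of the H\"older estimate (at the price of degrading the exponent to $\delta=\min\{\gamma,\ln\lambda_-/\ln b\}$, which your sketch does not account for), and differentiating the Fourier representation handles the near regime ($x-y\in B_{j+1}\setminus B_j$ with $j<\ell+2\omega$) of the order-$m$ condition, producing $b^{-\ell}\le b^{2\omega}\rho(x-y)^{-1}$. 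Your outline for the far regime (transferring derivatives onto $K$ and using \eqref{Ap2} for $T_L$) matches the paper's argument and is fine; to complete the proof you need to replace the near-diagonal treatment by the Fourier-transform estimates on $L*\phi_\ell$ and its derivatives.
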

\begin{proof}
Let $\ell \in \mathbb{Z}$. For every $\psi \in  \mathcal{S}(\mathbb{R}^n)$ we have that
$$
S_{(\ell )}(\psi )=T_{\phi _\ell }(T_L(\psi ))=\phi _\ell *(L*\psi )=(L*\phi _\ell )*\psi .
$$
Hence, $S_{(\ell )}=T_{L*\phi _\ell }$. Since $|\widehat{\phi _\ell}|\leq \|\phi \|_1$, the interchange formula leads to
$\|\widehat{S_{(\ell )}}\|_\infty=\|\widehat{L}\widehat{\phi _\ell }\|_\infty\leq \|\widehat{L}\|_\infty \|\phi \|_1$. According to \cite[p. 248]{Sch}, $L*\phi _\ell$ is a multiplier for $\mathcal{S}(\mathbb{R}^n)$ and, for every $\psi \in \mathcal{S}(\mathbb{R}^n)$,
$$
S_{(\ell )}(\psi )(x)=\int_{\mathbb{R}^n}(L*\phi _\ell )(x-y)\psi (y)dy,\quad x\in \mathbb{R}^n.
$$
Note that this integral is absolutely convergent for every $x\in \mathbb{R}^n$. Then, $S_{(\ell)}$ is associated with the kernel $L*\phi _\ell$ which is in $C^\infty (\mathbb{R}^n)$. We define, for every $k\in \mathbb{Z}$, $L_k\in \mathcal{S}'(\mathbb{R}^n)$ as follows:
$$
<L_k,\psi >=<L,\psi (A^k\cdot )>,\quad \psi \in \mathcal{S}(\mathbb{R}^n).
$$
It is not hard to see that, for every $k\in \mathbb{Z}$, $(L*\phi _\ell )_k=L_k*\phi _{\ell +k}$. Then, $(L*\phi _\ell )_{-\ell}=L_{-\ell }*\phi$.

Suppose that $T_L$ is associated with the kernel $K$, that is, for every $\psi \in \mathcal{S}(\mathbb{R}^n)$,
$$
(L*\psi )(x)=\int_{\mathbb{R}^n}K(x-y)\psi (y)dy, \quad x\not \in \mbox{supp}\; \psi ,
$$
and $K$ satisfies (ii) and (iii) after (\ref{Ap1}).

Let $k\in \mathbb{Z}$ and $\psi \in \mathcal{S}(\mathbb{R}^n)$. We have that
\begin{align*}
(L_k*\psi )(x)&=<L_k(y),\psi (x-y)>=<L(y),\psi (x-A^ky)>=<L(y),\psi (A^k(A^{-k}x-y))>\\
&=(L*\psi (A^k\cdot ))(A^{-k}
x)=\int_{\mathbb{R}^n}K(A^{-k}x-y)\psi (A^ky)dy,\quad A^{-k}x\not \in \mbox{supp }\psi (A^k\cdot).
\end{align*}
Then,
$$
(L_k*\psi )(x)=b^{-k}\int_{\mathbb{R}^n}K(A^{-k}(x-y))\psi (y)dy,\quad x\not \in \mbox{supp }\psi .
$$

We are going to see that, there exists $C>0$ that does not depend on $\ell$ such that:

$(i)$ $|(L_{-\ell }*\phi )(x)|\leq C/\rho (x)$, $x\in \mathbb{R}^n\setminus\{0\}$,

\noindent and, if $\delta=\min\{\gamma , \ln \lambda _-/\ln b\}$,

$(ii)$ $|(L_{-\ell }*\phi )(x-y)-(L_{-\ell }*\phi )(x)|\leq C\rho (y)^\delta/(\rho (x))^{\delta +1}$, when $b^{2w}\rho (y)\leq \rho (x)$.

Firstly we prove $(i)$. We have that $L_{-\ell}*\phi \in L^2(\mathbb{R}^n)$ and $\widehat{L_{-\ell }*\phi }=\widehat{L_{-\ell }}\widehat{\phi }\in L^1(\mathbb{R}^n)$. Then, we can write
$$
(L_{-\ell}*\phi )(x)=\int_{\mathbb{R}^n}e^{-2\pi ix\cdot y}\widehat{L_{-\ell}}(y)\widehat{\phi}(y)dy,\quad x\in \mathbb{R}^n.
$$
Note that the two sides in the last equalities define smooth functions in $\mathbb{R}^n$. Since $\|\widehat{L_{-\ell}}\|_\infty =\|\widehat{L}\|_\infty$, we deduce that
$$
|(L_{-\ell }*\phi )(x)|\leq \|\widehat{L_{-\ell}}\|_\infty\int_{\mathbb{R}^n}|\widehat{\phi}(y)|dy,\quad x\in \mathbb{R}^n.
$$

We obtain
$$
|(L_{-\ell }*\phi )(x)|\leq \frac{b^{1+w}\|\widehat{L_{-\ell}}\|_\infty}{\rho (x)}\int_{\mathbb{R}^n}|\widehat{\phi}(y)|dy,\quad x\in B_{1+w}\setminus\{0\}.
$$

On the other hand, since $\rho (x+y)\leq b^w(\rho (x)+\rho (y))$, $x,y\in \mathbb{R}^n$, we have that $\rho (x-y)\geq b^{-w}\rho (x)-\rho (y)$, $x,y\in \mathbb{R}^n$. Then, if $x\not \in B_{1+w}$ and $y\in B_0$, it follows that $\rho (x-y)\geq b^{-w}\rho (x)-b^{-w-1}\rho (x)=b^{-w}(1-b^{-1})\rho (x)$. We can write
\begin{align*}
|(L_{-\ell }*\phi )(x)|&\leq b^\ell \int_{B_0}|K(A^\ell (x-y))||\phi (y)|dy\leq C_Kb^\ell \int_{B_0}\frac{|\phi (y)|}{\rho (A^\ell (x-y))}dy\\
&\leq C_K\int_{B_0}\frac{|\phi (y)|}{\rho (x-y)}dy\leq \frac{C_Kb^w}{(1-b^{-1})\rho (x)}\int_{B_0}|\phi (y)|dy,\quad x\not \in B_{w+1}.
\end{align*}

We conclude that $|(L_{-\ell }*\phi )(x)|\leq C/\rho (x)$, $x\in \mathbb{R}^n\setminus \{0\}$, where $C>0$ is independent of $\ell$, and $(i)$ is proved.

We now establish $(ii)$. We can write
$$
(L_{-\ell }*\phi )(x-y)-(L_{-\ell }*\phi )(x)=\int_{\mathbb{R}^n}\Big(e^{-2\pi i(x-y)\cdot z}-e^{-2\pi ix\cdot z}\Big)\widehat{L_{-\ell }}(z)\widehat{\phi }(z)dz,\quad x,y\in \mathbb{R} ^n.
$$
Mean value theorem leads to
$$
|(L_{-\ell }*\phi )(x-y)-(L_{-\ell }*\phi )(x)|\leq C|y|\|\widehat{L}\|_\infty \int_{\mathbb{R}^n}|z||\widehat{\phi }(z)|dz,\quad x,y\in \mathbb{R} ^n.
$$
According to \cite[(3.3) p. 11]{Bow1} , $|y|\leq  C\rho (y)^{\ln \lambda _-/\ln b}$, when $\rho(y)\leq 1$. Also, by  \cite[(3.2) p. 11]{Bow1},  we get
$$
|y|\leq C\rho (y)^{\ln \lambda _+/\ln b}\leq C\rho (y)^{\ln \lambda _-/\ln b},\quad 1\leq \rho (y)\leq b^{4w}.
$$
Hence,
\begin{align*}
|(L_{-\ell }*\phi )(x-y)-(L_{-\ell }*\phi )(x)|&\leq C\rho (y)^{\ln \lambda _-/\ln b}\\
&\leq C\frac{\rho (y)^{\ln \lambda _-/\ln b}}{\rho (x)^{\ln \lambda _-/\ln b+1}}, \quad b^{2w}\rho (y)\leq \rho (x)\leq b^{4w}.
\end{align*}

Assume that $\rho (x)\geq b^{4w}$ and $b^{2w}\rho (y)\leq \rho (x)$. It is clear that $x\not \in \mbox{ supp}\;\phi$. Also, we have that $\rho (x-y)\geq b^{-w}\rho (x)-\rho (y)\geq b^{-w}\rho (x)-b^{-2w}\rho (x)\geq b^{3w}-b^{2w}\geq b$. Then, $x-y\not \in \rm{supp }\;\phi $. We can write
$$
(L_{-\ell }*\phi )(x-y)-(L_{-\ell }*\phi )(x)=\int_{\mathbb{R}^n}(K_{-\ell }(x-y-z)-K_{-\ell }(x-z))\phi (z)dz,
$$
where $K_{-\ell}(z)=b^\ell K(A^\ell z)$, $z\in \mathbb{R}^n$.

Suppose that $\rho (y)\leq b^{-6w}\rho (x)$ and $z\in \rm{supp}\;\phi$. Since $\rho (z)\leq b^{-4w}\rho (x)$, we have that $\rho (x-z)\geq b^{-w}\rho (x)-\rho (z)\geq b^{-w}\rho (x)-b^{-4w}\rho (x)\geq b^{6w}(b^{-w}-b^{-4w})\rho (y)=b^{2w}(b^{3w}-1)\rho (y)\geq b^{2w}\rho (y)$. We obtain
\begin{align*}
|(L_{-\ell }*\phi )(x-y)-(L_{-\ell }*\phi )(x)|&\leq \int_{B_0}|K_{-\ell }(x-y-z)-K_{-\ell }(x-z)||\phi (z)|dz\\
&\leq C\int_{B_0}|\phi (z)|\frac{\rho (y)^\gamma}{\rho (x-z)^{\gamma +1}}dz\leq C\frac{\rho (y)^\gamma }{\rho (x)^{\gamma +1}}\int_{\mathbb{R}^n}|\phi (z)|dz.
\end{align*}

Suppose now that $b^{2w}\rho (y)\leq \rho (x)\leq b^{6w}\rho (y)$. It follows that $\rho (x-y)\geq b^{-w}\rho (x)-\rho (y)\geq (b^{-w}-b^{-2w})\rho (x)$. From $(i)$ we deduce
$$
|(L_{-\ell }*\phi )(x-y)-(L_{-\ell }*\phi )(x)|\leq C\Big(\frac{1}{\rho (x-y)}+\frac{1}{\rho (x)}\Big)\leq \frac{C}{\rho (x)}\leq C\frac{\rho (y)^\gamma }{\rho (x)^{\gamma +1}}.
$$

We conclude that, if $\delta=\min\{\gamma , \ln \lambda _-/\ln b\}$,
$$
|(L_{-\ell }*\phi )(x-y)-(L_{-\ell }*\phi )(x)|\leq C\frac{\rho (y)^\delta }{\rho (x)^{\delta +1}},\quad b^{2w}\rho (y)\leq \rho (x),
$$
where $C>0$ does not depend on $\ell$, and $(ii)$ is proved.

Since $L*\phi _\ell=(L_{-\ell }*\phi)_\ell$, from $(i)$ and $(ii)$ we infer that

$(i')$ $|(L*\phi _\ell)(x)|\leq C/\rho (x)$, $x\in \mathbb{R}^n\setminus\{0\}$,

\noindent and, if $\delta=\min\{\gamma , \ln \lambda _-/\ln b\}$,

$(ii')$ $|(L*\phi _\ell)(x-y)-(L*\phi _\ell )(x)|\leq C\rho (y)^\delta/(\rho (x))^{\delta +1}$, when $b^{2w}\rho (y)\leq \rho (x)$,

\noindent and $C>0$ does not depend on $\ell$.

We are going to prove the $m$-regularity property for the kernel
$$
H_\ell (x,y)=(L*\phi_\ell )(x-y),\quad x,y\in \mathbb{R}^n.
$$

We have to show that, if $\alpha \in \mathbb{N}^n$, $|\alpha|\leq m$, and $x,y\in \mathbb{R}^n$, $x-y\in B_{k+1}\setminus B_k$, with $k\in \mathbb{Z}$, then
$$
|(\partial _y^\alpha \widetilde{H_\ell})(x,A^{-k}y)|\leq \frac{C}{\rho (x-y)}=\frac{C}{b^k},
$$
where $\widetilde{H_\ell}(x,y)=H_\ell (x, A^ky)$ and $C>0$ is independent of $\ell$. In order to prove this, we proceed as in  \cite[pp. 66 and 67]{Bow1}.

We have that
$$
H_\ell (x,y)=\int_{\mathbb{R}^n}\mathbb{K}(x-z ,y)\phi _\ell (z)dz,\quad x-y\in B_\ell ,
$$
where $\mathbb{K}(x,y)=K(x-y)$, $x,y\in \mathbb{R}^n\setminus\{0\}$.

Suppose that $x_0,y_0\in \mathbb{R}^n$ and $x_0-y_0\in B_{j+2w+1}\setminus B_{j+2w}$, where $j\in \mathbb{N}$, $j\geq \ell$. By \cite[(2.11), p. 68]{Bow1} it follows that $x_0-y_0-z\not\in B_{j+w}$ and $x_0-y_0-z\in B_{j+3w+1}$, for every $z\in B_\ell$. By using the regularity of $K$ we deduce  (see \cite[(9.29), p. 66]{Bow1}), for every $\alpha \in \mathbb{N}^n$, $|\alpha|\leq m$,
$$
|(\partial _y^\alpha[\mathbb{K}(\cdot ,A^{j+2w}\cdot)](x_0-z,A^{-j-2w}y_0)|\leq Cb^{-j-2w},\quad z\in B_\ell .
$$

Differentiating under the integral sign we get
$$
|(\partial _y^\alpha  \widetilde{H_\ell})(x_0,A^{-j-2w}y_0)|\leq Cb^{-j-2w},\quad \alpha \in \mathbb{N}^n,\;|\alpha|\leq m,
$$
where $C>0$ does not depend on $\{\ell ,j\}$.

Assume that $x_0,y_0\in \mathbb{R}^n$ and $x_0-y_0\in B_{j+1}\setminus B_j$, being $j<\ell +2w$. Let $\alpha \in \mathbb{N}^n$, $|\alpha |\leq m$. We can write
\begin{align*}
\widetilde{H_\ell}(x,y)&=\int_{\mathbb{R}^n}e^{-2\pi iz\cdot(x-A^jy)}\widehat{L}(z)\widehat{\phi_\ell}(z)dz\\
&=\int_{\mathbb{R}^n}e^{-2\pi iz\cdot(x-A^jy)}\widehat{L}(z)\widehat{\phi}((A^*)^\ell z)dz,\quad x,y\in \mathbb{R}^n,
\end{align*}
where $A^*$ denotes the adjoint matrix of $A$.

After making a change of variables we get
\begin{align*}
\widetilde{H_\ell}(x,y)&=b^{-\ell}\int_{\mathbb{R}^n}e^{-2\pi i(A^*)^{-\ell}z\cdot(x-A^jy)}\widehat{L}((A^*)^{-\ell}z)\widehat{\phi}(z)dz\\
&=b^{-\ell}\int_{\mathbb{R}^n}e^{-2\pi iz\cdot(A^{-\ell}x-A^{-\ell +j}y)}\widehat{L}((A^*)^{-\ell}z)\widehat{\phi}(z)dz,\quad x,y\in \mathbb{R}^n.
\end{align*}

Then, differentiating under the integral sign, we obtain
$$
|\partial _y^\alpha  \widetilde{H_\ell}(x,y)|\leq Cb^{-\ell}\int_{\mathbb{R}^n}|z|^\alpha |\widehat{\phi}(z)|dz,\quad x,y\in \mathbb{R}^n,\;x-y\in B_{j+1}\setminus B_j,
$$
because $j-\ell<2w$. Here $C>0$ does not depend on $\{\ell ,j\}$.

Hence,
$$
|(\partial _y^\alpha  \widetilde{H_\ell})(x_0,A^{-j}y_0)|\leq Cb^{-\ell}=Cb^{-\ell +j}b^{-j}\leq Cb^{2w}\rho (x_0-y_0)^{-1}.
$$
We conclude that there exists $C>0$ such that, for every $\alpha \in \mathbb{N}^n$, $|\alpha |\leq m$, and $x,y\in \mathbb{R}^n$, $x-y\in B_{k+1}\setminus B_k$, $k\in \mathbb{Z}$,
$$
|(\partial _y^\alpha  \widetilde{H_\ell})(x,A^{-k}y)|\leq \frac{C}{\rho (x-y)}.
$$
Thus, the proof of the property is finished.
\end{proof}

\begin{proof}[Proof of Theorem \ref{Th1.4}, $(ii)$]

Consider $r$, $p_0$ and $s$ as in Proposition \ref{Prop6.2}. Assume that $a$ is a $(p(\cdot ),q(\cdot ), r/p_0,s)$-atom associated with $x_0\in \mathbb{R}^n$ and $\ell _0\in \mathbb{Z}$, and $\nu \in \mathcal{A}_1(\mathbb{R}^n,A)\cap RH_{(r/p_0)'}(\mathbb{R}^n,A)$. We take $\varphi \in \mathcal{S}(\mathbb{R}^n)$ such that $\int \varphi (x)dx\not=0$ and ${\rm supp}\;\varphi \subset B_0$.

We can write
\begin{align*}
\|Ta\|_{H^{p_0}(\mathbb{R}^n,\nu ,A)}^{p_0}&\leq C\left(\int_{x_0+B_{\ell _0+w}}(M_\varphi ^0(Ta)(x))^{p_0}\nu (x)dx+\int_{(x_0+B_{\ell _0+w})^c}(M_\varphi ^0(Ta)(x))^{p_0}\nu (x)dx\right)\\
&=J_1+J_2.
\end{align*}
The Hardy-Littlewood maximal function satisfies Kolmogorov inequality (see \cite[p. 91]{Gr}). Then, since $M_\varphi ^0(Ta)\leq CM_{HL}(Ta)$, we get
$$
J_1\leq C\nu (x_0+B_{\ell _0+w})^{1-p_0}\left(\int_{\mathbb{R}^n}|T(a)(x)|\nu (x)dx\right)^{p_0}.
$$
Here $C=C([\nu ]_{\mathcal{A}_1(\mathbb{R}^n,A)})>0$.

By splitting the last integral in the same way we obtain
$$
\int_{\mathbb{R}^n}|Ta(x)|\nu (x)dx=\int_{x_0+B_{\ell _0+w}}|Ta(x)|\nu (x)dx+\int_{(x_0+B_{\ell _0+w})^c}|Ta(x)|\nu (x)dx.
$$
Since $T$ is a bounded operator in $L^{r/p_0}(\mathbb{R}^n)$ (see Proposition \ref{Prop6.2}) and $\nu\in RH_{(r/p_0)'}(\mathbb{R}^n,A)$, it follows that
\begin{align*}
\int_{x_0+B_{\ell _0+w}}|Ta(x)|\nu (x)dx&\leq \left(\int_{x_0+B_{\ell _0+w}}|Ta(x)|^{r/p_0}dx\right)^{p_0/r}\left(\int_{x_0+B_{\ell _0+w}}\nu (x)^{(r/p_0)'}dx\right)^{1/(r/p_0)'}\\
&\leq C\|a\|_{r/p_0}|B_{\ell _0+w}|^{1/(r/p_0)'-1}\nu (x_0+B_{\ell _0+w})\\
&\leq C\frac{\nu (x_0+B_{\ell _0})}{\|\chi _{x_0+B_{\ell _0}}\|_{p(\cdot ),q(\cdot )}}.
\end{align*}
Here $C=C([\nu ]_{\mathcal{A}_1(\mathbb{R}^n,A)}, [\nu]_{RH_{(r/p_0)'}(\mathbb{R}^n,A)})$. We have used that $\nu$ defines a doubling measure.

By proceeding as in the estimation of $I_2$ in the proof of Theorem \ref{Th1.4}  (i) we get
\begin{align*}
\int_{(x_0+B_{\ell _0+w})^c}|Ta(x)|\nu (x)dx&\leq C\frac{|B_{\ell _0}|^{\delta +1}}{\|\chi _{x_0+B_{\ell _0}}\|_{p(\cdot ),q(\cdot )}}\int_{(x_0+B_{\ell _0+w})^c}\frac{\nu (x)}{\rho (x-x_0)^{\delta +1}}dx\\
&\leq C\frac{|B_{\ell _0}|^{\delta +1}}{\|\chi _{x_0+B_{\ell _0}}\|_{p(\cdot ),q(\cdot )}}\nu (x_0+B_{\ell _0})|B_{\ell _0}|^{-(\delta +1)}\\
&= C\frac{\nu (x_0+B_{\ell _0})}{\|\chi _{x_0+B_{\ell _0}}\|_{p(\cdot ),q(\cdot )}},
\end{align*}
where $C=C([\nu ]_{\mathcal{A}_1(\mathbb{R}^n,A)})>0$ and $\delta =m\ln \lambda _-/\ln b$.

We conclude that
$$
J_1\leq C\frac{\nu (x_0+B_{\ell _0})}{\|\chi _{x_0+B_{\ell _0}}\|_{p(\cdot ),q(\cdot )}^{p_0}}.
$$
Here $C=C([\nu ]_{\mathcal{A}_1(\mathbb{R}^n,A)}, [\nu]_{RH_{(r/p_0)'}(\mathbb{R}^n,A)})>0$.

According to Lemma \ref{Lem6.2}, for every $k\in \mathbb{Z}$, the convolution operator $S_k$ defined by
$$
S_{(k)}(\psi )=\varphi _k*(T\psi),\quad \psi \in \mathcal{S}(\mathbb{R}^n),
$$
is a Calder\'on-Zygmund singular integral of order $m$ and this property is uniformly in $k\in \mathbb{Z}$, that is, the characteristic constant is not depending on $k$.

If $k\in \mathbb{Z}$, by proceeding as in the proof of Theorem \ref{Th1.4} $(i)$ we get
$$
|S_{(k)}(a)|\leq C\frac{|B_{\ell _0}|^{\delta +1}}{\rho (x-x_0)^{\delta +1}}\frac{1}{\|\chi _{x_0+B_{\ell _0}}\|_{p(\cdot ),q(\cdot )}},\quad x\not \in x_0+B_{\ell _0+w},
$$
where $C>0$ does not depend on $k$.

Then,
$$
|M_\varphi ^0(Ta)(x)|\leq C\frac{|B_{\ell _0}|^{\delta +1}}{\rho (x-x_0)^{\delta +1}}\frac{1}{\|\chi _{x_0+B_{\ell _0}}\|_{p(\cdot ),q(\cdot )}},\quad x\not \in x_0+B_{\ell _0+w}.
$$
We conclude that
$$
J_2\leq C([\nu ]_{A_1(\mathbb{R}^n,A)})\frac{\nu (x_0+B_{\ell _0})}{\|\chi _{x_0+B_{\ell _0}}\|_{p(\cdot ),q(\cdot )}^{p_0}}.
$$
The proof of this theorem can be finished putting together the above estimates.
\end{proof}

\begin{Rem} In order to prove the boundedness of an operator $T$ defined on Hardy type spaces (or finite atomic Hardy type spaces) and which takes values in a Banach (or quasi-Banach) space it is usual to put the condition: $T$ is uniformly bounded on atoms. As it can be seen (for instance, in \cite[pp. 3096 and 3097]{BLYZ}, the last condition implies the boundedness of $T$, roughly speaking, proceeding as follows: if $f=\sum_{j=1}^k\lambda _ja_j$, then
\begin{equation}\label{R1}
\|Tf\|_X\leq \sum_{j=1}^k|\lambda _j|\|Ta_j\|_X\leq C\sum_{j=1}^k|\lambda _j|\leq C\|f\|_{H^p}.
\end{equation}
In our case, for the anisotropic Hardy-Lorentz spaces with variable exponents, we do not know if the last inequality in (\ref{R1}) holds. In Theorem \ref{Th1.2}  we establish our atomic quasinorm. The condition in Proposition \ref{Prop6.2} is adapted to the quasinorms on the anisotropic Hardy-Lorentz spaces with variable exponents and they replace the uniform boundedness on atoms condition.
\end{Rem}

\begin{Rem} As it is well known, Lorentz and Hardy-Lorentz spaces appear related with interpolation. C. Fefferman, Rivi\`ere and Sagher (\cite{FRS}) proved that, if $0<p_0<1$, then $(H^{p_0}(\mathbb{R}^n), L^\infty (\mathbb{R}^n))_{\eta ,q}=H^{p,q}(\mathbb{R}^n)$, where $1/p=(1-\eta)/p_0$, $0<\eta <1$ and $0<q\leq \infty$. Recently, Liu, Yang and Yuan (\cite[Lemma 6.3]{LYY})) established an anisotropic version of this result. By using a reiteration argument in \cite[Theorem 6.1]{LYY} the interpolation spaces between anisotropic Hardy spaces are described. Kempka and Vyb\'{\i}ral (\cite[Theorem 8]{KV}) proved that $(L^{p(\cdot )}(\mathbb{R}^n),L^\infty (\mathbb{R}^n))_{\theta ,q}=L_{\tilde{p}(\cdot),q}$, where $0<\theta <1$, $0<q\leq \infty$ and $1/\tilde{p}(\cdot)=(1-\theta)/p(\cdot)$. It is clear that a similar property can not be hoped for the Lorentz space $\mathcal{L}^{p(\cdot ),q(\cdot)}(\mathbb{R}^n)$, since in the definition of $L^{p(\cdot )}(\mathbb{R}^n)$, $p$ is a measurable function defined in $\mathbb{R}^n$ while in the definition of the Lorentz space $\mathcal{L}^{p(\cdot ), q(\cdot )}(\mathbb{R}^n)$, $p$ and $q$ are measurable functions defined in $(0,\infty)$. Then, the arguments used in \cite{LYY} to study interpolation in anisotropic Hardy spaces do not work in our variable exponent setting. New arguments will must be developed in order to describe interpolation spaces between our anisotropic Hardy-Lorentz spaces with variable exponents.
\end{Rem}
%%%%%%%%%%%%%%%%%%%%%%%%%%%%%%%%%%%%%%%%%%%%%%%%%%%%%%%%%%%%%%%%%%%%%%%%%

%\bibliographystyle{siam}
%\bibliography{references_ani_var_hardy-lorentz1}

%\end{document}

%%%%%%%%%%%%%%%%%%%%%%%%%%%%%%%%%%%%%%%%%%%%%%

\end{document}